\documentclass[11pt,a4paper]{article}
\usepackage[T1]{fontenc}\usepackage{lmodern}
\usepackage{amsmath,amssymb,amsthm,mathtools,mathrsfs}
\usepackage[margin=2cm]{geometry}\usepackage{authblk,longtable}
\usepackage[sf,bf]{titlesec}
\usepackage{microtype,enumitem,booktabs,needspace}\usepackage{array}
\usepackage{xcolor}
\usepackage[colorlinks=true,linkcolor=blue!50!black,urlcolor=blue!50!black,citecolor=blue!50!black]{hyperref}
\newtheorem{proposition}{Proposition}[section]
\newtheorem{backgroundproposition}[proposition]{Proposition}

\newtheorem{theorem}[proposition]{Theorem}

\theoremstyle{definition}
\newtheorem{definition}[proposition]{Definition}

\newtheorem*{question}{Question}
\newcommand{\F}{\mathbb F_2}
\newcommand{\Q}{\mathbb Q}
\newcommand{\Z}{\mathbb Z}
\newcommand{\Gal}{\operatorname{Gal}}
\newcommand{\Spec}{\operatorname{Spec}}
\newcommand{\Pic}{\operatorname{Pic}}
\newcommand{\Br}{\operatorname{Br}}

\newcommand{\im}{\operatorname{im}}

\newcommand{\et}{\mathrm{\acute et}}

\newcommand{\sh}{\mathrm{sh}}

\DeclareMathOperator{\Frac}{Frac}

\newcommand{\st}[1]{\href{https://stacks.math.columbia.edu/tag/#1}{\textup{[Stacks, #1]}}}
\newcommand{\pigeom}{\pi_1^{\mathrm{geom}}}
\newcommand{\piet}{\pi_1^{\et}}
\title{\sffamily\bfseries\Large Golod--Shafarevich for arithmetic surfaces}
\author[1]{Timo Keller\thanks{Timo Keller, IBM Deutschland Research \& Development, IBM Campus 1, 71139 Ehningen, Germany, \texttt{math@kellertimo.de}.}}
\author[2]{Carlo Pagano\thanks{Department of Mathematics and Statistics, Concordia University, Quebec H3G 1M8, Canada, \texttt{carlein90@gmail.com}; Google DeepMind, \texttt{carlopagano@google.com}.}}
\affil[1]{IBM Deutschland Research \& Development}
\affil[2]{Concordia University and Google DeepMind}

\date{September 27, 2026}
\begin{document}
\maketitle
\begin{abstract}
We construct Golod--Shafarevich towers of arithmetic surfaces over $\Spec\Z$. In particular, we obtain an arithmetic surface with a section and infinite geometric \'etale fundamental group, answering a question raised by Jean-Beno\^it Bost and Fran\c cois Charles about the existence of such surfaces. Taking generic fibres gives curves over $\Q$ with infinite geometric \'etale towers in which a rational point splits completely. This answers questions posed by Yasutaka Ihara and by Gerhard Frey, Ernst Kani and Helmut V\"olklein. Writing $C_n=(S_n)_{\Q}$ for the generic fibres of the tower, we also obtain $\log\mathfrak N(C_n)=O(g(C_n))$ and $g(C_n)\to\infty$, answering a question asked by Akshay Venkatesh at PCMI in 2022.

The authors worked together on this problem since $2022$ and developed a general strategy. They were able to bring it to fruit only after a collaboration with \emph{GPT5.6 Sol}, \emph{Fable} and \emph{Aletheia} a Gemini-powered internal agent developed at Google DeepMind.
\end{abstract}

\section{Introduction}\label{sec:introduction}
At PCMI in 2022, Akshay Venkatesh asked how small the logarithmic conductor
of a curve over $\Q$ can be compared with its genus. For a smooth
projective geometrically connected curve $C/\Q$, we write
$\mathfrak N(C)$ for the conductor of its Jacobian.
Standard examples, such as
the modular curves $X_0(p)$ for prime $p$, give logarithmic conductor of
order $g\log g$: writing $g_p=g(X_0(p))$, we have
$g_p\sim p/12$ and $\mathfrak N(X_0(p))=p^{g_p}$; see \cite[Section~0]{Emerton} for their reduction.
\begin{question}[Venkatesh, 2022]
Can one instead find a sequence of smooth projective geometrically
connected curves $C_n/\Q$ with $g(C_n)\to\infty$ and
\begin{equation}\label{eq:intro-statement-1}
 \log\mathfrak N(C_n)=O(g(C_n))?
\end{equation}
\end{question}
This is the expected optimal order of magnitude. For example, assuming that the completed
$L$-function of the Jacobian of $C$ is entire of order one and satisfies its
expected functional equation, Mestre proves
$\log\mathfrak N(C)>\log(10) \cdot g(C)$ \cite[p.~229]{Mestre}.

The analogy with number fields suggests an approach to construct families achieving growth which is linear in the genus. Namely Golod--Shafarevich's
theory gives infinite unramified towers of number fields with bounded
root discriminant: their logarithmic discriminants grow only linearly
with their degrees; see \cite[Section~10]{Ershov}. One is therefore led to hunt
for an arithmetic surface $S$ with an infinite tower of finite \'etale
covers $S_n\to S$ whose generic curves $C_n=(S_n)_{\Q}$ are
geometrically connected. We seek $g(C_n)\to\infty$ and a finite set of primes $\Sigma$ such that
every $C_n$ has good reduction at $p\notin\Sigma$. For each $p\in\Sigma$,
we also require a finite extension $E_p/\Q_p$, independent of $n$, over which every $C_n$ has semistable reduction.

This brings us to a question raised by Bost and Charles
\cite[Section~9.3.4, after Corollary~9.3.8, p.~160]{BostCharles}.
After proving finiteness results for fundamental groups of arithmetic surfaces, they point out that no example with the following properties was known.

\begin{question}[Bost--Charles, 2022]
Does there exist a regular integral quasi-projective arithmetic surface
$S\to\Spec\Z$ with a section and infinite \'etale fundamental group?
\end{question}

Bost and Charles allow nonproper surfaces; our examples will be projective. Several finiteness results are known:
Bost and Charles treat models of elliptic curves and certain integral modular schemes \cite[Corollaries~9.3.4 and~9.3.8]{BostCharles}. Calegari, Dimitrov and Tang \cite[Theorem~7.4.10]{CDT} prove that a
natural smooth integral model $\mathcal X(N)$ of the modular curve $X(N)$ over $\mathcal O_K$, for $K\supseteq\Q(\mu_N)$, has a structure map that induces
\begin{equation}\label{eq:intro-statement-2}
 \piet(\mathcal X(N))\simeq\piet(\Spec\mathcal O_K).
\end{equation}
Their model is the part of the smooth locus of the minimal regular
model containing the cusp section at infinity, which they call the
connected N\'eron model. In particular, the image of
$\piet(\mathcal X(N)_{\overline K})$ in $\piet(\mathcal X(N))$
is trivial. For our surfaces $S$, the image of
$\piet(S_{\overline\Q})$ in $\piet(S)$ is infinite.

On the generic fibers, that is for curves over $\Q$, we have a related question. Following Ihara
\cite{Ihara}, Frey, Kani and V\"olklein \cite{FKV} introduced the $k$-rational geometric fundamental group, which classifies \'etale covers
in which a chosen rational point splits completely. For a finite cover $D\to C$ of degree $d$, this means that the fibre above $P\in C(k)$ is a disjoint union of $d$ copies of $\Spec k$. Diem and Frey ask explicitly whether such covers can have unbounded degree in characteristic zero
\cite[Section~1.5, Problem~1.9 and footnote~1]{DiemFrey}.

\begin{question}[Diem--Frey, 2003]
Does there exist a finitely generated field $k$ of characteristic zero,
a smooth projective geometrically connected curve $C/k$, and a point
$P\in C(k)$ admitting geometrically connected finite \'etale Galois
covers of unbounded degree in which $P$ splits completely?
\end{question}

The purpose of this work is to develop Golod--Shafarevich towers for
arithmetic surfaces and answer all three questions. We construct
a projective surface $S$ over $\Spec\Z$ with a section $s$ and
finite \'etale covers $S_n\to S$ of unbounded degree whose generic
curves $C_n/\Q$ are geometrically connected. The point
$s_{\Q}\in S_{\Q}(\Q)$ splits completely in every $C_n$, answering
the curve question over $\Q$. We also prove a linear bound for
$\log\mathfrak N(C_n)$ in terms of $g(C_n)$, answering Venkatesh's
question.

Throughout this paper an arithmetic surface is an integral proper flat
scheme over $\Spec\Z$ with one-dimensional fibres. For a surface with
smooth geometrically connected generic fibre, write
\begin{equation}\label{eq:intro-statement-3}
 \pigeom(S)=\im\bigl(\piet(S_{\overline\Q})\longrightarrow\piet(S)\bigr).
\end{equation}
A section splits completely in a degree-$d$
cover if its inverse image is a disjoint union of $d$ copies of
$\Spec\Z$. Further conventions are given in
Section~\ref{sec:dictionary}.

\begin{theorem}\label{thm:main-surface}
There exist a regular projective arithmetic surface $S$ over $\Spec\Z$,
a section $s:\Spec\Z\to S$, and a tower
\begin{equation}\label{eq:intro-statement-4}
\cdots\longrightarrow S_{n+1}\longrightarrow S_n\longrightarrow\cdots
\longrightarrow S_0=S
\end{equation}
with the following properties.
\begin{enumerate}[label=(\roman*)]
\item Each $S_n\to S$ is a connected finite \'etale Galois cover of
degree $2^n$, and each transition map $S_{n+1}\to S_n$ has degree two.
\item Every $S_n$ is a regular projective arithmetic surface with
smooth geometrically connected generic fibre.
\item The section $s$ splits completely at every level:
\begin{equation}\label{eq:intro-statement-5}
S_n\times_{S,s}\Spec\Z\simeq\coprod_{j=1}^{2^n}\Spec\Z.
\end{equation}
\end{enumerate}
In particular, $\pigeom(S)=\piet(S)$ is infinite.
\end{theorem}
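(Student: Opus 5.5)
Since every $S_n\to S$ has $2$-power degree, the plan is to show that the maximal pro-$2$ quotient $G=\piet(S)(2)$ is infinite. Following the number-field case, I will bound its generator rank $d(G)=\dim_{\F}H^1(S,\F)$ and relation rank $r(G)\le\dim_{\F}H^2(S,\F)$, and arrange that the Golod--Shafarevich inequality $r(G)<d(G)^2/4$ holds. For the sections, I do not work with $G$ itself. Instead I use the quotient $G_s$ that classifies $\F$-torsors on $S$ trivial along $s$, i.e.\ the kernel of the map to $\piet(\Spec\Z)(2)$ induced by $s$. This map is trivial anyway, since $\Spec\Z$ has no nontrivial connected étale covers. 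So every connected étale cover of $\Spec\Z$ is trivial, and the pullback along $s$ of any finite étale cover of $S$ is automatically a disjoint union of copies of $\Spec\Z$. Property (iii) is then free once the covers are étale over all of $S$. That is the reason to insist on a proper regular model rather than a cover of the generic fibre. Given $G$ infinite, I pick a descending chain of open normal subgroups $G=U_0\supset U_1\supset\cdots$ with each $[U_n:U_{n+1}]=2$ and each $U_n$ normal in $G$. Such a chain exists in any infinite pro-$2$ group: refine a chief series of the finite $2$-group quotients. Setting $S_n$ to be the corresponding covers gives (i). Regularity and projectivity pass to finite étale covers, which gives (ii). Geometric connectedness of $(S_n)_{\Q}$ holds because $\Q$ has no unramified extensions. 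More precisely, the algebraic closure of $\Q$ in the function field of $S_n$ is unramified everywhere, since $S_n\to\Spec\Z$ is proper with a section-like structure over the étale locus. Hence $\pigeom(S)=\piet(S)$.

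Concretely, I will take $S$ to be a regular model of a hyperelliptic curve $y^2=f(x)$ with $f\in\Z[x]$ chosen so that $f$ has many roots modulo suitable small primes, or so that the Jacobian has large rational $2$-torsion. I would use Kummer-type double covers $y_i^2=g_i(x)$ obtained from factors $g_i$ of $f$, arranged so that each is unramified over the generic fibre and also over every fibre of the regular model, including the bad fibres and the prime $2$. The quantity $d(G)$ is then at least the number of independent such covers. I will compute it via the Kummer sequence and the Leray spectral sequence for $S\to\Spec\Z$: $H^1(S,\F)$ sits inside the $2$-torsion of $\Pic$ together with unit contributions, and $\mathcal O(S)^\times=\{\pm1\}$. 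To bound $H^2(S,\F)$, I use $\Pic(S)/2$, ${}_2\Br(S)$, and Artin--Verdier-type duality for the arithmetic surface, which controls $H^2$ by $H^1$ plus the contribution from $\Br$ and from the real place. I expect the real place to force a correction term, as in Golod--Shafarevich for number fields, where $r\le d+\text{(units)}$. So I aim for $r(G)\le d(G)+c$ with $c$ small and fixed, while $d(G)$ can be made large. Then $d^2/4>d+c$ once $d\ge 6$ or so.

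The main obstacle is the local analysis: making the double covers genuinely étale over the whole regular model, and in particular over components of bad fibres and over $2$, where Kummer covers of degree $2$ are not étale. I would handle the prime $2$ with Artin--Schreier-type equations $z^2+z=h$ on the model, or choose $f$ with good reduction at $2$ and torsors coming from $\Pic^0$ of the special fibre. A second obstacle is proving the relation bound rigorously, since it requires computing ${}_2\Br(S)$ and the Brauer/Tate--Shafarevich contribution of the Jacobian. My first attempt would be to pick $f$ with bad reduction only at a few primes and with Jacobian having rank $0$ and trivial ${}_2$Sha, so that $H^2$ is explicitly computable.
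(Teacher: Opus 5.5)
Your outer framework is fine and agrees with the paper. Pullbacks along $s$ split by Minkowski, a refined chief series gives index-two normal steps, regularity and projectivity pass to finite \'etale covers, and the resulting $\Z$-points give geometric connectedness and $\pigeom(S)=\piet(S)$.

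\textbf{The gap: no surface with many \'etale double covers.} You never produce a regular proper $S/\Z$ with a section and $d=h^1_{\et}(S,\F)$ large compared with $\sqrt{h^2_{\et}(S,\F)}$. This is exactly the Bost--Charles difficulty, and it cannot be deferred as ``local analysis'': the Kummer construction you sketch does not deliver it.
\begin{itemize}
\item Over $\Z$ the Weierstrass discriminant is never a unit, so branch points collide modulo small primes. The covers $y_i^2=g_i(x)$ then acquire inertia along fibre components of the regular model on which $g_i$ has odd valuation.
\item For the paper's own curve, $C_0\to C_{-1}$ is \'etale over $\Q$, yet $S_0\to S_{-1}$ has inertia $\langle b_E\rangle\cap A\neq 0$ along fibre components at odd $p$. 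By Proposition~\ref{prop:conductor-model-transport}, this cover therefore extends to a finite \'etale cover of no proper model of $C_{-1}$.
\end{itemize}

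\textbf{The relation bound is also unfounded.}
\begin{itemize}
\item Duality on an arithmetic surface pairs $H^2$ with a degree-three group, not with $H^1$, so there is no analogue of the number-field inequality $r\le d+c$.
\item The Leray spectral sequence lets Selmer-type groups $H^1(\Spec\Z,R^1f_*\F)$ and bad-fibre terms contribute to $H^2_{\et}(S,\F)$. Nothing ties these to $H^1_{\et}(S,\F)$.
\item Describing $H^1$ and $H^2$ through $\Pic(S)[2]$, $\Pic(S)/2$ and ${}_2\Br(S)$ computes flat $\mu_2$-cohomology, not \'etale $\F$-cohomology, because $2$ is not invertible on $S$. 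For instance, your unit $-1$ gives the non-\'etale $\mu_2$-torsor $\Spec\Z[\sqrt{-1}]$, while $H^1_{\et}(\Spec\Z,\F)=0$.
\end{itemize}

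\textbf{The missing idea: take the generators from a ramified cover.} The paper takes $S=S_0$, the regular normalization of a model $S_{-1}$ of $C_{-1}$ in the $\F^g$-extension. It lets $S_0\to S_{-1}$ ramify in the bad fibres and builds \'etale covers $S_n\to S_0$ that are Galois over $S_{-1}$.

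The $g$ generators come from $G_0=\Gal(S_0/S_{-1})\simeq\F^g$, the group of a ramified cover, not from \'etale torsors on any scheme over $\Z$. The equality $d(G_n)=g$ survives nonsplit central extensions, so Golod--Shafarevich gives $h^2(G_n,\F)>g^2/4$.

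In place of a bound on $h^2_{\et}(S,\F)$, the paper bounds the codimension of the classes in $H^2(G_n,\F)$ realizable by \'etale double covers $S_{n+1}\to S_n$.
\begin{itemize}
\item Good primes contribute nothing. This uses the Hasse principle of Proposition~\ref{prop:good-detection}, which needs the section and $J(C_{-1})(\Q)[2]=J(C_{-1})(\overline\Q)[2]$, together with twisting by $\sqrt d$.
\item Each of the $O(g/\log g)$ primes $p\le 2g+1$ contributes $O(g)$. The contributions come from two sources:
\begin{itemize}
\item the equivariant groups $H^2_A((S_0)_{B_p},\F)$, which are small because the fibres of $S_{-1}$ have $O(g)$ components and nodes and the stabilizers have order at most $4$;
\item the local twist groups $H^1_{\et}(C_{K_p},\F)$.
\end{itemize}
\end{itemize}
The resulting bound $5000g^2/\log g$ is independent of $n$ and lies below $g^2/4$. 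Hence a nonzero realizable class exists at every stage, and the tower never stops.
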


For the tower in Theorem~\ref{thm:main-surface}, the generic fibers
$B_n=(S_n)_{\Q}$ form a tower over $B=S_{\Q}$ in which the rational
point $s_{\Q}$ splits completely. If $f:C\to B$ is a ramified double
cover, the pullbacks $C\times_B B_n\to C$ are again geometrically
connected finite \'etale Galois covers. By choosing $f$ so that
$s_{\Q}$ splits completely and $C$ has the required genus, we obtain
examples in every sufficiently large genus.

\begin{theorem}\label{thm:main-curves}
For every sufficiently large integer $g$, there are a smooth projective
geometrically connected curve $C/\Q$ of genus $g$, a point $P\in C(\Q)$,
and a tower of smooth projective geometrically connected curves
\begin{equation}\label{eq:intro-statement-6}
 \cdots\longrightarrow D_{n+1}\longrightarrow D_n
 \longrightarrow\cdots\longrightarrow D_0=C
\end{equation}
such that $D_n\to C$ is finite \'etale Galois of degree $2^n$ and $P$
splits completely in every $D_n$. In particular,
\begin{equation}\label{eq:intro-statement-7}
 g(D_n)-1=2^n(g-1),\qquad \#D_n(\Q)\ge2^n,
\end{equation}
and the $\Q$-rational geometric fundamental group of $(C,P)$ is infinite.
\end{theorem}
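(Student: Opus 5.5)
We deduce Theorem~\ref{thm:main-curves} from Theorem~\ref{thm:main-surface}, following the strategy sketched just before the statement. Let $S$, $s$ and $(S_n)$ be as in Theorem~\ref{thm:main-surface}. Put $B=S_\Q$, $B_n=(S_n)_\Q$ and $Q=s_\Q\in B(\Q)$, and let $h=g(B)$. By (i) and (ii), each $B_n\to B$ is a finite \'etale Galois cover of degree $2^n$ between smooth projective geometrically connected curves. By (iii), $Q$ splits completely in $B_n$. Since $\pigeom(S)$ is infinite, the tower is nontrivial, so $h\ge 1$. In fact $h\ge2$, because any genus-one curve admitting a geometrically connected degree-$2^n$ Galois \'etale tower is fine but we do not need this.

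For a target genus $g$ we choose a double cover $f\colon C\to B$ over $\Q$ with the following properties:
\begin{itemize}[label={}]
\item $C$ is smooth projective and geometrically connected;
\item the branch locus $R\subset B$ is reduced of degree $2r$, with $2r\ge 2$, and avoids $Q$;
\item $f^{-1}(Q)$ consists of two rational points.
\end{itemize}
Riemann--Hurwitz then gives $g(C)=2h-1+r$, so every $g\ge 2h$ is attained. Concretely, pick a line bundle $L$ with $L^{2}\simeq\mathcal O_B(R)$, write $R=\operatorname{div}(\sigma)$, and let $C$ be the double cover defined by $\sigma$. We choose $L=\mathcal O_B(D)$ for a divisor $D$ of large degree and take $\sigma\in H^0(B,\mathcal O(2D))$ with reduced zero locus not containing $Q$. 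Such $\sigma$ exists by Bertini/Riemann--Roch, since $\deg 2D\gg0$ makes the linear system base-point-free with generic member reduced, and rational sections are Zariski dense over the infinite field $\Q$. Taking $D$ supported away from $Q$, the fibre over $Q$ is $\Spec\Q[t]/(t^2-\sigma(Q))$. We impose the open-closed condition that $\sigma(Q)$ be a nonzero square; rescaling $\sigma$ by a square constant does not help here, but we may replace $\sigma$ by $c\,\sigma$ with $c=\sigma(Q)$. This makes $\sigma(Q)=c^2$. The ramification is nonempty, so $C$ is geometrically irreducible.

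Now set $D_n=C\times_B B_n$ and $P$ one of the two rational points over $Q$. Base change preserves finite \'etale Galois of group $G_n$ of order $2^n$, and the fibre over $P$ is $P\times_Q(B_n)_Q$, which is $2^n$ copies of $\Spec\Q$. The key point is that $D_n$ is geometrically connected. Over $\overline\Q$, the cover $C_{\overline\Q}\to B_{\overline\Q}$ is totally ramified at some point of $R$, while $(B_n)_{\overline\Q}\to B_{\overline\Q}$ is \'etale. Hence the two function fields are linearly disjoint: their intersection is unramified over $B_{\overline\Q}$ and contained in a degree-$2$ field ramified at $R$, so it equals $\overline\Q(B)$. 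Therefore the fibre product is integral, and it is smooth because it is \'etale over $C$. Since \'etale degree-$2^n$ covers multiply $2g-2$ by $2^n$, we get $g(D_n)-1=2^n(g-1)$. Furthermore $\#D_n(\Q)\ge2^n$ from the split fibre, and infinitude of the rational geometric fundamental group follows from unbounded degrees.

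The only delicate step is the choice of $\sigma$ ensuring simultaneously that $R$ is reduced and that the fibre over $Q$ is split. We handle this by the rescaling trick together with genericity of reducedness. Reducedness is an open condition on the nonzero $\Q$-points of the linear system, and the condition $\sigma(Q)\neq0$ is also open. After rescaling, $\sigma$ and $c\sigma$ have the same zero locus, so reducedness is preserved.
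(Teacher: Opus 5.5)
Your argument is correct. It is exactly the route sketched in the paper's introduction; the formal proof in Section~\ref{sec:proof-curves} differs mainly in using a smaller base curve.

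\textbf{Where the routes differ.} Instead of $B=S_{\Q}=C_0$, the paper works over $C_{-1}$, which has genus $b=2^m-1$. It fills in the levels below $C_0$ with quotients $E_i=C_0/H_i$ for a flag of subgroups $H_i$ of $A=\Gal(C_0/C_{-1})\simeq\F^b$, and sets $E_{b+n}=C_n$. It then checks that $P_+$ splits completely in every $E_i$, using Proposition~\ref{prop:start-integral-section} and the translated sections $s_a=a\circ s_0$. Finally it pulls back along a ramified double cover $C\to C_{-1}$ supplied by Proposition~\ref{prop:genus-spreading}.

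\textbf{What each route buys.} The paper's route needs this extra bookkeeping, but it reaches every genus $g\ge 3b$. Your version uses Theorem~\ref{thm:main-surface} purely as a black box, at the price of only reaching genera of order $3h$, where $h=g(C_0)=1+2^b(b-1)$. That is harmless, since the statement only claims sufficiently large $g$.

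\textbf{The remaining steps match.} Your construction of the double cover uses Bertini for a base-point-free $|2D|$ and then replaces $\sigma$ by $\sigma(Q)\sigma$. This is equivalent to the paper's normalization $s(P)=1$ combined with its explicit incidence-variety dimension count. Your linear-disjointness argument for geometric connectedness (ramified quadratic versus \'etale) is the paper's argument.

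\textbf{Small corrections.}
\begin{enumerate}
\item Your Bertini step needs $\deg 2D\ge 2h$ to guarantee base-point-freeness. As written, the argument therefore gives $g\ge 3h-1$, not every $g\ge 2h$.
\item The sentence about $h\ge 2$ is garbled and should be deleted.
\item In the last step, say explicitly why the group is infinite. Complete splitting of $P$ makes $s_P(\Gal(\overline{\Q}/\Q))$ map trivially to $\Gal(D_n/C)$. Hence each $\Gal(D_n/C)$, of order $2^n$, is a quotient of $\pi_1^{\Q\text{-}\mathrm{geom}}(C,P)$.
\end{enumerate}
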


Here the $\Q$-rational geometric fundamental group is
\begin{equation}\label{eq:intro-statement-8}
 \pi_1^{\Q\text{-}\mathrm{geom}}(C,P)
 =\piet(C,\bar P)/
 \overline{\langle\!\langle s_P(\Gal(\overline\Q/\Q))
                       \rangle\!\rangle},
\end{equation}
Here $\bar P:\Spec\overline\Q\to C$ is the geometric point induced
by $P$, and $s_P$ is the section of
$\piet(C,\bar P)\to\Gal(\overline\Q/\Q)$ induced by $P$.
The denominator is the closed normal subgroup generated by
$s_P(\Gal(\overline\Q/\Q))$.
Our starting examples
are explicit: for $m\ge32768$ and $b=2^m-1$, we use the smooth
projective curve $C_{-1}$ of genus $b$ with affine equation
\begin{equation}\label{eq:intro-statement-9}
 y^2=\prod_{a=0}^{2b+1}(x-a)
\end{equation}
and the point $P_+$ at infinity where $y/x^{b+1}=1$. The choice
$b+1=2^m$ is used to extend the initial \'etale cover
$C_0\to C_{-1}$ to a finite map of arithmetic models
$S_0\to S_{-1}$ for which $S_0$ is regular, including over $2$.
For each sufficiently large $g$, we construct a ramified double cover
$f:C\to C_{-1}$ with $g(C)=g$ in which $P_+$ splits completely,
and pull back the \'etale tower over $C_{-1}$ along $f$.

Finally, the generic curves $C_n=(S_n)_{\Q}$ in
Theorem~\ref{thm:main-surface} have logarithmic conductors growing at
most linearly with $g(C_n)$. We recall the definition of the conductor
and the estimate used here in
Subsection~\ref{sec:conductor-background}.

\begin{theorem}\label{thm:main-conductor}
There exist a constant $A>0$ and a sequence of smooth projective
geometrically connected curves $C_n/\Q$ such that
\begin{equation}\label{eq:intro-statement-10}
 g(C_n)\longrightarrow\infty,\qquad
 \log\mathfrak N(C_n)\le A g(C_n)\quad(n\ge0).
\end{equation}
The curves can be chosen in a tower with finite \'etale Galois maps
$C_n\to C_0$ of degree $2^n$, in which a point of $C_0(\Q)$ splits
completely at every level.
\end{theorem}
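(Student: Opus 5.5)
We take $C_n=(S_n)_{\Q}$ for the tower $S_n\to S$ of Theorem~\ref{thm:main-surface}. The finite étale Galois property, the degree $2^n$ and the complete splitting of $s_{\Q}$ are then part (i) and part (iii) of that theorem restricted to generic fibres. By Riemann--Hurwitz for étale maps, $g(C_n)-1=2^n(g(C_0)-1)$. Since $g(C_0)\ge 2$ (in the explicit construction $C_0$ is an étale double cover of the genus-$b$ curve $C_{-1}$), we get $g(C_n)\to\infty$. So everything reduces to the conductor bound, which we prove prime by prime.

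\textbf{Good primes.} Let $\Sigma$ be the finite set of primes $p$ for which $S_{\mathbb F_p}$ is not smooth. For $p\notin\Sigma$, the fibre $(S_n)_{\mathbb F_p}$ is étale over a smooth proper curve, hence smooth. It is also geometrically connected, since it is connected (by Zariski's connectedness, as $S_n$ is normal with geometrically connected generic fibre) and smooth. Therefore $C_n$ has good reduction at $p$, the Jacobian has good reduction by Néron--Ogg--Shafarevich, and the conductor exponent $f_p(C_n)=0$. Thus
\[
\log\mathfrak N(C_n)=\sum_{p\in\Sigma} f_p(\operatorname{Jac}C_n)\log p,
\]
and it suffices to show $f_p(\operatorname{Jac}C_n)\le c_p\,g(C_n)$ for each $p\in\Sigma$, with $c_p$ independent of $n$.

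\textbf{Bad primes.} Here we use the estimate recalled in Subsection~\ref{sec:conductor-background}. The conductor exponent of an abelian variety of dimension $g$ over $\Q_p$ is $f_p=\epsilon_p+\delta_p$, where the tame part satisfies $\epsilon_p\le 2g$. The wild part $\delta_p$ is controlled by the ramification of a field over which semistable reduction is reached. If semistable reduction holds over a finite extension $E_p/\Q_p$, then by Brumer--Kramer style bounds $\delta_p\le 2g\cdot c(E_p)$, with $c(E_p)$ depending only on $E_p$, e.g.\ through the lower-numbering breaks of the Galois closure. So it suffices to find one $E_p$ over which all the $C_n$ are simultaneously semistable.

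Fix a finite Galois $E_p/\Q_p$ such that $S\times\mathcal O_{E_p}$ admits a semistable model $\mathcal S'$ dominating it, obtained by semistable reduction for $C_0$ (or for the model $S$). Base change of the finite étale cover $S_n\to S$ along $\mathcal S'\to S_{\mathcal O_{E_p}}$ gives a finite étale cover $\mathcal S'_n\to\mathcal S'$. An étale cover of a semistable curve is semistable, because étale-locally it has the same local rings (nodes pull back to nodes). Hence each $C_n$ acquires semistable reduction over the same $E_p$, and
\[
f_p(\operatorname{Jac}C_n)\le 2g(C_n)\bigl(1+c(E_p)\bigr).
\]
Summing over $p\in\Sigma$ gives the theorem with $A=\sum_{p\in\Sigma}2(1+c(E_p))\log p$.

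\textbf{Main obstacle.} The delicate point is the wild conductor bound being linear in $g$ with a constant depending only on $E_p$. I would take it from the background subsection (Brumer--Kramer: $\delta_p\le 2g\,(v_p(\text{different of }E_p^{\rm gal}/\Q_p)$-type quantity$)$). I would double-check that it applies to Jacobians of curves that become semistable over $E_p$: the inertia of $\Q_p$ acts on $T_\ell$ through $\Gal(E_p^{\rm gal}/\Q_p)$ modulo a unipotent part, so the Swan conductor is bounded by $2g$ times the largest upper-numbering break of $E_p^{\rm gal}$. A secondary check is that $\mathcal S'_n$ is geometrically connected, but this is only needed for semistability of $C_n$, which is a local statement, so connectedness is not essential.
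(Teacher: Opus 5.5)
Your overall architecture is the paper's: good reduction outside a finite set $\Sigma$, a single Galois $E_p/\Q_p$ over which every $C_n$ is semistable, and the bound $a_p(C_n)\le 2\bigl(1+b(E_p/\Q_p)\bigr)g(C_n)$ from the last upper break, as in Proposition~\ref{prop:conductor-local-bound}. The gap is the step that produces the uniform $E_p$.

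You assert that $E_p$ can be chosen so that $S_{\mathcal O_{E_p}}$ is dominated by a semistable model $\mathcal S'$ ``obtained by semistable reduction''. Semistable reduction only gives \emph{some} semistable model $W$ of $(C_0)_{E_p}$. It gives no morphism $W\to S_{\mathcal O_{E_p}}$. The rational map $W\dashrightarrow S_{\mathcal O_{E_p}}$ may have indeterminacy points in the special fibre, and eliminating them by blowing up closed points destroys semistability. For example, blowing up a node $uv=\pi$ of a regular semistable model creates an exceptional curve of multiplicity two, because $\pi\in\mathfrak m^2$. Restoring semistability while keeping domination needs further ramified base changes, renormalizations and resolutions, plus an argument that this process terminates. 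Such a domination statement can be proved after enlarging $E_p$ (for instance with semistable vertex sets on the Berkovich curve), but it is genuine extra input that you have not supplied. Without it, the pullback $S_n\times_S\mathcal S'$ is not defined.

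The missing idea is that domination is unnecessary. Take any \emph{regular} semistable model $W$ of $(C_0)_{E_p}$; resolving the nodes $uv=\pi^e$ by chains of blow-ups preserves semistability. Let $W'$ be the normalization of $W$ in the function field of $(C_n)_{E_p}$. For a codimension-one point $w\in W$, the ring $\mathcal O_{W,w}$ is a discrete valuation ring with fraction field $E_p(C_0)$. The valuative criterion for the proper scheme $S_{\mathcal O_{E_p}}$ therefore gives a morphism $\Spec\mathcal O_{W,w}\to S_{\mathcal O_{E_p}}$. Pulling back the finite \'etale cover $S_n\to S$ along it gives a finite \'etale, hence normal, $\mathcal O_{W,w}$-algebra, which must be the integral closure. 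So $W'\to W$ is \'etale over every codimension-one point. Purity of the branch locus on the regular surface $W$ then makes $W'\to W$ \'etale everywhere. Your observation that \'etale covers of reduced nodal fibres are reduced and nodal finishes the argument. This is Proposition~\ref{prop:conductor-model-transport}, used in Propositions~\ref{prop:conductor-common-extension} and~\ref{prop:conductor-uniform-integral-covers}.

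A minor slip: $C_0\to C_{-1}$ is an \'etale $\F^b$-cover of degree $2^b$, not a double cover. Your conclusion $g(C_0)\ge2$ is unaffected.
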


%\paragraph{Independent work.}\textbf{TODO: Insert the precise statement of the independent results ofFran\c cois Charles's student, compare them withTheorems~\ref{thm:main-surface}--\ref{thm:main-conductor}, and add thereference to the simultaneously released paper.}

\subsection{Method of proof}\label{sec:intro-method}
We begin with a hyperelliptic curve $C_{-1}/\Q$ of genus $g$ and an
explicit \'etale cover
\begin{equation}\label{eq:intro-statement-11}
 C_0\longrightarrow C_{-1},\qquad
 \Gal(C_0/C_{-1})\simeq\F^g.
\end{equation}
A key property of $C_{-1}$ is to have bad reduction at no more than $O\left(\frac{g}{\text{log}(g)}\right)$ primes. We construct arithmetic models $S_0$ and $S_{-1}$ and a finite map
$q:S_0\to S_{-1}$ whose generic fibre is $C_0\to C_{-1}$;
$S_0$ is regular and has a section $s_0:\Spec\Z\to S_0$.
We construct further covers $S_n\to S_0$ that are finite \'etale,
requiring the composites $S_n\to S_{-1}$ to be Galois.
The morphism $q$ may ramify in the fibres of $S_0\to\Spec\Z$ over
finitely many primes, while $S_{n+1}\to S_n$ is \'etale for every $n\ge0$. The composites
$C_n\to C_{-1}$, where $C_n=(S_n)_{\Q}$, are all \'etale.

Suppose we have constructed $S_n$, and put
$G_n=\Gal(S_n/S_{-1})$. A class $\theta\in H^2(G_n,\F)$ corresponds
to a central extension
\begin{equation}\label{eq:intro-statement-12}
 1\longrightarrow\F\longrightarrow E_\theta
   \longrightarrow G_n\longrightarrow1.
\end{equation}
A nonzero class $\theta$ is realizable if there is a connected finite
\'etale double cover $S_{n+1}\to S_n$ whose composite
$S_{n+1}\to S_{-1}$ is Galois with group $E_\theta$, and for
which the restriction homomorphism
$\Gal(S_{n+1}/S_{-1})\to\Gal(S_n/S_{-1})$ identifies with
$E_\theta\to G_n$ in \eqref{eq:intro-statement-12}.
Together with the zero class, the realizable classes form a subspace
of $H^2(G_n,\F)$. We bound its codimension by successive approximations: we first proceed rationally, then we measure the obstruction to make the maps \'etale integrally. 

Write $\mathscr E_n$ for the subspace of realizable classes just
defined. In Section~\ref{sec:estimating-obstruction} we prove
\begin{equation}\label{eq:intro-obstruction-bound}
 \dim_{\F}\bigl(H^2(G_n,\F)/\mathscr E_n\bigr)
 \le\frac{5000g^2}{\log g}\qquad(n\ge0).
\end{equation}
The bound in \eqref{eq:intro-obstruction-bound} depends on $g$, the genus of $C_{-1}$, and is independent of $n$.

Starting with $G_0\simeq\F^g$, we choose nonsplit central extensions
$G_{n+1}\to G_n$ with kernel $\F$. Lifts to $G_{n+1}$ of a minimal
generating set of $G_n$ generate $G_{n+1}$, so each $G_n$ has minimal
number of generators $g$. Golod--Shafarevich gives
\begin{equation}\label{eq:intro-statement-13}
 \dim_{\F}H^2(G_n,\F)>\frac{g^2}{4}.
\end{equation}
For sufficiently large $g$,
$5000g^2/\log g<g^2/4<\dim_{\F}H^2(G_n,\F)$.
There is therefore a nonzero realizable class at every stage, and we can continue the tower indefinitely. For each $n$, the inverse image of $s_0$ under $S_n\to S_0$ is a disjoint union of $2^n$ copies of
$\Spec\Z$, and $C_n$ is geometrically connected. At each bad prime $p$, there is a finite extension $E_p/\Q_p$, independent of $n$, over which every $C_n$ has semistable reduction. For every prime $p$ of bad reduction, Proposition~\ref{prop:conductor-local-bound} gives a
constant $B_p$, depending only on $E_p/\Q_p$, such that
$a_p(C_n)\le B_p g(C_n)$ for every $n$. Here $a_p(C_n)$ is the
exponent of $p$ in $\mathfrak N(C_n)$. Summing
$a_p(C_n)\log p$ over the finitely many primes of bad reduction proves $\log\mathfrak N(C_n)=O(g(C_n))$.
\subsection{Related work}
Pablo Chenal \cite{Chenal26} has independently and simultaneously proved the existence of a proper regular arithmetic surface over \(\operatorname{Spec}\mathbb Z\) with a section and an infinite pro-\(2\) quotient of its étale fundamental group. His starting point is a curve closely related to our \(C_{-1}\): both are hyperelliptic, with rational Weierstrass points and \(O(g/\log g)\) primes of bad reduction, where \(g\) denotes the genus. Concretely, the main change in the respective Weierstrass equations is a factor \(8\) used in his analysis of the model at \(2\).

In his argument too, the desired curve is a finite étale cover of his starting curve. This cover plays the role of our \(C_0\) and serves as the base of the infinite tower. It is not determined explicitly and is obtained by an existence argument. This allows him to avoid much of the detailed analysis of integral models and successive central extensions carried out here. As a consequence, the resulting proof is substantially shorter and more elegant. It works directly with the pro-\(2\) geometric fundamental group of his starting curve and the action of \(\operatorname{Gal}(\overline{\mathbb Q}/\mathbb Q)\) on it induced by a rational point. His argument combines specialization at \(2\) with Vinberg’s refinement of the Golod–Shafarevich theorem.

Pablo Chenal and the present authors are working towards a unification of the two approaches, and plan to merge the two papers into a single joint work.

\subsection{Layout of the paper}\label{sec:intro-layout}
Section~\ref{sec:dictionary} recalls the geometric and
cohomological background. Section~\ref{sec:starting-surface}
gives the explicit starting cover, its integral models, the $\Z$-section and the bad fibres. Section~\ref{sec:realizable} defines the subspace
$\mathscr E(S_n,S_{-1})$ of realizable central extensions and the quotient
$H^2(G_n,\F)/\mathscr E(S_n,S_{-1})$. It bounds the dimension
of this quotient independently of $n$ by studying covers of $C_n/\Q$,
their quadratic twists and the bad fibres of $S_0\to S_{-1}$. Section~\ref{sec:main-proofs} proves the three
main theorems.

\subsection{Human--AI collaboration}\label{sec:intro-ai}
The authors began developing this approach from Summer 2022; inspired by the analogy with Shafarevich's original work \cite{Shafarevich}. The central issue they were stuck on was to find a starting arithmetic surface $S_0$ with many \'etale $\mathbb{F}_2$-covers. The naive example of hyperelliptic curves works very well over $\Q$, but integrally it only works on the nose when the discriminant of the associated Weierstrass polynomial is a unit, which can be done over a number field $K$. On the other hand changing the number field comes with a bill: it makes the codimension of the obstruction group grow with $\text{dim}_{\mathbb{F}_2}(\text{Sel}_2(K))$.

This seemed extremely delicate and the authors had never found the stamina to carry systematic experiments with this. Until Summer $2026$. 

At that point they decided to exploit the new technological advancements and run serious AI-experiments on this. This led to five days of intensive collaboration with \emph{GPT5.6 Sol},
\emph{Fable}, and several internal copies of \emph{Aletheia}, internally at Google DeepMind.

The agents constructed and studied many examples.
A recurring difficulty was that the $2$-Selmer ``bill" seemed to consistently conspire against the possibility to construct an example. Seeing this pattern, the authors ended up convincing themselves that under reasonable extra-assumptions this fails systematically. At that point the authors decided to ask the models to go back to $K=\Q$ and to allow more primes of bad reduction and to at least find a way to count obstructions only to settle \textbf{Question}(Venkatesh, 2022) above. At this point \emph{GPT 5.6 Sol} introduced the key idea of making everything \'etale on top of the auxiliary cover
$C_0\to C_{-1}$ and to have indeed only $g/\text{log}(g)$ primes of bad reduction for $C_{-1}$. During this interaction, the AI realized that with these choices one has only $O(g^2/\text{log}(g))$ obstructions at each step. And one gets a favorable $g^2/4$ different many classes to try: for $g$ sufficiently large one will be able to continue forever the construction of the tower. At that point the AI had realized to have settled \textbf{Question}(Venkatesh, 2022) and so went on and noticed that in this shifted perspective a resolution of \textbf{Question}(Bost--Charles, 2022) and \textbf{Question}(Diem--Frey, 2003) was available. 

When the process settled, the authors digested the new fundamental trick of making the tower only on top of the multiquadratic extension $C_0$ of $C_{-1}$. Armed with this insight, they obtained a first draft of the paper giving instructions to \emph{Astra}. They subsequently rewrote significant parts or instructed \emph{Astra} to rewrite them or both, until the work reached their satisfaction. They subsequently proofread the work using \emph{Astra}, \emph{Fable} and internal models at Google DeepMind.  

We regard this work as an instance of category $\mathbf{C}$ in the
human--AI collaboration taxonomy of the \emph{Aletheia} paper
\cite[Section~6.1, Table~8]{Aletheia}: both the human authors and the AI contributed essential parts of the argument.

\subsection*{Acknowledgments}
CP thanks the \emph{Superhuman reasoning team} at Google DeepMind for
building \emph{Aletheia} and for several conversations about mathematics
and artificial intelligence. In particular, he thanks Thang Luong for
making available to him the resources he used during this project. CP also thanks Akshay Venkatesh for conversations on this problem in the Spring of 2023 at IAS. 

The authors thank Fran\c cois Charles for putting them in touch with
Pablo Chenal upon being informed of the authors' results; which allowed for a simultaneous release. 

The idea of sacrificing odd primes arose as a possibility to be studied further already in a conversation between CP and Jean Gillibert; and played a crucial role in conversations with the agents. Part of the CP's background on integral models on curves was acquired in a conversation between CP and Maarten Derickx. CP warmly thanks both of them for this important contribution to the present work. 
\section{Preliminaries}\label{sec:dictionary}

\subsection{Geometric and cohomological conventions}

All rings are commutative with identity. We use the standard language of schemes and
local algebra as in \cite{Stacks} and \cite{Liu}, and the conventions for
\'etale cohomology of \cite{MilneEtale}. Our convention for projectivity
over a general base is that of \st{01W7}; over $\Spec A$ this means a
closed immersion into $\mathbb P^n_A$ for some integer $n\geq0$. We use the fibrewise criterion
for smoothness in \st{01V4}. In particular, for a monic polynomial
$P(z)\in A[z]$, the morphism $\Spec A[z]/(P)\to\Spec A$
is \'etale at every prime $\mathfrak q\subset A[z]/(P)$ for which
$P'(z)\notin\mathfrak q$ (\st{02GH}). If $f:X\to Y$ and
$g:Y\to Z$ are finite (respectively flat, proper, smooth or \'etale),
then $g\circ f:X\to Z$ is finite (respectively flat, proper, smooth
or \'etale). If $f:X\to Y$ has
one of these properties and $Y'\to Y$ is a morphism, then
$X\times_Y Y'\to Y'$ has that property; see \st{01WG},
\st{01W0}, \st{01V4} and \st{02GH}.

\paragraph{Arithmetic surfaces and normalization.}
The purpose of this section is to recall the basics of normalization. In Section~3.2, we will use normalization to extend the cover of curves constructed in Section~3.1 to a finite morphism between arithmetic surfaces.

An arithmetic surface over $B=\Spec\Z$ is an integral proper flat
$B$-scheme all of whose nonempty fibres have pure dimension one. A model
of a smooth proper geometrically connected curve $C/\Q$ is an
arithmetic surface $S$ together with an isomorphism $S_{\Q}\simeq C$;
see also \st{0C2R}. A model is not required to be regular unless this is
stated. For a scheme $X$ over a base $B$ and a specified morphism
$\Spec k\to B$, we write $X_k=X\times_B\Spec k$; a
geometric fibre is taken over an algebraic closure of the residue field.
Geometric connectedness and geometric integrality can be checked over
an algebraic closure (Stacks, Sections 33.7--33.9).

Let $S$ be a normal integral scheme with function field $K$, and let
$L/K$ be a finite extension. For each affine open $U=\Spec A\subset S$,
write $\overline A^{\,L}$ for the integral closure of $A$ in $L$.
Normalization commutes with localization: for $f\in A\setminus\{0\}$,
\begin{equation}\label{eq:bg-normalization-localization}
 \overline{A_f}^{\,L}=(\overline A^{\,L})_f.
\end{equation}
The identifications in \eqref{eq:bg-normalization-localization} identify
$\Spec\overline{A_f}^{\,L}$ with the inverse image of
$\Spec A_f\subset\Spec A$ in $\Spec\overline A^{\,L}$. Gluing the
schemes $\Spec\overline A^{\,L}$ using
\eqref{eq:bg-normalization-localization} gives the
normalization $S_L\to S$ (\st{0BAK}). For a reduced scheme $X$ with
irreducible components $X_i$, its normalization is
$\coprod_i\widetilde X_i\to X$, where $\widetilde X_i$ is the
normalization of the reduced integral scheme $X_i$.

A finite dominant morphism $f:S'\to S$ between normal integral schemes
is called Galois if the induced extension $K(S')/K(S)$ of function
fields is Galois. In that case $S'$ is the normalization of $S$ in
$K(S')$, and $\operatorname{Gal}(K(S')/K(S))$ acts on $S'$ by
automorphisms commuting with $f$. This definition does not require
$f:S'\to S$ to be \'etale.

Let $X$ be an integral scheme of finite type over $\Z$ that dominates
$\Spec\Z$ and has a curve as generic fibre. Then every irreducible
component of $X_{\mathbb F_p}$ is one-dimensional for every rational
prime $p$. Indeed, let $Z$ be an irreducible component of
$X_{\mathbb F_p}$, let $z$ be the generic point of $Z$, and choose an
affine open $\Spec A\subset X$ containing $z$. Let
$\mathfrak q\subset A$ be the prime corresponding to $z$. Since
$\mathfrak q$ is minimal over $pA$, the principal ideal theorem
(\st{00KV}) gives $\operatorname{ht}(\mathfrak q)\leq1$.
Since $p$ is nonzero in the domain $A$, we have
$(0)\subsetneq\mathfrak q$, so $\operatorname{ht}(\mathfrak q)=1$.
The dimension formula over the universally catenary ring $\Z$
(\st{02IJ}) gives
\begin{equation}\label{eq:bg-explicit-fibre-dimension}
\operatorname{ht}(\mathfrak q)
=1+\operatorname{trdeg}_{\Q}\Frac(A)
-\operatorname{trdeg}_{\mathbb F_p}\kappa(\mathfrak q).
\end{equation}
Since $X_{\Q}$ is a curve,
$\operatorname{trdeg}_{\Q}\Frac(A)=1$.
Equation~\eqref{eq:bg-explicit-fibre-dimension} therefore shows that
$\kappa(\mathfrak q)$ has transcendence degree one over
$\mathbb F_p$. Thus $\dim Z=1$ (\st{00P0}).

\paragraph{Local calculations on the models.}
In this section we recall the facts about blow-ups and local rings needed for the construction of the models in Section~3.2 and the proof of regularity of the relevant surfaces in Section~3.3.

For regularity and normality criteria, see
\st{00KU}, \st{00N7}, \st{00NT} and \st{031S}; for the conventions on
discrete valuation rings and Dedekind schemes, see \st{00PD}.

Let $X$ be a noetherian integral scheme, and let
$\mathcal I\subset\mathcal O_X$ be a nonzero coherent ideal sheaf.
The blow-up of $X$ with centre $V(\mathcal I)$ is the morphism
\begin{equation}\label{eq:bg-explicit-blowup}
 b:\operatorname{Bl}_{\mathcal I}(X)
   =\operatorname{Proj}_X\left(\bigoplus_{n\ge0}\mathcal I^n\right)
   \longrightarrow X
\end{equation}
(\st{01OG}). Write $X'=\operatorname{Bl}_{\mathcal I}(X)$. For an
affine open $U=\Spec A\subset X$, choose nonzero generators
$a_1,\ldots,a_r$ of $I=\Gamma(U,\mathcal I)$. Then
\begin{equation}\label{eq:bg-blowup-affine-opens}
 b^{-1}(U)=\bigcup_{j=1}^r U_j,\qquad
 U_j=\Spec A\left[\frac{a_1}{a_j},\ldots,\frac{a_r}{a_j}\right],
 \qquad
 A\left[\frac{a_1}{a_j},\ldots,\frac{a_r}{a_j}\right]\subset A[1/a_j]
\end{equation}
(\st{01OF}). The exceptional divisor of $b$ is the closed subscheme
\begin{equation}\label{eq:bg-exceptional-divisor}
 E_b=V(\mathcal I\mathcal O_{X'})\subset X'.
\end{equation}
The restriction
$b|_{X'\setminus E_b}:X'\setminus E_b\to X\setminus V(\mathcal I)$
is an isomorphism. If $Z\subset X$ is a closed subscheme, the strict
transform of $Z$ under $b$ is
\begin{equation}\label{eq:bg-strict-transform}
 Z^{\mathrm{str},b}
   =\overline{\,b^{-1}(Z\setminus V(\mathcal I))\,}^{\,X'},
\end{equation}
where the bar denotes scheme-theoretic closure in $X'$.
A resolution of an integral surface $X$ is a
proper birational morphism $\widetilde X\to X$ from a regular integral
surface. A scheme of finite type over $\Z$ is excellent, and excellence
is preserved under localization and henselization. If $X$ is an
excellent integral scheme with function field $K$ and $L/K$ is finite,
the normalization $X_L\to X$ is finite. If $X$ is excellent, the set
$\{x\in X:\mathcal O_{X,x}\text{ is regular}\}$ is open. If $A$ is
a normal excellent local ring, its completion $\widehat A$ is normal
(\st{07QS}). We also use the resolution theorem for normal arithmetic
surfaces (\st{0BGP}).

For a local ring $(A,\mathfrak m)$ with residue field $k$, write $A^h$
for its henselization. Fix a separable closure $k^s$ of $k$, and write
$A^{\sh}$ for the strict henselization relative to $k^s$. The ring
$A^h$ is the filtered colimit of the local rings $B_{\mathfrak n}$,
where $B$ is an \'etale $A$-algebra, $\mathfrak n$ lies above
$\mathfrak m$, and the induced map $k\to\kappa(\mathfrak n)$ is an
isomorphism. The ring $A^{\sh}$ is the corresponding filtered colimit
when one specifies an embedding $\kappa(\mathfrak n)\to k^s$ over $k$
instead (Stacks, Sections 10.153 and 15.45).
A local ring $(A,\mathfrak m)$ is henselian if, for every monic
$P(z)\in A[z]$ and every $\bar a\in A/\mathfrak m$ satisfying
$\overline P(\bar a)=0$ and $\overline P'(\bar a)\ne0$, there is a
unique $a\in A$ with $P(a)=0$ and $a\bmod\mathfrak m=\bar a$.
A strictly henselian local ring is a henselian local ring with
separably closed residue field (\st{03QK}). The strict localization
of $X$ at a geometric point
$\bar x$ is $\Spec\mathcal O^{\sh}_{X,\bar x}$ (\st{03PS}). Every
finite \'etale cover of $\Spec\mathcal O^{\sh}_{X,\bar x}$ is a disjoint
union of copies of $\Spec\mathcal O^{\sh}_{X,\bar x}$ (\st{04GK}).
Normalization commutes with localization, henselization and strict
henselization. If $A$ is normal, then $A_f$ is normal for every
$f\in A$; if $A$ is a normal local ring, then $A^h$ and $A^{\sh}$
are normal. If $A$ is a regular noetherian ring, then $A_f$ is
regular; if $A$ is a regular noetherian local ring, then $A^h$ and
$A^{\sh}$ are regular (\st{081J}, \st{0CBM}). For a noetherian local ring
$(A,\mathfrak m)$, put $\widehat A=\varprojlim_n A/\mathfrak m^n$.
Then $A$ is regular if and only if $\widehat A$ is regular (\st{07NY}).
If $X$ is of finite type over a noetherian local ring $R$ and
$X\times_R\widehat R$ is regular, then $X$ is regular (\st{0BG6}).

\paragraph{Curves and their fibres.}
In this section we introduce the terminology used in Section~3.5 to describe the bad fibres of our arithmetic surfaces: their irreducible components, the multiplicity of each component and the singularities of the reduced fibres.

For a nonzero rational function $f$ on a regular integral curve $C/k$,
we use
$\operatorname{div}(f)=\sum_Pv_P(f)P$ and
$\deg(\sum_P n_PP)=\sum_Pn_P[\kappa(P):k]$, where $P$ runs over the
closed points of $C$ and $v_P$ is the valuation of
$\mathcal O_{C,P}$.

\begin{definition}\label{def:bg-normal-crossings}
Let $X$ be a regular surface. A reduced effective divisor $D\subset X$
has strict normal crossings if, for every closed point $x\in D$, there
is a regular system of parameters $(t_1,t_2)$ of $\mathcal O_{X,x}$
such that the ideal of $D$ in $\mathcal O_{X,x}$ is either $(t_1)$ or
$(t_1t_2)$. In the second case, the two components through $x$ have
local ideals $(t_1)$ and $(t_2)$; we say that the components meet
transversely at $x$. A divisor has normal crossings if the same
condition holds after replacing $X$ by an \'etale neighbourhood of
each point (\st{0CBN}).
\end{definition}

If $f:X\to\Spec R$ is flat, $R$ is a discrete valuation ring with
uniformizer $\pi$, and $X$ is regular, the special fibre is the
effective divisor $X_s=\operatorname{div}(\pi)=\sum_i m_iD_i$.
The reduced special fibre is $(X_s)_{\mathrm{red}}=\sum_iD_i$.
Here the $D_i$ are the distinct irreducible components of $X_s$ and
$m_i$ is the valuation of $\pi$ at the generic point of $D_i$.

A reduced curve $C$ over an algebraically closed field $k$ is nodal if
every singular point $c\in C$ satisfies
$\widehat{\mathcal O}_{C,c}\simeq k[[u,v]]/(uv)$ (\st{0C1P}).
Such a singular point is called a node or an ordinary double point.
If $C$ is proper and nodal, write
$\nu:\coprod_{i\in I}\widetilde C_i\to C$ for the normalization,
where each $\widetilde C_i$ is connected. Each $\widetilde C_i$ is a
smooth proper curve, and the \emph{total normalization genus} of $C$
is $\sum_{i\in I}g(\widetilde C_i)$. An irreducible component
$C_i\subset C$ is called rational when $C_i\simeq\mathbb P^1_k$.

For a reduced curve $Y$ over an algebraically closed field $k$, a
branch at $y\in Y(k)$ corresponds to a
minimal prime $\mathfrak p$ of $\widehat{\mathcal O}_{Y,y}$, or
equivalently to a point of the normalization above $y$. The branch
defined by $\mathfrak p$ is smooth if
$\widehat{\mathcal O}_{Y,y}/\mathfrak p\simeq k[[t]]$.
For two distinct branches with primes $\mathfrak p_1,\mathfrak p_2$,
the intersection multiplicity of the branches is
\begin{equation}\label{eq:bg-explicit-branch-intersection}
 i_y(\mathfrak p_1,\mathfrak p_2)
 =\dim_k\widehat{\mathcal O}_{Y,y}/(\mathfrak p_1+\mathfrak p_2).
\end{equation}
If $\widehat{\mathcal O}_{Y,y}=k[[u,v]]/(uv)$, the two branch ideals
are $(u)$ and $(v)$, and \eqref{eq:bg-explicit-branch-intersection}
gives $i_y((u),(v))=\dim_k k=1$. The branches of
$\Spec k[[u,v]]/(v(v-u^2))$ are smooth and have intersection
multiplicity two, since the quotient in
\eqref{eq:bg-explicit-branch-intersection} is $k[[u]]/(u^2)$.

A finite undirected graph is a triple $\Gamma=(V,E,\partial)$,
where $V$ and $E$ are finite sets and $\partial$ assigns to every
edge an unordered pair of vertices, allowing repetitions. Thus
loops and several edges with the same endpoints are allowed. Two
vertices $v,w\in V$ are connected if there are an integer $r\geq0$,
vertices $v_0=v,v_1,\ldots,v_r=w$, and edges $e_1,\ldots,e_r$ with
$\partial(e_j)=\{v_{j-1},v_j\}$ for $1\leq j\leq r$.
Let $c(\Gamma)$ be the number of equivalence classes in $V$ for the
relation just defined, and put
\begin{equation}\label{eq:bg-explicit-graph-betti}
 b_1(\Gamma)=|E|-|V|+c(\Gamma).
\end{equation}
A tree is a nonempty connected graph with $b_1(\Gamma)=0$;
equivalently, it is connected and $|E|=|V|-1$.

Let $Y$ be a proper reduced curve over an algebraically closed field
$k$ such that every singular point has exactly two branches.
Write $\nu:\coprod_{i\in I}\widetilde Y_i\to Y$ for the
normalization, with each $\widetilde Y_i$ connected. Define
\begin{equation}\label{eq:bg-explicit-incidence-graph}
 \Gamma(Y)=(I,\operatorname{Sing}(Y),\partial),\qquad
 \partial(y)=\{i,j\},
\end{equation}
where the two points of $\nu^{-1}(y)$ lie on
$\widetilde Y_i$ and $\widetilde Y_j$.
The graph $\Gamma(Y)$ in \eqref{eq:bg-explicit-incidence-graph} is the
incidence graph of $Y$; when $Y$ is nodal, $\Gamma(Y)$ is also called
the dual graph. The definition of $\partial(y)$ does not impose a
condition on the intersection multiplicity of the two branches at $y$.

We write $\Pic(X)$ for the group of line bundles on $X$, identified on a
regular integral curve with divisors modulo principal divisors. For a
smooth proper geometrically connected curve $C/k$,
$g(C)=\dim_kH^1(C,\mathcal O_C)$ (\st{0BY7}), and the canonical bundle
$\omega_C=\Omega^1_{C/k}$ has degree $2g(C)-2$ (\st{0C1A}).

The Jacobian $J(C)$ is the abelian variety representing the degree-zero
Picard functor of $C/k$; see Stacks, Section 44.6, and \st{03RN}. If
$C$ has a rational point $P\in C(k)$, then
$J(C)(k)=\Pic^0(C)$, where $\Pic^0(C)$ is the group of degree-zero line
bundles on $C$ modulo isomorphism. Let us recall the descent argument.
Choose a separable closure $k^s$ of $k$ and a line bundle $L$ on
$C_{k^s}$ whose class is fixed by $G_k=\Gal(k^s/k)$. For each
$\sigma\in G_k$, choose an isomorphism
$\alpha_\sigma:\sigma^*L\xrightarrow{\sim}L$. Fix a trivialization
$L|_{P_{k^s}}\simeq k^s$ and rescale each $\alpha_\sigma$ so that it
carries the conjugate trivialization to the chosen trivialization of
$L|_{P_{k^s}}$. For $\sigma,\tau\in G_k$, put
\begin{equation}\label{eq:bg-rigidified-descent-cocycle}
 c_{\sigma,\tau}
 =\alpha_\sigma\circ\sigma^*\alpha_\tau\circ
       \alpha_{\sigma\tau}^{-1}\in\operatorname{Aut}(L).
\end{equation}
Since $C_{k^s}$ is geometrically connected and proper,
$\operatorname{Aut}(L)=(k^s)^\times$. The restriction of
$c_{\sigma,\tau}$ to $L|_{P_{k^s}}$ is the identity by the
normalization of $\alpha_\sigma$, $\alpha_\tau$ and
$\alpha_{\sigma\tau}$. Hence $c_{\sigma,\tau}=1$, or equivalently
$\alpha_\sigma\circ\sigma^*\alpha_\tau=\alpha_{\sigma\tau}$.
The line bundle $L$ and the trivialization at $P_{k^s}$ are defined
over a finite extension of $k$, so the isomorphisms
$\alpha_\sigma$ give continuous descent data. Descent for line
bundles (\st{023N}) gives a line bundle $L_0$ on $C$ with
$(L_0)_{k^s}\simeq L$.

By a hyperelliptic curve over $k$ we mean the smooth proper model of a
separable quadratic extension of $k(x)$. For existence, uniqueness and
functoriality of the smooth proper model over a perfect field, see
\st{0BXX}.

\paragraph{Cohomology.}
In this section we recall the basics we shall need later on about group and \'etale cohomology. We will use group cohomology in Section~4.1 to describe central extensions of Galois groups, and \'etale cohomology in Section~4.2 to bound the obstructions to realizing these extensions by finite \'etale covers of arithmetic surfaces.

For an \'etale sheaf $\mathcal F$ on $X$ and a morphism $f:X\to Y$,
we use $H^i_{\et}(X,\mathcal F)$ for \'etale cohomology and
$R^if_*\mathcal F$ for the higher direct image sheaf on $Y$.
For a geometric point $\bar x$ of $X$, the geometric stalk
$\mathcal F_{\bar x}$ is the colimit of $\mathcal F(U)$ over
\'etale morphisms $U\to X$ equipped with a lift of $\bar x$; see
\cite{MilneEtale} and \st{03RY}. We use the constructibility
conventions of the same references. We write
$h^i(X,\mathcal F)=\dim_{\F}H^i_{\et}(X,\mathcal F)$ and
$h^i(G,M)=\dim_{\F}H^i(G,M)$ when the coefficient sheaf or module is
over $\F$.
If $E_r^{a,b}\Rightarrow H^{a+b}$ is a first-quadrant spectral
sequence of finite-dimensional $\F$-vector spaces, then $H^n$ has
a finite filtration with quotients $E_\infty^{a,n-a}$. Since
$E_\infty^{a,n-a}$ is a subquotient of $E_r^{a,n-a}$, we have
\begin{equation}\label{eq:bg-spectral-sequence-dimension}
 \dim_{\F}H^n\leq\sum_{a=0}^n\dim_{\F}E_r^{a,n-a}.
\end{equation}

We will use this inequality in the proof of Proposition~\ref{prop:realizable-local-bounds} to bound the dimensions of the cohomology groups of the bad fibres of our arithmetic surfaces.

For a profinite group $G$ and a discrete continuous $G$-module $M$,
$H^i(G,M)$ means continuous group cohomology; for finite groups it is
ordinary group cohomology. We write $G_K=\Gal(K^{\mathrm{sep}}/K)$.
On a scheme where $2$ is invertible, we identify $\mu_2$ with the
constant sheaf $\F$ by sending $-1$ to $1$. For a field $K$ of
characteristic different from $2$, the Kummer character of a class
$[a]\in K^\times/K^{\times2}$ is the homomorphism
$\chi_a:G_K\to\F$ determined by
$\sigma(\sqrt a)=(-1)^{\chi_a(\sigma)}\sqrt a$; it corresponds to the
quadratic extension $K(\sqrt a)/K$ when $[a]\ne1$ (\st{03PK}). We
write $\Br(K)=H^2(G_K,(K^{\mathrm{sep}})^\times)$ for the Brauer group
of a field (\st{03R7}); for a scheme $X$, $\Br(X)$ denotes the torsion
subgroup of $H^2_{\et}(X,\mathbb G_m)$. We will use the Brauer--Hasse--Noether theorem in the proof of Proposition~\ref{prop:good-detection}. 

\subsection{Conductors}\label{sec:conductor-background}
We will use the results of this subsection in Section~\ref{sec:proof-conductor} to bound the logarithm of the conductor of each curve in our tower by a constant times its genus. Proposition~\ref{prop:conductor-model-transport} shows that a finite \'etale cover of a proper model induces, by normalization, a finite \'etale cover of any regular proper model of the same generic curve. Proposition~\ref{prop:conductor-common-extension} then shows that, at each prime, a single finite extension of the local field suffices (for the generic curves of the various covers) to acquire semistable reduction. Proposition~\ref{prop:conductor-local-bound} bounds their conductor exponents by a constant times their genera, with the constant depending only on this extension. Summing these bounds over the bad primes gives Proposition~\ref{prop:conductor-uniform-integral-covers}, whose bound is independent of the degree of the cover.

For a smooth projective geometrically connected curve $D/\Q$, a rational
prime $p$, and an auxiliary prime $q\ne p$, choose an algebraic closure
$\overline\Q_p$. Put $D_{\overline\Q_p}=D\times_{\Q}\Spec\overline\Q_p$
and
\begin{equation}\label{eq:bg-explicit-cohomology-representation}
 V_{D,q}=H^1_{\et}(D_{\overline\Q_p},\Q_q)
 =\left(\varprojlim_r H^1_{\et}
       (D_{\overline\Q_p},\Z/q^r\Z)\right)\otimes_{\Z_q}\Q_q.
\end{equation}
The action of $G_{\Q_p}$ on $D_{\overline\Q_p}$ induces a continuous
representation $\rho_{D,q}:G_{\Q_p}\to\operatorname{GL}(V_{D,q})$,
and $\dim_{\Q_q}V_{D,q}=2g(D)$.

For a finite extension $F/\Q_p$, let $I_F\subset G_F$ be the inertia
subgroup and let $G_F^u$ denote the upper ramification groups, defined
below. If $V$ is a finite-dimensional continuous $\Q_q$-representation
of $G_F$, write $V^H=\{v\in V:\sigma v=v\text{ for every }\sigma\in H\}$
for the invariant subspace of a subgroup $H\subset G_F$. Define the
Swan conductor $\operatorname{Sw}_F(V)$ and the Artin conductor
$a_F(V)$ by
\begin{equation}\label{eq:conductor-artin-swan}
 \operatorname{Sw}_F(V)=\int_0^\infty
           \dim_{\Q_q}(V/V^{G_F^u})\,du,\qquad
 a_F(V)=\dim_{\Q_q}(V/V^{I_F})+\operatorname{Sw}_F(V).
\end{equation}
For a smooth proper curve $D/F$, put
$a_F(D)=a_F(H^1_{\et}(D_{\overline F},\Q_q))$. For a curve $D/\Q$,
put $a_p(D)=a_{\Q_p}(V_{D,q})$. The integer $a_p(D)$ is independent
of $q\ne p$. The conductor of $D$, equivalently the conductor of
the Jacobian $J(D)$, is
\begin{equation}\label{eq:bg-explicit-conductor-product}
 \mathfrak N(D)=\prod_p p^{a_p(D)},\qquad
 a_p(D)=v_p(\mathfrak N(D)).
\end{equation}
Here $v_p(p)=1$. The name \emph{conductor exponent at $p$} for $a_p(D)$
refers to the exponent of $p$ in
\eqref{eq:bg-explicit-conductor-product}.

We follow the ramification conventions of
\cite[Chapters~IV and VI]{SerreLocal}. For a finite Galois extension
$E/F$ of local fields, with group $H$ and normalized valuation $v_E$,
the lower ramification groups are
\begin{equation}\label{eq:bg-explicit-lower-ramification}
 H_i=\{\sigma\in H:v_E(\sigma(a)-a)\ge i+1
                     \text{ for every }a\in\mathcal O_E\}\quad(i\ge0).
\end{equation}
Set $H_t=H_{\lceil t\rceil}$ for $t>0$ and define
\begin{equation}\label{eq:bg-upper-ramification}
 \varphi(t)=\int_0^t[H_0:H_s]^{-1}\,ds,\qquad
 H^u=H_{\varphi^{-1}(u)},\qquad
 G_F^u=\varprojlim_{E/F}\operatorname{Gal}(E/F)^u\quad(u\geq0).
\end{equation}
The inverse limit ranges over finite Galois extensions $E/F$ in a
chosen separable closure of $F$. The quotient map
$\operatorname{Gal}(E'/F)\to\operatorname{Gal}(E/F)$ sends
$\operatorname{Gal}(E'/F)^u$ onto $\operatorname{Gal}(E/F)^u$ for
$E\subset E'$, so the inverse limit in
\eqref{eq:bg-upper-ramification} defines a subgroup of $G_F$.
Write $P_F$ for wild inertia, the maximal pro-$p$ subgroup of $I_F$.
For $u>0$, $G_F^u$ is a pro-$p$ subgroup of $P_F$.
If $q\ne p$, the image of $P_F$ in $\operatorname{GL}(V)$ is finite
for every continuous finite-dimensional $\Q_q$-representation $V$
of $G_F$. Consequently there is a real number $b\geq0$ such that
$V^{G_F^u}=V$ for every $u>b$. Thus the integral defining
$\operatorname{Sw}_F(V)$ in \eqref{eq:conductor-artin-swan} is finite.

Let $R$ be a discrete valuation ring with fraction field $F$ and
uniformizer $\pi$. A regular semistable model of a smooth proper curve
$D/F$ is a regular proper flat $R$-scheme $W$, with a specified
identification $W_F\simeq D$, whose geometric special fibre is reduced
and has only nodes. At a node $w$ of the geometric special fibre
$W_{\overline k}$, where $k=R/(\pi)$, the equation of $W$ in an
\'etale neighbourhood of $w$ after an \'etale extension of $R$ is
$uv=\pi$. A proper nodal model $M_0/R$ with smooth generic fibre can
have equations $uv=\pi^e$ with $e\geq1$. To obtain a regular model
from $M_0$, define recursively
\begin{equation}\label{eq:bg-nodal-resolution-sequence}
 Z_n=\{z\in M_n:\mathcal O_{M_n,z}\text{ is not regular}\}_{\mathrm{red}},
 \qquad M_{n+1}=\operatorname{Bl}_{Z_n}(M_n)
       \quad\text{if }Z_n\ne\varnothing.
\end{equation}
Here $Z_n$ has the reduced closed subscheme structure. For the local
equations $uv=\pi^e$, there is an integer $N\geq0$ for which
$Z_N=\varnothing$. The model $M_N/R$ is regular, and the geometric
special fibre $(M_N)_{\overline k}$ is reduced and nodal
(\st{0CDD}, \st{0CDE}).

\begin{backgroundproposition}\label{prop:conductor-model-transport}
Let $R$ be an excellent discrete valuation ring with fraction field $F$,
and let $X/R$ be proper with smooth geometrically connected generic fibre
$D/F$. Let $Y\to X$ be finite \'etale with geometrically connected
generic fibre $D'$. If $W/R$ is a regular proper model of $D$, then the
normalization $W'$ of $W$ in $F(D')$ is finite \'etale over $W$.
\end{backgroundproposition}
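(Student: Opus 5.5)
The plan is to reduce, via Zariski--Nagata purity of the branch locus on the regular scheme $W$, to checking that $W'\to W$ is unramified at the codimension-one points of $W$. At those points we compare with $X$ using properness of $X$.

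First, finiteness. $R$ is excellent, so $W$ is excellent, and the normalization $W'\to W$ of the normal integral scheme $W$ in the finite extension $F(D')/F(D)$ is finite. Over the generic fibre, $W'_F$ is the normalization of $D$ in $F(D')$. Since $D'\to D$ is the generic fibre of the finite \'etale map $Y\to X$, we have $D'$ finite \'etale over $D$ and normal, so $W'_F\simeq D'$. Here we use that $Y_F=D'$ is connected, hence integral, so $F(Y_F)=F(D')$. In particular, $W'\to W$ is \'etale over the generic fibre.

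Second, the codimension-one points of $W$ not in $D$ are the generic points $\eta$ of the irreducible components of the special fibre $W_k$. Let $\eta$ be one of these. Since $W$ is regular, $A=\mathcal O_{W,\eta}$ is a discrete valuation ring with fraction field $F(D)$, and it dominates $R$. Since $X$ is proper over $R$ and $X_F=D$ has the same function field, the valuative criterion gives a morphism $\Spec A\to X$ extending $\Spec F(D)\to D\subset X$. Form the base change $T=Y\times_X\Spec A$, which is finite \'etale over $A$. Then:
\begin{itemize}
\item $T$ is flat over the DVR $A$, so it has no components supported on the closed point.
\item $T$ is normal, being \'etale over a regular scheme.
\item Its generic fibre is $Y\times_X\Spec F(D)=\Spec F(D')$.
\end{itemize}
Hence $T=\Spec B$, where $B$ is finite over $A$, normal, with $\Frac(B)=F(D')$. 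So $B$ is the integral closure of $A$ in $F(D')$, namely $B=\mathcal O_{W'}\otimes\mathcal O_{W,\eta}$, because normalization commutes with localization as in \eqref{eq:bg-normalization-localization}. Thus $W'\to W$ is \'etale over $\eta$.

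Third, apply purity of the branch locus (Zariski--Nagata, Stacks 0BMB). $W'\to W$ is finite, $W'$ is normal, $W$ is regular, and the morphism is \'etale over every point of codimension $\le1$. Hence it is \'etale everywhere, and $W'$ is finite \'etale over $W$.

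The main obstacle I expect is the careful identification in the second step: that the base change $T$ really is the localized normalization. This needs flatness over $A$, to exclude embedded or vertical components, together with normality of \'etale algebras over a regular ring. The purity theorem is then quoted.
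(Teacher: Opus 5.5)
Your proposal is correct and follows essentially the same route as the paper. Both arguments get finiteness of $W'\to W$ from excellence, use the valuative criterion at each codimension-one point $w$ of $W$ to map $\Spec\mathcal O_{W,w}$ to $X$, identify the normal finite \'etale base change $Y\times_X\Spec\mathcal O_{W,w}$ with the localized normalization, and conclude by purity of the branch locus (\st{0BMB}). The differences are minor: the paper runs the valuative-criterion argument uniformly at all codimension-one points instead of treating the horizontal ones via $W'_F\simeq D'$, and it checks explicitly, by going-down, the hypothesis $\dim\mathcal O_{W',w'}=\dim\mathcal O_{W,w}$ of the purity theorem, which you use only implicitly.
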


\begin{proof}
The finite \'etale morphism $D'\to D$ identifies $F(D')$ as a finite
separable extension of the function field $F(D)$ of $W$. Since $W$ is
excellent, its normalization $\nu:W'\to W$ in $F(D')$ is finite.
Let $w$ be a codimension-one point of $W$. The local ring
$B=\mathcal O_{W,w}$ is a discrete valuation ring with fraction field
$F(D)$. The identification $W_F\simeq D=X_F$ gives an $R$-morphism
$j:\Spec F(D)\to X$. Since $X\to\Spec R$ is proper, the valuative
criterion gives an $R$-morphism $j_B:\Spec B\to X$ satisfying
$j_B|_{\Spec F(D)}=j$ (\st{0BX5}).

Write $Y\times_X\Spec B=\Spec A_B$. The $B$-algebra $A_B$ is finite
\'etale and has generic algebra
$A_B\otimes_BF(D)=F(D')$. Since $B$ is regular and $A_B$ is
\'etale over $B$, the ring $A_B$ is regular and therefore normal.
Since $A_B$ is torsion-free over $B$, the
map $A_B\to A_B\otimes_BF(D)=F(D')$ is injective. Since $A_B$ is
finite over $B$, every element of $A_B$ is integral over $B$.
Conversely, if $a\in F(D')$ is integral over $B$, then $a$ is
integral over $A_B$, so $a\in A_B$ because $A_B$ is normal. Thus
\begin{equation}\label{eq:bg-transport-integral-closure}
 A_B=\overline B^{\,F(D')},\qquad
 W'\times_W\Spec B\simeq\Spec A_B.
\end{equation}
The morphism $\Spec A_B\to\Spec B$ is \'etale. Therefore
$\nu:W'\to W$ is \'etale at every point of $\nu^{-1}(w)$, for
every codimension-one point $w\in W$.

The scheme $W'$ is normal and $W$ is regular. Choose an affine open
$\Spec A\subset W$ and write $\nu^{-1}(\Spec A)=\Spec A'$, where
$A\subset A'$ is a finite extension of normal domains. For
$w'\in\Spec A'$ and $w=\nu(w')$, the going-down theorem and
incomparability for primes in $A\subset A'$ give
\begin{equation}\label{eq:bg-transport-local-dimensions}
 \dim\mathcal O_{W',w'}=\dim\mathcal O_{W,w}
\end{equation}
(\st{00ON}). The morphism $\nu:W'\to W$ is generically \'etale
because $D'\to D$ is
\'etale. Purity of the branch locus (\st{0BMB}) therefore shows that
$\nu:W'\to W$ is \'etale everywhere.
\end{proof}

\begin{backgroundproposition}\label{prop:conductor-common-extension}
Let $F$ be a finite extension of $\Q_p$, and let $X/\mathcal O_F$ be
proper with smooth geometrically connected generic curve $D/F$.
There is a finite Galois extension $E/F$ and a regular semistable model
$W/\mathcal O_E$ of $D_E$ with the following property. For every finite
\'etale cover $Y\to X$ with geometrically connected generic curve $D'$,
the normalization $W'$ of $W$ in the function field $E(D'_E)$ is finite
\'etale over $W$ and is a regular semistable
model of $D'_E$.
\end{backgroundproposition}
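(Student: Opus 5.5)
The plan is to use semistable reduction to fix $E$ and $W$ once, independently of $Y$. Then I apply Proposition~\ref{prop:conductor-model-transport} over $\mathcal O_E$ to get étaleness of $W'\to W$. Finally I show that a finite étale cover of a regular semistable model is again regular semistable.

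\emph{Choice of $E$ and $W$.} By the semistable reduction theorem (Deligne--Mumford), there is a finite extension $E_1/F$ over which $D_{E_1}$ has a proper nodal model $M_0/\mathcal O_{E_1}$. Replace $E_1$ by its Galois closure $E$ over $F$, and base change $M_0$ to $\mathcal O_E$. Nodality of the geometric special fibre is preserved, with local equations of the form $uv=\pi^e$. The recursive blow-up \eqref{eq:bg-nodal-resolution-sequence} then produces a regular model $W=M_N$ whose geometric special fibre is reduced and nodal, i.e.\ a regular semistable model of $D_E$. The construction uses only $D$ and $F$, not $Y$, which is the required uniformity.

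\emph{Étaleness.} Put $X_E=X\times_{\mathcal O_F}\mathcal O_E$ and $Y_E=Y\times_X X_E$. Then $X_E$ is proper over the excellent discrete valuation ring $\mathcal O_E$ with generic fibre $D_E$, which is smooth and geometrically connected. The map $Y_E\to X_E$ is finite étale by base change, and its generic fibre $D'_E$ is geometrically connected because $D'$ is. Note that $X_E$ need not be integral; I expect Proposition~\ref{prop:conductor-model-transport} to need only properness, through the valuative criterion. If integrality is required, I will replace $X_E$ by the closure of its generic fibre, which is still proper. Proposition~\ref{prop:conductor-model-transport} then shows that the normalization $W'$ of $W$ in $E(D'_E)$ is finite étale over $W$.

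\emph{Semistability of $W'$.} Since $W'\to W$ is étale and $W$ is regular, $W'$ is regular, by the stalk criteria recalled in Section~\ref{sec:dictionary}. It is proper and flat over $\mathcal O_E$, and its generic fibre is $D'_E$, being the normalization of $D_E$ in $E(D'_E)$ with $D'_E\to D_E$ finite étale. For the special fibre, $W'_{\bar k}\to W_{\bar k}$ is étale. Reducedness is preserved under étale maps, and étale maps induce isomorphisms on completed local rings at closed points over the algebraically closed field $\bar k$. Hence every singular point of $W'_{\bar k}$ has completed local ring $\bar k[[u,v]]/(uv)$, so it is a node. Therefore $W'$ is a regular semistable model of $D'_E$.

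The main obstacle is the hypotheses of Proposition~\ref{prop:conductor-model-transport}. The essential subtlety is ensuring that the single $E$, chosen for $D$ alone, works for all $Y$. This is exactly what étaleness over a fixed regular $W$ provides, via purity.
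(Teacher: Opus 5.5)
Your proposal is correct and follows the paper's proof essentially step by step. Both use semistable reduction, pass to the Galois closure, and run the blow-up recursion to fix $E$ and $W$; then both apply Proposition~\ref{prop:conductor-model-transport} to $X_{\mathcal O_E}$ and use \'etaleness of $W'\to W$ to transfer regularity, properness, flatness and the reduced nodal geometric special fibre. Your worry about integrality of $X_E$ is unnecessary, since that proposition assumes only properness with smooth geometrically connected generic fibre.
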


\begin{proof}
Semistable reduction for curves (\st{0CDN}) gives a finite extension
$E_0/F$ and a semistable model $M_0/\mathcal O_{E_0}$ of $D_{E_0}$.
Let $E/F$ be the Galois closure of $E_0/F$. The base change
$M=M_0\times_{\mathcal O_{E_0}}\mathcal O_E$ is a nodal proper model
of $D_E$. Apply the recursion
\eqref{eq:bg-nodal-resolution-sequence} with initial model $M$.
Let $N$ be the first index for which $Z_N=\varnothing$, and put
$W=M_N$. By \st{0CDE}, the scheme $W$ is regular and
$W_{\overline k}$ is reduced and nodal, where $k$ is the residue
field of $E$. Thus $W/\mathcal O_E$ is a regular semistable model
of $D_E$.

Apply Proposition~\ref{prop:conductor-model-transport} to
$X_{\mathcal O_E}=X\times_{\mathcal O_F}\mathcal O_E$, the cover
$Y_{\mathcal O_E}\to X_{\mathcal O_E}$ and the regular model $W$.
The proposition gives a finite \'etale morphism $\nu:W'\to W$,
where $W'$ is the normalization of $W$ in $E(D'_E)$. Since $W$ is
regular and $\nu$ is \'etale, $W'$ is regular. Since $\nu$ is finite
and flat and $W\to\Spec\mathcal O_E$ is proper and flat,
$W'\to\Spec\mathcal O_E$ is proper and flat. The induced morphism
$W'_{\overline k}\to W_{\overline k}$ is finite \'etale. Since
$W_{\overline k}$ is reduced and nodal, $W'_{\overline k}$ is
reduced and nodal. Normalization commutes with restriction to
$W_E=D_E$, so $(W')_E$ is the normalization of $D_E$ in $E(D'_E)$.
The finite morphism $D'_E\to D_E$ has normal source and induces the
extension $E(D'_E)/E(D_E)$; hence $(W')_E\simeq D'_E$.
Thus $W'/\mathcal O_E$ is a regular semistable model of $D'_E$.
The choices of $E$ and $W$ depend on $D$, independently of the
cover $Y\to X$.
\end{proof}

For a smooth proper curve $D/E$ over a finite extension $E/\Q_p$
with a regular semistable model over $\mathcal O_E$, semistable
monodromy says that $I_E$ acts unipotently on
$V=H^1_{\et}(D_{\overline E},\Q_q)$ and the wild inertia subgroup
$P_E\subset I_E$ acts trivially on $V$, for every $q\ne p$.
The assertions about the
actions of $I_E$ and $P_E$ follow from the
semistable nearby-cycle calculation in
\cite[Section~6.3, p.~104]{Illusie} and the proper comparison in
\cite[Section~1.2(b), (1.7), p.~86]{Illusie}.
The triviality of the $P_E$-action also follows from the unipotence
of the $I_E$-action. To see this, let
$\rho:G_E\to\operatorname{GL}(V)$ be the cohomology representation.
Choose an open pro-$q$ subgroup $K_0\subset\rho(G_E)$. The group
$\rho(P_E)\cap K_0$ is both pro-$p$ and pro-$q$, so
$\rho(P_E)\cap K_0=1$. Since $K_0$ has finite index in the compact
group $\rho(G_E)$, the group $\rho(P_E)$ is finite. A unipotent
matrix of finite order over $\Q_q$ is the identity, so
$\rho(P_E)=1$.

\begin{backgroundproposition}\label{prop:conductor-local-bound}
Let $D/F$ be a smooth projective geometrically connected curve over a
finite extension of $\Q_p$. Suppose $D$ acquires semistable reduction
over a finite Galois extension $E/F$. Put
\begin{equation}\label{eq:bg-explicit-last-break}
 b(E/F)=\sup\bigl(\{0\}\cup
       \{u>0:\operatorname{Gal}(E/F)^u\ne1\}\bigr).
\end{equation}
Then the integer $a_F(D)$ defined in
\eqref{eq:conductor-artin-swan} satisfies
\begin{equation}\label{eq:bg-explicit-local-bound}
 a_F(D)\le 2\bigl(1+b(E/F)\bigr)g(D).
\end{equation}
\end{backgroundproposition}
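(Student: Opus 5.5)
We bound the tame part and the Swan part of $a_F(D)$ separately. Write $V=H^1_{\et}(D_{\overline F},\Q_q)$, so that $\dim V=2g(D)$.

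The tame part is immediate: $\dim(V/V^{I_F})\le\dim V=2g(D)$.

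For the Swan part, we use the recalled semistable monodromy statement for $D_E$: the wild inertia $P_E$ acts trivially on $V$. The plan is to show that $G_F^u$ acts trivially on $V$ for every $u>b(E/F)$. Granting this, the integrand $\dim(V/V^{G_F^u})$ vanishes for $u>b(E/F)$ and is at most $2g(D)$ for $0<u\le b(E/F)$. Hence
\[
\operatorname{Sw}_F(V)\le 2g(D)\,b(E/F),
\]
and adding the tame part gives $a_F(D)\le 2(1+b(E/F))g(D)$.

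The key step is the claim that $G_F^u$ acts trivially for $u>b=b(E/F)$. Fix such a $u>0$.

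\emph{Step 1.} By the compatibility of upper numbering with quotients recalled after \eqref{eq:bg-upper-ramification}, the image of $G_F^u$ in $\operatorname{Gal}(E/F)$ is $\operatorname{Gal}(E/F)^u$. This group is trivial by the definition of $b$. Hence $G_F^u\subset G_E$.

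\emph{Step 2.} Since $u>0$, the group $G_F^u$ is pro-$p$ and contained in $I_F$. Thus $G_F^u\subset I_F\cap G_E=I_E$. A pro-$p$ subgroup of $I_E$ lies in its maximal pro-$p$ subgroup, so $G_F^u\subset P_E$.

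\emph{Step 3.} Since $P_E$ acts trivially on $V$, so does $G_F^u$, which is the claim.

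Step 2 is the one to check with some care, because $P_E$ is normal in $I_E$. The maximal pro-$p$ subgroup of $I_E$ is the kernel of the tame quotient $I_E\to I_E/P_E$, and that quotient is pro-prime-to-$p$. The image of any pro-$p$ subgroup in a pro-prime-to-$p$ group is trivial, so every pro-$p$ subgroup of $I_E$ lies in $P_E$.

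We also make two routine checks. First, at $u=b$ itself the integrand may be nonzero, but a single point does not affect the integral. Second, the argument only needs the inclusion $G_F^u\subset G_E$ and not the finer statement $G_F^u=G_E^{\psi_{E/F}(u)}$, so no Herbrand-function computation is required.

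If one prefers to avoid Step 1, an alternative is to use Herbrand's function $\psi_{E/F}$ to compare upper numberings of $F$ and $E$. This is unnecessary here, since the inclusion from Step 1 suffices.
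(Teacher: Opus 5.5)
Your proposal is correct and follows the paper's proof essentially step for step. The paper also bounds the tame term by $\dim V=2g(D)$, gets $G_F^u\subseteq G_E$ for $u>b(E/F)$ from compatibility of upper numbering with quotients, then $G_F^u\subseteq I_E$ and $G_F^u\subseteq P_E$ because a pro-$p$ group has trivial image in $I_E/P_E$, and concludes $V^{G_F^u}=V$ by semistable monodromy, giving $\operatorname{Sw}_F(V)\le 2b(E/F)g(D)$ and the stated bound exactly as you describe.
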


\begin{proof}
Fix an algebraic closure $\overline F$ containing $E$. Put
$b=b(E/F)$ as in \eqref{eq:bg-explicit-last-break} and
$V=H^1_{\et}(D_{\overline F},\Q_q)$ for $q\ne p$.
For $u>b$, compatibility of upper numbering with the quotient map
$G_F\to\operatorname{Gal}(E/F)$ gives
\begin{equation}\label{eq:bg-local-bound-kernel}
 G_F^u\subseteq\ker\bigl(G_F\to\operatorname{Gal}(E/F)\bigr)=G_E.
\end{equation}
Since $u>0$, the group $G_F^u$ acts trivially on the algebraic
closure of the residue field. Hence $G_F^u\subseteq I_E$.
The group $G_F^u$ is pro-$p$, and every finite quotient of
$I_E/P_E$ has order prime to $p$. Therefore the homomorphism
$G_F^u\to I_E/P_E$ is trivial and $G_F^u\subseteq P_E$.
Semistable monodromy for $D_E$ gives $V^{P_E}=V$, so
\begin{equation}\label{eq:bg-local-bound-invariants}
 V^{G_F^u}=V\qquad(u>b).
\end{equation}
For $0<u\leq b$, we have
$\dim_{\Q_q}(V/V^{G_F^u})\leq\dim_{\Q_q}V=2g(D)$.
Using \eqref{eq:bg-local-bound-invariants} in
\eqref{eq:conductor-artin-swan}, we obtain
\begin{equation}\label{eq:bg-local-bound-integral}
 \operatorname{Sw}_F(V)
 =\int_0^b\dim_{\Q_q}(V/V^{G_F^u})\,du
 \leq2b\,g(D).
\end{equation}
Finally,
$a_F(D)=\dim_{\Q_q}(V/V^{I_F})+\operatorname{Sw}_F(V)
\leq2g(D)+2b\,g(D)=2(1+b)g(D)$, as claimed in
\eqref{eq:bg-explicit-local-bound}.
\end{proof}

\begin{backgroundproposition}\label{prop:conductor-uniform-integral-covers}
Let $S/\Spec\Z$ be a proper flat arithmetic surface with smooth
geometrically connected generic curve $C/\Q$. Let $\Sigma$ be a finite
set of primes such that $S$ is smooth over
$U=\Spec\Z[1/\prod_{p\in\Sigma}p]$.
For each $p\in\Sigma$, let $E_p/\Q_p$ be a finite Galois extension
over which $C$ has semistable reduction, and put $b_p=b(E_p/\Q_p)$.
Then every finite \'etale cover $T\to S$ with geometrically connected
generic curve $D/\Q$ satisfies
\begin{equation}\label{eq:bg-explicit-uniform-bound}
 a_p(D)=0\quad(p\notin\Sigma),\qquad
 \log\mathfrak N(D)\le A(S,\Sigma)g(D),
 \quad
 A(S,\Sigma)=2\sum_{p\in\Sigma}(1+b_p)\log p.
\end{equation}
In particular, the constant $A(S,\Sigma)$ in
\eqref{eq:bg-explicit-uniform-bound} is independent of the degree of $T\to S$.
\end{backgroundproposition}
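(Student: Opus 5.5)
The plan has two parts. First I show that $D$ has good reduction at every $p\notin\Sigma$. Then, at each $p\in\Sigma$, I show that $D$ becomes semistable over the same field $E_p$ as $C$, which lets me apply Proposition~\ref{prop:conductor-local-bound} with a constant that does not depend on $T$. Summing the local bounds gives the claim.

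\emph{Primes outside $\Sigma$.} Fix $p\notin\Sigma$. The composite $T_U\to S_U\to U$ is finite \'etale followed by smooth proper, so $T_U$ is smooth and proper over $U$. Its generic fibre is $D$, which is smooth and geometrically connected. Hence $T\times_{\Spec\Z}\Spec\Z_p$ is a smooth proper model of $D_{\Q_p}$. The smooth proper base change theorem then says that $I_{\Q_p}$ acts trivially on $V_{D,q}$ for $q\ne p$. Both terms in \eqref{eq:conductor-artin-swan} therefore vanish, and $a_p(D)=0$.

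\emph{Primes in $\Sigma$.} Fix $p\in\Sigma$ and set $X=S\times_{\Spec\Z}\Spec\Z_p$ and $Y=T\times_{\Spec\Z}\Spec\Z_p$. Then $Y\to X$ is finite \'etale, and the generic fibre $D_{\Q_p}$ of $Y$ is geometrically connected. By hypothesis $C_{E_p}$ has a semistable model. Resolving it by the recursion \eqref{eq:bg-nodal-resolution-sequence}, exactly as in the proof of Proposition~\ref{prop:conductor-common-extension}, gives a regular semistable model $W/\mathcal O_{E_p}$ of $C_{E_p}$. Next apply Proposition~\ref{prop:conductor-model-transport} to $X_{\mathcal O_{E_p}}$, the cover $Y_{\mathcal O_{E_p}}$ and the model $W$. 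The normalization $W'$ of $W$ in $E_p(D_{E_p})$ is then finite \'etale over $W$. The second half of the proof of Proposition~\ref{prop:conductor-common-extension} applies verbatim to show that $W'$ is a regular semistable model of $D_{E_p}$.

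So $D$ acquires semistable reduction over the Galois extension $E_p/\Q_p$. Proposition~\ref{prop:conductor-local-bound} then gives
\[
a_p(D)\le 2(1+b_p)\,g(D).
\]
The point to watch is that $E_p$, and hence $b_p$, is attached to $C$ alone. This is exactly what the model-transport step secures: it produces semistability of every cover over the single field $E_p$.

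\emph{Summing.} Combining the two parts,
\[
\log\mathfrak N(D)=\sum_{p\in\Sigma}a_p(D)\log p\le 2\sum_{p\in\Sigma}(1+b_p)\log p\cdot g(D)=A(S,\Sigma)\,g(D).
\]
The constant $A(S,\Sigma)$ involves only $S$, $\Sigma$ and the fields $E_p$, so it is independent of the degree of $T\to S$.

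I expect no serious obstacle in this argument. The only delicate point is the one already noted: semistability of $D$ must hold over the field $E_p$ fixed for $C$, and that is exactly what Proposition~\ref{prop:conductor-model-transport} provides.
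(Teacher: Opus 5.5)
Your proposal is correct and follows the paper's proof essentially step by step. Away from $\Sigma$ you use smooth proper base change for $T_U\to U$; at each $p\in\Sigma$ you transport a regular semistable model of $C_{E_p}$ along $T_{\mathcal O_{E_p}}\to S_{\mathcal O_{E_p}}$ via Proposition~\ref{prop:conductor-model-transport}, obtaining semistability of $D$ over the same $E_p$, and then apply Proposition~\ref{prop:conductor-local-bound} and sum. Your explicit use of the resolution recursion \eqref{eq:bg-nodal-resolution-sequence} to produce the regular semistable model is a detail the paper leaves implicit when it simply says ``choose a regular semistable model $W_p$''.
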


\begin{proof}
The composition $T_U\to S_U\to U$ is smooth and proper because
$T_U\to S_U$ is finite \'etale and $S_U\to U$ is smooth and proper.
For every $p\notin\Sigma$ and $q\ne p$, smooth proper base change
(\st{095T}, \st{0GLI}) implies that $I_{\Q_p}$ acts trivially on
$H^1_{\et}(D_{\overline\Q_p},\Q_q)$, so $a_p(D)=0$.

For $p\in\Sigma$, choose a regular semistable model
$W_p/\mathcal O_{E_p}$ of $C_{E_p}$. Apply
Proposition~\ref{prop:conductor-model-transport} to
$S_{\mathcal O_{E_p}}$, its finite \'etale cover
$T_{\mathcal O_{E_p}}$ and $W_p$. The proposition gives a finite
\'etale morphism $W'_p\to W_p$, where $W'_p$ is the normalization
of $W_p$ in $E_p(D_{E_p})$. Write $k_p$ for the residue field of
$E_p$. Since $W_p$ is regular and
the morphism $W'_p\to W_p$ is \'etale, $W'_p$ is regular.
The base change
$(W'_p)_{\overline{k_p}}\to(W_p)_{\overline{k_p}}$ is finite
\'etale, and $(W_p)_{\overline{k_p}}$ is reduced and nodal.
Hence $(W'_p)_{\overline{k_p}}$ is reduced and nodal. Moreover,
$(W'_p)_{E_p}\simeq D_{E_p}$ and $W'_p\to\Spec\mathcal O_{E_p}$
is proper and flat. Thus $D_{E_p}$ has semistable reduction.
Proposition~\ref{prop:conductor-local-bound} gives
$a_p(D)\leq2(1+b_p)g(D)$ for every $p\in\Sigma$. Consequently
\begin{equation}\label{eq:bg-uniform-conductor-sum}
 \log\mathfrak N(D)
 =\sum_{p\in\Sigma}a_p(D)\log p
 \leq2g(D)\sum_{p\in\Sigma}(1+b_p)\log p
 =A(S,\Sigma)g(D),
\end{equation}
as claimed in \eqref{eq:bg-explicit-uniform-bound}.
\end{proof}
\subsection{Changing the genus of a pointed curve}
The goal of this section is to establish Proposition~\ref{prop:genus-spreading}, which will be used in Section~\ref{sec:proof-curves} to obtain the towers of Theorem~\ref{thm:main-curves} with a base curve of any, but finitely many, genera. The proof constructs a double cover using a section of the square of a line bundle. The degree of the line bundle determines the genus by Riemann--Hurwitz. 

Let $C/\Q$ be a smooth projective geometrically connected curve, and
let $P\in C(\Q)$. We construct a ramified double cover $f:D\to C$
with prescribed genus $g(D)$ and with two rational points above $P$.
We also prove that, for every geometrically connected finite \'etale
Galois cover $C'\to C$, the curve $D\times_C C'$ is geometrically
connected. Consequently, a tower of geometrically connected finite
\'etale Galois covers $C_n\to C$ gives a tower
$D_n=D\times_C C_n\to D$, with
\begin{equation}\label{eq:genus-tower-basechange}
 \deg(D_n\to D)=\deg(C_n\to C),\qquad
 \operatorname{Aut}_D(D_n)\simeq\operatorname{Aut}_C(C_n).
\end{equation}

\begin{proposition}\label{prop:genus-spreading}
Let $C/\Q$ be a smooth projective geometrically connected curve of genus
$b\ge2$, and let $P\in C(\Q)$. For every integer $h\ge3b$ there are a
smooth projective geometrically connected curve $D/\Q$ of genus $h$
and a ramified double cover $f:D\to C$ in which $P$ splits completely.
For every geometrically connected finite \'etale Galois cover $C'\to C$,
the pullback $D\times_C C'\to D$ is geometrically connected and has the
same Galois group and degree as $C'\to C$. If $P$ splits completely in
$C'\to C$, each of the two rational points in $f^{-1}(P)$ splits
completely in $D\times_C C'\to D$.
\end{proposition}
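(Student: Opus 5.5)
We construct $D$ as a cyclic double cover $D=\operatorname{Spec}_C(\mathcal O_C\oplus L^{-1})$ attached to a line bundle $L$ on $C$ and a section $\sigma\in H^0(C,L^{\otimes2})$ with reduced divisor of zeros $B=\operatorname{div}(\sigma)$, the algebra structure being $L^{-2}\to\mathcal O_C$ given by $\sigma$. Riemann--Hurwitz gives
\[
2h-2=2(2b-2)+\deg B,\qquad \deg B=2\deg L.
\]
We therefore need $\deg L=h-2b+1$. The hypothesis $h\ge3b$ gives $\deg L\ge b+1$, and hence $\deg L^{2}\ge 2b+2$.

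To make $P$ split, we take $L=\mathcal O_C(mP)$ with $m=h-2b+1$; this choice is defined over $\Q$. Since $2m\ge 2b+2> 2b-1$, the bundle $L^{2}=\mathcal O_C(2mP)$ is very ample. The section $\sigma_0=1$, with divisor $2mP$, is a square. The splitting condition is then that $\sigma(P)$, read in the trivialization of $L^{2}$ at $P$ given by the square of a trivialization of $L$, is a nonzero square in $\Q$. Concretely, we pick $\sigma\in H^0(C,\mathcal O(2mP))$, a rational function $\phi$ with poles bounded by $2mP$, such that:
\begin{enumerate}[label=(\alph*)]
\item the zero divisor of $\phi$ is reduced (and, by degree count, $\phi$ has exact pole order $2m$ at $P$);
\item the leading coefficient of $\phi$ in $t^{-2m}$, for a uniformizer $t$ at $P$, is $1$.
\end{enumerate}
The local equation at $P$ is then $w^2=t^{2m}\phi=1+O(t)$, and its fibre over $P$ is $\Spec\Q[w]/(w^2-1)$, which gives two rational points. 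Since $2m\ge2g+1$, Riemann--Roch shows that the functions with pole order $\le 2m$ include ones of exact order $2m$. Bertini over the infinite field $\Q$ (the linear system is base point free, and the members with reduced divisor form a dense open set) gives a rational $\phi$ satisfying (a). Rescaling by a rational square times the leading coefficient, we may then take the leading coefficient equal to $1$. Rescaling by a nonzero constant keeps the divisor reduced. $D$ is smooth because $B$ is reduced. $D$ is geometrically connected because $\deg B>0$ forces ramification, so the cover is not split over $\overline\Q$.

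\textbf{Pullback.} Let $C'\to C$ be geometrically connected finite étale Galois with group $G$. Then $D':=D\times_CC'\to D$ is finite étale, and $G$ acts on it by base change. The key claim is that $D'_{\overline\Q}$ is connected, and this is the step that needs care. The argument is that $D'_{\overline\Q}\to C'_{\overline\Q}$ is a double cover branched exactly over the preimage of $B$, which is nonempty. A disconnected double cover of the connected curve $C'_{\overline\Q}$ would be two copies of $C'_{\overline\Q}$, hence unramified, which is a contradiction. So $D'$ is geometrically connected, hence connected. A connected finite étale $G$-torsor over $D$ is Galois with group $G$, so $\operatorname{Aut}_D(D')=G$ and the degrees agree.

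Alternatively, one can argue with function fields. The fields $\overline\Q(C')$ and $\overline\Q(D)$ are linearly disjoint over $\overline\Q(C)$: the first is unramified over $\overline\Q(C)$ and the second is a ramified quadratic extension, so their intersection is an extension of $\overline\Q(C)$ that is both unramified and contained in a ramified quadratic extension, hence trivial.

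Finally, suppose $P$ splits completely in $C'$. Then the fibre of $D'$ over a point $Q\in f^{-1}(P)$ is
\[
D'\times_D Q=C'\times_C Q\simeq C'\times_CP,
\]
using that $Q\to P$ is an isomorphism of $\Spec\Q$'s. This is $\deg(C'/C)$ copies of $\Spec\Q$, so $Q$ splits completely in $D'\to D$.
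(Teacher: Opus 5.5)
Your proposal is essentially the paper's proof: the same cyclic double cover $\operatorname{Spec}_C(\mathcal O_C\oplus L^{-1})$ with $L=\mathcal O_C((h-2b+1)P)$ and a section normalized to $1$ at $P$ with simple zeros, then Riemann--Hurwitz, then the ramified-versus-unramified disjointness argument for the pullbacks. Where you cite Bertini, the paper instead does an explicit incidence-variety dimension count inside the affine space $\{s:s(P)=1\}$. One small slip needs fixing: reducedness of $B=\operatorname{div}(\phi)+2mP$ does not force exact pole order $2m$, since a reduced $B$ may contain $P$ with multiplicity one. So before choosing a rational member you must also impose the open condition $\sigma(P)\neq0$. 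This condition is nonempty because $P$ is not a base point of $|2mP|$, and once it holds the rescaling in (b) goes through.
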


\begin{proof}
Put $k=h-2b+1\ge b+1$ and $L=\mathcal O_C(kP)$, and choose an
identification $L|_P\simeq\Q$. Use the induced identification
$L^{\otimes2}|_P\simeq\Q$ to define the value $s(P)$ of a section
$s\in H^0(C,L^{\otimes2})$.
For a line bundle $M$ on $C$ with $\deg M>2b-2$, Serre duality gives
\begin{equation}\label{eq:genus-explicit-duality}
 H^1(C,M)\simeq H^0(C,\omega_C\otimes M^{-1})^\vee=0,
\end{equation}
since $\deg(\omega_C\otimes M^{-1})<0$; see \st{0E39}.
For every field extension $E/\Q$, the inequality
$\deg(M_E)=\deg M>2b-2$ and Serre duality give $H^1(C_E,M_E)=0$.
Applying \eqref{eq:genus-explicit-duality} to $L^{\otimes2}(-P)$ and
then taking cohomology of
$0\to L^{\otimes2}(-P)\to L^{\otimes2}\to L^{\otimes2}|_P\to0$ gives
\begin{equation}\label{eq:genus-explicit-evaluation}
 H^1(C,L^{\otimes2}(-P))=0,\qquad
 H^0(C,L^{\otimes2})\twoheadrightarrow L^{\otimes2}|_P.
\end{equation}
Thus
\begin{equation}\label{eq:genus-explicit-sections}
 V=\{s\in H^0(C,L^{\otimes2}):s(P)=1\}
\end{equation}
is a nonempty affine space over $\Q$.

Write $\overline C=C_{\overline\Q}$ and
$\overline L=L|_{\overline C}$.
For $x\in\overline C(\overline\Q)$ and a local generator $e$
of $\overline L^{\otimes2}$ at $x$, write $s=ae$ and define
$\operatorname{ord}_x(s)=v_x(a)$.
Replacing $e$ by $ue$, with $u\in\mathcal O_{\overline C,x}^{\times}$,
replaces $a$ by $u^{-1}a$, so this definition does not depend on $e$.
We will choose $s\in V(\Q)$ such that
$\operatorname{ord}_x(s)=1$ at every zero
$x\in\overline C(\overline\Q)$ of $s$.
For $x\in\overline C(\overline\Q)\setminus\{P\}$, let
$2x\subset\overline C$ be the closed subscheme defined by the ideal
sheaf $\mathcal O_{\overline C}(-2x)$; the local ring of $2x$ at $x$ is
$\mathcal O_{\overline C,x}/\mathfrak m_x^2$ and has length two.
The inequality
\begin{equation}\label{eq:genus-explicit-restriction-degree}
 \deg(\overline L^{\otimes2}(-P-2x))=2k-3>2b-2
\end{equation}
and Serre duality give
$H^1(\overline C,\overline L^{\otimes2}(-P-2x))=0$.
The restriction sequence therefore gives a surjection
\begin{equation}\label{eq:genus-explicit-restriction-map}
 H^0(\overline C,\overline L^{\otimes2}(-P))
 \longrightarrow H^0(2x,\overline L^{\otimes2}|_{2x}).
\end{equation}
The vector space on the right of
\eqref{eq:genus-explicit-restriction-map} has dimension two over
$\overline\Q$. Fix $s_0\in V(\Q)$. Every element of
$V(\overline\Q)$ has a unique expression $s_0+u$, with
$u\in H^0(\overline C,\overline L^{\otimes2}(-P))$.
The condition $\operatorname{ord}_x(s_0+u)\ge2$ is equivalent to
\begin{equation}\label{eq:genus-explicit-double-zero}
 u|_{2x}=-s_0|_{2x}.
\end{equation}
Surjectivity in \eqref{eq:genus-explicit-restriction-map} implies
that the set of solutions of \eqref{eq:genus-explicit-double-zero}
is an affine subspace of $V_{\overline\Q}$ of codimension two.

We next construct a closed subscheme $Z\subset V\times C$ whose
geometric points satisfy
\begin{equation}\label{eq:genus-incidence-geometric-points}
 Z(\overline\Q)=
 \{(s,x)\in V(\overline\Q)\times\overline C(\overline\Q):
                       \operatorname{ord}_x(s)\ge2\}.
\end{equation}
Choose a basis $s_1,\ldots,s_r$ of
$H^0(C,L^{\otimes2}(-P))$, and give $V$ coordinates
$a_1,\ldots,a_r$ by the equality $s=s_0+\sum_{j=1}^r a_js_j$.
Choose an affine open covering $(W_\alpha)_\alpha$ of $C$ such that
each $L^{\otimes2}|_{W_\alpha}$ has a generator $e_\alpha$ and
each $W_\alpha$ admits an \'etale morphism
$t_\alpha:W_\alpha\to\mathbb A^1_\Q$.
For $0\le j\le r$, define $f_{j,\alpha}\in\mathcal O_C(W_\alpha)$
by $s_j|_{W_\alpha}=f_{j,\alpha}e_\alpha$.
For a regular function $a$ on $W_\alpha$, define
$\partial a/\partial t_\alpha$ by
$da=(\partial a/\partial t_\alpha)\,dt_\alpha$.
Define the function $F_\alpha$ and the ideal sheaf
$\mathcal I_\alpha$ on $V\times W_\alpha$ by
\begin{equation}\label{eq:genus-explicit-incidence-equations}
 F_\alpha=f_{0,\alpha}+\sum_{j=1}^r a_jf_{j,\alpha},\qquad
 \mathcal I_\alpha=
 \left(F_\alpha,\frac{\partial F_\alpha}{\partial t_\alpha}\right)
 \mathcal O_{V\times W_\alpha}.
\end{equation}
For $W_{\alpha\beta}=W_\alpha\cap W_\beta$, define units
$\lambda_{\alpha\beta},c_{\alpha\beta}$ on $W_{\alpha\beta}$ by
$e_\alpha=\lambda_{\alpha\beta}e_\beta$ and
$dt_\beta=c_{\alpha\beta}\,dt_\alpha$.
Restriction to $V\times W_{\alpha\beta}$ gives
\begin{equation}\label{eq:genus-explicit-gluing}
 \begin{aligned}
 F_\beta&=\lambda_{\alpha\beta}F_\alpha,\\
 \frac{\partial F_\beta}{\partial t_\beta}
 &=c_{\alpha\beta}^{-1}\lambda_{\alpha\beta}
       \frac{\partial F_\alpha}{\partial t_\alpha}
   +c_{\alpha\beta}^{-1}
       \frac{\partial\lambda_{\alpha\beta}}{\partial t_\alpha}
       F_\alpha,\\
 \mathcal I_\beta|_{V\times W_{\alpha\beta}}
 &=\mathcal I_\alpha|_{V\times W_{\alpha\beta}}.
 \end{aligned}
\end{equation}
The ideal sheaves $\mathcal I_\alpha$ therefore define an ideal
sheaf $\mathcal I\subset\mathcal O_{V\times C}$. Define $Z$ to be
the closed subscheme of $V\times C$ with ideal sheaf $\mathcal I$.
A geometric point $(s,x)$ belongs to $Z$ if and only if the constant
and linear coefficients of $s$ in a local parameter at $x$ are zero.
This proves the asserted description of $Z(\overline\Q)$.

Let $p_C:Z\to C$ and $p_V:Z\to V$ be the restrictions of the
projections $(s,x)\mapsto x$ and $(s,x)\mapsto s$, respectively.
Since $s(P)=1$ for every $s\in V$, the fibre $p_C^{-1}(P)$ is empty.
For $x\in\overline C(\overline\Q)\setminus\{P\}$, equation
\eqref{eq:genus-explicit-double-zero} identifies $p_C^{-1}(x)$
with an affine space of dimension $r-2$. The dimension bound for
the fibres of $p_C$ gives
\begin{equation}\label{eq:genus-explicit-incidence-dimension}
 \dim Z\le\dim C+r-2=\dim V-1.
\end{equation}
The morphism $p_V$ is proper, because $Z$ is closed in $V\times C$
and $C$ is proper over $\Q$. Hence $p_V(Z)$ is closed in $V$, and
\begin{equation}\label{eq:genus-explicit-image-dimension}
 \dim p_V(Z)\le\dim Z\le\dim V-1.
\end{equation}
The nonempty open subset $V\setminus p_V(Z)$ of the affine space
$V$ contains a rational point. Choose
$s\in(V\setminus p_V(Z))(\Q)$. Then $s(P)=1$ and
$\operatorname{ord}_x(s)=1$ at every geometric zero $x$ of $s$.

The section $s$ defines the $\mathcal O_C$-linear map
$\mu_s:L^{-2}\to\mathcal O_C$ that sends $e^{-2}$ to $a_W$
whenever $e$ generates $L|_W$ and $s|_W=a_We^2$.
Make $\mathcal B=\mathcal O_C\oplus L^{-1}$ an
$\mathcal O_C$-algebra by the multiplication rule
\begin{equation}\label{eq:genus-explicit-algebra-product}
 (a,u)(b,v)=(ab+\mu_s(u\otimes v),\,av+bu).
\end{equation}
Define the curve $D$ and the morphism $f$ by
\begin{equation}\label{eq:genus-explicit-relative-spectrum}
 D=\operatorname{Spec}_C(\mathcal B),\qquad f:D\longrightarrow C.
\end{equation}
Since $\mathcal B$ is locally free of rank two over $\mathcal O_C$,
the morphism $f$ is finite locally free of degree two.
For an affine open $W=\Spec A\subset C$ with a generator $e$
of $L|_W$, write $s|_W=a_We^2$. Then
\begin{equation}\label{eq:genus-explicit-affine-cover}
 f^{-1}(W)=\Spec A[w]/(w^2-a_W).
\end{equation}
On $f^{-1}(\{a_W\ne0\})$, the element $w$ is a unit and
$\partial(w^2-a_W)/\partial w=2w$ is a unit. Consequently, $f$ is
\'etale on $f^{-1}(\{a_W\ne0\})$, and $D$ is smooth over $\Q$
on that open set.
At a geometric zero $x$ of $a_W$, choose a local parameter $t$ on
$\overline C$ at $x$. The equality
$\operatorname{ord}_x(a_W)=1$ gives
$(\partial a_W/\partial t)(x)\ne0$. Hence
$\partial(w^2-a_W)/\partial t\ne0$ at the point $(x,0)$ of $D$,
so $D$ is smooth at $(x,0)$.
In the local ring of $D$ at $(x,0)$, the element $w$ is a
uniformizer and $a_W=w^2$. Thus $f$ has ramification index two at
$(x,0)$. It follows that the ramification points of $f$ are exactly
the points above the zeros of $s$. The number of geometric zeros
of $s$ is $\deg L^{\otimes2}=2k$.

Choose a rational generator $e$ of $\overline L$ and write $s=ae^2$,
with $a\in\overline\Q(C)^\times$. At a zero $x$ of $s$,
\begin{equation}\label{eq:genus-explicit-odd-valuation}
 v_x(a)=1-2\operatorname{ord}_x(e)\equiv1\pmod2.
\end{equation}
Therefore $a\notin\overline\Q(C)^{\times2}$ and
$\overline\Q(C)[w]/(w^2-a)$ is a field.
Every connected component of $D_{\overline\Q}$ maps onto
$\overline C$: the restriction of the finite flat map $f$ to a
connected component has nonempty open and closed image.
Since the generic fibre of $f_{\overline\Q}$ is the spectrum of
one field, $D_{\overline\Q}$ has one connected component.
Thus $D$ is geometrically connected.
The curve $D$ is projective, since $f$ is finite and $C$ is projective.
By $s(P)=1$, the fibre of $f$ above $P$ is
\begin{equation}\label{eq:genus-explicit-point-fibre}
 D\times_{C,P}\Spec\Q=\Spec\Q[w]/(w^2-1)
 \simeq\Spec\Q\amalg\Spec\Q.
\end{equation}
The Riemann--Hurwitz formula for $f$ (\st{0C1B}) gives
\begin{equation}\label{eq:genus-explicit-riemann-hurwitz}
 2g(D)-2=2(2b-2)+2k,\qquad g(D)=2b-1+k=h.
\end{equation}

Let $C'\to C$ be as in the statement. Put
$K=\overline\Q(C)$, $M=\overline\Q(D)$ and
$M'=\overline\Q(C')$, viewed as subfields of an algebraic closure of
$K$. The extension $M/K$ is ramified, whereas $M'/K$ is unramified
at every point of $\overline C$. Hence $M\not\subseteq M'$.
Since $[M:K]=2$, we obtain
\begin{equation}\label{eq:genus-explicit-disjointness}
 M\cap M'=K,\qquad [MM':M']=[M:K]=2.
\end{equation}
Multiplication induces an isomorphism of $K$-algebras
\begin{equation}\label{eq:genus-explicit-generic-tensor}
 M\otimes_K M'\xrightarrow{\ \sim\ }MM',\qquad
 a\otimes b\longmapsto ab.
\end{equation}
Put $T=(D\times_C C')_{\overline\Q}$, and let
$p_D:T\to D_{\overline\Q}$ be the morphism induced by
$(d,c')\mapsto d$. The morphism $p_D$ is finite \'etale, and
\begin{equation}\label{eq:genus-tensor-generic-fibre}
 T\times_{D_{\overline\Q}}\Spec M=\Spec(M\otimes_K M')
                                      =\Spec MM'.
\end{equation}
For each connected component $T_j\subset T$, the restriction
$p_D|_{T_j}$ is finite \'etale. Thus $p_D(T_j)$ is nonempty, open
and closed in the connected curve $D_{\overline\Q}$, and
$p_D(T_j)=D_{\overline\Q}$. Every $T_j$ therefore contains a point
above the generic point of $D_{\overline\Q}$.
Since $\Spec MM'$ consists of one point, $T$ has one connected
component. This proves that $D\times_C C'$ is geometrically connected.
Write $\pi_D:D\times_C C'\to D$ for the projection and put
$d=\deg(C'\to C)$. Base change gives
\begin{equation}\label{eq:genus-pullback-degree-group}
 \deg\pi_D=d,\qquad
 \operatorname{Aut}_D(D\times_C C')\simeq\operatorname{Aut}_C(C').
\end{equation}
Let $Q_1,Q_2\in D(\Q)$ be the points with $f(Q_j)=P$ from
\eqref{eq:genus-explicit-point-fibre}. If $P$ splits completely in
$C'\to C$, then, for $j=1,2$,
\begin{equation}\label{eq:genus-pullback-split-fibres}
 (D\times_C C')\times_{D,Q_j}\Spec\Q
 \simeq C'\times_{C,P}\Spec\Q
 \simeq\coprod_{a=1}^{d}\Spec\Q,
\end{equation}
as desired.
\end{proof}
\section{The starting surface}\label{sec:starting-surface}

We construct a hyperelliptic curve $C_{-1}/\Q$, an \'etale
$\F^g$-cover $C_0\to C_{-1}$, and a finite morphism
$S_0\to S_{-1}$ of arithmetic surfaces with generic fibre
$C_0\to C_{-1}$. The surface $S_0$ is integral, regular, projective
and flat over $\Spec\Z$, and has a section.
The quotient $\F^g$ will give a lower bound $g$ for the number of
generators in the Golod--Shafarevich argument. We bound the numbers
of irreducible components and nodes in the geometric bad fibres of
$S_{-1}$, and the stabilizers of geometric points of $S_0$ under
$\F^g$.

\subsection{The curves and their covering map}

Fix an integer $m\geq 4$ and put
\begin{equation}\label{eq:start-explicit-parameters}
 g=2^m-1,\qquad \ell=g+1=2^m,\qquad
 f_i(x)=(x-i)(x-i-\ell)\quad(0\leq i\leq g).
\end{equation}
The roots of $f_0,\ldots,f_g$ are distinct and their union is
$\{0,1,\ldots,2g+1\}$. Set
\begin{equation}\label{eq:start-explicit-polynomials}
 F(x)=\prod_{i=0}^{g}f_i(x)=\prod_{a=0}^{2g+1}(x-a),\qquad
 h_i(t)=(1-it)(1-(i+\ell)t).
\end{equation}
Define the affine curves and their coordinate rings by
\begin{equation}\label{eq:start-explicit-base-affine-rings}
 \begin{aligned}
 U_{-1,x}&=\Spec A_{-1,x},&
 A_{-1,x}&=\Q[x,y]/(y^2-F(x)),\\
 U_{-1,t}&=\Spec A_{-1,t},&
 A_{-1,t}&=\Q[t,w]/\bigl(w^2-\prod_{i=0}^g h_i(t)\bigr).
 \end{aligned}
\end{equation}
The assignments $t\mapsto x^{-1}$ and
$w\mapsto x^{-(g+1)}y$ define a ring isomorphism
\begin{equation}\label{eq:start-explicit-base-overlap}
 \beta_{-1}:A_{-1,t}[t^{-1}]\xrightarrow{\ \sim\ }
                 A_{-1,x}[x^{-1}],\qquad
 t\longmapsto x^{-1},\quad w\longmapsto x^{-(g+1)}y.
\end{equation}
Define $C_{-1}$ by identifying the open subschemes
$\Spec A_{-1,x}[x^{-1}]\subset U_{-1,x}$ and
$\Spec A_{-1,t}[t^{-1}]\subset U_{-1,t}$ through
$\Spec\beta_{-1}$.
Let $B_x=\Spec\Q[x]$ and $B_t=\Spec\Q[t]$ be the standard affine
opens of $\mathbb P^1_\Q$, identified by $t=x^{-1}$ on
$B_x\cap B_t=\Spec\Q[x,x^{-1}]$.
The inclusions $\Q[x]\to A_{-1,x}$ and
$\Q[t]\to A_{-1,t}$ commute with $\beta_{-1}$ after localization,
and hence define a morphism $\rho_{-1}:C_{-1}\to\mathbb P^1_\Q$.

Define $U_{0,x}=\Spec A_{0,x}$ and $U_{0,t}=\Spec A_{0,t}$ by
\begin{align}
 A_{0,x}&=\Q[x,z_0,\ldots,z_g]/(z_i^2-f_i(x))_{0\leq i\leq g},
 \label{eq:start-affine-cover}\\
 A_{0,t}&=\Q[t,v_0,\ldots,v_g]/(v_i^2-h_i(t))_{0\leq i\leq g}.
 \label{eq:start-infinity-cover}
\end{align}
The assignments $t\mapsto x^{-1}$ and $v_i\mapsto x^{-1}z_i$
define a ring isomorphism
\begin{equation}\label{eq:start-explicit-cover-overlap}
 \beta_0:A_{0,t}[t^{-1}]\xrightarrow{\ \sim\ }A_{0,x}[x^{-1}],
 \qquad t\longmapsto x^{-1},\quad v_i\longmapsto x^{-1}z_i.
\end{equation}
Define $C_0$ by identifying the open subschemes
$\Spec A_{0,x}[x^{-1}]\subset U_{0,x}$ and
$\Spec A_{0,t}[t^{-1}]\subset U_{0,t}$ through $\Spec\beta_0$.
The inclusions $\Q[x]\to A_{0,x}$ and $\Q[t]\to A_{0,t}$
commute with $\beta_0$ after localization and define a morphism
$\rho_0:C_0\to\mathbb P^1_\Q$.
Define ring homomorphisms
\begin{equation}\label{eq:start-explicit-ring-maps}
 \begin{aligned}
 \alpha_x:A_{-1,x}&\longrightarrow A_{0,x},&
       x&\longmapsto x,&y&\longmapsto\prod_{i=0}^g z_i,\\
 \alpha_t:A_{-1,t}&\longrightarrow A_{0,t},&
       t&\longmapsto t,&w&\longmapsto\prod_{i=0}^g v_i.
 \end{aligned}
\end{equation}
The identities $\prod_i z_i^2=F(x)$ and
$\prod_i v_i^2=\prod_i h_i(t)$ show that $\alpha_x$ and
$\alpha_t$ respect the defining relations of $A_{-1,x}$ and
$A_{-1,t}$. Their localizations satisfy
\begin{equation}\label{eq:start-ring-map-compatibility}
 \beta_0\circ\alpha_t[t^{-1}]
       =\alpha_x[x^{-1}]\circ\beta_{-1}
 :A_{-1,t}[t^{-1}]\longrightarrow A_{0,x}[x^{-1}].
\end{equation}
Consequently, $\Spec\alpha_x$ and $\Spec\alpha_t$ define a morphism
\begin{equation}\label{eq:start-curve-map}
 \pi_0:C_0\longrightarrow C_{-1},\qquad
 \begin{cases}
 (x,z_0,\ldots,z_g)\longmapsto(x,\prod_{i=0}^g z_i)
                                      &\text{on }U_{0,x},\\
 (t,v_0,\ldots,v_g)\longmapsto(t,\prod_{i=0}^g v_i)
                                      &\text{on }U_{0,t}.
 \end{cases}
\end{equation}
The identity $\rho_{-1}\circ\pi_0=\rho_0$ follows from
\eqref{eq:start-explicit-ring-maps}.
For background on smooth projective models and hyperelliptic curves, see
\cite[Section~7.4.3]{Liu} and \st{0BY1}, \st{0BY3}.

\begin{proposition}\label{prop:start-curves}
The curves $C_{-1}$ and $C_0$ are smooth, projective and geometrically
integral over $\mathbb Q$. The degrees of the morphisms to
$\mathbb P^1_\Q$ are
$\deg\rho_{-1}=2$ and $\deg\rho_0=2^{g+1}$.
\end{proposition}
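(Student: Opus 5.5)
The plan is to check smoothness chart by chart with the Jacobian criterion, then deduce properness from finiteness over $\mathbb P^1_\Q$ and geometric integrality from a function-field degree computation.

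\textbf{Smoothness of $C_{-1}$.} On $U_{-1,x}$ the equation $y^2=F(x)$ is smooth because $F$ is separable with roots $0,\ldots,2g+1$. So the partials $2y$ and $F'(x)$ cannot vanish simultaneously on $y^2=F(x)$. On $U_{-1,t}$ the polynomial $\prod_i h_i(t)$ has degree $2g+2$. Its roots are $1/a$ for $1\le a\le 2g+1$, together with the root coming from $a=0$. That factor is $h_0(t)=1-\ell t$, which has degree one, so the polynomial has $2g+2$ distinct roots and the same criterion applies.

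\textbf{Smoothness of $C_0$.} On $U_{0,x}$, at a geometric point at most one $f_i$ vanishes, since the roots of the $f_i$ are pairwise disjoint. The Jacobian matrix of $(z_i^2-f_i(x))_i$ with respect to $(x,z_0,\ldots,z_g)$ has rows $(-f_i'(x),0,\ldots,2z_i,\ldots,0)$. If no $z_i$ vanishes, the $z$-block is invertible diagonal, giving rank $g+1$. If exactly $z_j=0$, then $f_j(x)=0$ and $f_j'(x)\ne0$ because $f_j$ is separable. Row $j$ then supplies the $x$-direction and the other rows supply their $z_i$-directions, so the rank is again $g+1$. The same argument works on $U_{0,t}$ with the $h_i$. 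Here $h_0=1-\ell t$ has degree one, and each $h_i$ with $i\ge1$ is separable with roots disjoint from those of the other $h_j$. All of these roots are distinct because the roots $1/a$ are distinct. In particular $\dim C_0=1$ and $C_0$ is smooth over $\Q$ everywhere.

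\textbf{Properness and degrees.} $A_{0,x}$ is a free $\Q[x]$-module with basis the monomials $\prod_{i\in I}z_i$, $I\subseteq\{0,\ldots,g\}$, so it has rank $2^{g+1}$. The same holds for $A_{0,t}$ over $\Q[t]$, and similarly $A_{-1,x},A_{-1,t}$ have rank $2$. Hence $\rho_0$ and $\rho_{-1}$ are finite and flat, locally free of the stated degrees. Both curves are proper because finite morphisms are proper and $\mathbb P^1_\Q$ is proper. They are projective because finite over projective; one can take $\mathcal O(1)$ pulled back, which is ample since the map is finite.

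\textbf{Geometric integrality.} This is the main point. I would show that $\overline{\Q}(x)(\sqrt{f_0},\ldots,\sqrt{f_g})$ has degree $2^{g+1}$ over $\overline\Q(x)$. By Kummer theory this amounts to the classes of the $f_i$ being independent in $\overline\Q(x)^\times/\overline\Q(x)^{\times2}$. That holds because a product $\prod_{i\in I}f_i$ with $I\ne\varnothing$ has simple roots, so it is not a square. Consequently $A_{0,x}\otimes\overline\Q$, which is free of rank $2^{g+1}$ over $\overline\Q[x]$ and hence torsion-free, embeds in its generic fibre. That generic fibre is $\overline\Q(x)[z]/(z_i^2-f_i)$, a field of the right degree, so $A_{0,x}\otimes\overline\Q$ is a domain. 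The same holds for $A_{0,t}\otimes\overline\Q$. The two charts are dense opens that meet, so $C_{0,\overline\Q}$ is integral. For $C_{-1}$ the same argument applies with $F$ non-square.
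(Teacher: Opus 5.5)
Your proposal is correct and follows essentially the paper's proof. Both use the Jacobian criterion on the charts, freeness of the chart rings over $\Q[x]$ and $\Q[t]$ for finiteness, projectivity and the degrees, and the count $[\overline\Q(x)(\sqrt{f_0},\ldots,\sqrt{f_g}):\overline\Q(x)]=2^{g+1}$ together with torsion-freeness of the free chart algebras for geometric integrality.

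One harmless slip: $\prod_i h_i(t)$ has degree $2g+1$, not $2g+2$. Its roots are exactly the $2g+1$ simple roots $1/a$ for $1\le a\le 2g+1$: the root $1/\ell$ of $h_0$ is the case $a=\ell$, and $a=0$ contributes no root. Your root count is therefore off, but only separability is used, so the argument stands. The paper avoids the issue by checking the $t$-charts only at $t=0$, since all other points already lie in the $x$-charts.
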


\begin{proof}
Let $k$ be an algebraic closure of $\mathbb Q$.
For a nonempty subset $I\subseteq\{0,\ldots,g\}$ and $j\in I$,
the valuation at $x=j$ satisfies
$v_{x=j}(\prod_{i\in I}f_i)=1$. Hence
$\prod_{i\in I}f_i\notin k(x)^{\times2}$.
Thus the classes of $f_0,\ldots,f_g$ in
$k(x)^\times/k(x)^{\times2}$ are linearly independent, and
\begin{equation}\label{eq:start-explicit-geometric-degree}
 [k(x)(\sqrt{f_0},\ldots,\sqrt{f_g}):k(x)]=2^{g+1}.
\end{equation}
The $k[x]$-module $A_{0,x}\otimes_\Q k$ is free with basis
$\{\prod_{i=0}^g z_i^{e_i}:e_i\in\{0,1\}\}$.
Localization therefore gives an injective homomorphism
\begin{equation}\label{eq:start-geometric-ring-injection}
 A_{0,x}\otimes_\Q k\lhook\joinrel\longrightarrow
 (A_{0,x}\otimes_\Q k)\otimes_{k[x]}k(x)
       =k(x)(\sqrt{f_0},\ldots,\sqrt{f_g}).
\end{equation}
Consequently, $A_{0,x}\otimes_\Q k$ is a domain.
The $k[t]$-module $A_{0,t}\otimes_\Q k$ is free with basis
$\{\prod_{i=0}^g v_i^{e_i}:e_i\in\{0,1\}\}$.
The isomorphism $\beta_0$ in \eqref{eq:start-explicit-cover-overlap}
identifies the localization of $A_{0,t}\otimes_\Q k$ at
$k[t]\setminus\{0\}$ with the field
in \eqref{eq:start-explicit-geometric-degree}.
Thus $A_{0,t}\otimes_\Q k$ is also a domain. Moreover,
\begin{equation}\label{eq:start-cover-open-intersection}
 (U_{0,x})_k\cap(U_{0,t})_k
 =\Spec((A_{0,x}\otimes_\Q k)[x^{-1}])\ne\varnothing,
\end{equation}
because $x$ is a nonzero element of the domain
$A_{0,x}\otimes_\Q k$. This proves that $(C_0)_k$ is integral.

The equality $v_{x=0}(F)=1$ gives $F\notin k(x)^{\times2}$.
The free rank-two modules $A_{-1,x}\otimes_\Q k$ over $k[x]$
and $A_{-1,t}\otimes_\Q k$ over $k[t]$ consequently embed in
$k(x)(\sqrt F)$. Thus $A_{-1,x}\otimes_\Q k$ and
$A_{-1,t}\otimes_\Q k$ are domains. Since
\begin{equation}\label{eq:start-base-open-intersection}
 (U_{-1,x})_k\cap(U_{-1,t})_k
 =\Spec((A_{-1,x}\otimes_\Q k)[x^{-1}])\ne\varnothing,
\end{equation}
the curve $(C_{-1})_k$ is integral as well.

To prove smoothness, put
$P_i(x,z_0,\ldots,z_g)=z_i^2-f_i(x)$ for $0\le i\le g$.
At a geometric point $a\in U_{0,x}(k)$, at most one of the
coordinates $z_i(a)$ is zero, because the polynomials $f_i$ have
disjoint sets of roots. The partial derivatives are
\begin{equation}\label{eq:start-affine-jacobian-entries}
 \frac{\partial P_i}{\partial z_j}=2z_i\delta_{ij},\qquad
 \frac{\partial P_i}{\partial x}=-f_i'(x).
\end{equation}
If $z_i(a)\ne0$ for all $i$, the submatrix with columns
$z_0,\ldots,z_g$ has determinant $\prod_{i=0}^g2z_i(a)\ne0$.
If $z_j(a)=0$, then $f_j'(x(a))\ne0$, and the submatrix with
columns $x$ and $z_i$ for $i\ne j$ has determinant
$\pm f_j'(x(a))\prod_{i\ne j}2z_i(a)\ne0$.
The Jacobian criterion therefore proves that $U_{0,x}$ is smooth.
At a point of $U_{0,t}\setminus U_{0,x}$, one has $t=0$ and
$v_i^2=1$ for every $i$. For
$R_i(t,v_0,\ldots,v_g)=v_i^2-h_i(t)$,
\begin{equation}\label{eq:start-infinity-jacobian-determinant}
 \det\left(\frac{\partial R_i}{\partial v_j}\right)_{0\le i,j\le g}
       =\prod_{i=0}^g2v_i\ne0\qquad\text{at }t=0.
\end{equation}
It follows that $C_0$ is smooth over $\Q$.

For $P(x,y)=y^2-F(x)$, the equalities
$\partial P/\partial y=2y$ and $\partial P/\partial x=-F'(x)$
show that the two partial derivatives cannot vanish at a point of
$U_{-1,x}$, because $F$ has no repeated root.
For $R(t,w)=w^2-\prod_{i=0}^g h_i(t)$, the equations $t=0$ and
$R(t,w)=0$ give $w=1$ or $w=-1$, and hence
$(\partial R/\partial w)(0,w)=2w\ne0$.
The Jacobian criterion proves that $C_{-1}$ is smooth over $\Q$.

By the definitions of $\rho_{-1}$ and $\rho_0$,
\begin{equation}\label{eq:start-inverseimages-affine-opens}
 \begin{array}{ll}
 \rho_{-1}^{-1}(B_x)=\Spec A_{-1,x},&
 \rho_{-1}^{-1}(B_t)=\Spec A_{-1,t},\\
 \rho_0^{-1}(B_x)=\Spec A_{0,x},&
 \rho_0^{-1}(B_t)=\Spec A_{0,t}.
 \end{array}
\end{equation}
The rings $A_{-1,x}$ and $A_{-1,t}$ are free of rank two over
$\Q[x]$ and $\Q[t]$, respectively. The rings $A_{0,x}$ and
$A_{0,t}$ are free of rank $2^{g+1}$ over $\Q[x]$ and $\Q[t]$,
respectively. Thus $\rho_{-1}$ and $\rho_0$ are finite morphisms,
and $C_{-1}$ and $C_0$ are projective over $\Q$.
Finally, the field degrees give
\begin{equation}\label{eq:start-rho-field-degrees}
 \begin{aligned}
 \deg\rho_{-1}&=[\Q(x)(\sqrt F):\Q(x)]=2,\\
 \deg\rho_0&=[\Q(x)(\sqrt{f_0},\ldots,\sqrt{f_g}):\Q(x)]
            =2^{g+1},
 \end{aligned}
\end{equation}
as desired.
\end{proof}

For $\boldsymbol\epsilon=(\epsilon_0,\ldots,\epsilon_g)
\in\{1,-1\}^{g+1}$, define automorphisms of $A_{0,x}$ and
$A_{0,t}$ by
\begin{equation}\label{eq:start-automorphism-generator-images}
 \begin{aligned}
 \sigma_{\boldsymbol\epsilon,x}^*(x)&=x,&
 \sigma_{\boldsymbol\epsilon,x}^*(z_i)&=\epsilon_i z_i,\\
 \sigma_{\boldsymbol\epsilon,t}^*(t)&=t,&
 \sigma_{\boldsymbol\epsilon,t}^*(v_i)&=\epsilon_i v_i.
 \end{aligned}
\end{equation}
After localization, the isomorphism $\beta_0$ satisfies
\begin{equation}\label{eq:start-automorphism-gluing-compatibility}
 \beta_0\circ\sigma_{\boldsymbol\epsilon,t}^*[t^{-1}]
 =\sigma_{\boldsymbol\epsilon,x}^*[x^{-1}]\circ\beta_0.
\end{equation}
Consequently, $\Spec\sigma_{\boldsymbol\epsilon,x}^*$ and
$\Spec\sigma_{\boldsymbol\epsilon,t}^*$ define an automorphism
$\sigma_{\boldsymbol\epsilon}:C_0\to C_0$
with $\rho_0\circ\sigma_{\boldsymbol\epsilon}=\rho_0$.
Every $\Q(x)$-automorphism of $\Q(C_0)$ sends $z_i$ to one of the
two roots $z_i,-z_i$ of $T^2-f_i(x)$. Hence
\begin{equation}\label{eq:start-rho-automorphism-group}
 \operatorname{Aut}_{\mathbb P^1_\Q}(C_0)
       =\{\sigma_{\boldsymbol\epsilon}:
                       \boldsymbol\epsilon\in\{1,-1\}^{g+1}\}.
\end{equation}
The identity $\pi_0\circ\sigma_{\boldsymbol\epsilon}=\pi_0$
holds if and only if
$\sigma_{\boldsymbol\epsilon}^*(\prod_i z_i)=\prod_i z_i$,
or equivalently $\prod_i\epsilon_i=1$.
We therefore put
\begin{equation}\label{eq:start-explicit-sign-group}
 G_0=\left\{(\epsilon_0,\ldots,\epsilon_g)\in\{1,-1\}^{g+1}:
              \prod_{i=0}^{g}\epsilon_i=1\right\}.
\end{equation}
The isomorphism $\mathbb F_2^g\to G_0$ is
\begin{equation}\label{eq:start-sign-group-isomorphism}
 (a_0,\ldots,a_{g-1})\longmapsto
 ((-1)^{a_0},\ldots,(-1)^{a_{g-1}},(-1)^{\sum_{i=0}^{g-1}a_i}).
\end{equation}

Put $K=\Q(C_{-1})=\Q(x,y)$ and define
\begin{equation}\label{eq:start-explicit-function-field-L}
 L=\Q(C_0)=\mathbb Q(x)(z_0,\ldots,z_g),\qquad z_i^2=f_i(x).
\end{equation}

\begin{proposition}\label{prop:start-galois}
The morphism $\pi_0:C_0\to C_{-1}$ is a finite Galois cover of degree
$2^g$ with group $G_0\simeq\mathbb F_2^g$. The action of
$\boldsymbol\epsilon\in G_0$ on $C_0$ is the automorphism
$\sigma_{\boldsymbol\epsilon}$ defined above.
\end{proposition}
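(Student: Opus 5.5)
The plan is to work at the level of function fields and then pass to the curves. First, I will show that $\pi_0$ is finite. On the affine opens $\rho_{-1}^{-1}(B_x)=\Spec A_{-1,x}$ and $\rho_0^{-1}(B_x)=\Spec A_{0,x}$, the ring $A_{0,x}$ is finite free over $\Q[x]$. It is therefore finite over the subring $A_{-1,x}$, viewed as an algebra via $\alpha_x$. The same holds on the $t$-charts. Since $\rho_{-1}\circ\pi_0=\rho_0$, these charts are compatible, so $\pi_0$ is affine and finite; equivalently, it is a morphism of proper curves over $\mathbb P^1_\Q$ with $\rho_0$ finite. Since $\pi_0$ is nonconstant, it is dominant, and its degree is $[L:K]$. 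Using $\deg\rho_0=2^{g+1}$, $\deg\rho_{-1}=2$ (Proposition~\ref{prop:start-curves}) and multiplicativity of degrees in $\Q(x)\subset K\subset L$, we get $[L:K]=2^{g+1}/2=2^g$.

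Next, I will identify the Galois group. By Proposition~\ref{prop:start-curves}, $\Q(x)\subset L$ is the multiquadratic extension of degree $2^{g+1}$. It is Galois, and its group is $\operatorname{Aut}_{\mathbb P^1_\Q}(C_0)=\{\sigma_{\boldsymbol\epsilon}\}$ of order $2^{g+1}$, by \eqref{eq:start-rho-automorphism-group} together with the fact that every $\boldsymbol\epsilon$ gives a field automorphism. Since $K$ is an intermediate field, $L/K$ is Galois, and $\Gal(L/K)$ is the subgroup of $\Gal(L/\Q(x))$ fixing $K=\Q(x,y)$. Here $y$ corresponds to $\prod_i z_i$ under $\alpha$. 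The automorphism $\sigma_{\boldsymbol\epsilon}$ fixes this product exactly when $\prod\epsilon_i=1$, so $\Gal(L/K)=G_0$. As a consistency check, $G_0$ has order $2^g$, matching $[L:K]$. The isomorphism $G_0\simeq\F^g$ is \eqref{eq:start-sign-group-isomorphism}.

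Finally, I will pass from the field statement to the cover. Since $C_0$ is normal and $\pi_0$ is finite with function field extension $L/K$, $C_0$ is the normalization of $C_{-1}$ in $L$. Hence $\pi_0$ is a finite Galois cover in the sense of Section~\ref{sec:dictionary}. The action of $\boldsymbol\epsilon\in G_0$ on $C_0$ is the unique automorphism over $C_{-1}$ inducing the corresponding field automorphism, and this is $\sigma_{\boldsymbol\epsilon}$, since $\pi_0\circ\sigma_{\boldsymbol\epsilon}=\pi_0$. Uniqueness holds because a morphism between integral separated schemes is determined by its behaviour at the generic point.

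The only mildly delicate step is the finiteness of $\pi_0$ and the identification of $\pi_0$ with the normalization. I will handle both by using the finiteness of $\rho_0$ over $\mathbb P^1$: since $\rho_0$ is finite and $\rho_{-1}$ is separated, the factor $\pi_0$ is finite.
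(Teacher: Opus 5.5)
Your proposal is correct and follows essentially the same route as the paper: you identify $\Gal(L/\Q(x))$ with the full sign group, note that the subgroup fixing $y=\prod_i z_i$ is $G_0$, and check finiteness chart by chart over the $x$- and $t$-opens. The differences are cosmetic: you get finiteness from $A_{0,x}$ being finite free over $\Q[x]$, whereas the paper uses the monic equations $z_i^2-f_i(x)=0$ over $A_{-1,x}$, and you get $[L:K]=2^g$ from multiplicativity of degrees rather than from $L^{G_0}=K$.
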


\begin{proof}
The extension $L/\Q(x)$ is Galois of degree $2^{g+1}$, with
Galois group $\{\sigma_{\boldsymbol\epsilon}^*:
\boldsymbol\epsilon\in\{1,-1\}^{g+1}\}$.
The subgroup fixing $y=\prod_i z_i$ is $G_0$, and
\begin{equation}\label{eq:start-fixed-field-identities}
 L^{G_0}=K,\qquad
 [L:K]=2^g,\qquad \operatorname{Gal}(L/K)=G_0.
\end{equation}
The inverse images of the affine opens of $C_{-1}$ are
$\pi_0^{-1}(U_{-1,x})=U_{0,x}$ and
$\pi_0^{-1}(U_{-1,t})=U_{0,t}$.
The $A_{-1,x}$-algebra $A_{0,x}$ is generated by $z_0,\ldots,z_g$,
which satisfy the monic equations $z_i^2-f_i(x)=0$.
The $A_{-1,t}$-algebra $A_{0,t}$ is generated by $v_0,\ldots,v_g$,
which satisfy the monic equations $v_i^2-h_i(t)=0$.
Thus $A_{0,x}$ is finite over $A_{-1,x}$ and $A_{0,t}$ is finite
over $A_{-1,t}$. Consequently, $\pi_0$ is finite, and
\begin{equation}\label{eq:start-pi-degree-group}
 \deg\pi_0=[L:K]=2^g,\qquad
 \operatorname{Aut}_{C_{-1}}(C_0)=G_0.
\end{equation}
\end{proof}

For $0\leq a\leq 2g+1$, let $W_a\in U_{-1,x}(\mathbb Q)$ be the
point with $x(W_a)=a$ and $y(W_a)=0$.
Let $P_+,P_-\in U_{-1,t}(\Q)$ be defined by
\begin{equation}\label{eq:start-base-infinity-coordinates}
 t(P_+)=t(P_-)=0,\qquad w(P_+)=1,\qquad w(P_-)=-1.
\end{equation}

\begin{proposition}\label{prop:start-etale}
The morphism $\pi_0:C_0\to C_{-1}$ is \'{e}tale.
\end{proposition}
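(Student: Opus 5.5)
We show étaleness chart by chart, using the two affine opens $U_{-1,x}$ and $U_{-1,t}$ of $C_{-1}$ and their preimages $U_{0,x}$ and $U_{0,t}$. Proposition~\ref{prop:start-galois} already gives that $\pi_0$ is finite. Proposition~\ref{prop:start-curves} gives that source and target are smooth curves over $\Q$. So $\pi_0$ is flat, being a finite dominant morphism of regular curves. It therefore suffices to show that $\pi_0$ is unramified, i.e.\ that $\Omega_{C_0/C_{-1}}=0$. Equivalently, by the Jacobian criterion for a presentation over the base, we exhibit at each geometric point of $C_0$ a suitable $g\times g$ invertible minor.

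On $U_{0,x}$ we present $A_{0,x}$ over $A_{-1,x}=\Q[x,y]/(y^2-F)$ as the quotient of $A_{-1,x}[z_0,\dots,z_g]$ by the $g+2$ relations
\[
z_i^2-f_i(x)\ (0\le i\le g),\qquad y-\prod_i z_i .
\]
The relative differentials are then generated by $dz_0,\dots,dz_g$ subject to
\[
2z_i\,dz_i=0,\qquad \sum_j\Bigl(\prod_{i\neq j}z_i\Bigr)dz_j=0 .
\]
Fix a geometric point $a$. As in the proof of Proposition~\ref{prop:start-curves}, at most one coordinate $z_j(a)$ vanishes, because the $f_i$ have disjoint roots.

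If no $z_i(a)$ vanishes, the relations $2z_i\,dz_i=0$ kill every $dz_i$ locally, since $2$ is a unit over $\Q$. If $z_j(a)=0$, these relations kill $dz_i$ for $i\ne j$. The remaining relation then becomes $\bigl(\prod_{i\ne j}z_i\bigr)dz_j=0$, and the coefficient $\prod_{i\ne j}z_i$ is a unit at $a$, so $dz_j$ dies as well. Hence $\Omega_{U_{0,x}/U_{-1,x}}$ vanishes at every point.

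On $U_{0,t}$ the same argument applies with $v_i$ and $w=\prod_i v_i$. The points not already covered by $U_{0,x}$ have $t=0$, where $v_i^2=1$, so all $v_i$ are units and every $dv_i$ is killed directly.

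The only step needing care is the fibre over the branch points $x=j$ or $x=j+\ell$, where $\rho_0$ itself ramifies. There the key input is that the product relation supplies the missing derivative. This is precisely why $\pi_0$ is unramified although $C_0\to\mathbb P^1$ is not: the ramification of $\sqrt{f_j}$ at $x=j$ is absorbed by the ramification of $y$ at $W_j$. As a sanity check, Riemann--Hurwitz can be verified on genera via the count of branch points of $\rho_0$. Finally, unramified together with flat gives étale.
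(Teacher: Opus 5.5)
Your proof is correct, but it takes a different route from the paper. The paper never computes differentials. It uses $\operatorname{div}(f_i)=2W_i+2W_{i+\ell}-2P_+-2P_-$ on $C_{-1}$ to write $f_i=\varpi^{2n_i}q_i$ with $q_i$ a unit at each closed point $P$. Each algebra $\mathcal O_{C_{-1},P}[T_i]/(T_i^2-q_i)$ is then finite \'etale, and so is their tensor product. The image of that tensor product in $L$ is one of its normal domain factors, hence the integral closure of $\mathcal O_{C_{-1},P}$ in $L$, which is the local piece of $C_0$.

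You instead present $A_{0,x}$ over $A_{-1,x}$ by the relations $z_i^2-f_i$ and $y-\prod_i z_i$. You show $\Omega_{C_0/C_{-1}}=0$ by letting the product relation kill $dz_j$ at the unique point where $z_j$ vanishes. Flatness comes from $\pi_{0*}\mathcal O_{C_0}$ being torsion-free over the regular curve $C_{-1}$.

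Your version is more elementary. It shows concretely how the ramification of $\sqrt{f_j}$ over $x=j$ and $x=j+\ell$ is absorbed by that of $y$. The paper's version is intrinsic to the normalization of $C_{-1}$ in $L$ and needs no global equations for $C_0$. It is also the same ``unit times a square'' Kummer mechanism the paper reuses later for the integral models.

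One small slip, which your argument does not depend on: the parenthetical reformulation via a ``$g\times g$'' Jacobian minor is off by one. There are $g+1$ variables $z_0,\dots,z_g$, so a local \'etale presentation uses $g+1$ of the $g+2$ relations. For example, near a zero of $z_j$ you drop $z_j^2-f_j$, which follows locally from $y^2=F$ and the other relations. The relevant minor is therefore $(g+1)\times(g+1)$.
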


\begin{proof}
For $0\le i\le g$, the divisor of the rational function $f_i$ on
$C_{-1}$ is
\begin{equation}\label{eq:start-explicit-divisors}
 \operatorname{div}(f_i)=2W_i+2W_{i+\ell}-2P_+-2P_-.
\end{equation}
Let $P$ be a closed point of $C_{-1}$, put
$A=\mathcal O_{C_{-1},P}$, and choose a uniformizer $\varpi$ of
$A$. For $0\le i\le g$, equation
\eqref{eq:start-explicit-divisors} gives integers $n_i$ with
$v_P(f_i)=2n_i$. Define
\begin{equation}\label{eq:start-local-etale-algebras}
 q_i=\varpi^{-2n_i}f_i\in A^\times,\qquad
 B_i=A[T_i]/(T_i^2-q_i).
\end{equation}
The element $T_i$ is a unit of $B_i$, since $T_i^2=q_i\in A^\times$.
Also $2\in A^\times$, so
$\partial(T_i^2-q_i)/\partial T_i=2T_i\in B_i^\times$.
It follows that $B_i$ is finite \'etale over $A$; see \st{02GH}.
The tensor product $B=\bigotimes_{i=0}^g B_i$ is consequently
finite \'etale over $A$.
Define an $A$-algebra homomorphism
\begin{equation}\label{eq:start-local-algebra-map}
 \varphi:B\longrightarrow L,\qquad
 T_i\longmapsto\varpi^{-n_i}z_i,
\end{equation}
and put $B_0=\varphi(B)$. A finite \'etale algebra over the normal
ring $A$ is a product of normal domains. The kernel of $\varphi$
is a prime ideal whose intersection with $A$ is zero, since
$\varphi|_A$ is the inclusion $A\subset L$.
Because $B$ is integral over $A$, this kernel is a minimal prime
of $B$. Thus $B_0=B/\ker\varphi$ is one of the normal domain
factors of $B$, and is finite \'etale over $A$.
Furthermore,
$\operatorname{Frac}(B_0)=K(z_0,\ldots,z_g)=L$.
Thus $B_0$ is the integral closure of $A$ in $L$.
The curve $C_0$ is normal and finite over $C_{-1}$, with function
field $L$. Hence $C_0$ is the normalization of $C_{-1}$ in $L$,
and
\begin{equation}\label{eq:start-local-normalization-identity}
 C_0\times_{C_{-1}}\Spec A=\Spec B_0.
\end{equation}
Since $B_0$ is finite \'etale over $A$, $\pi_0$ is \'etale at every
point above $P$. This holds for every closed point $P$ of $C_{-1}$,
and $L/K$ is separable, so $\pi_0$ is \'etale everywhere.
\end{proof}

\begin{proposition}\label{prop:start-genera}
The curves $C_{-1}$ and $C_0$ satisfy
\begin{equation}\label{eq:start-explicit-genera}
 g(C_{-1})=g,\qquad g(C_0)=1+2^g(g-1).
\end{equation}
\end{proposition}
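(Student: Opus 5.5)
The genus of $C_{-1}$ comes from Riemann--Hurwitz applied to $\rho_{-1}$, and the genus of $C_0$ then follows from Riemann--Hurwitz applied to the étale cover $\pi_0$.

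For $C_{-1}$, Proposition~\ref{prop:start-curves} shows that $\rho_{-1}:C_{-1}\to\mathbb P^1_\Q$ is a finite morphism of degree two between smooth projective geometrically integral curves. We compute its ramification geometrically over $k=\overline\Q$.

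Over $B_x$, the ring $A_{-1,x}\otimes k=k[x][y]/(y^2-F)$ is étale over $k[x]$ wherever $2y$ is a unit, that is, away from $y=0$. The points with $y=0$ are $W_0,\dots,W_{2g+1}$. At each $W_a$ the element $y$ is a uniformizer, because $F$ has a simple root at $a$ (the smoothness computation of Proposition~\ref{prop:start-curves}), and $x-a=y^2\cdot(\text{unit})$. So each $W_a$ has ramification index $2$.

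Over $t=0$, the fibre consists of the two distinct points $P_\pm$, so $\rho_{-1}$ is unramified there. Alternatively, $\partial R/\partial w=2w\neq0$ at $t=0$ shows étaleness directly.

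In characteristic zero all ramification is tame. The Riemann--Hurwitz formula (\st{0C1B}) therefore gives
\begin{equation*}
2g(C_{-1})-2=2(0-2)+(2g+2)(2-1)=2g-2,
\end{equation*}
so $g(C_{-1})=g$.

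For $C_0$, Propositions~\ref{prop:start-galois} and~\ref{prop:start-etale} say that $\pi_0:C_0\to C_{-1}$ is finite étale of degree $2^g$. Riemann--Hurwitz with empty ramification divisor gives $2g(C_0)-2=2^g(2g-2)$, hence $g(C_0)=1+2^g(g-1)$.

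As a consistency check, one can apply Riemann--Hurwitz to $\rho_0$ of degree $2^{g+1}$ from Proposition~\ref{prop:start-curves}. It is ramified exactly above $x=0,\dots,2g+1$, with index $2$ and $2^g$ points over each, and is unramified at infinity because the $h_i(0)=1$ are squares. This gives $2g(C_0)-2=-2^{g+2}+(2g+2)2^g$, which agrees.

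There is no real obstacle here. The only point requiring care is the local verification that the fibres over $x=a$ and over $t=0$ have the stated structure, and this is already essentially contained in the Jacobian computations of Proposition~\ref{prop:start-curves}.
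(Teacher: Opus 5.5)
Your proposal is correct and follows the paper's proof. Riemann--Hurwitz for the double cover $\rho_{-1}$, ramified with index two exactly at the $2g+2$ points $W_a$ and unramified at $P_\pm$, gives $g(C_{-1})=g$, and Riemann--Hurwitz for the \'etale degree-$2^g$ cover $\pi_0$ then gives $g(C_0)=1+2^g(g-1)$; your extra cross-check via $\rho_0$ is not in the paper but is also correct.
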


\begin{proof}
The degree-two morphism $\rho_{-1}:C_{-1}\to\mathbb P^1_\Q$ is
ramified precisely at $W_0,\ldots,W_{2g+1}$, with ramification
index two at each $W_a$. At $P_+$ and $P_-$, the nonvanishing
partial derivative
$\partial(w^2-\prod_i h_i(t))/\partial w=2w$
shows that $\rho_{-1}$ is unramified.
The Riemann--Hurwitz formula for $\rho_{-1}$
(\cite[Section~7.4.2]{Liu}, \st{0C1B}) gives
\begin{equation}\label{eq:start-rho-riemann-hurwitz}
 2g(C_{-1})-2=-4+(2g+2)=2g-2.
\end{equation}
Applying the Riemann--Hurwitz formula to the \'etale morphism
$\pi_0:C_0\to C_{-1}$ gives
\begin{equation}\label{eq:start-pi-riemann-hurwitz}
 2g(C_0)-2=2^g(2g(C_{-1})-2)=2^g(2g-2).
\end{equation}
Thus $g(C_{-1})=g$ and $g(C_0)=1+2^g(g-1)$.
\end{proof}

Define $Q_+\in U_{0,t}(\mathbb Q)$ to be the point with coordinates
\begin{equation}\label{eq:start-explicit-rational-point}
 t(Q_+)=0,\qquad v_i(Q_+)=1\quad(0\le i\le g).
\end{equation}
The equations $v_i(Q_+)^2=h_i(t(Q_+))=1$ verify that $Q_+$
belongs to $U_{0,t}(\mathbb Q)$.
Equation \eqref{eq:start-explicit-ring-maps} gives
$\pi_0(Q_+)=P_+$.

\begin{proposition}\label{prop:start-infinity}
Write $P_1=P_+$ and $P_{-1}=P_-$. For $\delta\in\{1,-1\}$, define
\begin{equation}\label{eq:start-infinity-sign-index-set}
 E_\delta=\left\{\boldsymbol\epsilon\in\{1,-1\}^{g+1}:
                                  \prod_{i=0}^g\epsilon_i=\delta\right\}.
\end{equation}
For $\boldsymbol\epsilon\in E_\delta$, let
$Q_{\boldsymbol\epsilon}\in U_{0,t}(\Q)$ be the point defined by
\begin{equation}\label{eq:start-explicit-infinity-points}
 t(Q_{\boldsymbol\epsilon})=0,\qquad
 v_i(Q_{\boldsymbol\epsilon})=\epsilon_i\quad(0\le i\le g).
\end{equation}
Then
\begin{equation}\label{eq:start-infinity-split-fibre}
 C_0\times_{C_{-1},P_\delta}\Spec\Q
       =\coprod_{\boldsymbol\epsilon\in E_\delta}
                                      \{Q_{\boldsymbol\epsilon}\}
       \simeq\coprod_{a=1}^{2^g}\Spec\Q.
\end{equation}
The action of $G_0$ on
$\{Q_{\boldsymbol\epsilon}:\boldsymbol\epsilon\in E_\delta\}$
is simply transitive.
\end{proposition}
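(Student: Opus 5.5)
The fibre over $P_\delta$ can be computed directly in the affine chart $U_{0,t}$, since $P_\pm$ lie in $U_{-1,t}$ and $\pi_0^{-1}(U_{-1,t})=U_{0,t}$ by the proof of Proposition~\ref{prop:start-galois}.

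The point $P_\delta$ is the closed subscheme of $U_{-1,t}$ cut out by the ideal $(t,w-\delta)$ of $A_{-1,t}$. Its residue ring is $A_{-1,t}/(t,w-\delta)=\Q$; this uses $\prod_i h_i(0)=1=\delta^2$. Tensoring along $\alpha_t$ gives
\[
C_0\times_{C_{-1},P_\delta}\Spec\Q=\Spec\,\Q[v_0,\ldots,v_g]\Big/\Bigl(v_i^2-1\ (0\le i\le g),\ \prod_{i=0}^g v_i-\delta\Bigr),
\]
because $h_i(0)=1$.

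The ring $R=\Q[v_0,\ldots,v_g]/(v_i^2-1)$ is $\bigotimes_i\Q[v_i]/(v_i^2-1)\simeq\bigotimes_i(\Q\times\Q)$. By the Chinese remainder theorem it is isomorphic to $\Q^{\{1,-1\}^{g+1}}$, via evaluation at the $2^{g+1}$ sign vectors $\boldsymbol\epsilon$. In particular $R$ is a product of copies of $\Q$ indexed by the points $Q_{\boldsymbol\epsilon}$ with $t=0$, $v_i=\epsilon_i$.

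In this product decomposition the element $\prod_i v_i-\delta$ has $\boldsymbol\epsilon$-coordinate $\prod_i\epsilon_i-\delta$. This is $0$ when $\boldsymbol\epsilon\in E_\delta$, and it is $\pm2$, a unit in $\Q$, otherwise. Quotienting by it therefore kills exactly the factors with $\boldsymbol\epsilon\notin E_\delta$. This leaves $\Q^{E_\delta}$, and since $|E_\delta|=2^g$ we obtain \eqref{eq:start-infinity-split-fibre}. The only care needed is that the $\pm2$ is invertible, which holds over $\Q$.

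For the group action, $\sigma_{\boldsymbol\eta}$ with $\boldsymbol\eta\in G_0$ acts on $A_{0,t}$ by $v_i\mapsto\eta_iv_i$, so it sends $Q_{\boldsymbol\epsilon}$ to $Q_{\boldsymbol\eta\boldsymbol\epsilon}$ with coordinatewise product. Since $\prod\eta_i=1$, this action preserves $E_\delta$. The set $E_\delta$ is a coset of the subgroup $G_0$ of $\{1,-1\}^{g+1}$, and $G_0$ acts on it by multiplication, so the action is simply transitive. Its orbit map $\boldsymbol\eta\mapsto\boldsymbol\eta\boldsymbol\epsilon$ is injective and both sets have size $2^g$.

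None of these steps is a real obstacle. The one point to state carefully is that the fibre sits entirely inside $U_{0,t}$, which follows from the preimage identity above.
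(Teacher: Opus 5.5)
Your proposal is correct and follows essentially the same route as the paper: you compute the fibre ring $\Q[v_0,\ldots,v_g]/(v_i^2-1,\ \prod_i v_i-\delta)$ in the chart $U_{0,t}$, split it by evaluation at sign vectors, and check that $\sigma_{\boldsymbol\eta}(Q_{\boldsymbol\epsilon})=Q_{\boldsymbol\eta\boldsymbol\epsilon}$, so that $E_\delta$ is a $G_0$-coset. Your derivation of the fibre ring via $\pi_0^{-1}(U_{-1,t})=U_{0,t}$, and your remark that the discarded factors are killed by the unit $\pm2$, fill in details the paper leaves implicit.
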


\begin{proof}
The fibre $C_0\times_{C_{-1},P_\delta}\Spec\Q$ has coordinate ring
\begin{equation}\label{eq:start-explicit-infinity-fibre-ring}
 \Q[v_0,\ldots,v_g]/(v_i^2-1\ (0\leq i\leq g),\,
                              \prod_i v_i-\delta).
\end{equation}
Evaluation at the tuples $\boldsymbol\epsilon\in E_\delta$ gives
an isomorphism from the ring in
\eqref{eq:start-explicit-infinity-fibre-ring} to
$\prod_{\boldsymbol\epsilon\in E_\delta}\Q$, because
$T^2-1=(T-1)(T+1)$ has distinct roots in $\Q$.
For $\boldsymbol\eta\in G_0$ and
$\boldsymbol\epsilon\in E_\delta$, the automorphism
$\sigma_{\boldsymbol\eta}$ satisfies
\begin{equation}\label{eq:start-infinity-action-formula}
 \sigma_{\boldsymbol\eta}(Q_{\boldsymbol\epsilon})
       =Q_{(\eta_i\epsilon_i)_{i=0}^g}.
\end{equation}
For $\boldsymbol\epsilon,\boldsymbol\epsilon'\in E_\delta$,
there is exactly one $\boldsymbol\eta\in G_0$ with
$\eta_i\epsilon_i=\epsilon_i'$ for all $i$, namely
$\eta_i=\epsilon_i'\epsilon_i$. Thus the action of $G_0$ on
$\{Q_{\boldsymbol\epsilon}:\boldsymbol\epsilon\in E_\delta\}$
is simply transitive.
\end{proof}

Define rational functions on $C_{-1}$ by
\begin{equation}\label{eq:start-kummer-functions}
 u_i=\frac{f_i(x)}{(x-g)^2}
     =\frac{(x-i)(x-i-\ell)}{(x-g)^2}\quad(0\leq i<g).
\end{equation}
Choose the square root $\sqrt{u_i}=z_i/(x-g)$ in $L$.
Then
\begin{equation}\label{eq:start-explicit-kummer-field}
 L=K(\sqrt{u_0},\ldots,\sqrt{u_{g-1}}),\qquad
 \sqrt{u_i}=\frac{z_i}{x-g}.
\end{equation}
Indeed, the chosen elements $\sqrt{u_i}$ satisfy
\begin{equation}\label{eq:start-explicit-kummer-generators}
 z_i=(x-g)\sqrt{u_i}\quad(0\leq i<g),\qquad
 z_g=\frac{y}{\prod_{i=0}^{g-1}z_i}.
\end{equation}
Equations \eqref{eq:start-explicit-cover-overlap} and
\eqref{eq:start-explicit-kummer-generators} give
\begin{equation}\label{eq:start-kummer-infinity-value}
 \sqrt{u_i}=\frac{v_i}{1-gt}\in A_{0,t}[(1-gt)^{-1}],
 \qquad \sqrt{u_i}(Q_+)=1\quad(0\le i<g).
\end{equation}
The evaluation at $Q_+$ is defined because $1-gt(Q_+)=1$.

\subsection{The arithmetic models}

Put
\begin{equation}\label{eq:start-explicit-bad-primes}
 N=2g+2=2\ell,\qquad R=N-1,\qquad
 \Sigma=\{p\text{ prime}:p\leq R\},\qquad
 U=\Spec\Z\bigl[1/\prod_{p\in\Sigma}p\bigr].
\end{equation}
We will construct a regular surface $P_m$ by blowing up closed points
of $\mathbb P^1_\Z$ over primes in $\Sigma$. We will then normalize
$P_m$ in $K$, resolve the singularities of this normalization over odd
primes to obtain $S_{-1}$, and define $S_0$ by normalizing
$S_{-1}$ in $L$.
The definitions of normalization, blow-up and strict transform are in
Section~\ref{sec:dictionary}; see also
\cite[Sections~4.1.2 and~8.1.1]{Liu}, \st{0BAK} and \st{01OF}.

For $0\leq a<N$, let
$\sigma_a:\Spec\Z\to\mathbb P^1_\Z$ be the section defined by
$x-a=0$ in $\Spec\Z[x]$. For each surface $T$ constructed below,
let $D_{a,T}\subset T$ denote the strict transform of
$\sigma_a(\Spec\Z)$ under $T\to\mathbb P^1_\Z$.
We will use the following observation at each step. Suppose
$b:T'\to T$ is the blow-up of a reduced union $Z$ of closed points
of $T_{\mathbb F_p}$, and let $\mathcal I_Z\subset\mathcal O_T$
be the ideal sheaf of $Z$. If $D_{a,T}$ is the image of a section
$\sigma_{a,T}:\Spec\Z\to T$, then
\begin{equation}\label{eq:start-model-section-ideal}
 \sigma_{a,T}^{-1}\mathcal I_Z\cdot\mathcal O_{\Spec\Z}
 =\begin{cases}
   p\mathcal O_{\Spec\Z},&\sigma_{a,T}(p)\in Z,\\
   \mathcal O_{\Spec\Z},&\sigma_{a,T}(p)\notin Z.
  \end{cases}
\end{equation}
In the first case the equality follows locally because the ideal of
$Z$ contains $p$, and every element of that ideal vanishes at
$\sigma_{a,T}(p)$. Both ideals on the right of
\eqref{eq:start-model-section-ideal} are invertible. The universal
property of the blow-up therefore gives a unique section
$\sigma_{a,T'}:\Spec\Z\to T'$ satisfying
$b\circ\sigma_{a,T'}=\sigma_{a,T}$. Its image is $D_{a,T'}$, so
$D_{a,T'}\simeq\Spec\Z$.

We now specify all the blow-ups used to define $P_m$.
\begin{enumerate}[label=(\roman*)]
\item Write the odd primes in $\Sigma$ as
$p_1<\cdots<p_s$, and set $T^{(0)}=\mathbb P^1_\Z$.
Suppose $T^{(r-1)}$ has been constructed, where $1\leq r\leq s$,
and put $p=p_r$ and $T_{p,0}=T^{(r-1)}$.
For $k\geq0$, define the finite reduced closed subscheme
\begin{equation}\label{eq:start-model-odd-centres}
 Z_{p,k}=
 \left(\bigcup_{0\leq a<b<N}
       (D_{a,T_{p,k}}\cap D_{b,T_{p,k}}\cap(T_{p,k})_{\mathbb F_p})
 \right)_{\mathrm{red}}.
\end{equation}
If $Z_{p,k}\ne\varnothing$, let $\mathcal I_{p,k}$ be its ideal
sheaf on $T_{p,k}$ and define
\begin{equation}\label{eq:start-model-odd-blowup}
 b_{p,k}:T_{p,k+1}=\operatorname{Bl}_{\mathcal I_{p,k}}(T_{p,k})
       \longrightarrow T_{p,k}.
\end{equation}
If $Z_{p,k}=\varnothing$, stop the recursion for $p$, put $k_p=k$,
and define $T^{(r)}=T_{p,k_p}$.

Here are affine opens proving that $k_p$ exists. Set
\begin{equation}\label{eq:start-model-odd-chart-indices}
 \begin{aligned}
 C_{p,0}&=\{0\},\\
 C_{p,k}&=\{c:0\leq c<p^k,\
       |\{a:0\leq a<N,\ a\equiv c\pmod{p^k}\}|\geq2\}
       \quad(k\geq1).
 \end{aligned}
\end{equation}
For each $c\in C_{p,k}$, the surface
$T_{p,k}\times_{\Spec\Z}\Spec\Z_{(p)}$ has an affine open
\begin{equation}\label{eq:start-model-odd-affine-open}
 U_{p,k,c}=\Spec\Z_{(p)}[X_{p,k,c}],\qquad
 X_{p,k,c}=\frac{x-c}{p^k}.
\end{equation}
For $k=0$, take $U_{p,0,0}=\Spec\Z_{(p)}[x]$.
For $k\geq0$, assume that $U_{p,k,c}$ has been constructed.
We construct the opens at stage $k+1$ as follows.
In $U_{p,k,c}$, define $z_{p,k,c,e}$ to be the closed point with ideal
$(p,X_{p,k,c}-e)$, where $0\leq e<p$.
The section with index $a$ meets $z_{p,k,c,e}$ precisely when
$a\equiv c+p^ke\pmod{p^{k+1}}$.
If $c+p^ke\in C_{p,k+1}$, then $z_{p,k,c,e}\in Z_{p,k}$.
The affine open of the blow-up of $U_{p,k,c}$ at $z_{p,k,c,e}$
corresponding to the generator $p$ has coordinate ring
\begin{equation}\label{eq:start-explicit-odd-coordinate}
 \Z_{(p)}[X_{p,k,c}]
    \left[\frac{X_{p,k,c}-e}{p}\right]
     =\Z_{(p)}[X_{p,k+1,c+p^ke}],\qquad
 X_{p,k+1,c+p^ke}=\frac{x-(c+p^ke)}{p^{k+1}}.
\end{equation}
This defines $U_{p,k+1,c+p^ke}$. Under the morphism to $T_{p,k}$,
the fibre $(U_{p,k+1,c+p^ke})_{\mathbb F_p}$ maps to
$z_{p,k,c,e}$. Thus $U_{p,k+1,c+p^ke}$ has empty inverse image
of $Z_{p,k}\setminus\{z_{p,k,c,e}\}$. The blow-up along the
inverse image of $Z_{p,k}\setminus\{z_{p,k,c,e}\}$ therefore
restricts to the identity morphism of $U_{p,k+1,c+p^ke}$.
Consequently $U_{p,k+1,c+p^ke}$ is an affine open of
$T_{p,k+1}\times_{\Spec\Z}\Spec\Z_{(p)}$.
For $a\equiv c+p^ke\pmod{p^{k+1}}$, the restriction of
$\sigma_{a,T_{p,k+1}}$ to $\Spec\Z_{(p)}$ factors through
$U_{p,k+1,c+p^ke}$ because $(a-c-p^ke)/p^{k+1}$ is an integer.
These identities prove \eqref{eq:start-model-odd-affine-open}
by induction. They also show that each point of $Z_{p,k}$ belongs
to the smooth locus of $(T_{p,k})_{\mathbb F_p}$.
Choose an integer $h$ with $p^h>R$. Distinct indices
$a,b\in\{0,\ldots,N-1\}$ satisfy $a\not\equiv b\pmod{p^h}$.
Consequently $k_p\leq h-1$, and the definition of $Z_{p,k_p}$ gives
\begin{equation}\label{eq:start-model-separated-sections}
 D_{a,T^{(r)}}\cap D_{b,T^{(r)}}\cap(T^{(r)})_{\mathbb F_p}
   =\varnothing\qquad(0\leq a<b<N).
\end{equation}

\item Set $T_{2,0}=T^{(s)}$. The morphism
$T^{(s)}\to\mathbb P^1_\Z$ is an isomorphism after base change
to $\Spec\Z_{(2)}$, since the centres in
\eqref{eq:start-model-odd-centres} lie over odd primes. Thus
$(T_{2,0})_{\mathbb F_2}=\mathbb P^1_{\mathbb F_2}$; denote this
component by $E_{0,0}$ and put $X_{0,0}=x$.
For $k=0,\ldots,m-2$, the fibre $(T_{2,k})_{\mathbb F_2}$ has
components $E_{k,c}$ indexed by $0\leq c<2^k$. The base change
$T_{2,k}\times_{\Spec\Z}\Spec\Z_{(2)}$ has affine opens
\begin{equation}\label{eq:start-explicit-dyadic-coordinate}
 U_{2,k,c}=\Spec\Z_{(2)}[X_{k,c}],\qquad
 X_{k,c}=\frac{x-c}{2^k},\qquad
 E_{k,c}\cap U_{2,k,c}=V(2).
\end{equation}
For $k=0$, this is $U_{2,0,0}=\Spec\Z_{(2)}[x]$; the opens
for $k\geq1$ are defined inductively below.
For $0\leq k\leq m-3$, define $z_{k,c,e}\in U_{2,k,c}$ to be
the closed point with ideal $(2,X_{k,c}-e)$ in
$\Z_{(2)}[X_{k,c}]$, for $e\in\{0,1\}$, and put
\begin{equation}\label{eq:start-explicit-dyadic-centres}
 Z_{2,k}=\coprod_{0\leq c<2^k,\ e\in\{0,1\}} z_{k,c,e},
 \qquad \mathcal I_{2,k}=\mathcal I_{Z_{2,k}}.
\end{equation}
The point $z_{k,c,e}$ belongs to the smooth locus of
$(T_{2,k})_{\mathbb F_2}$. Define
\begin{equation}\label{eq:start-model-dyadic-blowup}
 b_{2,k}:T_{2,k+1}=\operatorname{Bl}_{\mathcal I_{2,k}}(T_{2,k})
        \longrightarrow T_{2,k}.
\end{equation}
Let $E_{k+1,c+2^ke}$ be the exceptional component over $z_{k,c,e}$.
Define $U_{2,k+1,c+2^ke}$ to be the affine open of
$T_{2,k+1}\times_{\Spec\Z}\Spec\Z_{(2)}$ corresponding to
the generator $2$ of $(2,X_{k,c}-e)$. Its coordinate ring is
\begin{equation}\label{eq:start-explicit-dyadic-next-coordinate}
 \Z_{(2)}[X_{k,c}]\left[\frac{X_{k,c}-e}{2}\right]
 =\Z_{(2)}[X_{k+1,c+2^ke}],\qquad
 X_{k+1,c+2^ke}=\frac{x-(c+2^ke)}{2^{k+1}}.
\end{equation}
The section $\sigma_{a,T_{2,k}}$ passes through $z_{k,c,e}$ if and
only if $a\equiv c+2^ke\pmod{2^{k+1}}$.
For these indices $a$, the restriction of $\sigma_{a,T_{2,k+1}}$
to $\Spec\Z_{(2)}$ factors through $U_{2,k+1,c+2^ke}$, with
$X_{k+1,c+2^ke}=(a-c-2^ke)/2^{k+1}\in\Z_{(2)}$.
This proves \eqref{eq:start-explicit-dyadic-coordinate} at stage $k+1$.
We perform exactly the morphisms $b_{2,0},\ldots,b_{2,m-3}$, and
set $P_m=T_{2,m-2}$. For $0\leq j\leq m-2$, the notation
$E_{j,c}\subset P_m$ means the strict transform on $P_m$ of the
component created on $T_{2,j}$; we call $j$ the depth of $E_{j,c}$.
No centre of a morphism in
\eqref{eq:start-model-dyadic-blowup} belongs to a component of depth
$m-2$.
\end{enumerate}
For each morphism $b_{p,k}:T_{p,k+1}\to T_{p,k}$ in
\eqref{eq:start-model-odd-blowup} or
\eqref{eq:start-model-dyadic-blowup}, the centre is contained in
$(T_{p,k})_{\mathbb F_p}$. Hence the restriction of $b_{p,k}$ is
an isomorphism
\begin{equation}\label{eq:start-model-disjoint-prime-restriction}
 T_{p,k+1}\times_{\Spec\Z}\Spec\Z[1/p]
   \xrightarrow{\ \sim\ }
 T_{p,k}\times_{\Spec\Z}\Spec\Z[1/p].
\end{equation}
In particular, if $q\ne p$, base change of $b_{p,k}$ to
$\Spec\Z_{(q)}$ is an isomorphism. Thus the calculations over
$\Z_{(p)}$ in steps (i) and (ii) also describe
$P_m\times_{\Spec\Z}\Spec\Z_{(p)}$.
Write $\widetilde\sigma_a=\sigma_{a,P_m}$ and $D_a=D_{a,P_m}$.

Every geometric fibre of $P_m\to\Spec\Z$ is a reduced union of
projective lines. Define
\begin{equation}\label{eq:start-explicit-fibre-graph}
 \begin{aligned}
 \Gamma_p&=(V_p,\mathcal E_p,\partial_p),\\
 V_p&=\{E:E\text{ is an irreducible component of }
                   (P_m)_{\overline{\mathbb F}_p}\},\\
 \mathcal E_p&=\{z:z\text{ is an intersection point of two distinct}
                   \ E,E'\in V_p\},\\
 \partial_p(z)&=\{E,E'\}\quad\text{if }z\in E\cap E'.
 \end{aligned}
\end{equation}
These definitions agree with
\eqref{eq:bg-explicit-incidence-graph}. To justify the assertions
about the fibres and the definition of $\partial_p$, consider a
single centre $z$ of a morphism $b_{p,k}$.
The point $z$ is a smooth point of $(T_{p,k})_{\mathbb F_p}$ and
lies on one component $E$ of that fibre. If $u$ is a local parameter
along $E$ at $z$, the maximal ideal at $z$ is $(p,u)$.
The affine opens of the blow-up of $(p,u)$ have coordinates $u/p$
and $p/u$; on the latter open the equation for the fibre is
$u(p/u)=0$. It follows that the exceptional divisor
$E'_z\simeq\mathbb P^1_{\mathbb F_p}$ meets the strict transform
of $E$ at one point, transversely, and meets no other fibre
component. The fibre of the blow-up is reduced.
The fibre of $\mathbb P^1_\Z$ over $p$ has one component. Induction
on the morphisms $b_{p,k}$ proves that every geometric fibre of
$P_m$ is reduced, has projective lines as components, and has
at most two components through any point, meeting transversely.
For each stage $k$, let
$\Gamma((T_{p,k})_{\overline{\mathbb F}_p})
=(V_{p,k},\mathcal E_{p,k},\partial_{p,k})$.
For $z\in Z_{p,k}$, denote the component containing $z$ by $E_z$
and the intersection of $E'_z$ with the strict transform of $E_z$
by $e_z$. The change in the graph under $b_{p,k}$ is
\begin{equation}\label{eq:start-explicit-graph-blowup}
 \begin{aligned}
 V_{p,k+1}&=V_{p,k}\amalg\{E'_z:z\in Z_{p,k}\},\\
 \mathcal E_{p,k+1}&=\mathcal E_{p,k}\amalg\{e_z:z\in Z_{p,k}\},\\
 \partial_{p,k+1}(e_z)&=\{E_z,E'_z\}\quad(z\in Z_{p,k}),\\
 \partial_{p,k+1}(e)&=\partial_{p,k}(e)\quad(e\in\mathcal E_{p,k}).
 \end{aligned}
\end{equation}
In these equalities each component in $V_{p,k}$ is identified with
its strict transform on $T_{p,k+1}$.
Thus $\Gamma_p$ is connected and
$|\mathcal E_p|=|V_p|-1$, so $\Gamma_p$ is a tree.

The coordinates in \eqref{eq:start-explicit-odd-coordinate} and
\eqref{eq:start-explicit-dyadic-next-coordinate} show that each
$D_a$ meets $(P_m)_{\mathbb F_p}$ at a smooth point of the fibre
and transversely. For odd $p$, equation
\eqref{eq:start-model-separated-sections} also makes these points
distinct for different $a$. Consequently
$\sum_aD_a+(P_m)_{\mathbb F_p}$ is a divisor with normal crossings
near $(P_m)_{\mathbb F_p}$ when $p$ is odd, in the sense of
Definition~\ref{def:bg-normal-crossings}.
Every centre of every $b_{p,k}$ is a regular closed point of
$T_{p,k}$. Blowing up a regular surface at a regular closed point
preserves regularity. The morphisms $b_{p,k}$ are projective, so
their composition $P_m\to\mathbb P^1_\Z$ is projective.
It follows that $P_m$ is a regular projective model of
$\mathbb P^1_\Q$.
For a prime $p$, put
\begin{equation}\label{eq:start-explicit-local-base}
 P_p=P_m\times_{\Spec\Z}\Spec\Z_{(p)}.
\end{equation}
The notation $(P_p)_{\overline{\mathbb F}_p}$ denotes the geometric
special fibre of $P_p\to\Spec\Z_{(p)}$.
Below, when we write a strictly henselian local ring of $P_p$ at a
geometric point $\bar z\to P_p$, we mean
$\mathcal O^{\mathrm{sh}}_{P_p,\bar z}$.

Let $\nu_{-1}:X_{-1}\to P_m$ be the normalization of $P_m$ in
\begin{equation}\label{eq:start-explicit-function-field-K}
 K=\Q(x,y),\qquad y^2=F(x).
\end{equation}
Let $Z_{-1}\subset X_{-1}$ be the reduced closed subscheme whose
points are the singular closed points of $X_{-1}$ over odd primes,
and let $\mathcal J_{-1}$ be its ideal sheaf.
The local equations in the proof of
Proposition~\ref{prop:start-models} below show that $Z_{-1}$ is a
finite set. Define
\begin{equation}\label{eq:start-model-resolution-morphism}
 b_{-1}:S_{-1}=\operatorname{Bl}_{\mathcal J_{-1}}(X_{-1})
             \longrightarrow X_{-1}.
\end{equation}
This is one blow-up along the union $Z_{-1}$: over each
$z\in Z_{-1}$ the morphism $b_{-1}$ is the blow-up of the
maximal ideal of $\mathcal O_{X_{-1},z}$.
Since $Z_{-1}$ has no point over $2$, the restriction
\begin{equation}\label{eq:start-model-resolution-at-two}
 S_{-1}\times_{\Spec\Z}\Spec\Z_{(2)}
  \xrightarrow{\ b_{-1}\ \sim\ }
 X_{-1}\times_{\Spec\Z}\Spec\Z_{(2)}
\end{equation}
is an isomorphism. Finally, define
\begin{equation}\label{eq:start-surface-map}
 S_0=\operatorname{Norm}_{L}(S_{-1}),\qquad
 \Pi_0:S_0\longrightarrow S_{-1}.
\end{equation}
For every affine open $\Spec A\subset S_{-1}$, equation
\eqref{eq:start-surface-map} means
\begin{equation}\label{eq:start-explicit-integral-closure}
 A\longrightarrow\overline A^{\,L},
 \qquad
 \Pi_0^{-1}(\Spec A)=\Spec\overline A^{\,L},
\end{equation}
where $\overline A^{\,L}=\{u\in L:u\text{ is integral over }A\}$.
If $\Spec A$ maps to $\Spec\Z[x]\subset\mathbb P^1_\Z$, then
$x,y\in A$. In this case $\overline A^{\,L}$ is also the integral
closure in $L$ of $A[z_0,\ldots,z_g]$, where
$z_i^2=f_i(x)$ and $\prod_i z_i=y$: the elements $z_i$ are
integral over $A$, so transitivity of integrality gives this equality
of integral closures. If $\Spec A$ maps to $\Spec\Z[t]$, the
corresponding subalgebra is $A[v_0,\ldots,v_g]\subset L$, with
$v_i^2=h_i(t)$ and $\prod_i v_i=w$.
On the inverse image of $\Spec\Z[x,x^{-1}]$, the identities
$t=x^{-1}$ and $v_i=x^{-1}z_i$ give
$A[x^{-1}][z_0,\ldots,z_g]=A[x^{-1}][v_0,\ldots,v_g]$.
These are the substitutions in
\eqref{eq:start-explicit-base-overlap} and
\eqref{eq:start-explicit-cover-overlap}.
The generic fibre of $\Pi_0$ is $\pi_0:C_0\to C_{-1}$.

The normalization morphisms $\nu_{-1}:X_{-1}\to P_m$ and
$\Pi_0:S_0\to S_{-1}$ are finite: schemes of finite type over
$\Z$ are excellent, and their normalizations in finite extensions
of their function fields are finite; see
\cite[Section~8.2.3]{Liu}, \st{07QS} and \st{0AVK}.

\begin{proposition}\label{prop:start-models}
The schemes $S_{-1}$ and $S_0$ are normal integral projective flat surfaces
over $\Spec\Z$, with pure one-dimensional fibres and respective generic
fibres $C_{-1}$ and $C_0$. The surface $S_{-1}$ is regular away from its
fibre at $2$.
\end{proposition}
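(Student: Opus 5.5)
The plan is to treat the formal properties (integral, normal, projective, flat, fibre dimension, generic fibres) by general arguments about normalization and blow-ups. Regularity of $S_{-1}$ away from $2$ will need explicit local equations for $X_{-1}$ over odd primes.

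\textbf{Formal properties.} $P_m$ is a regular, hence normal, integral projective flat surface with generic fibre $\mathbb P^1_\Q$, as shown above. Since $P_m$ is excellent, $\nu_{-1}:X_{-1}\to P_m$ is finite, so $X_{-1}$ is projective over $\Spec\Z$. It is integral and normal by construction, with function field $K$. It is flat because it is integral and dominates $\Spec\Z$: the function $x$ is transcendental, so $p$ is not zero in any local ring. Normalization commutes with localization, so $(X_{-1})_\Q$ is the normalization of $\mathbb P^1_\Q$ in $K$. Since $C_{-1}$ is smooth (Proposition~\ref{prop:start-curves}) and finite over $\mathbb P^1_\Q$ via $\rho_{-1}$, this normalization is $C_{-1}$. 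The blow-up $b_{-1}$ is projective and has centre in the fibres over odd primes, so $S_{-1}$ is integral, projective, flat, and has generic fibre $C_{-1}$.

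Normality of $S_{-1}$ will follow once we know $S_{-1}$ is regular away from $2$. Over $\Z_{(2)}$ it is normal by \eqref{eq:start-model-resolution-at-two}. The same argument, applied to the finite map $\Pi_0$ and using Proposition~\ref{prop:start-curves} for $C_0$, gives the properties of $S_0$. Pure one-dimensionality of the fibres follows from the principal ideal theorem argument in Section~\ref{sec:dictionary}.

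\textbf{Regularity away from $2$.} For a prime $p\notin\Sigma$, $F$ has unit discriminant. Hence $X_{-1}$ is the smooth model $y^2=F(x)$ over $\Z_{(p)}$ near the affine chart, and similarly at infinity, since $\prod_i h_i(0)=1$. For odd $p\in\Sigma$, work on $P_p$. By the normal-crossing statement above, near $(P_p)_{\mathbb F_p}$ the branch divisor $\sum_a D_a$ consists of disjoint sections meeting the fibre transversally at smooth points. In a chart $\Z_{(p)}[X]$ with $X=(x-c)/p^k$ we have $F=p^{e}\cdot u\cdot\prod_{a\equiv c}(X-\text{const}_a)$, with $u$ a unit near the fibre. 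Locally $F$ is therefore $p^{e}$ times a unit times at most one factor $X-\alpha$ near each point.

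Normalizing then gives two local models. If $e$ is even, the equation is $y'^2=u\cdot(X-\alpha)$ or $y'^2=u$, which is regular, since $p$ is odd and the cover is étale or simply ramified along a regular divisor. If $e$ is odd, $K$ ramifies along the fibre component, and locally $X_{-1}$ is $y'^2=p\,u\,(X-\alpha)$ or $y'^2=pu$. The second is regular. The first is an $A_1$-type point, $y'^2=pX'$ with $p,X'$ part of a regular system, whose maximal ideal contains $y'^2$ but $p X'$ is not in $\mathfrak m^2$-generated form. In fact $y'^2=pX'$ is a singular rational double point. A single blow-up of its maximal ideal resolves it, as one checks on the three affine charts $p/y'$, $y'/p$, $X'/p$ (in the strict henselization, after extracting $\sqrt u$ since $p$ is odd).

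Intersections of fibre components of $P_m$ must also be checked. There the branch divisor includes two fibre components with parities $e,e'$, and the local equation $y'^2=u\,p$ or $y'^2=u\,s t$-type, with $st=p$, gives a second $A_1$ point, again resolved by one blow-up.

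\textbf{Main obstacle.} The main obstacle is this case analysis: confirming that the only singular points of $X_{-1}$ over odd primes are ordinary double points of the form $y^2=ab$ with $(a,b)$ a regular system of parameters, and that one blow-up of $Z_{-1}$ makes each of them regular. I would pass to $\mathcal O^{\sh}$ to absorb units into squares, which is legitimate since $p$ is odd and regularity descends. Then I would verify the three blow-up charts of $k[[a,b,y]]/(y^2-ab)$-type rings directly.
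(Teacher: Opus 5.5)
Your proposal is correct and follows essentially the same route as the paper. The formal properties come from finiteness of normalization and from the blow-up being centred in odd fibres. Regularity away from $2$ comes from passing to strict henselizations at points of $(P_p)_{\overline{\mathbb F}_p}$, writing $F$ as a square times $u^\alpha v^\beta$ with $u,v$ local equations of the normal-crossings divisor, and resolving the only singular model $z^2=uv$ by one blow-up of its maximal ideal. The paper organizes the case analysis uniformly via that divisor rather than through your charts $X=(x-c)/p^k$. It also explicitly checks, by Serre's criterion, that $A[z]/(z^2-uv)$ is normal and hence is the normalization; you assume this when you call it a rational double point.
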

\begin{proof}
Each morphism $b_{p,k}$ in
\eqref{eq:start-model-odd-blowup} or
\eqref{eq:start-model-dyadic-blowup}, and the morphism
$b_{-1}$ in \eqref{eq:start-model-resolution-morphism}, is
projective because it is a blow-up. The finite morphisms
$\nu_{-1}:X_{-1}\to P_m$ and $\Pi_0:S_0\to S_{-1}$ are
projective by \st{0B3I}. Composing these morphisms with
$P_m\to\mathbb P^1_\Z\to\Spec\Z$ proves that the structure
morphisms $S_{-1}\to\Spec\Z$ and $S_0\to\Spec\Z$ are
projective; see \st{01WE} and \st{0B44}.
By the definition of normalization, $X_{-1}$ is normal and integral
with function field $K$, and $S_0$ is normal and integral with
function field $L$. The blow-up morphism
$b_{-1}:S_{-1}\to X_{-1}$ is birational, and $S_{-1}$ is
integral with function field $K$.

Fix an odd prime $p$. The support of
$\sum_a(D_a\cap P_p)+(P_p)_{\mathbb F_p}$ is a divisor with
normal crossings. Let $\bar z$ be a geometric closed point of
$(P_p)_{\mathbb F_p}$, and set
$A=\mathcal O^{\mathrm{sh}}_{P_p,\bar z}$.
Choose regular parameters $u,v$ in $A$ so that every component of
this divisor through $\bar z$ has equation $u=0$ or $v=0$.
Every unit of $A$ is a square in $A$: apply Hensel's lemma to
$T^2-a$ for $a\in A^\times$, using that $2\in A^\times$ and
that the residue field of $A$ is separably closed.
If $\bar z$ maps to the point at infinity of $\mathbb P^1_{\mathbb F_p}$,
use $\tau=x^{-1}$ and replace $F$ by
$\tau^{2g+2}F=\prod_i h_i(\tau)$; the factor
$\tau^{2g+2}$ is a square in $\Frac(A)^\times$.
Otherwise, the divisor of $F$ on $\Spec A$ is supported on
$V(u)\cup V(v)$, so there exist $b\in\Frac(A)^\times$ and
$\alpha,\beta\in\{0,1\}$ with
\begin{equation}\label{eq:start-model-odd-square-class}
 F=b^2u^\alpha v^\beta.
\end{equation}
At infinity, the corresponding equality is
$\tau^{2g+2}F=b^2u^\alpha v^\beta$.
Removing $b^2$ describes the normalization over $\Spec A$ by
$A[z]/(z^2-u^\alpha v^\beta)$ if
$(\alpha,\beta)\ne(0,0)$, and by $A\times A$ if
$(\alpha,\beta)=(0,0)$. The algebras with
$(\alpha,\beta)=(1,0)$ or $(0,1)$ are regular. For example,
the maximal ideal of $A[z]/(z^2-u)$ is generated by $z,v$.
For $(\alpha,\beta)=(1,1)$, put
\begin{equation}\label{eq:start-model-node-ring}
 B=A[z]/(z^2-uv),\qquad \mathfrak n=(u,v,z)B.
\end{equation}
The ring $B$ is normal, and $\Spec B\setminus\{\mathfrak n\}$
is regular: the hypersurface $\Spec B$ is Cohen--Macaulay,
and the only possible singular point of $\Spec B$ is
$V(u,v,z)$, so Serre's normality
criterion applies. Thus all singular points of $X_{-1}$ over odd
primes lie over intersections of components of the divisor on
$P_p$. This divisor has finitely many component intersections,
and $X_{-1}\to P_m$ is finite. Hence there are finitely many
singular points of $X_{-1}$ over each $p$.
For $p>R$, the sections $x=a$, $0\leq a<N$, meet
$\mathbb P^1_{\mathbb F_p}$ at distinct points, and
\eqref{eq:start-model-disjoint-prime-restriction} identifies
$P_p=\mathbb P^1_{\Z_{(p)}}$. Moreover $F$ has
valuation zero at the generic point of
$\mathbb P^1_{\mathbb F_p}$.
Thus, at each finite point, at most one of $\alpha,\beta$ in
\eqref{eq:start-model-odd-square-class} is one. At infinity,
$\prod_i h_i(0)=1$, so the algebra
$A[w]/(w^2-\prod_i h_i(t))$ is \'etale over $A$: the element
$w$ is a unit, and the derivative of its defining polynomial is
$2w\in(A[w]/(w^2-\prod_i h_i(t)))^\times$.
The regularity of the corresponding local algebras proved above
shows that $X_{-1}$ has no singular point over $p>R$. Therefore
$Z_{-1}$ is finite.

The blow-up $b_B:\operatorname{Bl}_{\mathfrak n}(\Spec B)
\to\Spec B$ has affine opens
\begin{equation}\label{eq:start-explicit-node-resolution}
 \begin{aligned}
 B_u&=\Spec A[q]/(v-uq^2),& z&=uq,\\
 B_v&=\Spec A[r]/(u-vr^2),& z&=vr.
 \end{aligned}
\end{equation}
These are the opens corresponding to the generators $u$ and $v$
of $\mathfrak n$. Denote the open corresponding to the generator $z$ by
$B_z=\Spec B[u/z,v/z]$. The identity $(u/z)(v/z)=1$ in its
coordinate ring makes both $u/z$ and $v/z$ units and gives
\begin{equation}\label{eq:start-model-resolution-open-inclusion}
 B_z\subseteq B_u\cap B_v.
\end{equation}
Let $\xi$ be a closed point of the exceptional divisor in $B_u$,
and let $b\in A$ lift the residue of $q$ at $\xi$.
The relation $v=uq^2$ shows that the maximal ideal of
$\mathcal O_{B_u,\xi}$ is generated by $u,q-b$.
This local ring has dimension two and is regular.
For a closed point $\xi\in B_v$ on the exceptional divisor,
the relation $u=vr^2$ gives the generators $v,r-b$, where
$b\in A$ lifts the residue of $r$; thus
$\mathcal O_{B_v,\xi}$ is also regular.
Writing $E_B=b_B^{-1}(\mathfrak n)$, the restriction of $b_B$ is
\begin{equation}\label{eq:start-model-resolution-complement}
 \operatorname{Bl}_{\mathfrak n}(\Spec B)\setminus E_B
     \xrightarrow{\ \sim\ }\Spec B\setminus\{\mathfrak n\}.
\end{equation}
The target of \eqref{eq:start-model-resolution-complement} is
regular, as proved after \eqref{eq:start-model-node-ring}.
Therefore $\operatorname{Bl}_{\mathfrak n}(\Spec B)$ is regular.
Applying this calculation at every point of $Z_{-1}$ proves that
$S_{-1}$ is regular over every odd prime. The generic fibre of
$S_{-1}$ is $C_{-1}$ and is regular. Over $2$, equation
\eqref{eq:start-model-resolution-at-two} identifies $S_{-1}$
with the normal scheme $X_{-1}$. Hence $S_{-1}$ is normal
everywhere and regular away from its fibre over $2$.

For $j\in\{-1,0\}$, every local ring of $S_j$ is a subring of
its characteristic-zero function field. These local rings are
torsion-free over $\Z$, and hence flat over $\Z$ by \st{0AUW};
see also \cite[Section~4.3.1]{Liu}.
The generic fibre of $X_{-1}$ is the normalization of
$\mathbb P^1_\Q$ in $K$, namely $C_{-1}$.
The centres of $b_{-1}$ lie in special fibres, so
$(S_{-1})_\Q=(X_{-1})_\Q=C_{-1}$.
The generic fibre of $S_0$ is the normalization of $C_{-1}$ in
$L$, namely $C_0$.
Finally apply \eqref{eq:bg-explicit-fibre-dimension} to an affine
open $\Spec A\subset S_j$ containing the generic point
$\mathfrak q$ of an irreducible component of $(S_j)_{\mathbb F_p}$.
Here $\operatorname{ht}(\mathfrak q)=1$ and
$\operatorname{trdeg}_\Q\Frac(A)=1$, so that equation gives
$\operatorname{trdeg}_{\mathbb F_p}\kappa(\mathfrak q)=1$.
Thus every irreducible component of $(S_j)_{\mathbb F_p}$ has
dimension one, as asserted; see \cite[Section~8.3.1]{Liu} and
\st{02IJ}.
\end{proof}

\begin{proposition}\label{prop:start-integral-galois}
The morphism $\Pi_0:S_0\to S_{-1}$ is finite and surjective of
degree $2^g$. The extension of function fields induced by $\Pi_0$
is the Galois extension $L/K$, with group $G_0\simeq\F^g$, and
\begin{equation}\label{eq:start-model-automorphism-group}
 \operatorname{Aut}_{S_{-1}}(S_0)=\operatorname{Gal}(L/K)=G_0.
\end{equation}
The generic fibre of $\Pi_0$ is $\pi_0:C_0\to C_{-1}$.
\end{proposition}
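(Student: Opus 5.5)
The plan is to read off each assertion from the definition of $S_0$ as the normalization of $S_{-1}$ in $L$, together with Propositions~\ref{prop:start-galois} and~\ref{prop:start-models}.

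Finiteness of $\Pi_0$ was already noted after \eqref{eq:start-explicit-integral-closure}: $S_{-1}$ is of finite type over $\Z$, hence excellent, so the normalization in the finite extension $L/K$ is finite. For surjectivity, I will argue locally. On an affine open $\Spec A\subset S_{-1}$ the extension $A\subset\overline A^{\,L}$ is an injective integral extension of domains, so lying over makes $\Spec\overline A^{\,L}\to\Spec A$ surjective. These affine pieces cover $S_{-1}$, so $\Pi_0$ is surjective.

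The degree of $\Pi_0$ is the degree of the extension of function fields. By Proposition~\ref{prop:start-models}, $S_0$ is integral with function field $L$ and $S_{-1}$ has function field $K$. Proposition~\ref{prop:start-galois} then gives $[L:K]=2^g$ and $\operatorname{Gal}(L/K)=G_0\simeq\F^g$. The generic fibre statement is the last line of Proposition~\ref{prop:start-models}: normalization commutes with localization, so $(S_0)_\Q$ is the normalization of $C_{-1}$ in $L$. That normalization is $C_0$, because $C_0$ is normal and finite over $C_{-1}$. Hence the generic fibre of $\Pi_0$ is $\pi_0$.

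The main step is \eqref{eq:start-model-automorphism-group}. I will define homomorphisms in both directions and check that they are inverse.
\begin{enumerate}[label=(\alph*)]
\item From $\operatorname{Aut}_{S_{-1}}(S_0)$ to $\operatorname{Gal}(L/K)$: an $S_{-1}$-automorphism of the integral scheme $S_0$ induces a $K$-automorphism of its function field $L$. This map is injective, because $S_0$ is integral and separated over $S_{-1}$: two morphisms that agree at the generic point agree on a dense open, hence everywhere.
\item From $\operatorname{Gal}(L/K)$ to $\operatorname{Aut}_{S_{-1}}(S_0)$: each $\sigma\in\operatorname{Gal}(L/K)$ fixes $A$ pointwise and preserves integrality over $A$, so it maps $\overline A^{\,L}$ to itself. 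These ring automorphisms are compatible with localization by \eqref{eq:bg-normalization-localization}, so they glue to an automorphism of $S_0$ over $S_{-1}$.
\end{enumerate}
Composing the two maps gives the identity on $\operatorname{Gal}(L/K)$ by construction. Together with the injectivity in (a), this shows both maps are bijections. On the generic fibre the automorphism attached to $\boldsymbol\epsilon$ is $\sigma_{\boldsymbol\epsilon}$, since on $C_0$ both are determined by their action on $L$.

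The only point needing care is the gluing in (b), namely compatibility with localization, and this is standard. I do not expect a serious obstacle.
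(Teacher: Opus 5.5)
Your proposal is correct and follows essentially the same route as the paper: finiteness from excellence, surjectivity from lying over on affine opens, and the degree and generic fibre from the function fields and the compatibility of normalization with localization. For the automorphism group you, like the paper, let $\Gal(L/K)$ act on the integral closures $\overline A^{\,L}$ and glue, and you note conversely that an automorphism over $S_{-1}$ is determined by its action on $L$.
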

\begin{proof}
The morphism $\Pi_0$ is finite by finiteness of normalization over
$\Z$. For every affine open $\Spec A\subset S_{-1}$, lying over
for the integral extension $A\subset\overline A^{\,L}$ proves
that $\Spec\overline A^{\,L}\to\Spec A$ is surjective.
Consequently $\Pi_0$ is surjective, and
$\deg(\Pi_0)=[L:K]=2^g$.
The generic fibre of $\Pi_0$ is the normalization of $C_{-1}$ in
$L$, so $(\Pi_0)_\Q=\pi_0$.

Let $\sigma\in\operatorname{Gal}(L/K)$ and
$u\in\overline A^{\,L}$.
If $u^d+a_1u^{d-1}+\cdots+a_d=0$ with $a_i\in A$, then
$\sigma$ fixes each $a_i$ and gives
$\sigma(u)^d+a_1\sigma(u)^{d-1}+\cdots+a_d=0$.
Thus $\sigma(\overline A^{\,L})=\overline A^{\,L}$.
Restriction of $\sigma$ defines an automorphism of
$\Spec\overline A^{\,L}$ over $\Spec A$.
For $f\in A\setminus\{0\}$, this automorphism restricts to
the same automorphism of
$\Spec(\overline A^{\,L}[f^{-1}])$ as the action of $\sigma$
on the integral closure of $A[f^{-1}]$ in $L$.
Hence these affine automorphisms define an automorphism of $S_0$
over $S_{-1}$. Conversely an automorphism of $S_0$ over $S_{-1}$
acts on $L$ and fixes $K$, and its action on $L$ determines its
action on every subring $\overline A^{\,L}\subset L$.
This proves \eqref{eq:start-model-automorphism-group}.
\end{proof}

Thus $\Pi_0$ is a finite Galois morphism according to the definition
in Section~\ref{sec:dictionary}: $S_0$ is the normalization of the
normal integral scheme $S_{-1}$ in the finite Galois extension
$L/K$. This assertion does not assert that $\Pi_0$ is \'etale
in every special fibre.
The quotient of $S_0$ by $G_0$ is $S_{-1}$. Indeed, for every
affine open $\Spec A\subset S_{-1}$,
\begin{equation}\label{eq:start-model-invariant-ring}
 (\overline A^{\,L})^{G_0}
  =\overline A^{\,L}\cap L^{G_0}
  =\overline A^{\,L}\cap K=A,
\end{equation}
since $A$ is integrally closed in $K$.

\begin{proposition}\label{prop:start-good-fibres}
The morphisms $S_{-1}\times_{\Spec\Z}U\to U$ and
$S_0\times_{\Spec\Z}U\to U$ are smooth and projective, with
geometrically integral one-dimensional fibres. The morphism
$\Pi_0\times_{\Spec\Z}U$ is a finite \'etale $G_0$-torsor.
For every prime $p>R$, the fibre $(S_{-1})_{\mathbb F_p}$ is
obtained by gluing
\begin{equation}\label{eq:start-explicit-good-fibre-base}
 \Spec\mathbb F_p[x,y]/(y^2-F(x))\quad\text{and}\quad
 \Spec\mathbb F_p[t,w]/\bigl(w^2-\prod_i h_i(t)\bigr),
\end{equation}
and the fibre $(S_0)_{\mathbb F_p}$ is obtained by gluing
\begin{equation}\label{eq:start-explicit-good-fibre-cover}
 \Spec\mathbb F_p[x,z_0,\ldots,z_g]/(z_i^2-f_i(x))_i
 \quad\text{and}\quad
 \Spec\mathbb F_p[t,v_0,\ldots,v_g]/(v_i^2-h_i(t))_i.
\end{equation}
For \eqref{eq:start-explicit-good-fibre-base}, identify the open
where $x\ne0$ in the left-hand scheme with the open where $t\ne0$
in the right-hand scheme by $t=x^{-1}$ and $w=x^{-(g+1)}y$.
For \eqref{eq:start-explicit-good-fibre-cover}, identify the open
where $x\ne0$ in the left-hand scheme with the open where $t\ne0$
in the right-hand scheme by $t=x^{-1}$ and $v_i=x^{-1}z_i$.
The morphism $(\Pi_0)_{\mathbb F_p}$ is induced by
$y\mapsto\prod_i z_i$ on the affine opens with coordinate $x$
and by $w\mapsto\prod_i v_i$ on the affine opens with coordinate $t$.
\end{proposition}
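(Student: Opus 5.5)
The plan is to reduce everything over $U$ to the explicit equations, by showing that the whole construction is the obvious one there. First, by \eqref{eq:start-model-disjoint-prime-restriction}, every blow-up $b_{p,k}$ has centre over some $p\in\Sigma$. Hence $P_m\times_{\Spec\Z}U=\mathbb P^1_U$. Likewise $Z_{-1}$ has no point over $p>R$; this was shown in the proof of Proposition~\ref{prop:start-models}. So over $U$ the morphism $b_{-1}$ is an isomorphism, and $S_{-1}\times U$ is the normalization of $\mathbb P^1_U$ in $K$. Next I define explicit $U$-schemes $\mathcal S_{-1}$ and $\mathcal S_0$ by gluing the $\mathcal O(U)$-algebras $\mathcal O(U)[x,y]/(y^2-F)$ and $\mathcal O(U)[t,w]/(w^2-\prod_ih_i)$, respectively $\mathcal O(U)[x,z_i]/(z_i^2-f_i)$ and $\mathcal O(U)[t,v_i]/(v_i^2-h_i)$, via the substitutions \eqref{eq:start-explicit-base-overlap} and \eqref{eq:start-explicit-cover-overlap}. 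These are finite and free over $\mathbb P^1_U$, and their generic fibres are $C_{-1}$ and $C_0$.

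Smoothness comes from the Jacobian computation of Proposition~\ref{prop:start-curves}, redone fibrewise over each $\mathbb F_p$ with $p>R$. The roots $0,\dots,2g+1$ are distinct mod $p$, since $p>2g+1$. So $F$ is separable over $\mathbb F_p$, the $f_i$ have pairwise disjoint root sets, and $2$, $f_j'(x)$ at a root of $f_j$, and $2v_i$ at $t=0$ are all units. The same determinants as before are therefore nonzero mod $p$. The fibrewise criterion \st{01V4} plus flatness (freeness over $\mathbb P^1_U$) gives that $\mathcal S_{-1}$ and $\mathcal S_0$ are smooth over $U$, hence regular and in particular normal. A normal scheme finite over $\mathbb P^1_U$ with function field $K$ (resp.\ $L$) is that normalization, so $\mathcal S_{-1}=S_{-1}\times U$. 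For $S_0$, normalization commutes with localization, so $S_0\times U$ is the normalization of $\mathcal S_{-1}$ in $L$, and this equals $\mathcal S_0$. The displayed gluings of the fibres at $p>R$ then follow by base change.

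Geometric integrality of the fibres is the argument of Proposition~\ref{prop:start-curves} with $k=\overline{\mathbb F}_p$. Here $v_{x=j}(\prod_{i\in I}f_i)=1$ still holds, because the roots stay distinct mod $p$. Freeness over $k[x]$ then embeds the coordinate rings into a field of degree $2^{g+1}$ (resp.\ $2$), and the two charts meet. Projectivity and the fibre dimension were already proved in Proposition~\ref{prop:start-models}.

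For the torsor claim, I repeat the proof of Proposition~\ref{prop:start-etale} over every point of $\mathcal S_{-1}$. The local ring there is regular and $2$ is invertible on $U$. The relation $\operatorname{div}(f_i)=2W_i+2W_{i+\ell}-2P_+-2P_-$ holds on each fibre, and the horizontal divisors $\overline{W_a},\overline{P_\pm}$ are disjoint over $U$. So $f_i$ is locally a unit times a square, and the algebra $B_0$ is finite étale. Alternatively I use purity of the branch locus (\st{0BMB}) on the regular $\mathcal S_{-1}$: $\Pi_0$ is étale at the generic points of the fibres, because $f_i$ has even valuation there. Finally, $G_0$ acts on $\mathcal S_0$ and the map $G_0\times\mathcal S_0\to\mathcal S_0\times_{\mathcal S_{-1}}\mathcal S_0$ is an isomorphism. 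This can be checked on geometric fibres, where it is a finite étale Galois cover of degree $2^g$ of an integral curve, and $G_0$ acts freely by Proposition~\ref{prop:start-galois}. The main obstacle is this étaleness over $U$ in characteristic $p$. I expect it to go through by applying purity on the smooth surface, once the parity of the divisors is checked along the vertical fibres.
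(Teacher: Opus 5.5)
Your proof is correct. For the identification of $S_{-1}\times_{\Spec\Z}U$ and $S_0\times_{\Spec\Z}U$ with the explicit glued schemes, it follows the paper: fibrewise Jacobian criterion, hence smooth and normal, hence equal to the normalizations. The only difference there is cosmetic. You take $Z_{-1}\times_{\Spec\Z}U=\varnothing$ from the proof of Proposition~\ref{prop:start-models}, whereas the paper re-derives it from regularity of the explicit model.

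You genuinely diverge on the \'etale and torsor assertions.

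\textbf{The paper's route.} It covers $\Spec B_{-1,x}$ by the opens $W_{x,j}$ on which $\prod_{i\ne j}f_i$ is inverted, and similarly uses $W_{t,j}$. It then notes that $\Pi_0^{-1}(W_{x,j})=\Spec A_{x,j}[z_i:i\ne j]/(z_i^2-f_i)_{i\ne j}$ with $z_j=y/\prod_{i\ne j}z_i$. \'Etaleness is read off this standard presentation. Simple transitivity on geometric fibres is shown by the explicit sign vector $\epsilon_i=z_i(q')/z_i(q)$.

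\textbf{Your route.} You argue that $\operatorname{div}(f_i)$ on the regular surface is twice a divisor. Factoriality then makes $f_i$ locally a unit times a square, and the argument of Proposition~\ref{prop:start-etale} (or purity) applies. The torsor property comes from freeness of the $G_0$-action.

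\textbf{Trade-off.} Your route reuses earlier arguments and is less computational, but it relies on factoriality of regular local rings or Zariski--Nagata purity. The paper's route is elementary and yields explicit local charts.

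Two points to tighten.
\begin{enumerate}
\item The fact you need is not that the divisor relation ``holds on each fibre.'' It is that $f_i$ has valuation $0$ along every closed fibre over $U$, because its reduction is a nonzero function. Hence $\operatorname{div}(f_i)=2(\overline{W_i}+\overline{W_{i+\ell}}-\overline{P_+}-\overline{P_-})$ on the surface. Disjointness of these horizontal divisors is not needed for the factoriality step.
\item Proposition~\ref{prop:start-galois} only gives faithfulness of $G_0$ on $C_0$ over $\Q$. Freeness at geometric points of the fibres over $p>R$ needs a rigidity step. Since $\Pi_0$ is \'etale and separated, the diagonal of $\mathcal S_0\times_{\mathcal S_{-1}}\mathcal S_0$ is open and closed. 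So for $g\in G_0$ fixing one geometric point, the preimage of the diagonal under $(\mathrm{id},g)$ is a nonempty open and closed subset of the connected $\mathcal S_0$. Therefore $g=\mathrm{id}$ on $\mathcal S_0$, and $g=1$ by faithfulness. Then $|G_0|=2^g=\deg\Pi_0$ gives simple transitivity.
\end{enumerate}
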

\begin{proof}
Set $\Lambda=\Z[1/\prod_{p\in\Sigma}p]$, so $U=\Spec\Lambda$,
and define
\begin{equation}\label{eq:start-model-good-rings}
 \begin{aligned}
 B_{-1,x}&=\Lambda[x,y]/(y^2-F(x)),&
 B_{-1,t}&=\Lambda[t,w]/\bigl(w^2-\prod_i h_i(t)\bigr),\\
 B_{0,x}&=\Lambda[x,z_0,\ldots,z_g]/(z_i^2-f_i(x))_i,&
 B_{0,t}&=\Lambda[t,v_0,\ldots,v_g]/(v_i^2-h_i(t))_i.
 \end{aligned}
\end{equation}
The substitutions in \eqref{eq:start-explicit-base-overlap} and
\eqref{eq:start-explicit-cover-overlap} are defined over $\Lambda$
and give isomorphisms
\begin{equation}\label{eq:start-model-good-localizations}
 \begin{aligned}
 B_{-1,t}[t^{-1}]&\xrightarrow{\ \sim\ }B_{-1,x}[x^{-1}],&
 t&\longmapsto x^{-1},&w&\longmapsto x^{-(g+1)}y,\\
 B_{0,t}[t^{-1}]&\xrightarrow{\ \sim\ }B_{0,x}[x^{-1}],&
 t&\longmapsto x^{-1},&v_i&\longmapsto x^{-1}z_i.
 \end{aligned}
\end{equation}
For $j\in\{-1,0\}$, let $Y_j$ be the scheme obtained by gluing
$\Spec B_{j,x}$ to $\Spec B_{j,t}$ along
$\Spec B_{j,x}[x^{-1}]$ and $\Spec B_{j,t}[t^{-1}]$ using the
corresponding isomorphism in
\eqref{eq:start-model-good-localizations}.
The inclusions of $\Lambda[x]$ and $\Lambda[t]$ into these rings
define finite morphisms $Y_j\to\mathbb P^1_U$.
All four rings in \eqref{eq:start-model-good-rings} are free over
the corresponding polynomial ring, with the monomial bases used in
Proposition~\ref{prop:start-curves}; in particular, $Y_j\to U$
is flat.

Let $k$ be an algebraically closed residue field of a point of $U$.
The images of $0,\ldots,2g+1$ in $k$ are distinct and
$2\in k^\times$.
At a point of $\Spec(B_{0,x}\otimes_\Lambda k)$ with every
$z_i\ne0$, the partial derivatives
$\partial(z_i^2-f_i(x))/\partial z_i=2z_i$ give a nonzero
diagonal $(g+1)\times(g+1)$ minor of the Jacobian matrix.
If $z_j=0$, then $z_i\ne0$ for $i\ne j$ and
$f'_j(x)\ne0$; the columns indexed by $x$ and the $z_i$ with
$i\ne j$ have determinant $\pm f'_j(x)\prod_{i\ne j}2z_i\ne0$.
At a point with $t=0$ in $\Spec(B_{0,t}\otimes_\Lambda k)$,
one has $v_i^2=h_i(0)=1$, so
$\partial(v_i^2-h_i(t))/\partial v_i=2v_i\ne0$ for every $i$.
For $\Spec(B_{-1,x}\otimes_\Lambda k)$, the derivatives of
$y^2-F(x)$ are $2y$ and $-F'(x)$; they cannot both vanish
because $F$ has distinct roots in $k$.
At $t=0$ in $\Spec(B_{-1,t}\otimes_\Lambda k)$,
$w^2=1$, so
$\partial(w^2-\prod_i h_i(t))/\partial w=2w\ne0$.
These derivatives prove that $(Y_{-1})_k$ and $(Y_0)_k$ are smooth.
The valuation at $x=i$ of $f_i$ is one, while the valuations of
$f_j$ for $j\ne i$ are zero. Thus the classes of the $f_i$ are
independent in $k(x)^\times/k(x)^{\times2}$, and
$F\notin k(x)^{\times2}$.
Define
\begin{equation}\label{eq:start-model-good-geometric-fields}
 K_k=k(x)(\sqrt F),\qquad
 L_k=k(x)(\sqrt{f_0},\ldots,\sqrt{f_g}).
\end{equation}
The free $k[x]$-modules $B_{-1,x}\otimes_\Lambda k$ and
$B_{0,x}\otimes_\Lambda k$ embed in $K_k$ and $L_k$, respectively,
by localizing at $k[x]\setminus\{0\}$. Likewise the free
$k[t]$-modules $B_{-1,t}\otimes_\Lambda k$ and
$B_{0,t}\otimes_\Lambda k$ embed in $K_k$ and $L_k$ using
$t=x^{-1}$ and the substitutions in
\eqref{eq:start-model-good-localizations}.
In particular, all four rings are domains, and for $j=-1,0$,
\begin{equation}\label{eq:start-model-good-nonempty-intersection}
 \Spec(B_{j,x}\otimes_\Lambda k)\cap
 \Spec(B_{j,t}\otimes_\Lambda k)
 =\Spec((B_{j,x}\otimes_\Lambda k)[x^{-1}])\ne\varnothing
 \quad\text{in }(Y_j)_k.
\end{equation}
Thus $(Y_{-1})_k$ and $(Y_0)_k$ are integral.

It follows that $Y_{-1}\to U$ and $Y_0\to U$ are smooth with
geometrically integral fibres. Since $U$ is regular, $Y_{-1}$
and $Y_0$ are regular and normal. Their function fields are $K$
and $L$, respectively. The finite morphisms
$Y_{-1}\to\mathbb P^1_U$ and $Y_0\to\mathbb P^1_U$ therefore
identify $Y_{-1}$ and $Y_0$ with the normalizations of
$\mathbb P^1_U$ in $K$ and $L$.
Equation~\eqref{eq:start-model-disjoint-prime-restriction} gives
$P_m\times_{\Spec\Z}U=\mathbb P^1_U$, so
$X_{-1}\times_{\Spec\Z}U=Y_{-1}$.
The scheme $Y_{-1}$ is regular, hence $Z_{-1}\times_{\Spec\Z}U$
is empty and $S_{-1}\times_{\Spec\Z}U=Y_{-1}$.
Normalizing this last scheme in $L$ gives
$S_0\times_{\Spec\Z}U=Y_0$.

For $0\leq j\leq g$, define the open subschemes
\begin{equation}\label{eq:start-model-etale-opens}
 \begin{aligned}
 W_{x,j}&=\Spec B_{-1,x}\bigl[(\prod_{i\ne j}f_i(x))^{-1}\bigr],\\
 W_{t,j}&=\Spec B_{-1,t}\bigl[(\prod_{i\ne j}h_i(t))^{-1}\bigr].
 \end{aligned}
\end{equation}
At every point of $\Spec B_{-1,x}$ at most one $f_i(x)$ vanishes,
so $\bigcup_jW_{x,j}=\Spec B_{-1,x}$.
Likewise $\bigcup_jW_{t,j}=\Spec B_{-1,t}$; at $t=0$ all
$h_i(0)=1$.
Write $A_{x,j}=\Gamma(W_{x,j},\mathcal O_{W_{x,j}})$.
Define the $A_{x,j}$-algebra $B_{x,j}$ by
\begin{equation}\label{eq:start-model-etale-algebra}
 \begin{aligned}
 B_{x,j}&=A_{x,j}[z_i:i\ne j]/(z_i^2-f_i(x))_{i\ne j},\\
 \Pi_0^{-1}(W_{x,j})&=\Spec B_{x,j},\qquad
 z_j=y\Big/\prod_{i\ne j}z_i.
 \end{aligned}
\end{equation}
For each $i\ne j$, we have $z_i\in B_{x,j}^{\times}$ and
$\left.\partial(Z^2-f_i(x))/\partial Z\right|_{Z=z_i}
=2z_i\in B_{x,j}^{\times}$. Thus $B_{x,j}$ is finite \'etale
of rank $2^g$ over $A_{x,j}$.
Put $A_{t,j}=\Gamma(W_{t,j},\mathcal O_{W_{t,j}})$ and define
\begin{equation}\label{eq:start-model-etale-infinity-algebra}
 \begin{aligned}
 B_{t,j}&=A_{t,j}[v_i:i\ne j]/(v_i^2-h_i(t))_{i\ne j},\\
 \Pi_0^{-1}(W_{t,j})&=\Spec B_{t,j},\qquad
 v_j=w\Big/\prod_{i\ne j}v_i.
 \end{aligned}
\end{equation}
For each $i\ne j$, we have $v_i\in B_{t,j}^{\times}$ and
$\left.\partial(V^2-h_i(t))/\partial V\right|_{V=v_i}
=2v_i\in B_{t,j}^{\times}$. Thus $B_{t,j}$ is finite \'etale
of rank $2^g$ over $A_{t,j}$.
Consequently $\Pi_0\times_{\Spec\Z}U$ is finite \'etale
of degree $2^g$.
Fix a geometric point $a$ of $W_{x,j}$ and two points $q,q'$
of the fibre $\Pi_0^{-1}(a)$. Define
\begin{equation}\label{eq:start-model-torsor-signs}
 \epsilon_i=\frac{z_i(q')}{z_i(q)}\in\{1,-1\}\quad(i\ne j),
 \qquad \epsilon_j=\prod_{i\ne j}\epsilon_i.
\end{equation}
Then $\epsilon=(\epsilon_0,\ldots,\epsilon_g)\in G_0$.
The identity $z_j=y/\prod_{i\ne j}z_i$ gives
$z_j(q')=\epsilon_jz_j(q)$, so the automorphism
$\sigma_\epsilon$ defined by $\sigma_\epsilon^*(z_i)=\epsilon_i z_i$
satisfies $\sigma_\epsilon(q)=q'$.
Every element of $G_0$ with this property must have the coordinates
in \eqref{eq:start-model-torsor-signs}; hence it equals $\epsilon$.
For $a\in W_{t,j}$ and $q,q'\in\Pi_0^{-1}(a)$, use instead
$\epsilon_i=v_i(q')/v_i(q)$ for $i\ne j$ and
$\epsilon_j=\prod_{i\ne j}\epsilon_i$. The identity
$v_j=w/\prod_{i\ne j}v_i$ again gives the unique automorphism
$\sigma_\epsilon$ with $\sigma_\epsilon(q)=q'$.
Therefore
$\Pi_0\times_{\Spec\Z}U$ is a $G_0$-torsor, as asserted.
\end{proof}

% Mathematical audit fragment: notation P=P_m, S_{-1}=C_m, S_0=T_m.
% All statements below use ell=2^m, g=ell-1, N=2ell.
\subsection{Regularity of \texorpdfstring{$S_0$}{S0}}

Recall the functions and fields
\begin{equation}\label{eq:fibres-explicit-functions}
 f_i=(x-i)(x-i-\ell)\quad(0\leq i\leq g),
 \qquad
 L=\mathbb Q(x)(\sqrt{f_0},\ldots,\sqrt{f_g}),
 \qquad y=\prod_{i=0}^g\sqrt{f_i}.
\end{equation}
The classes of $f_0,\ldots,f_g$ in
$\overline{\mathbb Q}(x)^\times/\overline{\mathbb Q}(x)^{\times2}$
are linearly independent over $\mathbb F_2$: for each $i$, the
valuation at $x=i$ is odd on $f_i$ and zero on $f_j$ for $j\ne i$.
Identify $H=\Gal(L/\mathbb Q(x))$ with $\mathbb F_2^{g+1}$ by sending
$(a_0,\ldots,a_g)$ to the automorphism fixing $x$ and satisfying
$\sqrt{f_i}\mapsto(-1)^{a_i}\sqrt{f_i}$ for $0\leq i\leq g$.
The subgroup $G_0=\Gal(L/K)$, where $K=\mathbb Q(x,y)$, is the kernel of
\begin{equation}\label{eq:fibres-explicit-parity-character}
 \lambda:H\longrightarrow\mathbb F_2,
 \qquad (a_0,\ldots,a_g)\longmapsto\sum_{i=0}^g a_i.
\end{equation}
We will compute the local rings of $S_0$ after strict henselization.
Regularity descends from a strict henselization, and normalization
commutes with strict henselization (\st{081J}, \st{0CBM}).
Finite \'etale algebras over a strictly henselian local ring are
products of copies of that ring; see \st{04GK}.
We also use the descriptions of surface singularities in
\cite[Sections~8.3.4 and~9.2.4]{Liu}.
Put $X_0=\operatorname{Norm}_L(P_m)$ and
$X_{-1}=\operatorname{Norm}_K(P_m)$.
By \eqref{eq:start-surface-map}, $S_{-1}\to X_{-1}$ is the blow-up
at the reduced union of the singular closed points of $X_{-1}$
above odd primes, and $S_0=\operatorname{Norm}_L(S_{-1})$.

\begin{proposition}\label{prop:starting-regularity}
The normalization $S_0$ of $S_{-1}$ in $L$ is regular.
\end{proposition}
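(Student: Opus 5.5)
The plan is to check regularity of $S_0$ locally on $S_{-1}$, after strict henselization. For a geometric point $\bar s$ of $S_{-1}$ put $A=\mathcal O^{\sh}_{S_{-1},\bar s}$; since normalization commutes with strict henselization, the local rings of $S_0$ above $\bar s$ are the localizations of the normalization of $A$ in $L\otimes_K\Frac(A)$. It suffices to show that these rings are regular. Over $U$ this is Proposition~\ref{prop:start-good-fibres}, since there $\Pi_0$ is étale over the smooth scheme $S_{-1}$. On the generic fibre it holds because $C_0$ is smooth. So only points over primes $p\le R$ remain, and I would split into odd $p$ and $p=2$.

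\textbf{Odd $p$.} I would first work on $P_m$. At a geometric closed point $\bar z$ of $(P_p)_{\mathbb F_p}$, choose normal-crossing parameters $u,v$ as in the proof of Proposition~\ref{prop:start-models}. Every unit of $\mathcal O^{\sh}_{P_p,\bar z}$ is a square. Each $f_i$ has divisor supported on $D_i\cup D_{i+\ell}$ together with fibre components, which all have multiplicity one. Hence every $f_i$ is, up to a square, of the form $u^{\alpha_i}v^{\beta_i}$. Moreover, by \eqref{eq:start-model-separated-sections}, at most one index $i$ has $D_i$ or $D_{i+\ell}$ passing through $\bar z$.

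It follows that $L\otimes\Frac(A)$ is generated over $\Frac(A)$ by at most the square roots of $u$ and $v$, plus étale square roots. The only non-regular case is $\sqrt{uv}$, which is precisely the $A_1$ point of $X_{-1}$ blown up in $S_{-1}$. I would then compute on the charts \eqref{eq:start-explicit-node-resolution} of $S_{-1}$. Pulling back, $u=vr^2$ on $B_v$, so a square root of $u$ becomes $\sqrt v\cdot r$. Hence over $S_{-1}$ the extension $L$ is locally generated by units together with a single square root of a regular parameter ($v$, or the exceptional divisor). The normalization is then $A'[T]/(T^2-\text{parameter})$ over an étale cover, which is regular. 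The case where only the fibre components meet (a node of the fibre with $\alpha_i=\beta_i$ coming from fibre multiplicities) does not occur, since the $f_i$ have even valuation, namely $0$, along odd-fibre components. The point at infinity is handled by the same argument using $h_i$ and $\tau$.

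\textbf{$p=2$.} This is the main obstacle, and it is where $\ell=2^m$ and the $m-2$ dyadic blow-ups matter. Near a point of $E_{k,c}$ I would use the chart coordinate $X=X_{k,c}$. For a section $a\equiv c\pmod{2^k}$ we have $x-a=2^k(X-a')$, so
\[
f_i=(x-i)(x-i-\ell)=(x-i)^2-\ell(x-i).
\]
Writing $x-i=2^kY$, this gives $f_i=2^{2k}(Y^2-2^{m-k}Y)$. Thus $\sqrt{f_i}$ corresponds to $\sqrt{Y^2-2^{m-k}Y}$, an Artin–Schreier-type equation: after substituting $\sqrt{\cdot}=Y+2^{(m-k)/2}\cdot(\ldots)$ one gets $W^2+2^{j}WY'=\ldots$. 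The step I expect to be hardest is showing that at depth $m-2$ the substitution $\sqrt{f_i}=2^k(Y+2W)$ turns the equation into $W^2+YW=\text{unit-ish}\cdot(\ldots)$, i.e. a cover that is étale away from $Y=0$ and, at $Y=0$, regular since it has the shape $W^2+YW=c$ with $c$ a unit or a parameter. For $f_i$ with no root through the chart, $f_i=(x-i)^2(1-\ell/(x-i))$ is a square times $1+4(\ldots)$, whose square root is étale (Artin–Schreier $W^2+W=\ldots$).

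I would then verify that the compositum of these at most one ramified generator with the étale ones is regular. At intersection points $E_{k,c}\cap E_{k+1,c'}$ the same computation applies in the $2/(X-e)$ chart, using the multiplicities of the components. This is a finite case check over the chart types.
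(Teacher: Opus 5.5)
\textbf{Dyadic case.} Your plan has a genuine gap here. The claim that each point sees ``at most one ramified generator'' is false, and the case it discards is the heart of the proof. Only $m-2$ rounds of dyadic blow-ups are performed, so the sections are not separated over $2$. Through each point $\zeta_j$ of $E_{m-2,c}$ pass the four divisors $D_a$ with $a\equiv j\pmod{2^{m-1}}$, and these are the roots of both $f_j$ and $f_{j+2^{m-1}}$. Put $t=(x-j)/2^{m-2}$, $U=\sqrt{f_j}/2^{m-2}$ and $V=\sqrt{f_{j+2^{m-1}}}/2^{m-2}$. Then there are two simultaneously ramified generators, $U^2=t(t-4)$ and $V^2=(t-2)(t-6)$.

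Each generator alone gives a regular ring; your substitution yields $W^2+YW+Y=0$. But a compositum of two ramified quadratic covers of a regular local ring need not be regular, so you must prove that
$B=A[a,b]/(a^2+ta+t,\ b^2+(t-2)b+t-2)$, with $U=t+2a$ and $V=t-2+2b$, is regular. The paper does this as follows.
\begin{itemize}
\item It notes $B/(2,t)B=\kappa(A)[a,b]/(a^2,b^2)$, so $B$ is local with maximal ideal $(2,t,a,b)$.
\item It derives $t(a+1)=-a^2$ and, for $d=b-a$, the relation $d^2+td+2\bigl((a-1)d-(a+1)\bigr)=0$.
\item Since $a+1$ and $(a-1)d-(a+1)$ are units, both $t$ and $2$ lie in $(a,d)$.
\end{itemize}
Hence $B$ is regular with parameters $a,d$. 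This is exactly where the choice $\ell=2^m$ and the stopping depth $m-2$ matter, and your plan never reaches it.

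\textbf{Odd case.} Here your argument rests on a false statement: the $f_i$ do not have valuation $0$ along the fibre components over odd $p$. On the component with coordinate $(x-c)/p^k$ one has $v_E(f_i)=\min\{k,v_p(c-i)\}+\min\{k,v_p(c-i-\ell)\}$. For instance, the depth-one component over $x\equiv0\pmod p$ has $v_E(f_0)=1$; it is always a centre, since $0$ and $p$ are both less than $N$.

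So fibre nodes can carry nontrivial inertia on both branches, and you must exclude $b=c\neq0$ there. If this occurred with $\lambda(b)=0$, then $X_{-1}$ would be étale over $P_p$ at that point and nothing would be blown up. Then $S_0$ would be locally $A[z_1]/(z_1^2-uv)$, which is singular.

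The missing argument is the paper's. Suppose $F$ is created by blowing up a point of $E$ through which the sections indexed by a nonempty set $B$ pass. Then $b_F-b_E=\sum_{a\in B}e_{\operatorname{pair}(a)}$. The map $\operatorname{pair}$ is injective on $B$, because $p\nmid\ell$ keeps $D_i$ and $D_{i+\ell}$ apart over $p$. Hence $b_E\neq b_F$.

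With that in place, your chart computation on $B_u$ and $B_v$ correctly handles the remaining singular case ($b,c$ independent with $\lambda(b)=\lambda(c)=1$) and agrees with the paper's.
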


\begin{proof}
Proposition~\ref{prop:start-good-fibres} proves that
$(S_0)_U=S_0\times_{\Spec\mathbb Z}U$ is regular.
We first prove regularity of
$S_0\times_{\Spec\mathbb Z}\Spec\mathbb Z_{(p)}$ for an odd prime
$p\in\Sigma$.
For an irreducible component $E$ of $(P_p)_{\mathbb F_p}$,
let $v_E$ be the normalized valuation of the discrete valuation ring
$\mathcal O_{P_p,\eta_E}$ at the generic point $\eta_E$ of $E$, and put
\begin{equation}\label{eq:fibres-explicit-component-inertia}
 b_E=(v_E(f_i)\bmod 2)_{0\leq i\leq g}\in H.
\end{equation}
Write $D_a\subset P_p$ for the restriction to $P_p$ of the strict
transform of $\sigma_a(\Spec\mathbb Z)$ on $P_m$.
For $a=i$ or $a=i+\ell$, put $b_{D_a}=e_i$, where
$e_i=(0,\ldots,0,1,0,\ldots,0)\in H$ has $i$th entry $1$.
For $D=D_a$, let $v_D$ be the normalized valuation of
$\mathcal O_{P_p,\eta_D}$.
If $D$ is a component of $(P_p)_{\mathbb F_p}$ or a divisor $D_a$,
and $w$ is an extension of $v_D$ to $L$, the inertia subgroup $I_w\subset H$ is
\begin{equation}\label{eq:fibres-explicit-inertia-subgroup}
 I_w=\langle b_D\rangle.
\end{equation}
To verify \eqref{eq:fibres-explicit-inertia-subgroup}, let
$R_D^{\mathrm{sh}}$ be a strict henselization of the discrete
valuation ring $\mathcal O_{P_p,\eta_D}$, let $\pi$ be a
uniformizer, and put $F_D=\operatorname{Frac}(R_D^{\mathrm{sh}})$.
Write $f_i=\pi^{v_D(f_i)}u_i$ with
$u_i\in(R_D^{\mathrm{sh}})^\times$.
Since $p\ne2$, every $u_i$ has a square root in
$R_D^{\mathrm{sh}}$.
Thus every field factor of $L\otimes_{\mathbb Q(x)}F_D$ is
$F_D$ when $b_D=0$, and is $F_D(\sqrt\pi)$ when $b_D\ne0$.
In the second case, the automorphism $\sqrt\pi\mapsto-\sqrt\pi$
acts on $\sqrt{f_i}$ by $(-1)^{v_D(f_i)}$.
Consequently $I_w=\{0\}$ when $b_D=0$, and otherwise $b_D$
is the unique nonidentity element of $I_w$.

Let $z$ be a geometric closed point of $(P_p)_{\mathbb F_p}$, and
let $(A,\mathfrak m_A)$ be the strict henselization of
$\mathcal O_{P_p,z}$, using the geometric point to choose the residue
field. The reduced divisor formed by $(P_p)_{\mathbb F_p}$ and
$D_0,\ldots,D_{N-1}$ has strict normal crossings.
Choose regular parameters $u,v$ of $A$ such that every component of
this divisor through $z$ is defined by $u=0$ or $v=0$.
Choose integers $b_i,c_i\in\{0,1\}$ satisfying
\begin{equation}\label{eq:fibres-explicit-local-parity-vectors}
 b_i\equiv\operatorname{ord}_{(u)}(f_i)\pmod2,\qquad
 c_i\equiv\operatorname{ord}_{(v)}(f_i)\pmod2
 \quad(0\leq i\leq g).
\end{equation}
Put $b=(b_i\bmod2)_{i=0}^g\in H$ and
$c=(c_i\bmod2)_{i=0}^g\in H$.
Here $\operatorname{ord}_{(u)}$ and $\operatorname{ord}_{(v)}$ are the
normalized valuations of $A_{(u)}$ and $A_{(v)}$.
When $u=0$ defines a component $D$ of the divisor, $b=b_D$;
when $u=0$ defines no component, $b=0$.
When $v=0$ defines a component $D$, $c=b_D$; otherwise $c=0$.
Thus \eqref{eq:fibres-explicit-inertia-subgroup} identifies the inertia
subgroups at $u=0$ and $v=0$ as $\langle b\rangle$ and
$\langle c\rangle$.

At a point above $x=\infty$, replace $f_i$ by
$\tau^2f_i=h_i(\tau)$ with $\tau=x^{-1}$; multiplication by
$\tau^2$ does not change the class in
$\operatorname{Frac}(A)^\times/\operatorname{Frac}(A)^{\times2}$.
Factoring $f_i$ in $\operatorname{Frac}(A)$, using that $A$ is factorial, gives
$f_i=\xi_i u^{b_i}v^{c_i}r_i^2$ with
$\xi_i\in A^\times$ and $r_i\in\operatorname{Frac}(A)^\times$.
Since $2\in A^\times$ and $A$ is strictly henselian, every
$\xi_i$ has a square root in $A$.
Let $\widetilde A_L$ be the integral closure of $A$ in
$L\otimes_{\mathbb Q(x)}\operatorname{Frac}(A)$, so that
$X_0\times_{P_m}\Spec A=\Spec\widetilde A_L$.
If $b,c$ are linearly independent, $\widetilde A_L$ is a product
of copies of
\begin{equation}\label{eq:fibres-explicit-independent-normalization}
 B_{u,v}=A[s,t]/(s^2-u,t^2-v).
\end{equation}
The maximal ideal of $B_{u,v}$ is $(s,t)$, and
$\dim B_{u,v}=2$, so $B_{u,v}$ is regular.
If $b\ne0$ and $c=0$, the factors of $\widetilde A_L$ are
$A[s]/(s^2-u)$, which are regular with parameters $s,v$.
The case $b=0$, $c\ne0$ follows by interchanging $u$ and $v$.
If $b=c=0$, then $\widetilde A_L$ is a product of copies of $A$.
If $b=c\ne0$, the factors are $A[z_1]/(z_1^2-uv)$.

Suppose $z\in E\cap F$ for two distinct irreducible components
$E,F$ of $(P_p)_{\mathbb F_p}$. We claim that $b_E\ne b_F$.
By the construction of $P_p$, one of the components, say $F$, was
created by blowing up a smooth point of the strict transform of $E$.
Let $B\subset\{0,\ldots,N-1\}$ consist of the indices $a$ for
which the strict transform of $\sigma_a(\Spec\mathbb Z)$ passed
through the centre of that blow-up. The set $B$ is nonempty.
For $a\in B$, $v_F(x-a)=v_E(x-a)+1$; for $a\notin B$,
$v_F(x-a)=v_E(x-a)$.
Consequently
\begin{equation}\label{eq:fibres-explicit-inertia-after-blowup}
 b_F-b_E=\sum_{a\in B}e_{\operatorname{pair}(a)},
 \qquad
 \operatorname{pair}(a)=
 \begin{cases}a,&0\leq a\leq g,\\a-\ell,&\ell\leq a<N.\end{cases}
\end{equation}
Since $p\nmid\ell$, the strict transforms of
$\sigma_i(\Spec\mathbb Z)$ and $\sigma_{i+\ell}(\Spec\mathbb Z)$
cannot pass through the same point above $p$.
Thus the map $\operatorname{pair}:B\to\{0,\ldots,g\}$ is
injective. The sum in
\eqref{eq:fibres-explicit-inertia-after-blowup} is a nonempty sum
of distinct basis vectors, so $b_F-b_E\ne0$.

Suppose instead that $z\in D_a\cap E$, where $E$ is a component
of $(P_p)_{\mathbb F_p}$ and $a=i$ or $a=i+\ell$.
If $b=c\ne0$ in \eqref{eq:fibres-explicit-local-parity-vectors},
then $b=c=e_i$, so $\lambda(b)=\lambda(c)=1$.
In this case put $R=A[z_1]/(z_1^2-uv)$.
Then $X_{-1}\times_{P_m}\Spec A=\Spec R$, and
$X_0\times_{P_m}\Spec A$ is a disjoint union of copies of $\Spec R$.
The restriction of $S_{-1}\to X_{-1}$ to $\Spec R$ is the blow-up
of the ideal $(u,v,z_1)$.
The computation in \eqref{eq:start-explicit-node-resolution} proves
that $\operatorname{Bl}_{(u,v,z_1)}(\Spec R)$ is regular.
The normalization in each of the identical field factors therefore
leaves this blow-up unchanged. Hence $S_0$ is regular above $z$ in
this case.

It remains at odd primes to consider $b,c$ linearly independent
with $\lambda(b)=\lambda(c)=1$.
Again put $R=A[z_1]/(z_1^2-uv)$; now choose $z_1=st$ in
$B_{u,v}$ from \eqref{eq:fibres-explicit-independent-normalization}.
The blow-up $\operatorname{Bl}_{(u,v,z_1)}(\Spec R)$ contains the
affine open
\begin{equation}\label{eq:fibres-explicit-quadratic-u-open}
 U_u=\Spec R_u,\qquad R_u=A[q]/(v-uq^2),\qquad
 q=z_1/u,\quad z_1=uq.
\end{equation}
The integral closure of $R_u$ in the field of fractions of
$B_{u,v}$ is
\begin{equation}\label{eq:fibres-explicit-normalized-u-open}
 B_u=A[s,q]/(u-s^2,\ v-s^2q^2),\qquad t=sq.
\end{equation}
The ring $B_u$ is finite over $R_u$, and the formulas
$s\mapsto s$, $q\mapsto t/s$ identify
$\operatorname{Frac}(B_u)=\operatorname{Frac}(B_{u,v})$.
To check normality, we prove that $B_u$ is regular: at a closed point $\xi$ of
$\Spec B_u$ with $s(\xi)=0$, choose $\beta\in A$ lifting $q(\xi)$.
The maximal ideal of $(B_u)_\xi$ is $(s,q-\beta)$ and
$\dim(B_u)_\xi=2$.
The affine open associated with the generator $v$ is
$U_v=\Spec R_v$, where
$R_v=A[q']/(u-v(q')^2)$ and $q'=z_1/v$.
Its integral closure in the field of fractions of $B_{u,v}$ is
\begin{equation}\label{eq:fibres-explicit-normalized-v-open}
 B_v=A[t,q']/(v-t^2,\ u-t^2(q')^2),\qquad s=tq'.
\end{equation}
At a closed point $\xi'$ of $\Spec B_v$ with $t(\xi')=0$,
the maximal ideal of $(B_v)_{\xi'}$ is $(t,q'-\beta')$, where
$\beta'\in A$ lifts $q'(\xi')$. Thus $(B_v)_{\xi'}$ is regular.
Outside $V(s)\subset\Spec B_u$ and $V(t)\subset\Spec B_v$,
respectively, the homomorphisms
$B_{u,v}\to B_u$, $t\mapsto sq$, and
$B_{u,v}\to B_v$, $s\mapsto tq'$, give the isomorphisms
\begin{equation}\label{eq:fibres-mechanical-step-1}
 B_{u,v}[s^{-1}]\simeq B_u[s^{-1}],\qquad
 B_{u,v}[t^{-1}]\simeq B_v[t^{-1}].
\end{equation}
Consequently $B_u$ and $B_v$ are regular everywhere.
The affine open associated with the generator $z_1$ is
\begin{equation}\label{eq:fibres-explicit-quadratic-z-open}
 U_{z_1}=\Spec A[z_1,\alpha,\beta]/
 (u-z_1\alpha,v-z_1\beta,\alpha\beta-1),\qquad
 \alpha=u/z_1,\quad\beta=v/z_1.
\end{equation}
Since $\alpha\beta=1$, one has
$U_{z_1}=D(q)\subset U_u$ and
$U_{z_1}=D(q')\subset U_v$, with $q'=q^{-1}$.
Thus $U_u\cup U_v$ covers the blow-up, and
\eqref{eq:fibres-explicit-normalized-u-open}--
\eqref{eq:fibres-explicit-normalized-v-open} prove that $S_0$ is
regular above $z$.
If $\lambda(b)$ and $\lambda(c)$ are not both $1$, then
$X_{-1}$ is regular above $z$ and the morphism $S_{-1}\to X_{-1}$
is an isomorphism there. The regularity of $B_{u,v}$ and of the
other factors already considered then proves regularity of $S_0$
above $z$.
This proves regularity over $\Spec\mathbb Z_{(p)}$ for every odd
$p\in\Sigma$.

We now consider the prime $2$. Put $k=m-2$ and
\begin{equation}\label{eq:fibres-explicit-dyadic-functions}
 A_i=-\frac{2^{m-2}}{x-i},\qquad
 \frac{f_i}{(x-i)^2}=1+4A_i\quad(0\leq i\leq g).
\end{equation}
At the generic point of $E_{j,c}\subset(P_2)_{\mathbb F_2}$,
where $j\leq k$, the valuation $v_{E_{j,c}}$ satisfies
$v_{E_{j,c}}(2)=1$ and $v_{E_{j,c}}(x-i)\leq j$.
It follows that $v_{E_{j,c}}(A_i)\geq0$.
Among the horizontal prime divisors of $P_2$, only $D_i$ can have
negative valuation on $A_i$.
The local rings of $P_2$ are regular and hence factorial
(\st{0AG0}). Therefore
\begin{equation}\label{eq:fibres-mechanical-step-2}
 A_i\in\mathcal O_{P_2,z}
 \quad\text{for every }z\in(P_2)_{\mathbb F_2}\setminus D_i.
\end{equation}
The intersection of $D_i$ with $(P_2)_{\mathbb F_2}$ lies on
$E_{k,c}$ with $c\equiv i\pmod{2^k}$.

Let $z$ be a geometric point of $(P_2)_{\mathbb F_2}$ outside
$\bigcup_a D_a$, and let $A$ be the strict henselization of
$\mathcal O_{P_2,z}$. For each $i$ define
\begin{equation}\label{eq:fibres-explicit-dyadic-substitution}
 W_i=\frac{\sqrt{f_i}/(x-i)-1}{2},\qquad
 P_i(W)=W^2+W-A_i,\qquad P_i(W_i)=0.
\end{equation}
Here $A_i\in A$. Let $w_i$ be the image of the indeterminate
$W$ in $A[W]/(P_i(W))$. Then
$P_i'(w_i)=2w_i+1\in(A[W]/(P_i(W)))^\times$.
Thus $A[W]/(P_i(W))$ is finite \'etale over $A$, and strict
henselianity gives $A[W]/(P_i(W))\simeq A\times A$.
Consequently
\begin{equation}\label{eq:fibres-mechanical-step-3}
 \widetilde A_L\simeq\prod_{j=1}^{2^{g+1}}A.
\end{equation}
In particular, $X_0$ is regular at every point above $z$.
This calculation applies when $z$ is an intersection of two
components of $(P_2)_{\mathbb F_2}$, since $D_a\cap E\cap F=\varnothing$ for every $a$ and
every pair of distinct fibre components $E,F$.

For $0\leq j<2^{m-1}$, let $\zeta_j$ be the point of
$E_{k,c}$ through which the divisors $D_a$ with
$a\equiv j\pmod{2^{m-1}}$ pass, where $c\equiv j\pmod{2^k}$.
These are precisely the points of
$(P_2)_{\mathbb F_2}\cap\bigcup_aD_a$.
Let $A$ be the strict henselization of $\mathcal O_{P_2,\zeta_j}$
and put $t=(x-j)/2^{m-2}$, so $\mathfrak m_A=(2,t)$.
For $i\notin\{j,j+2^{m-1}\}$, the polynomial
$P_i(W)$ from \eqref{eq:fibres-explicit-dyadic-substitution}
has coefficients in $A$ and defines $A\times A$ as above.
For the remaining indices put
$U=\sqrt{f_j}/2^{m-2}$ and
$V=\sqrt{f_{j+2^{m-1}}}/2^{m-2}$. Then
\begin{equation}\label{eq:fibres-explicit-dyadic-two-extensions}
 U^2=t(t-4),\qquad V^2=(t-2)(t-6).
\end{equation}
Define $a,b,d$ by
\begin{equation}\label{eq:fibres-explicit-dyadic-integral-substitutions}
 U=t+2a,\qquad V=t-2+2b,\qquad d=b-a.
\end{equation}
Substitution in \eqref{eq:fibres-explicit-dyadic-two-extensions}
gives the finite free $A$-algebra of rank four
\begin{equation}\label{eq:fibres-explicit-dyadic-rank-four-algebra}
 B=A[a,b]/(a^2+ta+t,\ b^2+(t-2)b+t-2).
\end{equation}
Writing $F_A=\operatorname{Frac}(A)$, one has
\begin{equation}\label{eq:fibres-mechanical-step-4}
 B\otimes_A F_A\simeq
 F_A[U,V]/(U^2-t(t-4),V^2-(t-2)(t-6)).
\end{equation}
The quotient $B/(2,t)B=\kappa(A)[a,b]/(a^2,b^2)$ has one
prime ideal. Thus the unique maximal ideal of $B$ above
$\mathfrak m_A$ is $\mathfrak n=(2,t,a,b)$.
In $B_{\mathfrak n}$ the defining relations give
\begin{equation}\label{eq:fibres-explicit-dyadic-parameters}
 t(a+1)=-a^2,\qquad
 d^2+td+2\bigl((a-1)d-(a+1)\bigr)=0.
\end{equation}
The residues of $a+1$ and $(a-1)d-(a+1)$ in
$B_{\mathfrak n}/\mathfrak nB_{\mathfrak n}$ are nonzero.
Hence \eqref{eq:fibres-explicit-dyadic-parameters} gives
$t,2\in(a,d)B_{\mathfrak n}$, and therefore
$\mathfrak nB_{\mathfrak n}=(a,d)$.
Since $\dim B_{\mathfrak n}=2$, the ring $B_{\mathfrak n}$ is
regular and normal.
The algebra $B$ is finite over the local ring $A$ and has only the
maximal ideal $\mathfrak n$, so $B=B_{\mathfrak n}$.
Thus $B$ is the integral closure of $A$ in $B\otimes_A F_A$.
The equations for $i\notin\{j,j+2^{m-1}\}$ split over $A$, and
consequently
\begin{equation}\label{eq:fibres-mechanical-step-5}
 \widetilde A_L\simeq\prod_{h=1}^{2^{g-1}}B.
\end{equation}
This proves that $X_0$ is regular above every $\zeta_j$.
Finally,
$S_{-1}\times_{\Spec\mathbb Z}\Spec\mathbb Z_{(2)}
 \simeq X_{-1}\times_{\Spec\mathbb Z}\Spec\mathbb Z_{(2)}$.
Normalization in $L$ then gives
$S_0\times_{\Spec\mathbb Z}\Spec\mathbb Z_{(2)}
 \simeq X_0\times_{\Spec\mathbb Z}\Spec\mathbb Z_{(2)}$. Hence
$S_0\times_{\Spec\mathbb Z}\Spec\mathbb Z_{(2)}$ is regular,
completing the proof.
\end{proof}

\subsection{The section}

Recall the point $Q_+\in U_{0,t}(\mathbb Q)\subset C_0(\mathbb Q)$
with $t(Q_+)=0$ and $v_i(Q_+)=1$ for $0\leq i\leq g$.
We will extend $Q_+:\Spec\mathbb Q\to C_0$ to a section of
$S_0\to\Spec\mathbb Z$.
The extension uses the valuative criterion for properness and
extension of a morphism from a local scheme to an open
neighbourhood; see \cite[Section~3.3.2]{Liu}, \st{0BX5} and \st{0BX6}.

\begin{proposition}\label{prop:start-integral-section}
The point $Q_+\in C_0(\Q)$ extends uniquely to a section
\begin{equation}\label{eq:fibres-explicit-section}
 s_0:\Spec\Z\longrightarrow S_0.
\end{equation}
The composite $s_{-1}=\Pi_0\circ s_0:\Spec\mathbb Z\to S_{-1}$
is the unique section extending $P_+\in C_{-1}(\mathbb Q)$.
The morphism $S_0\to\Spec\mathbb Z$ is smooth at every point
of $s_0(\Spec\mathbb Z)$, and
$S_{-1}\to\Spec\mathbb Z$ is smooth at every point of
$s_{-1}(\Spec\mathbb Z)$.
Moreover,
\begin{equation}\label{eq:fibres-explicit-split-section-fibre}
 S_0\times_{S_{-1},s_{-1}}\Spec\Z
 \simeq\coprod_{j=1}^{2^g}\Spec\Z.
\end{equation}
\end{proposition}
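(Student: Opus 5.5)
Existence and uniqueness of $s_0$ come from the valuative criterion: $\Spec\Z$ is a Dedekind scheme of dimension one and $S_0\to\Spec\Z$ is proper (Proposition~\ref{prop:start-models}), so the rational point $Q_+:\Spec\Q\to C_0=(S_0)_\Q$ extends to a morphism $\Spec\Z_{(p)}\to S_0$ for every prime $p$. These local extensions spread out to open neighbourhoods and glue by separatedness, giving a unique section $s_0$. The same argument applied to $P_+$ on $S_{-1}$ yields a unique section extending $P_+$. Since $\pi_0(Q_+)=P_+$, the composite $\Pi_0\circ s_0$ also extends $P_+$, so uniqueness identifies it with $s_{-1}$.

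For smoothness, I will locate $s_0$ explicitly. Over $U$, Proposition~\ref{prop:start-good-fibres} already gives smoothness of both surfaces. Over a prime $p\in\Sigma$, I follow the point at infinity through the construction.

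\emph{Odd $p$.} All blow-ups of $P_m$ at $p$ have centres on the strict transforms $D_a$, which meet the fibre at finite points $x\equiv a$. The section $t=0$ of $\mathbb P^1_\Z$ is therefore untouched, and $P_m$ coincides with $\mathbb P^1$ near $t=0$. There $\prod_i h_i(t)$ and each $h_i(t)$ are units (value $1$ at $t=0$). Hence, exactly as in Proposition~\ref{prop:start-good-fibres}, the rings $\Z_{(p)}[t,w]/(w^2-\prod h_i)$ and $\Z_{(p)}[t,v_0,\dots,v_g]/(v_i^2-h_i)$, localized near $t=0$, are étale over $\Z_{(p)}[t]$. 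These rings are normal, so they are the normalizations. The singular points $Z_{-1}$ blown up by $b_{-1}$ lie over intersections of components of the divisor, and none of these occur near $t=0$. So the sections $t=0$, $w=1$ and $t=0$, $v_i=1$ lie in loci étale over $\mathbb A^1_{\Z_{(p)}}$, hence smooth over $\Z_{(p)}$.

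\emph{$p=2$.} Again the dyadic blow-ups are centred at points $x\equiv c$ in the finite charts, so near $t=0$ we still have $\mathbb P^1_{\Z_{(2)}}$. Here $w^2-\prod h_i$ is not étale, so instead I use the substitution of \eqref{eq:fibres-explicit-dyadic-substitution} at infinity. Writing $h_i(t)=(1-it)(1-(i+\ell)t)$, in the variable $t$ this reads $h_i/(1-it)^2=1-\ell t/(1-it)=1+4A_i'$, with $A_i'=-2^{m-2}t/(1-it)$ regular near $t=0$ (using $m\ge4$). Then $W_i=(v_i/(1-it)-1)/2$ satisfies the étale Artin--Schreier equation $W^2+W-A_i'=0$. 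So over a neighbourhood of $t=0$ the normalization of $\Z_{(2)}[t]$ in $L$ is étale of rank $2^{g+1}$ and splits as in \eqref{eq:fibres-mechanical-step-3}.

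For $K$, I use $y$-analogue $W=(w/\prod(1-it)-1)/2$, whose equation is étale similarly: $\prod_i h_i/(1-it)^2\equiv1\pmod 4$ locally. Hence $S_{-1}$ and $S_0$ are étale over $\mathbb P^1_{\Z_{(2)}}$ near $t=0$, so smooth over $\Z_{(2)}$ there. The section $Q_+$ corresponds to $W_i=0$, which is integral, so it lands in this locus.

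Finally, the splitting \eqref{eq:fibres-explicit-split-section-fibre}. The above shows that $\Pi_0$ is finite étale on a neighbourhood $V$ of $s_{-1}(\Spec\Z)$: all $v_i$ are units at $t=0$ for odd $p$, and the Artin--Schreier description covers $p=2$. Thus $S_0\times_{S_{-1},s_{-1}}\Spec\Z$ is finite étale over $\Spec\Z$ of degree $2^g$. Since $\Spec\Z$ has no nontrivial connected finite étale covers (Minkowski), it is a disjoint union of copies of $\Spec\Z$. More concretely, its generic fibre is the split fibre of Proposition~\ref{prop:start-infinity}, each $Q_{\boldsymbol\epsilon}$ extends to a section by the valuative criterion, and these $2^g$ sections are disjoint because the fibre is étale.

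The main obstacle is the $p=2$ analysis at infinity: checking that no dyadic blow-up centre meets $t=0$, and that the normalization there really is given by the Artin--Schreier rings.
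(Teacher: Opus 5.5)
Your proposal is correct and follows the paper's proof essentially step by step. You extend $Q_+$ and $P_+$ by the valuative criterion with spreading out and gluing, observe that no blow-up centre meets a neighbourhood of $t=0$, so that $S_0$ and $S_{-1}$ are finite \'etale over $\mathbb P^1_{\Z_{(p)}}$ there (units $h_i$ for odd $p$, the Artin--Schreier substitution $v_i=(1-it)(1+2W_i)$ at $p=2$), and deduce smoothness along the sections and \'etaleness of the fibre over $s_{-1}$. There are two small deviations, both valid: you obtain the splitting from Minkowski's theorem, as in Proposition~\ref{prop:realizing-surface-properties}, where the paper instead checks that the $2^g$ extended sections are distinct in every fibre; and you write out the \'etale equation for the normalization in $K$ at $2$, which the paper only asserts.
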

\begin{proof}
For every prime $p$, properness gives a unique extension
$s_{0,p}:\Spec\mathbb Z_{(p)}\to S_0$ of $Q_+$ over
$\Spec\mathbb Z$.
Since $S_0\to\Spec\mathbb Z$ is of finite presentation, there
are an open neighbourhood $V_p\subset\Spec\mathbb Z$ of $(p)$
and a morphism $s_{0,V_p}:V_p\to S_0$ whose restriction to
$\Spec\mathbb Z_{(p)}$ is $s_{0,p}$.
For any primes $p,q$, the morphisms
$s_{0,V_p}|_{V_p\cap V_q}$ and $s_{0,V_q}|_{V_p\cap V_q}$
agree at $\Spec\mathbb Q$. Separatedness of
$S_0\to\Spec\mathbb Z$ gives
\begin{equation}\label{eq:fibres-mechanical-step-6}
 s_{0,V_p}|_{V_p\cap V_q}=s_{0,V_q}|_{V_p\cap V_q}.
\end{equation}
Therefore the morphisms $s_{0,V_p}$ define a section $s_0$ on
$\bigcup_pV_p=\Spec\mathbb Z$.
The uniqueness in the valuative criterion proves uniqueness of $s_0$.
Applying the same argument to $S_{-1}\to\Spec\mathbb Z$ and
$P_+$ gives a unique section $s_{-1}$; since
$\pi_0(Q_+)=P_+$, uniqueness gives $\Pi_0\circ s_0=s_{-1}$.

We next check the morphisms near the sections explicitly.
Put $T=x^{-1}$ and
$H(T)=\prod_{i=0}^g(1-iT)(1-(i+\ell)T)$.
For every prime $p$ let
\begin{equation}\label{eq:fibres-mechanical-step-7}
 R_p=\mathbb Z_{(p)}[T,H(T)^{-1}],\qquad
 \mathcal V_p=\Spec R_p\subset\mathbb P^1_{\mathbb Z_{(p)}}.
\end{equation}
The section $T=0$ lies in $\mathcal V_p$.
The centres of the blow-ups defining $P_m$ lie on the images of
$\sigma_0,\ldots,\sigma_{N-1}$, which are disjoint from
$\mathcal V_p$. Hence
$P_m\times_{\mathbb P^1_{\mathbb Z}}\mathcal V_p\simeq\mathcal V_p$.
For odd $p$ the elements
\begin{equation}\label{eq:fibres-explicit-infinity-odd-functions}
 h_i(T)=T^2f_i=(1-iT)(1-(i+\ell)T)\in R_p^\times
 \quad(0\leq i\leq g)
\end{equation}
are units. Define
$B_p=R_p[v_0,\ldots,v_g]/(v_i^2-h_i(T):0\leq i\leq g)$.
For $P_i(V)=V^2-h_i(T)$ one has
$P_i'(v_i)=2v_i\in B_p^\times$.
Thus $\Spec B_p\to\mathcal V_p$ is finite \'etale.
Since $R_p$ is regular, $B_p$ is regular.
The fraction field of $B_p$ is $L$, so $\Spec B_p$ is the
restriction of $X_0$ to $\mathcal V_p$.
Likewise, $R_p[w]/(w^2-H(T))$ is finite \'etale over $R_p$
and is the integral closure of $R_p$ in $K$.
Therefore the restriction of $X_{-1}$ to $\mathcal V_p$ is
regular, and the blow-up $S_{-1}\to X_{-1}$ is an isomorphism
on this restriction.

For $p=2$, the functions
\begin{equation}\label{eq:fibres-explicit-infinity-dyadic-functions}
 A_i=-\frac{2^{m-2}}{x-i}
     =-\frac{2^{m-2}T}{1-iT}\in R_2
 \quad(0\leq i\leq g)
\end{equation}
are defined because $1-iT\in R_2^\times$.
Put
$B_2=R_2[W_0,\ldots,W_g]/(W_i^2+W_i-A_i:0\leq i\leq g)$.
For $P_i(W)=W^2+W-A_i$ one has
\begin{equation}\label{eq:fibres-mechanical-step-8}
 P_i'(W_i)=2W_i+1,\qquad
 (2W_i+1)^2=1+4A_i=\frac{1-(i+\ell)T}{1-iT}\in R_2^\times.
\end{equation}
Consequently $\Spec B_2\to\mathcal V_2$ is finite \'etale,
and $B_2$ is the integral closure of $R_2$ in $L$.
The integral closure of $R_2$ in the intermediate field $K$
is also finite \'etale over $R_2$.
The identification with the functions on $C_0$ is
\begin{equation}\label{eq:fibres-explicit-infinity-substitution}
 \frac{\sqrt{f_i}}{x-i}=1+2W_i,\qquad
 v_i=(1-iT)(1+2W_i).
\end{equation}
The restriction to $T=0$ has the explicit presentation
\begin{equation}\label{eq:fibres-mechanical-step-9}
 B_2/TB_2=
 \mathbb Z_{(2)}[W_0,\ldots,W_g]/
 (W_i(W_i+1):0\leq i\leq g).
\end{equation}
The points with $v_i=1$ and $v_i=-1$ on the generic fibre of
$T=0$ extend through $W_i=0$ and $W_i=-1$, respectively.
Since $0\ne-1$ in $\mathbb F_2$, these sections of
$\Spec B_2\to\Spec\mathbb Z_{(2)}$ are distinct after reduction.

For each $p$, the restrictions of $S_0$ and $S_{-1}$ to
$\mathcal V_p$ are finite \'etale over $\mathcal V_p$.
Since $\mathcal V_p\to\Spec\mathbb Z_{(p)}$ is smooth,
$S_0\to\Spec\mathbb Z$ and $S_{-1}\to\Spec\mathbb Z$ are
smooth at the points above $T=0$.
The induced morphism
$S_0\times_{P_m}\mathcal V_p\to S_{-1}\times_{P_m}\mathcal V_p$
is a morphism between finite \'etale $\mathcal V_p$-schemes,
and is therefore \'etale.

For $\epsilon=(\epsilon_i)_{i=0}^g\in\{1,-1\}^{g+1}$ with
$\prod_i\epsilon_i=1$, let $Q_\epsilon\in C_0(\mathbb Q)$
be the point $T=0$, $v_i=\epsilon_i$.
Then $\pi_0^{-1}(P_+)=\{Q_\epsilon:\prod_i\epsilon_i=1\}$.
Properness extends each $Q_\epsilon$ uniquely to a section
$s_\epsilon:\Spec\mathbb Z\to S_0$, and
$\Pi_0\circ s_\epsilon=s_{-1}$.
For odd $p$, the sections $s_\epsilon$ have distinct tuples
$(v_i)=(\epsilon_i)$ in the fibre at $p$.
For $p=2$, the tuples
$(W_i)=((\epsilon_i-1)/2)$ are distinct in that fibre.
Thus the morphism
\begin{equation}\label{eq:fibres-mechanical-step-10}
 \coprod_{\epsilon:\prod_i\epsilon_i=1}\Spec\mathbb Z
 \longrightarrow S_0\times_{S_{-1},s_{-1}}\Spec\mathbb Z
\end{equation}
induced by the sections $s_\epsilon$ is an isomorphism on every
geometric fibre. Both schemes are finite \'etale of degree $2^g$
over $\Spec\mathbb Z$, so the morphism is an isomorphism.
This proves \eqref{eq:fibres-explicit-split-section-fibre}.
\end{proof}

\begin{proposition}\label{prop:start-bost-charles-model}
For every $m\geq4$, the surface $S_0$ defined in
\eqref{eq:start-surface-map} is an integral regular projective flat
arithmetic surface over $\Spec\Z$, with smooth geometrically integral
generic fibre $C_0$ and the section $s_0$.
\end{proposition}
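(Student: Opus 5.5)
This proposition only collects properties already established, so the plan is to assemble them in order, checking each requirement of "arithmetic surface" in the sense of Section~\ref{sec:dictionary}. First, Proposition~\ref{prop:start-models} gives that $S_0$ is a normal integral projective flat scheme over $\Spec\Z$ with pure one-dimensional fibres and generic fibre $C_0$. In particular it is proper, flat and integral with one-dimensional nonempty fibres, hence an arithmetic surface. Nonemptiness of every fibre follows from surjectivity of $S_0\to\Spec\Z$. That surjectivity holds because the structure map is proper and flat with dense image, or more directly because the section $s_0$ hits every fibre.

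Second, regularity is exactly Proposition~\ref{prop:starting-regularity}. It covers the open set over $U$ via Proposition~\ref{prop:start-good-fibres}, the odd primes in $\Sigma$ by the local normal-crossings computation, and the prime $2$ by the dyadic computation at the points $\zeta_j$ and off $\bigcup_aD_a$. Third, the generic fibre $C_0$ is smooth, projective and geometrically integral by Proposition~\ref{prop:start-curves}. The identification $(S_0)_\Q\simeq C_0$ is part of Proposition~\ref{prop:start-models}, and it is compatible with Proposition~\ref{prop:start-integral-galois}, which gives $(\Pi_0)_\Q=\pi_0$.

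Finally, the section is supplied by Proposition~\ref{prop:start-integral-section}: $Q_+$ extends uniquely to $s_0:\Spec\Z\to S_0$, using properness together with spreading out over neighbourhoods and separatedness. Moreover, $S_0$ is smooth over $\Spec\Z$ along $s_0$.

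I expect no genuine obstacle, since all the heavy lifting (normalization at $2$, resolution over odd primes) is already done. The only points needing a sentence are the following. The hypothesis $m\ge4$ is the one used throughout the construction (it makes $k=m-2\ge2$ in the dyadic blow-ups and ensures the integrality of $A_i$). Also, "projective over $\Spec\Z$" in the sense of \st{01W7} follows from the composite of blow-ups, a normalization and $\mathbb P^1_\Z\to\Spec\Z$, as argued in Proposition~\ref{prop:start-models}.
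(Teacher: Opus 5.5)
Your proposal is correct and is exactly the paper's argument: the statement is the conjunction of Proposition~\ref{prop:start-models} (integral, projective, flat, generic fibre $C_0$), Proposition~\ref{prop:start-curves} (smoothness and geometric integrality of $C_0$), Proposition~\ref{prop:starting-regularity} (regularity) and Proposition~\ref{prop:start-integral-section} (the section $s_0$). One minor correction to your closing aside: integrality of $A_i=-2^{m-2}/(x-i)$ off $D_i$ only uses that every component of $(P_2)_{\mathbb F_2}$ has depth at most $m-2$, so it holds for any $m\ge2$; the hypothesis $m\ge4$ mainly serves to guarantee $g\ge15$ for the estimates of Section~\ref{sec:realizable}.
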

\begin{proof}
Proposition~\ref{prop:start-models} proves that $S_0$ is integral,
projective and flat over $\Spec\mathbb Z$, with generic fibre $C_0$.
Proposition~\ref{prop:start-curves} proves that $C_0$ is smooth
and geometrically integral.
Proposition~\ref{prop:starting-regularity} proves that $S_0$ is
regular, and Proposition~\ref{prop:start-integral-section} constructs
the section $s_0$.
\end{proof}

\subsection{The bad fibres}

For $p\in\Sigma$, we describe
$S_{-1}\times_{\Spec\mathbb Z}\Spec\overline{\mathbb F}_p$
and $S_0\times_{\Spec\mathbb Z}\Spec\overline{\mathbb F}_p$.
The schemes $S_{-1}$ and $S_0$ need not be minimal models.
The composite $S_0\to S_{-1}\to P_m$ factors uniquely through
the normalization $X_0=\operatorname{Norm}_L(P_m)$.
Denote the resulting morphism by $\mu_0:S_0\to X_0$.
The morphism $\mu_0$ is proper and birational and is an
isomorphism above every codimension-one point of $X_0$.
For an irreducible curve $C\subset X_0$, its strict transform
on $S_0$ means the closure of $\mu_0^{-1}(\eta_C)$, where
$\eta_C$ is the generic point of $C$.
For a component $C\subset(X_0)_{\overline{\mathbb F}_p}$, we use
the closure in $(S_0)_{\overline{\mathbb F}_p}$ of
$(\mu_0)_{\overline{\mathbb F}_p}^{-1}(\eta_C)$.
We keep track of the multiplicity of each irreducible component in
the scheme-theoretic fibre as well as the reduced fibre; see
\cite[Sections~8.3.1 and~10.1.1--10.1.4]{Liu} and \st{0CBN}.
The reduced fibres at odd primes are nodal, although some components
have multiplicity greater than one in the scheme-theoretic fibres.
The fibres at $2$ are reduced, but at some singular points the two
branches have intersection multiplicity two.

\begin{proposition}\label{prop:starting-odd-fibres}
Let $p\in\Sigma$ be odd and put $k_p=\overline{\mathbb F}_p$.
The reduced curves $(S_{-1})_{k_p,\mathrm{red}}$ and
$(S_0)_{k_p,\mathrm{red}}$ are nodal.
Their irreducible components, multiplicities and local rings can be
computed as follows from $(P_p)_{k_p}$, the graph $\Gamma_p$ in
\eqref{eq:start-explicit-fibre-graph}, and the divisors $D_a$.
Recall that $X_{-1}=\operatorname{Norm}_K(P_m)$,
$X_0=\operatorname{Norm}_L(P_m)$,
$S_{-1}\to X_{-1}$ is the blow-up at the singular closed points
above odd primes, and $S_0=\operatorname{Norm}_L(S_{-1})$.
Items (1) and (2) describe the components of $(X_0)_{k_p}$ and
$(X_{-1})_{k_p}$, respectively. Items (3) and (4) describe the
changes under $S_{-1}\to X_{-1}$ and normalization in $L$.
\begin{enumerate}
\item Let $E$ be an irreducible component of $(P_p)_{k_p}$.
The component $E$ is obtained by base change from a component
$E^\circ\subset(P_p)_{\mathbb F_p}$.
Let $v_E$ be the normalized valuation of
$\mathcal O_{P_p,\eta_{E^\circ}}$, and choose a uniformizer $q$
of this discrete valuation ring.
Put $m_i=v_E(f_i)$,
$b_E=(m_i\bmod2)_{i=0}^g\in H$, and
\begin{equation}\label{eq:fibres-mechanical-step-11}
 b_E^\perp=\{(a_i)_{i=0}^g\in H:\textstyle\sum_i a_i(b_E)_i=0\}.
\end{equation}
For $a=(a_i)\in b_E^\perp$, define $\xi_E(a)$ to be the class
\begin{equation}\label{eq:fibres-explicit-residual-square-classes}
 \xi_E(a)=\left[
 \overline{q^{-\sum_i a_i m_i}\prod_i f_i^{a_i}}
 \right]\in k_p(E)^\times/k_p(E)^{\times2}.
\end{equation}
In \eqref{eq:fibres-explicit-residual-square-classes}, each $a_i$
is represented by $0$ or $1$, and the bar denotes reduction in
the residue field at $\eta_{E^\circ}$ followed by extension to
$k_p(E)$.
If $E$ was created in the construction of $P_p$ with coordinate
$X=(x-c)/p^k$, then
\begin{equation}\label{eq:fibres-explicit-component-valuations}
 m_i=\min\{k,v_p(c-i)\}+\min\{k,v_p(c-i-\ell)\},
 \qquad v_p(0)=+\infty.
\end{equation}
For the strict transform of
$\mathbb P^1_{\mathbb F_p}\subset\mathbb P^1_{\mathbb Z}$,
take $k=0$ in \eqref{eq:fibres-explicit-component-valuations}.
Set
\begin{equation}\label{eq:fibres-mechanical-step-12}
 r_E=\dim_{\mathbb F_2}\langle\xi_E(a):a\in b_E^\perp\rangle,
 \qquad
 \epsilon_E=\begin{cases}0,&b_E=0,\\1,&b_E\ne0.\end{cases}
\end{equation}
There are exactly $2^{g+1-\epsilon_E-r_E}$ irreducible
components $C\subset(X_0)_{k_p}$ whose image in $(P_p)_{k_p}$
is $E$. Each component $C$ mapping onto $E$ has multiplicity
$2^{\epsilon_E}$ in
$(X_0)_{k_p}$.
Choose representatives $u_a\in k_p(E)^\times$ of $\xi_E(a)$.
The normalization $\widetilde C$ of $C$ is the smooth projective
curve with function field
$k_p(\widetilde C)=k_p(E)(\sqrt{u_a}:a\in b_E^\perp)$.
The strict transform of $C$ on $S_0$ has the same multiplicity
$2^{\epsilon_E}$.

For the genus calculation, let $\mathcal D_E$ be the set consisting
of the components $D\ne E$ of $(P_p)_{k_p}$ with
$D\cap E\ne\varnothing$, together with the divisors $D_a$
with $D_a\cap E\ne\varnothing$.
For a component $D$ use the vector $b_D$ from
\eqref{eq:fibres-explicit-component-inertia}; for $D_a$ put
$b_{D_a}=e_i$ when $a=i$ or $a=i+\ell$.
Then
\begin{equation}\label{eq:fibres-explicit-residual-rank}
 r_E=\dim_{\F}\langle b_E,b_D:D\in\mathcal D_E\rangle-\epsilon_E.
\end{equation}
Put $B_E=\#\{D\in\mathcal D_E:b_D\notin\langle b_E\rangle\}$.
For every component $C\subset(X_0)_{k_p}$ mapping onto $E$,
the genus of $\widetilde C$ is zero if $r_E=0$, and otherwise
\begin{equation}\label{eq:fibres-explicit-residual-genus}
 g(\widetilde C)=1+2^{r_E-2}(B_E-4).
\end{equation}
\item Put $F=\prod_{i=0}^g f_i$.
If $\sum_i m_i$ is odd, there is exactly one component
$C\subset(X_{-1})_{k_p}$ mapping onto $E$;
The multiplicity of $C$ is two and $k_p(C)=k_p(E)$.
If $\sum_i m_i$ is even, put
\begin{equation}\label{eq:fibres-explicit-quadratic-residue}
 \overline F_E=\overline{q^{-\sum_i m_i}F}\in k_p(E)^\times.
\end{equation}
If $\overline F_E\notin k_p(E)^{\times2}$, there is exactly
one component $C\subset(X_{-1})_{k_p}$ mapping onto $E$, and
$k_p(C)=k_p(E)(\sqrt{\overline F_E})$ has degree two over
$k_p(E)$.
If $\overline F_E\in k_p(E)^{\times2}$, there are exactly two
components $C\subset(X_{-1})_{k_p}$ mapping onto $E$, and
$k_p(C)=k_p(E)$ for each component.
When $\sum_i m_i$ is even, every component mapping onto $E$
has multiplicity one.
\item Let $z\in D\cap D'$ be an intersection of two distinct
members of the union of the fibre components and
$D_0,\ldots,D_{N-1}$.
Let $A$ be the strict henselization of $\mathcal O_{P_p,z}$,
and choose regular parameters $u,v$ of $A$ so that
$D\times_{P_p}\Spec A=V(u)$ and
$D'\times_{P_p}\Spec A=V(v)$.
Put $b=b_D$ and $c=b_{D'}$. Equivalently,
$b_i=\operatorname{ord}_{(u)}(f_i)\bmod2$ and
$c_i=\operatorname{ord}_{(v)}(f_i)\bmod2$.
The inertia subgroups at $D$ and $D'$ are
$\langle b\rangle$ and $\langle c\rangle$, as in
\eqref{eq:fibres-explicit-inertia-subgroup}.
After interchanging $(D,u,b)$ and $(D',v,c)$ if necessary,
the integral closure of $A$ in
$L\otimes_{\mathbb Q(x)}\operatorname{Frac}(A)$ is a product
of copies of the following ring:
\begin{equation}\label{eq:fibres-explicit-odd-local-rings}
\begin{array}{c|c}
 \text{condition on }b,c&\text{factor of the integral closure}\\\hline
 b=c=0&A\\
 b\ne0,\ c=0&A[s]/(s^2-u)\\
 b=c\ne0&A[z_1]/(z_1^2-uv)\\
 b,c\text{ linearly independent}&A[s,t]/(s^2-u,t^2-v).
\end{array}
\end{equation}
The scheme $X_{-1}\times_{P_m}\Spec A$ has a singular point
if and only if $\lambda(b)=\lambda(c)=1$.
In this case
$X_{-1}\times_{P_m}\Spec A=\Spec A[z_1]/(z_1^2-uv)$,
and $S_{-1}\to X_{-1}$ restricts to the blow-up at
$(u,v,z_1)$.
The exceptional curve is $\mathbb P^1_{k_p}$.
Its multiplicity in $(S_{-1})_{k_p}$ is one if one of $D,D'$
is a divisor $D_a$, and two if $D,D'$ are both components
of $(P_p)_{k_p}$.
\item In the notation of (3), suppose $\lambda(b)=\lambda(c)=1$.
If $b,c$ are linearly independent, the normalization of
$\operatorname{Bl}_{(u,v,z_1)}(\Spec A[z_1]/(z_1^2-uv))$
in each field factor of
$L\otimes_{\mathbb Q(x)}\operatorname{Frac}(A)$ is
$\operatorname{Bl}_{(s,t)}(\Spec A[s,t]/(s^2-u,t^2-v))$.
There are $2^{g-1}$ exceptional components of $(S_0)_{k_p}$
mapping to $z$, each isomorphic to $\mathbb P^1_{k_p}$.
The multiplicity of each exceptional component is two if one
of $D,D'$ is a divisor $D_a$, and four if $D,D'$ are both
components of $(P_p)_{k_p}$.
If $b=c$, then exactly one of $D,D'$ is a divisor $D_a$.
In this case there are $2^g$ exceptional components of
$(S_0)_{k_p}$ mapping to $z$, each isomorphic to
$\mathbb P^1_{k_p}$ and each of multiplicity one.
\end{enumerate}
In particular, every component of $(S_{-1})_{k_p}$ has
multiplicity at most two, and every component of $(S_0)_{k_p}$
has multiplicity at most four.
\end{proposition}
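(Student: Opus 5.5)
The plan is to work one geometric point of $(P_p)_{k_p}$ at a time, using strict henselizations. Normalization commutes with strict henselization, and the local structure is already computed in the proof of Proposition~\ref{prop:starting-regularity}. Throughout, $\mathcal O^{\mathrm{sh}}$ has residue field $k_p$ and $2$ is a unit, so every unit is a square. The global statements (component counts, multiplicities, function fields) will be read off at generic points of fibre components. The nodal statements will be read off at closed points.

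For items (1) and (2) I would pass to the discrete valuation ring $R=\mathcal O^{\mathrm{sh}}_{P_p,\eta_{E}}$, the strict henselization at the generic point of $E$, with uniformizer $q$. Write $f_i=q^{m_i}\cdot(\text{unit})$. The algebra $L\otimes_{\Q(x)}\Frac(R)$ is a product of fields. Each factor is obtained by first adjoining $\sqrt q$ when $b_E\neq0$, which is totally ramified of index $2^{\epsilon_E}$; the inertia statement is \eqref{eq:fibres-explicit-inertia-subgroup}. One then adjoins square roots of the residual units. For $a\in b_E^\perp$, the monomial $q^{-\sum a_im_i}\prod f_i^{a_i}$ is a unit whose square class in the residue field is $\xi_E(a)$. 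The residue extension is therefore $k_p(E)(\sqrt{u_a})$ of degree $2^{r_E}$. Counting degrees, $2^{g+1}=(\#\text{factors})\cdot 2^{\epsilon_E}\cdot 2^{r_E}$, which gives the number of components and the multiplicity $2^{\epsilon_E}$.

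Item (2) is the same argument for the single square root of $F$, giving the three cases according to the parity of $\sum m_i$ and whether $\overline F_E$ is a square. Multiplicities are unchanged on $S_0$ because $\mu_0$ is an isomorphism above codimension-one points.

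For \eqref{eq:fibres-explicit-residual-rank}, I would compute the divisors of the $\xi_E(a)$ on $E\simeq\mathbb P^1$. The class $\xi_E(a)$ has odd order exactly at the points $E\cap D$ with $\langle a,b_D\rangle=1$. The rank of the resulting map to $(\Z/2)^{\mathcal D_E}$ is $\dim\langle b_E,b_D\rangle-\epsilon_E$, since the pairing is perfect on $H$. There are no constant squares to worry about, since $k_p$ is algebraically closed. Riemann--Hurwitz for the $(\Z/2)^{r_E}$-cover of $\mathbb P^1$ then gives the genus. That cover is branched exactly over the $B_E$ points with $b_D\notin\langle b_E\rangle$, with ramification index $2$ and $2^{r_E-1}$ points above each. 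This yields $2g-2=2^{r_E}(-2)+2^{r_E-1}B_E$, which is \eqref{eq:fibres-explicit-residual-genus}.

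For items (3) and (4) I would use the local table \eqref{eq:fibres-explicit-odd-local-rings}, already derived in Proposition~\ref{prop:starting-regularity}, by factoring $f_i=\xi_iu^{b_i}v^{c_i}r_i^2$. The singular-point criterion $\lambda(b)=\lambda(c)=1$ is just the statement that $F=\prod f_i$ has odd order along both $u$ and $v$. The exceptional-curve computations come from \eqref{eq:start-explicit-node-resolution} and \eqref{eq:fibres-explicit-normalized-u-open}--\eqref{eq:fibres-explicit-normalized-v-open}.

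The multiplicity of the exceptional curve is the valuation of $p$ along it. On $B_u=A[q]/(v-uq^2)$ the exceptional curve is $u=0$, and $p$ equals $u^{\alpha}v^{\beta}\cdot(\text{unit})=u^{\alpha+\beta}\cdot(\text{unit})$. Here $\alpha,\beta\in\{0,1\}$ record whether $D,D'$ are fibre components, which is where reducedness of the fibre of $P_p$ is used. This gives multiplicity $1$ or $2$. On the normalized blow-up the curve is $s=0$ with $u=s^2$, which doubles the multiplicity to $2$ or $4$. In the case $b=c$, the same computation gives the multiplicity-one statement.

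The count $2^{g-1}$ (respectively $2^g$) follows because the local normalization is a product of $2^{g+1}/4$ (respectively $2^{g+1}/2$) factors, each contributing one $\mathbb P^1$. To see that $b=c\neq0$ forces one of $D,D'$ to be some $D_a$, I would cite the claim $b_E\ne b_F$ for adjacent fibre components, proved via \eqref{eq:fibres-explicit-inertia-after-blowup}. Nodality of the reduced fibres follows from the local forms: $uv$ in $A[s,t]$ and in the blown-up charts, $u=0$ or $s=0$ meeting transversely with other curves.

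I expect the main obstacle to be the bookkeeping for the residual rank \eqref{eq:fibres-explicit-residual-rank}. One must check that the points of $E$ meeting neighbours are distinct, and that the parity of $\operatorname{ord}$ of $\xi_E(a)$ there is exactly $\langle a,b_D\rangle$, including at the point at infinity. I would handle this by writing $\xi_E(a)$ explicitly in the coordinate $X=(x-c)/p^k$ on $E$ as a product of linear factors $X-e$.
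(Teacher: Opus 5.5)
Your outline is the paper's proof. Component counts and multiplicities come from $e$, $f$ and $[L:\Q(x)]=2^{g+1}$ at the generic point of $E$. The residual rank and genus come from square classes on $E\simeq\mathbb P^1_{k_p}$ and tame Riemann--Hurwitz. Items (3)--(4) come from the local table of Proposition~\ref{prop:starting-regularity} together with $p=\eta u^\alpha v^\beta$ and $u=s^2$. Your explicit parity computation of $\operatorname{ord}(\xi_E(a))$ at the points of $E$ is the Kummer dual of the paper's statement that $\Gal(k_p(\widetilde C)/k_p(E))$ is generated by the inertia images $b_D+\langle b_E\rangle$; it is the same argument.

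One step fails as you wrote it. For items (1)--(2) you cannot use the strict henselization $R$ of $\mathcal O_{P_p,\eta_E}$ at the generic point. Its residue field is the separable closure of the function field of $E$. Since $2$ is a unit, every unit of $R$ is a square, so the residual roots $\sqrt{u_a}$ already lie in $\Frac(R)$. Then $L\otimes_{\Q(x)}\Frac(R)$ has $2^{g+1-\epsilon_E}$ field factors, each with trivial residue extension. Counting these overcounts the components of $(X_0)_{k_p}$ over $E$ by the residue degree $2^{r_E}$, and it contradicts your next sentence that the residue extension is $k_p(E)(\sqrt{u_a})$.

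What you want instead is the local ring at $\eta_E$ of $P_p\times_{\Z_{(p)}}\Z_{(p)}^{\mathrm{sh}}$, or its henselization but not its strict henselization. This is a discrete valuation ring with the same uniformizer $q$ and residue field $k_p(E)$; it is what the paper means by ``after extension of the constant field to $k_p$.'' Over that ring your identity $2^{g+1}=(\#\text{components})\cdot2^{\epsilon_E}\cdot2^{r_E}$ holds. The multiplicity equals $e=2^{\epsilon_E}$ because $v_E(p)=1$ on the reduced fibre of $P_p$; you should say this in item (1), not only in item (3).

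Two smaller points.
\begin{itemize}
\item For ``exactly one of $D,D'$ is a $D_a$'' you need, besides $b_E\ne b_F$ for adjacent fibre components, that distinct $D_a$ do not meet over odd $p$, which is \eqref{eq:start-model-separated-sections}.
\item In the chart $A[q]/(v-uq^2)$ the factor $q^{2\beta}$ is a unit only at the generic point of the exceptional curve. That is all the multiplicity computation needs, but your ``$u^{\alpha+\beta}\cdot(\text{unit})$'' should be stated there. Likewise the reduced fibre on $A[s,t]/(s^2-u,t^2-v)$ is cut out by $st$, not $uv$.
\end{itemize}
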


\begin{proof}
Fix a component $E\subset(P_p)_{k_p}$ as in (1).
By \eqref{eq:fibres-explicit-inertia-subgroup}, the inertia
subgroup of an extension of $v_E$ to $L$ is
$\langle b_E\rangle$, of order $2^{\epsilon_E}$.
After extension of the constant field to $k_p$, the residue
field extension is
$k_p(E)(\sqrt{u_a}:a\in b_E^\perp)/k_p(E)$, of degree
$2^{r_E}$.
Thus, for every component $C\subset(X_0)_{k_p}$ mapping onto $E$,
\begin{equation}\label{eq:fibres-mechanical-step-13}
 [k_p(C):k_p(E)]=2^{r_E},\qquad e(C/E)=2^{\epsilon_E}.
\end{equation}
The equality $[L:\mathbb Q(x)]=2^{g+1}$ therefore gives
\begin{equation}\label{eq:fibres-mechanical-step-14}
 \#\{C\subset(X_0)_{k_p}:C\text{ irreducible},\ C\mapsto E\}
 =\frac{2^{g+1}}{2^{r_E}2^{\epsilon_E}}
 =2^{g+1-\epsilon_E-r_E}.
\end{equation}
Since $(P_p)_{k_p}$ is reduced, the multiplicity of $C$ in
$(X_0)_{k_p}$ equals $e(C/E)=2^{\epsilon_E}$.
For $K=\mathbb Q(x)(\sqrt F)$, the same valuation calculation
has ramification index two when $v_E(F)=\sum_i m_i$ is odd.
When $\sum_i m_i$ is even, the ramification index is one and
the residue algebra is
$k_p(E)[w]/(w^2-\overline F_E)$.
This algebra is a quadratic field if
$\overline F_E\notin k_p(E)^{\times2}$ and is
$k_p(E)\times k_p(E)$ otherwise. This proves (2).

To prove \eqref{eq:fibres-explicit-residual-rank}, fix a component
$C\subset(X_0)_{k_p}$ mapping onto $E$, and denote the induced
morphism of smooth projective curves by
$\varphi_C:\widetilde C\to E$.
The extension $k_p(\widetilde C)/k_p(E)$ is Galois, with
$\Gal(k_p(\widetilde C)/k_p(E))$ a subgroup of $H/\langle b_E\rangle$.
For $D\in\mathcal D_E$, the inertia subgroup at
$D\cap E$ is generated by the image
$b_D+\langle b_E\rangle$ in $H/\langle b_E\rangle$.
A square class in the function field of
$E\simeq\mathbb P^1_{k_p}$ is determined by the parities of
its valuations at the points of $E$.
Consequently
\begin{equation}\label{eq:fibres-mechanical-step-15}
 \Gal(k_p(\widetilde C)/k_p(E))
 =\langle b_E,b_D:D\in\mathcal D_E\rangle/\langle b_E\rangle.
\end{equation}
Taking dimensions proves
\eqref{eq:fibres-explicit-residual-rank}.
The morphism $\varphi_C$ is ramified at exactly the points
$D\cap E$ with $b_D\notin\langle b_E\rangle$.
There are $B_E$ of these points, and every nontrivial inertia
subgroup has order two.
Riemann--Hurwitz for $\varphi_C$ gives
\begin{equation}\label{eq:fibres-explicit-odd-riemann-hurwitz}
 2g(\widetilde C)-2=2^{r_E}(-2+B_E/2).
\end{equation}
For $r_E>0$, solving
\eqref{eq:fibres-explicit-odd-riemann-hurwitz} gives
\eqref{eq:fibres-explicit-residual-genus}.
For $r_E=0$, $\varphi_C$ is an isomorphism and
$g(\widetilde C)=g(E)=0$.

The local rings in \eqref{eq:fibres-explicit-odd-local-rings}
were computed in Proposition~\ref{prop:starting-regularity}.
For the multiplicities in (3) and (4), retain the notation
$z\in D\cap D'$, $A,u,v,b,c$ of (3), ordering the divisors so
that $D$ is a component of $(P_p)_{k_p}$.
Since $(P_p)_{k_p}$ is reduced, one has
\begin{equation}\label{eq:fibres-explicit-base-fibre-equation}
 p=\eta u^\alpha v^\beta,\qquad \eta\in A^\times,\qquad
 (\alpha,\beta)=
 \begin{cases}
 (1,0),&D'=D_a\text{ for some }a,\\
 (1,1),&D'\text{ is a component of }(P_p)_{k_p}.
 \end{cases}
\end{equation}
Suppose $\lambda(b)=\lambda(c)=1$ and put
$R=A[z_1]/(z_1^2-uv)$.
On $U_u=\Spec R_u$, where $R_u=A[q]/(v-uq^2)$ as in
\eqref{eq:fibres-explicit-quadratic-u-open}, the image of $p$ is
\begin{equation}\label{eq:fibres-explicit-resolved-fibre-equation}
 p=\eta u^{\alpha+\beta}q^{2\beta}\quad\text{in }R_u.
\end{equation}
The exceptional divisor in $U_u$ is $V(u)$, and $q$ is a unit
at the generic point of $V(u)$. Thus the multiplicity of $V(u)$
in the fibre of $S_{-1}$ is $\alpha+\beta$, equal to one or two as asserted
in (3).
If $b,c$ are linearly independent, the integral closure of
$R_u$ in each field factor is $B_u$ from
\eqref{eq:fibres-explicit-normalized-u-open}.
Substituting $u=s^2$ gives
\begin{equation}\label{eq:fibres-explicit-normalized-fibre-equation}
 p=\eta s^{2(\alpha+\beta)}q^{2\beta}\quad\text{in }B_u.
\end{equation}
The exceptional divisor in $\Spec B_u$ is $V(s)$.
Its multiplicity is $2(\alpha+\beta)$, equal to two when
$D'=D_a$ and four when $D'$ is a fibre component.
On $U_v=\Spec R_v$, where $R_v=A[q']/(u-v(q')^2)$,
and on $\Spec B_v$ from
\eqref{eq:fibres-explicit-normalized-v-open}, one has
\begin{equation}\label{eq:fibres-explicit-v-open-fibre-equations}
 \begin{aligned}
 p&=\eta v^{\alpha+\beta}(q')^{2\alpha} &&\text{in }R_v,\\
 p&=\eta t^{2(\alpha+\beta)}(q')^{2\alpha} &&\text{in }B_v.
 \end{aligned}
\end{equation}
The open $U_{z_1}$ of
\eqref{eq:fibres-explicit-quadratic-z-open} is
$D(q)\subset U_u$ and $D(q')\subset U_v$.
Thus $U_u\cup U_v$ is the entire blow-up, so
\eqref{eq:fibres-explicit-resolved-fibre-equation}--
\eqref{eq:fibres-explicit-v-open-fibre-equations} compute every
exceptional component.
There are $2^{g-1}$ field factors when $b,c$ are independent,
and each factor gives one exceptional projective line.
This proves the count and multiplicities in the first case of (4).
If $b=c$, Proposition~\ref{prop:starting-regularity} shows that
$D'$ is a divisor $D_a$.
There are $2^g$ factors of the integral closure, all equal to
$R=A[z_1]/(z_1^2-uv)$.
The normalization of the blow-up in each field factor is therefore
the blow-up of $\Spec R$ itself.
Equation~\eqref{eq:fibres-explicit-resolved-fibre-equation} with
$(\alpha,\beta)=(1,0)$ gives multiplicity one for each of the
$2^g$ exceptional components. This proves the second case of (4).

We check the assertions about singular points of the reduced fibres.
In the regular rings $R_u,R_v,B_u,B_v$, the reduced fibre is
cut out by one parameter or the product of two distinct parameters,
as follows from
\eqref{eq:fibres-explicit-resolved-fibre-equation}--
\eqref{eq:fibres-explicit-v-open-fibre-equations}.
At points where $S_{-1}\to X_{-1}$ is an isomorphism, the
regular factors in \eqref{eq:fibres-explicit-odd-local-rings}
give the same descriptions: the reduced fibre is cut out by
$u$, $v$, $uv$, $s$, $v$, $sv$, or $st$, as appropriate.
Hence every singular point of
$(S_{-1})_{k_p,\mathrm{red}}$ or $(S_0)_{k_p,\mathrm{red}}$
has completed local ring $k_p[[z_1,z_2]]/(z_1z_2)$.
This proves that $(S_{-1})_{k_p,\mathrm{red}}$ and
$(S_0)_{k_p,\mathrm{red}}$ are nodal.

For completeness, the identifications of the local rings are
compatible with the global normalizations.
If $\Spec A'\subset P_m$ is affine and $f\in A'$, then
\begin{equation}\label{eq:fibres-mechanical-step-16}
 (\overline{A'}^{\,L})_f=\overline{A'_f}^{\,L}.
\end{equation}
The same identity holds for an affine open of $S_{-1}$.
Thus the descriptions of $X_0$ and $S_0$ on affine opens restrict
to identical integral closures on every smaller affine open.
The restrictions of $\sqrt{f_i}\in L$ agree under these
identifications. The automorphisms
$\sqrt{f_i}\mapsto(-1)^{a_i}\sqrt{f_i}$ commute with localization,
so the identifications also respect the actions of $H$ on $X_0$
and $G_0$ on $S_0$.

We finally prove the stronger assertion needed later: every
irreducible component of
$Y_p=(S_{-1})_{k_p,\mathrm{red}}$ is smooth, and the two
branches at a node of $Y_p$ belong to distinct irreducible
components of $Y_p$.
At a point where $S_{-1}\to X_{-1}$ is an isomorphism,
a node of $Y_p$ can occur only above an intersection $E\cap F$
of two distinct components of $(P_p)_{k_p}$, by
\eqref{eq:fibres-explicit-odd-local-rings}.
The two branches map onto $E$ and $F$, respectively.
An irreducible component of $Y_p$ cannot map onto two distinct
curves, so the branches belong to different components of $Y_p$.
Above $D_a\cap E$, the reduced fibre of $X_{-1}$ has only one
branch.

At a singular point of $X_{-1}$ with local ring
$R=A[z_1]/(z_1^2-uv)$, consider the blow-up
$\operatorname{Bl}_{(u,v,z_1)}(\Spec R)$.
In $U_u=\Spec R_u$, the reduced fibre is $V(u)$ if
$\beta=0$ and $V(uq)$ if $\beta=1$, by
\eqref{eq:fibres-explicit-resolved-fibre-equation}.
If $\beta=1$, $V(u)$ is the exceptional curve and $V(q)$ is
the strict transform of the fibre component of $\Spec R$
whose image on $\Spec A$ is $V(v)$.
The elements $u,q$ are regular parameters at $V(u,q)$, so
$V(u)$ and $V(q)$ are smooth there and have intersection
multiplicity one.
In $U_v=\Spec R_v$, the reduced fibre is $V(vq')$ by
\eqref{eq:fibres-explicit-v-open-fibre-equations}.
Here $V(v)$ is the exceptional curve and $V(q')$ is the strict
transform of the fibre component mapping onto $V(u)\subset\Spec A$.
The elements $v,q'$ are regular parameters at $V(v,q')$, so
$V(v)$ and $V(q')$ are smooth there and have intersection
multiplicity one.
The exceptional curve is contracted to the point $(u,v,z_1)$,
whereas either strict transform maps onto a curve of $\Spec R$.
Thus the exceptional curve and either strict transform are
contained in distinct irreducible components of $Y_p$.
All branches of every component of $Y_p$ are smooth by the local
rings above, and no component contains two branches at a node.
Therefore every component of $Y_p$ is smooth, and $Y_p$ is a
strict normal-crossing curve.
\end{proof}

\Needspace{16\baselineskip}
\begin{proposition}\label{prop:starting-dyadic-fibres}
Put $k_2=\overline{\mathbb F}_2$,
$r=2^{m-2}$ and $n=2r-1$.
The fibre $(P_2)_{k_2}$ has $n$ irreducible components in total,
namely $E_{k,c}$ for $0\leq k\leq m-2$ and $0\leq c<2^k$.
In particular, there are $r$ components $E_{m-2,c}$.

The curves $(S_{-1})_{k_2}$ and $(S_0)_{k_2}$ are reduced.
The curve $(S_{-1})_{k_2}$ has $2n$ irreducible components,
each with normalization isomorphic to $\mathbb P^1_{k_2}$.
The singular points of $(S_{-1})_{k_2}$ consist of
$2(n-1)$ ordinary nodes and $2r$ points at which two smooth
branches have intersection multiplicity two.

For $k<m-2$, there are $2^{g+1}$ irreducible components of
$(S_0)_{k_2}$ mapping onto $E_{k,c}$, each with normalization
isomorphic to $\mathbb P^1_{k_2}$.
For $k=m-2$, there are $2^{g-1}$ irreducible components of
$(S_0)_{k_2}$ mapping onto $E_{m-2,c}$, each with normalization
of genus one.
Thus, writing $c_0$ for the number of irreducible components
of $(S_0)_{k_2}$ and $a_0$ for the sum of the genera of their
normalizations, one has
\begin{equation}\label{eq:fibres-explicit-dyadic-component-counts}
 c_0=(r-1)2^{g+1}+r2^{g-1},\qquad a_0=r2^{g-1}.
\end{equation}
The singular points of $(S_0)_{k_2}$ consist of
$2^{g+1}(n-1)$ ordinary nodes and $r2^g$ points at which two
smooth branches have intersection multiplicity two.
\end{proposition}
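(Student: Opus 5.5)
The plan is to read everything off the local descriptions of $X_0$ and $X_{-1}$ over $\Z_{(2)}$ obtained in the proof of Proposition~\ref{prop:starting-regularity}. By \eqref{eq:start-model-resolution-at-two} we have $S_{-1}=X_{-1}$ and $S_0=X_0$ over $\Z_{(2)}$, so no blow-up needs to be analysed.

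\textbf{The fibre of $P_2$.} The count of components of $(P_2)_{k_2}$ follows from step (ii) of the construction. Each $b_{2,k}$ blows up $2^{k+1}$ points, and the components $E_{k,c}$, $0\le c<2^k$, $0\le k\le m-2$, number $\sum_k 2^k=2r-1=n$. The graph rule \eqref{eq:start-explicit-graph-blowup} shows that these form a tree, with $n-1$ nodes.

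\textbf{Components over $E_{j,c}$ and reducedness.} At the generic point of $E_{j,c}$, $2$ is a uniformizer and $A_i=-2^{m-2}/(x-i)$ is integral. The extension is generated by roots of the Artin--Schreier equations $W^2+W=A_i$, so it is unramified at $v_{E_{j,c}}$. Hence every component of $(X_0)_{k_2}$ has multiplicity one. By \eqref{eq:fibres-mechanical-step-3}, together with regularity, the fibres are then reduced.

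The residue extension is $k_2(E)(\wp^{-1}(\bar A_i))$.
\begin{itemize}
\item For $j<m-2$ one has $v_E(A_i)\ge m-2-j>0$, so $\bar A_i=0$ and the fibre splits completely. This gives $2^{g+1}$ copies of $E\simeq\mathbb P^1$, which is \eqref{eq:fibres-mechanical-step-3} at the generic point.
\item For $j=m-2$, write $X=(x-c)/2^{m-2}$. Then $\bar A_i=-1/(X-\alpha_i)$ is nonzero exactly for the two indices $i$ with $i\equiv c\pmod{2^{m-2}}$ and $0\le i\le g$. These are the two pairs $\{j,j+\ell\}$ and $\{j+2^{m-1},j+2^{m-1}+\ell\}$, whose poles sit at the two points $\zeta_j,\zeta_{j+2^{m-1}}$ of $E_{m-2,c}$.
\end{itemize}
In the second case the residue algebra is a product of $2^{g-1}$ copies of a $(\Z/2)^2$ Artin--Schreier extension of $k_2(X)$. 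Each of the three quadratic subextensions has conductor $2$ at each pole it has, which gives different exponent $2$ per ramified point. Riemann--Hurwitz, or equivalently the conductor-discriminant formula applied to the biquadratic extension, then gives $2g-2=4(-2)+2\cdot 2\cdot 2=0$ for the total different over the two points. So each component has genus one. The counts $c_0$ and $a_0$ follow.

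For $X_{-1}$, repeat the argument with the single equation attached to $y/\prod_i(x-i)$, or equivalently with the Kummer class of $F$. Its reduction is the sum of the $\bar A_i$, and this sum vanishes on every component. Indeed, on $E_{m-2,c}$ the two poles cancel pairwise. The reason is that $\lambda$ kills exactly the right vectors, and I would check this directly from $F/\prod_a(x-a)^{\dots}$. As a result, each component splits into two copies of $\mathbb P^1$, giving $2n$ components.

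\textbf{Singular points.} Points of $(P_2)_{k_2}$ off $\bigcup D_a$ are étale-split, with $\widetilde A_L\simeq\prod A$. So each of the $n-1$ nodes of the tree lifts to $2^{g+1}$ ordinary nodes in $S_0$ and to $2$ ordinary nodes in $S_{-1}$.

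At the $2r$ points $\zeta_j$ the local algebra is $\prod^{2^{g-1}}B$ with $B$ from \eqref{eq:fibres-explicit-dyadic-rank-four-algebra}. I would compute
\[
B/2B=k_2[[t]][a,b]/(a^2+ta+t,\ b^2+tb+t).
\]
In the parameters $(a,d)$ this becomes $k_2[[a,d]]/\bigl(d(d+t)\bigr)$ with $t\sim a^2$ up to a unit. This has two smooth branches $d=0$ and $d=a^2\cdot(\text{unit})$, with intersection multiplicity two. That yields $r2^g$ tacnodes on $S_0$. On $X_{-1}$, the rank-two subalgebra generated by $U V$ gives a single such point above each $\zeta_j$, for $2r$ in total.

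\textbf{Main obstacle.} The delicate step is the local computation at $\zeta_j$. There one must identify the branches in the completed local ring of $B/2$ and of its $K$-analogue, and show that the intersection multiplicity is exactly two. One must also confirm the genus-one ramification data on $E_{m-2,c}$, consistently with those branches. I would first try to do both by an explicit change of variables in $k_2[[a,d]]$ as above, and then cross-check the result against the global Euler characteristic, using $\chi$ of the fibre equal to $2-2g(C_0)$.
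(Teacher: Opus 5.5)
Most of your plan follows the paper's proof: the count of components of $(P_2)_{k_2}$, the Artin--Schreier residues on $E_{k,c}$ for $k<m-2$, the genus-one components (different exponent two at the four ramified points), the lifting of the $n-1$ nodes, and the ring $B$ at the $\zeta_j$ (via $d=b-a$, $d(d+t)=0$, $t=a^2/(a+1)$).

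\textbf{The gap: $(S_{-1})_{k_2}$ above the $\zeta_j$.} Your claim about the $2r$ points of $(S_{-1})_{k_2}$ lying over the $\zeta_j$ is not proved. The algebra $A[UV]\simeq A[T]/(T^2-t(t-2)(t-4)(t-6))$ is not the local ring of $S_{-1}$ there. Modulo $2$ it becomes $(A/2A)[T]/((T+t^2)^2)$, and it is not integrally closed, so it gives no information about branches. You need the integral element $Z=(UV-h)/2$ with $h=t^2-6t+4$, which does not lie in $A\oplus A\cdot UV$.
\begin{itemize}
\item The identity $t(t-2)(t-4)(t-6)=h^2-16$ gives $Z^2+hZ+4=0$.
\item You must check that $R_{-1}=A[Z]/(Z^2+hZ+4)$ is normal, by Serre's criterion: it is a hypersurface, it is regular after inverting $2$, and $2Z+h\equiv t^2$ is a unit at the generic points of its fibre.
\item Only then does $R_{-1}/2R_{-1}=(A/2A)[Z]/(Z(Z+t^2))$ exhibit one point over $\zeta_j$ with two smooth branches of intersection multiplicity two.
\end{itemize}
This computation also gives reducedness of $(S_{-1})_{k_2}$ at these points. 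Your appeal to regularity does not give it, since $S_{-1}$ is singular above every $\zeta_j$; in general use instead that the normal surface $X_{-1}$ is Cohen--Macaulay.

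An alternative route exists. One point over $\zeta_j$ follows because $H$ acts transitively there with stabilizers not contained in $G_0=\ker\lambda$. Smooth branches follow because the two components over $E_{m-2,c}$ map isomorphically onto it. Intersection multiplicity two then follows from $\chi(\mathcal O)=1-g$ on the fibre, since the local rings at all $\zeta_j$ are isomorphic. However, you offer the Euler characteristic only as a cross-check, not as this argument.

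\textbf{Index bookkeeping on $E_{m-2,c}$.} This also needs correcting, because the component count for $S_{-1}$ depends on it.
\begin{itemize}
\item There are four indices $i\in\{0,\ldots,g\}$ with $i\equiv c\pmod{2^{m-2}}$, namely $c+e2^{m-2}$ for $0\le e\le 3$.
\item Their residues are $1/X$, $1/(X-1)$, $1/X$, $1/(X-1)$.
\item $\bar A_c$ and $\bar A_{c+2^{m-1}}$ have their pole at $\zeta_c$, while $\bar A_{c+2^{m-2}}$ and $\bar A_{c+3\cdot2^{m-2}}$ have theirs at the second point $\zeta_{c+2^{m-2}}$, not at ``$\zeta_{j+2^{m-1}}$''.
\end{itemize}
If only two residues were nonzero, as you state, then $\sum_i\bar A_i=1/X+1/(X-1)\notin\wp(k_2(X))$. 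In that case $(S_{-1})_{k_2}$ would have a single component over $E_{m-2,c}$ instead of two. The count $2n$ rests exactly on each residue occurring twice, and you should state that in place of the appeal to $\lambda$.
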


\begin{proof}
For each component $E_{k,c}$ with $k<m-2$, the construction of
$P_2$ blows up two points of $E_{k,c}$, creating the components
$E_{k+1,c}$ and $E_{k+1,c+2^k}$.
The number of components indexed by a given $k$ is therefore
$2^k$, and the total number is
\begin{equation}\label{eq:fibres-mechanical-step-17}
 \sum_{k=0}^{m-2}2^k=2^{m-1}-1=2r-1=n.
\end{equation}
The graph $\Gamma_2$ defined in
\eqref{eq:start-explicit-fibre-graph} is a tree with $n$ vertices
and hence $n-1$ edges. Each edge of $\Gamma_2$ corresponds to
one node of $(P_2)_{k_2}$, so $(P_2)_{k_2}$ has $n-1$ nodes.

Let $\eta_{k,c}$ be the generic point of $E_{k,c}$.
For $k<m-2$, the residue of
$A_i=-2^{m-2}/(x-i)$ at $\eta_{k,c}$ is zero for every $i$.
Hence the polynomial $W^2+W-A_i$ has residue $W(W+1)$.
The residue algebra at $\eta_{k,c}$ of the normalization in $L$
is consequently
\begin{equation}\label{eq:fibres-mechanical-step-18}
 \prod_{h=1}^{2^{g+1}}k_2(E_{k,c}).
\end{equation}
Thus $(X_0)_{k_2}$ has $2^{g+1}$ components mapping onto
$E_{k,c}$, each with function field $k_2(E_{k,c})\simeq k_2(X)$.

Now fix $0\leq c<2^{m-2}$ and consider $E_{m-2,c}$.
Use the coordinate $X=(x-c)/2^{m-2}$ on $E_{m-2,c}$.
For $i=c+j2^{m-2}$ with $j=0,1,2,3$, one has
$A_i=-1/(X-j)$.
The residues of $A_{c}$ and $A_{c+2^{m-1}}$ are $1/X$,
and the residues of $A_{c+2^{m-2}}$ and
$A_{c+3\cdot2^{m-2}}$ are $1/(X-1)$.
For every other index $i$, the residue of $A_i$ is zero.
Define the additive subgroup
$\wp(k_2(X))=\{h^2+h:h\in k_2(X)\}\subset k_2(X)$.
The residues of $A_0,\ldots,A_g$ span in
$k_2(X)/\wp(k_2(X))$ the two classes represented by
\begin{equation}\label{eq:fibres-explicit-dyadic-residue-classes}
 \frac1X,\qquad\frac1{X-1}.
\end{equation}
Every nonzero $\mathbb F_2$-linear combination of $1/X$ and
$1/(X-1)$ has a pole of order one.
If $h\in k_2(X)$ has a pole of order $d>0$, then $h^2+h$ has
a pole of order $2d$ at the same point.
Therefore the classes of $1/X$ and $1/(X-1)$ in
$k_2(X)/\wp(k_2(X))$ are linearly independent.
For a component $C\subset(X_0)_{k_2}$ mapping onto
$E_{m-2,c}$, the normalization $\widetilde C$ consequently
has function field
\begin{equation}\label{eq:fibres-explicit-dyadic-residue-field}
 k_2(\widetilde C)=k_2(X)(w_0,w_1),\qquad
 w_0^2+w_0=1/X,\quad w_1^2+w_1=1/(X-1).
\end{equation}
The degree of $k_2(\widetilde C)/k_2(X)$ is four.
Denote the corresponding morphism by
$\varphi_C:\widetilde C\to\mathbb P^1_{k_2}$.
Over each of $X=0$ and $X=1$ there are two points of
$\widetilde C$, each of ramification index two.
To compute the different at a point above $X=0$, put $t=X$
and $w=w_0$; at a point above $X=1$, put $t=X-1$ and $w=w_1$.
The equations in \eqref{eq:fibres-explicit-dyadic-residue-field}
then give
\begin{equation}\label{eq:fibres-explicit-dyadic-ramified-parameter}
 w^2+w=1/t,\qquad a=tw,\qquad
 a^2+ta+t=0,\qquad t=\frac{a^2}{a+1}.
\end{equation}
The element $a$ is a uniformizer at the chosen point of
$\widetilde C$, and in characteristic two
\begin{equation}\label{eq:fibres-mechanical-step-19}
 \frac{dt}{da}=\frac{a^2}{(a+1)^2},\qquad
 \operatorname{ord}_a\left(\frac{dt}{da}\right)=2.
\end{equation}
Thus the different exponent is two at each of the four points
of $\varphi_C^{-1}(\{0,1\})$.
The morphism $\varphi_C$ is unramified outside $\{0,1\}$.
Riemann--Hurwitz gives
\begin{equation}\label{eq:fibres-explicit-dyadic-riemann-hurwitz}
 2g(\widetilde C)-2=4(-2)+4\cdot2=0.
\end{equation}
Hence $g(\widetilde C)=1$.
There are $2^{g+1}/4=2^{g-1}$ components of $(X_0)_{k_2}$
mapping onto $E_{m-2,c}$.

We now compute the components of $(X_{-1})_{k_2}$.
For each $E_{k,c}$, the sum of the residues of $A_0,\ldots,A_g$
is zero: all residues vanish when $k<m-2$, and each of the two
nonzero residues in
\eqref{eq:fibres-explicit-dyadic-residue-classes} occurs twice
when $k=m-2$.
Define
\begin{equation}\label{eq:fibres-explicit-dyadic-product-substitution}
 V=\frac{y}{\prod_{i=0}^g(x-i)}
   =\prod_{i=0}^g(1+2W_i),\qquad W=\frac{V-1}{2}.
\end{equation}
The relation $V^2=\prod_i(1+4A_i)$ gives
\begin{equation}\label{eq:fibres-explicit-dyadic-product-equation}
 W^2+W=\frac{\prod_{i=0}^g(1+4A_i)-1}{4}.
\end{equation}
At the generic point of every $E_{k,c}$, the right side of
\eqref{eq:fibres-explicit-dyadic-product-equation} has residue
$\sum_i\overline A_i=0$.
Thus the residue algebra of the normalization in $K$ is
$k_2(E_{k,c})[W]/(W(W+1))
 \simeq k_2(E_{k,c})\times k_2(E_{k,c})$.
There are exactly two components of $(X_{-1})_{k_2}$ mapping
onto each $E_{k,c}$, each with function field $k_2(E_{k,c})$.

At every $\eta_{k,c}$, the polynomial $P_i(W)=W^2+W-A_i$
has derivative $P_i'(W)=2W+1$, which is a unit in the integral
quadratic algebra over the discrete valuation ring at
$\eta_{k,c}$.
The extensions of these discrete valuation rings in $L$ and
$K$ are therefore unramified.
Every component of $(X_0)_{k_2}$ and $(X_{-1})_{k_2}$ has
multiplicity one.
The normal surfaces $X_0$ and $X_{-1}$ have Cohen--Macaulay
local rings, so their fibres have no embedded associated points.
The multiplicities equal to one therefore imply that
$(X_0)_{k_2}$ and $(X_{-1})_{k_2}$ are reduced.
The morphism $S_{-1}\to X_{-1}$ is an isomorphism over
$\Spec\mathbb Z_{(2)}$, and normalization in $L$ gives
\begin{equation}\label{eq:fibres-mechanical-step-20}
 S_0\times_{\Spec\mathbb Z}\Spec\mathbb Z_{(2)}
 \simeq X_0\times_{\Spec\mathbb Z}\Spec\mathbb Z_{(2)}.
\end{equation}
Consequently $(S_{-1})_{k_2}$ and $(S_0)_{k_2}$ are reduced,
and the component counts give
\eqref{eq:fibres-explicit-dyadic-component-counts}.

Let $z$ be a node of $(P_2)_{k_2}$ and let $A$ be the strict
henselization of the local ring of $P_2$ at $z$.
The calculation in Proposition~\ref{prop:starting-regularity}
gives
\begin{equation}\label{eq:fibres-mechanical-step-21}
 X_{-1}\times_{P_m}\Spec A\simeq\coprod_{h=1}^2\Spec A,
 \qquad
 X_0\times_{P_m}\Spec A\simeq
 \coprod_{h=1}^{2^{g+1}}\Spec A.
\end{equation}
Thus there are exactly two nodes of $(S_{-1})_{k_2}$ and
$2^{g+1}$ nodes of $(S_0)_{k_2}$ mapping to $z$.
Since $(P_2)_{k_2}$ has $n-1$ nodes, this gives
$2(n-1)$ nodes on $(S_{-1})_{k_2}$ and
$2^{g+1}(n-1)$ nodes on $(S_0)_{k_2}$.

It remains to consider the points where $D_a$ meets
$E_{m-2,c}$.
Fix $0\leq j<2^{m-1}$, put $c\equiv j\pmod{2^{m-2}}$
with $0\leq c<2^{m-2}$, and let $\zeta_j\in E_{m-2,c}$ be the
point through which the divisors $D_a$ with
$a\equiv j\pmod{2^{m-1}}$ pass.
Let $A$ be the strict henselization of $\mathcal O_{P_2,\zeta_j}$
and put $t=(x-j)/2^{m-2}$, so $\mathfrak m_A=(2,t)$.
Use the elements $U,V$ of
\eqref{eq:fibres-explicit-dyadic-two-extensions}, and put
\begin{equation}\label{eq:fibres-explicit-dyadic-quadratic-substitution}
 h=t^2-6t+4,\qquad Z=\frac{UV-h}{2}.
\end{equation}
The identity
\begin{equation}\label{eq:fibres-explicit-dyadic-product-identity}
 t(t-2)(t-4)(t-6)=h^2-16
\end{equation}
gives $Z^2+hZ+4=0$.
The integral closure of $A$ in
$K\otimes_{\mathbb Q(x)}\operatorname{Frac}(A)$ is
\begin{equation}\label{eq:fibres-explicit-dyadic-quadratic-ring}
 R_{-1}=A[Z]/(Z^2+hZ+4).
\end{equation}
Indeed, the map $Z\mapsto(UV-h)/2$ identifies
$R_{-1}\otimes_A\operatorname{Frac}(A)$ with
$K\otimes_{\mathbb Q(x)}\operatorname{Frac}(A)$.
To prove that $R_{-1}$ is normal, we verify Serre's criterion.
The ring $R_{-1}$ is a hypersurface in the regular ring $A[Z]$,
so $R_{-1}$ is Cohen--Macaulay.
After inverting $2$, the substitution $T_1=2Z+h$ gives
\begin{equation}\label{eq:fibres-mechanical-horizontal-quadratic-ring}
 R_{-1}[1/2]\simeq
 A[1/2][T_1]/(T_1^2-t(t-2)(t-4)(t-6)).
\end{equation}
The polynomial $t(t-2)(t-4)(t-6)$ has distinct roots in
characteristic zero. The equation in
\eqref{eq:fibres-mechanical-horizontal-quadratic-ring}
therefore defines a regular curve after the \'etale base changes
used to form $A$.
Hence every localization of $R_{-1}$ at a horizontal
height-one prime is regular.
For $Q(Z)=Z^2+hZ+4$, the derivative $Q'(Z)=2Z+h$ has
residue $t^2$ at each generic point of the fibre at $2$.
Since $t^2\ne0$ in $k_2(t)$, $Q'(Z)$ is a unit at every
height-one prime of $R_{-1}$ containing $2$.
For every height-one prime $\mathfrak p\subset R_{-1}$
containing $2$, the ring $(R_{-1})_{\mathfrak p}$ is therefore \'etale over a
discrete valuation ring and are regular.
Serre's criterion proves normality of $R_{-1}$.
Since $S_{-1}\to X_{-1}$ is an isomorphism over
$\Spec\mathbb Z_{(2)}$, one has
$S_{-1}\times_{P_m}\Spec A=\Spec R_{-1}$.

Reduction of \eqref{eq:fibres-explicit-dyadic-quadratic-ring}
modulo $2$ gives
\begin{equation}\label{eq:fibres-explicit-dyadic-quadratic-fibre}
 R_{-1}/2R_{-1}=(A/2A)[Z]/(Z(Z+t^2)).
\end{equation}
There is exactly one point of $\Spec(R_{-1}/2R_{-1})$
above $\zeta_j$, namely the point with maximal ideal $(t,Z)$.
The completion of $(R_{-1}/2R_{-1})_{(t,Z)}$ is
\begin{equation}\label{eq:fibres-mechanical-step-22}
 k_2[[t,Z]]/(Z(Z+t^2)).
\end{equation}
The quotient by either minimal prime $(Z)$ or $(Z+t^2)$ is
$k_2[[t]]$, so the branches defined by $(Z)$ and $(Z+t^2)$
are smooth.
The intersection multiplicity of the branches $(Z)$ and
$(Z+t^2)$ is
\begin{equation}\label{eq:fibres-mechanical-step-23}
 \dim_{k_2}k_2[[t,Z]]/(Z,Z+t^2)
 =\dim_{k_2}k_2[[t]]/(t^2)=2.
\end{equation}
For each $c$, the points $\zeta_j$ on $E_{m-2,c}$ are exactly
$X=0$ and $X=1$.
There are $r$ choices of $c$, so this gives $2r$ points of
$(S_{-1})_{k_2}$ with two smooth branches of intersection
multiplicity two.

For $S_0$, use the regular ring $B=B_{\mathfrak n}$ from
\eqref{eq:fibres-explicit-dyadic-rank-four-algebra}.
Reducing \eqref{eq:fibres-explicit-dyadic-parameters} modulo
$2$ gives
\begin{equation}\label{eq:fibres-explicit-dyadic-rank-four-fibre}
 d(d+t)=0,\qquad t=\frac{a^2}{a+1}\quad\text{in }B/2B.
\end{equation}
Thus the completed local ring of $B/2B$ at $(a,d)$ is
\begin{equation}\label{eq:fibres-mechanical-step-24}
 k_2[[a,d]]/
 \left(d\left(d+\frac{a^2}{a+1}\right)\right).
\end{equation}
The quotient by either minimal prime $(d)$ or
$(d+a^2/(a+1))$ is $k_2[[a]]$.
The branches defined by $(d)$ and $(d+a^2/(a+1))$ are smooth,
and the intersection multiplicity of these branches is
\begin{equation}\label{eq:fibres-mechanical-step-25}
 \dim_{k_2}k_2[[a,d]]/(d,d+a^2/(a+1))
 =\dim_{k_2}k_2[[a]]/(a^2)=2.
\end{equation}
Substituting $U=t+2a$, $V=t-2+2b$ into
\eqref{eq:fibres-explicit-dyadic-quadratic-substitution} gives
\begin{equation}\label{eq:fibres-mechanical-step-26}
 Z=2t+t(a+b)-2a+2ab-2\equiv td\pmod{2B}.
\end{equation}
Hence the homomorphism on the reduced fibres induced by $\Pi_0$
is
\begin{equation}\label{eq:fibres-mechanical-step-27}
 R_{-1}/2R_{-1}\longrightarrow B/2B,
 \qquad t\longmapsto\frac{a^2}{a+1},\quad Z\longmapsto td.
\end{equation}
The branch $d=0$ maps to $Z=0$, and the branch $d=t$ maps
to $Z=t^2$; on either branch the map of parameters is
$t\mapsto a^2/(a+1)$.

By Proposition~\ref{prop:starting-regularity}, the integral
closure of $A$ in
$L\otimes_{\mathbb Q(x)}\operatorname{Frac}(A)$ is a product
of $2^{g-1}$ copies of $B$.
Thus there are $2^{g-1}$ points of $(S_0)_{k_2}$ mapping to
$\zeta_j$, each with completed local ring
$k_2[[a,d]]/(d(d+a^2/(a+1)))$.
There are $2r$ choices of $\zeta_j$, giving
$2r\cdot2^{g-1}=r2^g$ points of $(S_0)_{k_2}$ where two
smooth branches have intersection multiplicity two.
Finally, let $z\in(P_2)_{k_2}$ be a point which is neither
a node nor one of the $\zeta_j$.
Proposition~\ref{prop:starting-regularity} identifies the
strict henselian local rings of $S_{-1}$ and $S_0$ above $z$
with the strict henselian local ring of $P_2$ at $z$.
Since $(P_2)_{k_2}$ is smooth at $z$, the curves
$(S_{-1})_{k_2}$ and $(S_0)_{k_2}$ are smooth at every point
mapping to $z$.
This proves that the nodes and the points above the $\zeta_j$
listed in the proposition exhaust the singular points of
$(S_{-1})_{k_2}$ and $(S_0)_{k_2}$.
\end{proof}

The ring $R_{-1}$ in
\eqref{eq:fibres-explicit-dyadic-quadratic-ring} also describes
the singularities of the surface $S_{-1}$ itself.
At the point with maximal ideal $\mathfrak m=(2,t,Z)$,
\begin{equation}\label{eq:fibres-mechanical-step-28}
 \dim R_{-1,\mathfrak m}=2,\qquad
 \dim_{k_2}\mathfrak m/\mathfrak m^2=3.
\end{equation}
Thus $S_{-1}$ is singular at every point above a $\zeta_j$.
Every point of $S_0$ above a $\zeta_j$ has regular
local ring $B_{\mathfrak n}$ with parameters $a,d$, by
\eqref{eq:fibres-explicit-dyadic-parameters}.
\section{Realizable extensions}\label{sec:realizable}

Let $\widetilde S\to S_0$ be a connected finite \'etale cover, and
suppose that $\widetilde S\to S_{-1}$ is Galois with group $G$.
We ask which central extensions of $G$ by $\F$ occur as
$\Gal(T/S_{-1})$ for a finite \'etale double cover
$T\to\widetilde S$. We call a class in $H^2(G,\F)$ realizable when
the associated extension can be obtained from a cover in the precise
sense defined below. We will bound the codimension of the realizable
classes in terms of the morphism $q:S_0\to S_{-1}$.

\subsection{The group of realizable extensions}\label{sec:realizable-extensions}

Throughout this section, $g\ge15$ and $N=2g+2$.
We allow $q:S_0\to S_{-1}$ to be any morphism satisfying the conditions
below. Proposition~\ref{prop:starting-anchor} will verify the conditions
for the morphism constructed in Section~\ref{sec:starting-surface}.

We call a diagram of finite surjective morphisms of arithmetic surfaces
\begin{equation}\label{eq:anchored-diagram}
 \widetilde S\xrightarrow{\ h\ } S_0\xrightarrow{\ q\ }S_{-1}
\end{equation}
an \emph{anchored diagram} if the following conditions hold.
\begin{enumerate}[label=(\roman*)]
\item The surface $S_{-1}$ is normal and projective over $\Spec\Z$.
The generic fibre $C=(S_{-1})_{\Q}$ is smooth and geometrically
integral, with $g(C)=g$. The surface $S_0$ is regular and projective
over $\Spec\Z$, the curve $(S_0)_{\Q}$ is smooth and geometrically
integral, and there is a section $s_0:\Spec\Z\to S_0$.
In particular, $S_{-1}$ and $S_0$ are integral and flat over $\Spec\Z$,
as required by the definition of an arithmetic surface in
Section~\ref{sec:dictionary}.
\item The morphism $q$ is Galois with group
$A=\Gal(S_0/S_{-1})\simeq\F^g$, and
$q_{\Q}:(S_0)_{\Q}\to C$ is \'etale.
The scheme $\widetilde S$ is connected, the morphism $h$ is \'etale,
and $q\circ h:\widetilde S\to S_{-1}$ is Galois, with group
$G=\Gal(\widetilde S/S_{-1})$.
The term Galois refers to the normalization in a Galois extension of
function fields, as defined in Section~\ref{sec:dictionary}.
\item There is a set
$\Sigma\subseteq\{p:p\text{ prime},\ p\le2g+1\}$, with $2\in\Sigma$,
such that, for
\begin{equation}\label{eq:real-explicit-good-base}
 U=\Spec\Z[1/\prod_{p\in\Sigma}p],
\end{equation}
the morphism $(S_{-1})_U\to U$ is smooth with geometrically connected
fibres and $q_U:(S_0)_U\to(S_{-1})_U$ is an \'etale $A$-torsor.
Furthermore,
\begin{equation}\label{eq:real-explicit-rational-two-torsion}
 J(C)(\overline\Q)[2]=J(C)(\Q)[2].
\end{equation}
\item For each odd $p\in\Sigma$, the surface $S_{-1}$ is regular on
an open neighbourhood of $(S_{-1})_{\mathbb F_p}$. The curve
$Y_p=(S_{-1})_{\overline{\mathbb F}_p,\mathrm{red}}$ has strict normal
crossings, in the sense of Section~\ref{sec:dictionary}.
Let $c_p$ be the number of irreducible components of $Y_p$, let
$s_p=|\operatorname{Sing}(Y_p)|$, and write
$\nu_p:\coprod_{i\in I_p}\widetilde Y_{p,i}\to Y_p$ for the
normalization, with $\widetilde Y_{p,i}$ connected. Put
$a_p=\sum_{i\in I_p}g(\widetilde Y_{p,i})$. Then
\begin{equation}\label{eq:real-explicit-fibre-bounds}
 c_p\le7N,\qquad s_p\le8N,\qquad a_p\le2N.
\end{equation}
\item The curve
$Y_2=(S_{-1})_{\overline{\mathbb F}_2,\mathrm{red}}$ is connected.
Every connected component of the normalization of $Y_2$ is isomorphic
to $\mathbb P^1_{\overline{\mathbb F}_2}$, and every singular point
of $Y_2$ has two branches. For the graph $\Gamma(Y_2)$ of
\eqref{eq:bg-explicit-incidence-graph},
\begin{equation}\label{eq:real-explicit-dyadic-hypotheses}
 b_1(\Gamma(Y_2))\le\frac{g-1}{2}.
\end{equation}
There is a set $B_2\subseteq Y_2(\overline{\mathbb F}_2)$ with
$|B_2|\le(g+1)/2$ such that the stabilizer
$A_{\bar x}=\{a\in A:a\bar x=\bar x\}$ of each
$\bar x\in S_0(\overline{\mathbb F}_2)$ satisfies
\begin{equation}\label{eq:real-explicit-stabilizer-hypotheses}
 A_{\bar x}=\{0\}\quad\text{if }q(\bar x)\notin B_2,
 \qquad |A_{\bar x}|\le2\quad\text{if }q(\bar x)\in B_2.
\end{equation}
\end{enumerate}
The first Betti number in \eqref{eq:real-explicit-dyadic-hypotheses} is
as defined in \eqref{eq:bg-explicit-graph-betti}.
Except for the conditions on $\widetilde S$ and $h$ in (ii), the
conditions (i)--(v) concern $q:S_0\to S_{-1}$ alone.

Fix an anchored diagram
\begin{equation}\label{eq:realization-given-tower}
 \widetilde S\xrightarrow{\ h\ }S_0\xrightarrow{\ q\ }S_{-1},
 \qquad G=\Gal(\widetilde S/S_{-1}),
\end{equation}
and give $\F$ the trivial $G$-action. A class
$\theta\in H^2(G,\F)$ corresponds to a central extension
\begin{equation}\label{eq:realization-central-extension}
 1\longrightarrow\F\xrightarrow{\ j_\theta\ }E_\theta
   \xrightarrow{\ \rho_\theta\ }G\longrightarrow1.
\end{equation}
We call $\theta$ \emph{realizable} if there is a connected finite
\'etale cover $r:T\to\widetilde S$ such that
$q\circ h\circ r:T\to S_{-1}$ is Galois and, on putting
$H=\Gal(T/S_{-1})$, there is an injective homomorphism
\begin{equation}\label{eq:real-explicit-realization-embedding}
 \iota:H\hookrightarrow E_\theta,
 \qquad
 \rho_\theta\circ\iota
   =\operatorname{res}_{\widetilde S}:H\longrightarrow G.
\end{equation}
Here $\operatorname{res}_{\widetilde S}$ is restriction of automorphisms
from $\Q(T)$ to $\Q(\widetilde S)$.
We write
\begin{equation}\label{eq:real-explicit-realizable-classes}
 \mathscr E(\widetilde S,S_{-1})
   =\{\theta\in H^2(G,\F):\theta\text{ is realizable}\}.
\end{equation}
The morphisms $r$ and $q\circ h\circ r$ are surjective, and
$\operatorname{res}_{\widetilde S}(H)=G$. We do not require
$\iota(H)=E_\theta$. In particular, the zero class is realizable by
\begin{equation}\label{eq:real-explicit-zero-realization}
 E_0=G\times\F,\qquad T=\widetilde S,\qquad
 r=\mathrm{id}_{\widetilde S},\qquad \iota(g)=(g,0).
\end{equation}

If $\theta\ne0$, every homomorphism $\iota$ in
\eqref{eq:real-explicit-realization-embedding} is surjective.
Indeed, put $H'=\iota(H)$ and $K_\theta=\ker\rho_\theta=j_\theta(\F)$.
Since $|K_\theta|=2$, either $H'\cap K_\theta=K_\theta$ or
$H'\cap K_\theta=\{1\}$. If $H'\cap K_\theta=K_\theta$, then
$\rho_\theta(H')=G$ implies $H'=E_\theta$. If
$H'\cap K_\theta=\{1\}$, then
$\rho_\theta|_{H'}:H'\to G$ is an isomorphism; the inverse
$G\to H'\subseteq E_\theta$ splits
\eqref{eq:realization-central-extension}, contrary to $\theta\ne0$.
Consequently,
\begin{equation}\label{eq:real-explicit-realization-degree}
 [\Q(T):\Q(\widetilde S)]\in\{1,2\},\qquad
 [\Q(T):\Q(\widetilde S)]=2\quad\text{if }\theta\ne0.
\end{equation}
The definition of $\mathscr E(\widetilde S,S_{-1})$ depends only on
$\widetilde S\to S_{-1}$. For every realizing cover $r:T\to\widetilde S$,
the morphism $h\circ r:T\to S_0$ is a connected finite \'etale cover.
Proposition~\ref{prop:realizing-surface-properties} therefore shows that
$T$ is an integral regular projective flat arithmetic surface,
$T_{\Q}$ is smooth and geometrically integral, and
$T\to\Spec\Z$ has a section.

Choose a normalized cocycle $c_\theta:G\times G\to\F$ representing
$\theta$. Thus, for every $g,h,k\in G$,
\begin{equation}\label{eq:real-explicit-cocycle-identities}
 c_\theta(1,g)=c_\theta(g,1)=0,\qquad
 c_\theta(g,h)+c_\theta(gh,k)
   =c_\theta(h,k)+c_\theta(g,hk).
\end{equation}
We may take $E_\theta=G\times\F$ with multiplication
\begin{equation}\label{eq:realization-cocycle-product}
 (g,a)(h,b)=(gh,a+b+c_\theta(g,h)),
 \qquad j_\theta(a)=(1,a),\quad \rho_\theta(g,a)=g.
\end{equation}
For $b:G\to\F$ with $b(1)=0$, put
$\delta b(g,h)=b(g)+b(h)+b(gh)$.
The map
\begin{equation}\label{eq:real-explicit-change-cocycle}
 (G\times\F,*_{c_\theta})\longrightarrow
 (G\times\F,*_{c_\theta+\delta b}),\qquad
 (g,a)\longmapsto(g,a+b(g)),
\end{equation}
is a group isomorphism inducing the identity on $\F$ and on $G$.
Composing $\iota$ with \eqref{eq:real-explicit-change-cocycle} shows
that realizability depends only on the class $\theta$, and not on
the chosen cocycle $c_\theta$.

\begin{proposition}\label{prop:realizable-subspace}
Let $\widetilde S\to S_0\to S_{-1}$ be an anchored diagram as in
\eqref{eq:realization-given-tower}, with
$G=\Gal(\widetilde S/S_{-1})$. Then
$\mathscr E(\widetilde S,S_{-1})$ is an $\F$-linear subspace of
$H^2(G,\F)$.
\end{proposition}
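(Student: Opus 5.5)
The plan is to reinterpret realizability as a statement about étale double covers of $\widetilde S$ that are Galois over $S_{-1}$, and to use the correspondence between $H^2(G,\F)$ and central extensions under Baer sum. The zero class is realizable by \eqref{eq:real-explicit-zero-realization}. Since $\F$ has no nontrivial scalars, it remains to show closure under addition. Take realizable $\theta_1,\theta_2$, realized by connected covers $r_k:T_k\to\widetilde S$ with $H_k=\Gal(T_k/S_{-1})$ and embeddings $\iota_k:H_k\hookrightarrow E_{\theta_k}$. If $\theta_1=0$, $\theta_2=0$, or $\theta_1=\theta_2$, the sum is trivially realizable. So we may assume $\theta_1,\theta_2,\theta_1+\theta_2$ are all nonzero. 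Then both $\iota_k$ are isomorphisms, both $T_k\to\widetilde S$ are double covers, and $\Q(T_1)\ne\Q(T_2)$. Indeed, if the function fields were equal, the two extensions $E_{\theta_1},E_{\theta_2}$ of $G$ would be isomorphic as extensions (both identified with $H$ compatibly with the restriction to $G$ and with the kernel), forcing $\theta_1=\theta_2$.

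First, form the compositum. Let $M=\Q(T_1)\Q(T_2)$, which is Galois over $\Q(S_{-1})$ with group $\Gal(M/\Q(S_{-1}))=E_{\theta_1}\times_G E_{\theta_2}$, a central extension of $G$ by $\F^2$. Let $T_{12}$ be the normalization of $\widetilde S$ in $M$. Because $T_1\times_{\widetilde S}T_2$ is finite étale over $\widetilde S$, and by the argument of Proposition~\ref{prop:conductor-model-transport} (normality of étale algebras over normal schemes), $T_{12}$ is a connected component of the fibre product and is therefore finite étale over $\widetilde S$. Inside $\F^2=\Gal(M/\Q(\widetilde S))$, take the diagonal $\Delta$ and let $T$ be the normalization of $\widetilde S$ in $M^\Delta$. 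Then $T$ is the quotient $T_{12}/\Delta$. Since $\Delta$ acts freely on the étale $\widetilde S$-scheme $T_{12}$, the map $T\to\widetilde S$ is finite étale of degree $2$, and $T$ is connected because $M^\Delta$ is a field.

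Second, $T\to S_{-1}$ is Galois, because $\Delta$ is central, hence normal, in $E_{\theta_1}\times_G E_{\theta_2}$. Its group is $(E_{\theta_1}\times_G E_{\theta_2})/\Delta$, and the standard Baer-sum description identifies this with $E_{\theta_1+\theta_2}$. In cocycles this is explicit: with $E_{\theta_k}=G\times\F$ as in \eqref{eq:realization-cocycle-product}, the map $((g,a),(g,b))\mapsto(g,a+b)$ is a homomorphism onto $(G\times\F,*_{c_{\theta_1}+c_{\theta_2}})$ with kernel $\Delta$. It is compatible with projection to $G$, which gives the required $\iota$ with $\rho\circ\iota=\operatorname{res}_{\widetilde S}$.

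The main obstacle is the integral step: checking that the normalization in the compositum, and then its quotient by $\Delta$, remains finite étale over the whole arithmetic surface $\widetilde S$, including the fibres over $2$, and not just over the generic fibre. I would handle it through the identification of normalizations with components of fibre products of étale covers, and through the fact that a quotient of a finite étale cover by a group acting freely is étale. The group-theoretic identification in the second step is routine.
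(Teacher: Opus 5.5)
Your proposal is correct and is essentially the paper's argument: the realizing cover is $(T_1\times_{\widetilde S}T_2)/\Delta$ with $\Delta$ the diagonal copy of $\F$ (your $T_{12}$ is in fact the whole fibre product, which is connected once $\Q(T_1)\neq\Q(T_2)$). Its étaleness comes from the free $\Delta$-action, and its group is identified with the Baer sum via $((g,a),(g,b))\mapsto(g,a+b)$. The only variation is how connectedness of the quotient is obtained: you get it upfront from $\Q(T_1)\neq\Q(T_2)$ (equal fields would make $E_{\theta_1}\cong E_{\theta_2}$ as extensions), whereas the paper shows after the construction that a disconnected quotient would split the extension $E_{\theta_1+\theta_2}$.
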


\begin{proof}
Equation~\eqref{eq:real-explicit-zero-realization} gives
$0\in\mathscr E(\widetilde S,S_{-1})$. Let
$\theta_1,\theta_2\in\mathscr E(\widetilde S,S_{-1})$.
If $\theta_1=0$ or $\theta_2=0$, then
$\theta_1+\theta_2\in\{\theta_1,\theta_2\}$ is realizable.
We may therefore assume $\theta_1\ne0$ and $\theta_2\ne0$.
For $i\in\{1,2\}$ choose a realizing cover
$r_i:T_i\to\widetilde S$, identify $\Gal(T_i/S_{-1})$ with
$E_i=E_{\theta_i}$, and write
$j_i:\F\to E_i$ and $\rho_i:E_i\to G$ for the homomorphisms in
\eqref{eq:realization-central-extension}.

Define the fibre product $W$ and the morphism $\pi$ by
\begin{equation}\label{eq:real-explicit-cover-fibre-product}
 W=T_1\times_{\widetilde S}T_2,\qquad
 \pi=r_1\circ\operatorname{pr}_{T_1}
    =r_2\circ\operatorname{pr}_{T_2}:W\to\widetilde S.
\end{equation}
Since $r_1$ and $r_2$ are finite \'etale of degree two, $\pi$ is
finite \'etale of degree four. The group
\begin{equation}\label{eq:real-explicit-group-fibre-product}
 F=E_1\times_G E_2
   =\{(e_1,e_2)\in E_1\times E_2:
                       \rho_1(e_1)=\rho_2(e_2)\}
\end{equation}
acts on $W$ by $(e_1,e_2)(t_1,t_2)=(e_1t_1,e_2t_2)$.
Indeed, if $r_1(t_1)=r_2(t_2)$ and $(e_1,e_2)\in F$, then
\begin{equation}\label{eq:real-explicit-action-fibre-product}
 r_1(e_1t_1)=\rho_1(e_1)r_1(t_1)
           =\rho_2(e_2)r_2(t_2)=r_2(e_2t_2).
\end{equation}
The subgroup
\begin{equation}\label{eq:real-explicit-diagonal-subgroup}
 \Delta=\{(j_1(a),j_2(a)):a\in\F\}\subseteq F
\end{equation}
is central. The nonidentity element of $\Delta$ has no fixed
geometric point of $W$, because $j_i(1)$ has no fixed geometric
point of the \'etale double cover $r_i:T_i\to\widetilde S$.
Define
\begin{equation}\label{eq:real-explicit-quotient-cover}
 Z=W/\Delta,\qquad p_Z:Z\to\widetilde S,
\end{equation}
by the relative spectrum
\begin{equation}\label{eq:real-explicit-quotient-algebra}
 Z=\operatorname{Spec}_{\widetilde S}
       \bigl((\pi_*\mathcal O_W)^\Delta\bigr).
\end{equation}
We verify that $p_Z$ is finite \'etale of degree two.

Choose an \'etale covering $\{V_\alpha\to\widetilde S\}_\alpha$
and isomorphisms
$T_i\times_{\widetilde S}V_\alpha\simeq V_\alpha\times\F$
for $i=1,2$, compatible with the actions of $j_i(\F)$.
Here $V_\alpha\times\F$ denotes the disjoint union of two copies
of $V_\alpha$. Then
\begin{equation}\label{eq:real-explicit-quotient-trivializations}
 W\times_{\widetilde S}V_\alpha
       \simeq V_\alpha\times(\F\times\F),\qquad
 (j_1(1),j_2(1)):(a,b)\longmapsto(a+1,b+1).
\end{equation}
The two orbits of the action in
\eqref{eq:real-explicit-quotient-trivializations} are indexed by
$a+b\in\F$. Consequently,
\begin{equation}\label{eq:real-explicit-local-invariant-algebra}
 ((\pi_*\mathcal O_W)^\Delta)|_{V_\alpha}
       \simeq\mathcal O_{V_\alpha}\times\mathcal O_{V_\alpha},
 \qquad Z\times_{\widetilde S}V_\alpha\simeq V_\alpha\times\F.
\end{equation}
For completeness, on
$V_{\alpha\beta}=V_\alpha\times_{\widetilde S}V_\beta$, the
transition maps for $T_i$ have the form
$a\mapsto a+\epsilon_{i,\alpha\beta}$, where
$\epsilon_{i,\alpha\beta}:V_{\alpha\beta}\to\F$ is locally constant.
The induced transition map for $a+b$ is
\begin{equation}\label{eq:real-explicit-quotient-transition}
 u\longmapsto u+\epsilon_{1,\alpha\beta}
                  +\epsilon_{2,\alpha\beta}.
\end{equation}
On $V_\alpha\times_{\widetilde S}V_\beta
\times_{\widetilde S}V_\gamma$, the transition maps in
\eqref{eq:real-explicit-quotient-transition} satisfy the cocycle
identity because the transition maps of $T_1$ and $T_2$ do.
Thus \eqref{eq:real-explicit-local-invariant-algebra} identifies
$p_Z$ \'etale locally with the projection
$V_\alpha\times\F\to V_\alpha$, proving that $p_Z$ is finite
\'etale of degree two (\st{03RV}). The argument applies in every
residue characteristic.

Since $\Delta$ is central in $F$, the action of $F$ on $W$ induces
an action on $Z$ of
\begin{equation}\label{eq:real-explicit-quotient-group}
 E=F/\Delta.
\end{equation}
The homomorphism
$\rho:E\to G$, defined by
$\rho([(e_1,e_2)])=\rho_1(e_1)=\rho_2(e_2)$, has central kernel
\begin{equation}\label{eq:real-explicit-baer-kernel}
 \ker\rho=(j_1(\F)\times j_2(\F))/\Delta\simeq\F,
 \qquad [(j_1(a),j_2(b))]\longmapsto a+b.
\end{equation}
To compute the class of the extension, choose cocycles
$c_i=c_{\theta_i}$ and use
\eqref{eq:realization-cocycle-product} to identify $F$ with
$G\times\F\times\F$. The multiplication in $F$ is
\begin{equation}\label{eq:real-explicit-fibre-product-multiplication}
 (g,a,b)(h,a',b')
   =(gh,a+a'+c_1(g,h),b+b'+c_2(g,h)).
\end{equation}
The map $(g,a,b)\mapsto(g,a+b)$ identifies $E$ with
$G\times\F$ equipped with multiplication
\begin{equation}\label{eq:real-explicit-baer-multiplication}
 (g,u)(h,v)=(gh,u+v+c_1(g,h)+c_2(g,h)).
\end{equation}
It follows that the extension
\begin{equation}\label{eq:real-explicit-baer-extension}
 1\longrightarrow\F\longrightarrow E\xrightarrow{\ \rho\ }G
   \longrightarrow1
\end{equation}
represents the Baer sum $\theta_1+\theta_2$.

Let $\bar\eta$ be a geometric generic point of $S_{-1}$.
The action of $E_i$ on $(T_i)_{\bar\eta}$ is simply transitive.
By the definition of $F$ in
\eqref{eq:real-explicit-group-fibre-product}, the action of $F$ on
\begin{equation}\label{eq:real-explicit-generic-fibre-pairs}
 W_{\bar\eta}
   =\{(t_1,t_2)\in(T_1)_{\bar\eta}\times(T_2)_{\bar\eta}:
                                      r_1(t_1)=r_2(t_2)\}
\end{equation}
is simply transitive. Therefore the action of $E=F/\Delta$ on
$Z_{\bar\eta}=W_{\bar\eta}/\Delta$ is simply transitive.

If $\theta_1+\theta_2=0$, equation
\eqref{eq:real-explicit-zero-realization} already realizes the sum.
Suppose $\theta_1+\theta_2\ne0$. We claim that $Z$ is connected.
If $Z$ were disconnected, then, because $p_Z$ is finite \'etale of
degree two and $\widetilde S$ is connected, we could write
\begin{equation}\label{eq:real-explicit-disconnected-quotient}
 Z=Z_1\amalg Z_2,\qquad
 p_Z|_{Z_i}:Z_i\xrightarrow{\sim}\widetilde S\quad(i=1,2).
\end{equation}
Let $\tau$ be the nonidentity element of $\ker\rho$.
Equation~\eqref{eq:real-explicit-local-invariant-algebra} shows that
$\tau$ acts on each fibre of $p_Z$ by interchanging the two points.
Hence $\tau(Z_1)=Z_2$ and $\tau(Z_2)=Z_1$.
Put $E_{Z_1}=\{e\in E:e(Z_1)=Z_1\}$.
Given $g\in G$, choose $e\in E$ with $\rho(e)=g$. Either
$e(Z_1)=Z_1$ or $(\tau e)(Z_1)=Z_1$, so
$\rho(E_{Z_1})=G$. Moreover,
$E_{Z_1}\cap\ker\rho=\{1\}$, since $\tau(Z_1)=Z_2$.
Thus $\rho|_{E_{Z_1}}:E_{Z_1}\to G$ is an isomorphism, and
\begin{equation}\label{eq:real-explicit-forbidden-splitting}
 G\xrightarrow{(\rho|_{E_{Z_1}})^{-1}}E_{Z_1}\hookrightarrow E
\end{equation}
splits \eqref{eq:real-explicit-baer-extension}. This contradicts
$\theta_1+\theta_2\ne0$ and proves that $Z$ is connected.

The scheme $\widetilde S$ is regular because $h:\widetilde S\to S_0$
is \'etale and $S_0$ is regular. The scheme $Z$ is regular because
$p_Z$ is \'etale. Since $Z$ is connected and regular, $Z$ is integral.
The simply transitive action on $Z_{\bar\eta}$ identifies
$\Gal(\Q(Z)/\Q(S_{-1}))$ with $E$.
Furthermore, $Z\to S_{-1}$ is finite, and $Z$ is normal; hence $Z$
is the normalization of $S_{-1}$ in $\Q(Z)$.
The cover $p_Z:Z\to\widetilde S$ therefore realizes
$\theta_1+\theta_2$. We have proved that
$\mathscr E(\widetilde S,S_{-1})$ contains zero and is closed under
addition. Since $\F=\mathbb F_2$, it is an $\F$-linear subspace.
\end{proof}

We define the \emph{obstruction group} by
\begin{equation}\label{eq:realizable-obstruction-space}
 \mathcal O(\widetilde S,S_{-1})
   =H^2(G,\F)/\mathscr E(\widetilde S,S_{-1}).
\end{equation}

We next prove that a connected finite \'etale cover $T\to S_0$
satisfies the geometric properties required of a realizing surface.
We will use $s_0:\Spec\Z\to S_0$ to construct a section of
$T\to\Spec\Z$. We will then deduce the geometric integrality of
$T_{\Q}$ from the existence of a point in $T_{\Q}(\Q)$.

\begin{proposition}\label{prop:realizing-surface-properties}
Let $S_0\to\Spec\Z$ be an integral regular projective flat arithmetic
surface with smooth geometrically integral generic fibre and a section
$s_0$. Let $f:T\to S_0$ be a connected finite \'etale cover of degree $d$.
Then $T$ is an integral regular projective flat arithmetic surface with
smooth geometrically integral generic fibre. Moreover,
\begin{equation}\label{eq:real-explicit-section-pullback}
 T\times_{S_0,s_0}\Spec\Z
    \simeq\coprod_{i=1}^{d}\Spec\Z.
\end{equation}
In particular $T\to\Spec\Z$ has a section.
\end{proposition}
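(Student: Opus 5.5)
We transfer each property of $S_0$ to $T$ along the finite \'etale morphism $f$, and then use the section to control the generic fibre. The order is: local properties first, then global ones, then the section, then geometric integrality.

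\emph{Local and global properties.} Since $f$ is \'etale and $S_0$ is regular, every local ring of $T$ is regular (\st{081J}, or the fact that \'etale over regular is regular). A connected regular noetherian scheme is integral, so $T$ is integral. The morphism $f$ is finite and flat, and $S_0\to\Spec\Z$ is projective and flat. Finite morphisms are projective (\st{0B3I}), so $T\to\Spec\Z$ is projective and flat; flatness also follows from torsion-freeness of an integral scheme dominating $\Spec\Z$. Each fibre of $f$ is finite, so $T_{\mathbb F_p}\to(S_0)_{\mathbb F_p}$ is finite \'etale and all fibres of $T\to\Spec\Z$ have pure dimension one. One could also invoke the dimension formula \eqref{eq:bg-explicit-fibre-dimension} for this. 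The generic fibre $T_\Q$ is finite \'etale over the smooth curve $(S_0)_\Q$, hence smooth over $\Q$. It is also connected, since it is a dense open subscheme of the integral scheme $T$.

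\emph{The section.} Pull back along $s_0$: the scheme $T\times_{S_0,s_0}\Spec\Z$ is finite \'etale of degree $d$ over $\Spec\Z$. By Minkowski's theorem, $\Spec\Z$ has no nontrivial connected finite \'etale covers: a connected component would be $\Spec\mathcal O_F$ with $F/\Q$ unramified everywhere, forcing $F=\Q$. Hence this fibre product is a disjoint union of $d$ copies of $\Spec\Z$, which proves \eqref{eq:real-explicit-section-pullback}. Any one copy, composed with the projection to $T$, gives a section of $T\to\Spec\Z$.

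\emph{Geometric integrality.} I expect this to be the only point requiring care. $T_\Q$ is a smooth connected proper curve over $\Q$. Its field of constants $k=H^0(T_\Q,\mathcal O)$ is a finite field extension of $\Q$, because $T_\Q$ is integral and proper. The section restricts to a point $\Spec\Q\to T_\Q$, which induces a $\Q$-algebra map $k\to\Q$, so $k=\Q$. A proper normal (here smooth) connected scheme with $H^0=\Q$ is geometrically connected (Stacks, Section 33.9). Smooth and geometrically connected imply geometrically integral, which completes the argument. The main obstacle is just citing the right facts in this last step and the Minkowski step. No new computation is needed.
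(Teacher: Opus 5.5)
Your proof is correct. Up to the last step it is the paper's proof: regularity and integrality from \'etaleness, projectivity and flatness by composition, and pure one-dimensional fibres from the finite \'etale maps $T_{\mathbb F_p}\to(S_0)_{\mathbb F_p}$. The splitting likewise comes from applying Minkowski's theorem to the connected components of $T\times_{S_0,s_0}\Spec\Z$.

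The two routes differ only for geometric integrality. The paper lets $\Gal(\overline\Q/\Q)$ act on the set of connected components of $T_{\overline\Q}$. The action is transitive because $T_\Q$ is connected, and it fixes the component through the rational point $(s_i)_\Q$, so there is only one component. You instead show that the constant field $H^0(T_\Q,\mathcal O)$ equals $\Q$, because it maps into the residue field of a rational point. Flat base change then gives $H^0(T_{\overline\Q},\mathcal O)=\overline\Q$, which has no nontrivial idempotents. Your argument needs properness of $T_\Q$, which is what makes the constant field a finite extension of $\Q$; normality plays no role there. The paper's argument works for any connected finite-type $\Q$-scheme with a rational point, but relies on transitivity of the Galois action on geometric components. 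Both hypotheses hold here, so either route is fine.

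One small correction: $T_\Q$ is not an open subscheme of $T$. The map $T\to\Spec\Z$ is flat of finite presentation, hence open, and $\{(0)\}$ is not open in $\Spec\Z$. What you actually need is that $T_\Q$ is the localization of $T$ away from the closed fibres. On an affine open $\Spec B$ it is $\Spec(B\otimes_\Z\Q)$, a localization of the domain $B$, and it is nonzero because $B$ is torsion-free. Hence $T_\Q$ is integral and contains the generic point of $T$, which is what your argument uses.
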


\begin{proof}
Since $S_0$ is regular and $f$ is \'etale, $T$ is regular.
The irreducible components of a regular noetherian scheme are disjoint
and open. The connectedness of $T$ therefore implies that $T$ is
integral. The finite morphism $f:T\to S_0$ is projective; composing
$f$ with the projective morphism $S_0\to\Spec\Z$ proves that
$T\to\Spec\Z$ is projective. Since $f$ and $S_0\to\Spec\Z$ are flat,
the composite $T\to\Spec\Z$ is flat. For every prime $p$, the map
$f_{\mathbb F_p}:T_{\mathbb F_p}\to(S_0)_{\mathbb F_p}$ is finite
\'etale and surjective, so every irreducible component of
$T_{\mathbb F_p}$ has dimension one. Also,
$f_{\Q}:T_{\Q}\to(S_0)_{\Q}$ is \'etale and $(S_0)_{\Q}$ is smooth.
It follows that $T_{\Q}$ is an integral smooth projective curve.

Put $P=T\times_{S_0,s_0}\Spec\Z$ and denote the projections by
\begin{equation}\label{eq:real-explicit-section-projections}
 \operatorname{pr}_T:P\to T,\qquad
 \operatorname{pr}_{\Z}:P\to\Spec\Z.
\end{equation}
The morphism $\operatorname{pr}_{\Z}$ is finite \'etale of degree $d$.
Write $P=\coprod_{i\in I}P_i$ for the decomposition into connected
components. Each $P_i$ is isomorphic to $\Spec\mathcal O_{K_i}$
for a number field $K_i$ unramified at every finite prime.
Consequently $|\operatorname{disc}(K_i)|=1$, and Minkowski's
discriminant bound gives $K_i=\Q$
\cite[Theorem~4.9 and its proof]{MilneANT}.
Thus $\operatorname{pr}_{\Z}|_{P_i}:P_i\to\Spec\Z$ is an isomorphism
for every $i\in I$, and $|I|=d$. This proves
\eqref{eq:real-explicit-section-pullback}.
For each $i\in I$, define
\begin{equation}\label{eq:real-explicit-lifted-section}
 s_i=\operatorname{pr}_T|_{P_i}\circ
       (\operatorname{pr}_{\Z}|_{P_i})^{-1}:\Spec\Z\to T.
\end{equation}
If $\tau:T\to\Spec\Z$ is the structure morphism, then
$\tau\circ s_i=\mathrm{id}_{\Spec\Z}$ and $f\circ s_i=s_0$.
In particular, $(s_i)_{\Q}$ is a point of $T_{\Q}(\Q)$.

Let $\mathcal C$ be the set of connected components of
$T_{\overline\Q}$. Since $T_{\Q}$ is connected,
$\Gal(\overline\Q/\Q)$ acts transitively on $\mathcal C$.
Choose $i\in I$, and let $C_i\in\mathcal C$ be the component
containing the geometric point defined by $(s_i)_{\Q}$.
The rationality of $(s_i)_{\Q}$ gives
$\sigma(C_i)=C_i$ for every $\sigma\in\Gal(\overline\Q/\Q)$.
Transitivity now implies $\mathcal C=\{C_i\}$.
Since $T_{\overline\Q}$ is smooth, connectedness implies integrality.
Thus $T_{\Q}$ is geometrically integral.
\end{proof}

The proposition applies to $h:\widetilde S\to S_0$ and to
$h\circ r:T\to S_0$ for every realizing cover $r:T\to\widetilde S$.

We now verify conditions (i)--(v) for the morphism
$q:S_0\to S_{-1}$ of Section~\ref{sec:starting-surface}.
In particular, we must prove
$J(C_{-1})(\overline\Q)[2]=J(C_{-1})(\Q)[2]$, bound
$c_p,s_p,a_p$ for odd $p\in\Sigma$, and bound
$b_1(\Gamma(Y_2))$ and the stabilizers $A_{\bar x}$ at $p=2$.

\begin{proposition}\label{prop:starting-anchor}
Let $S_0\to S_{-1}$ be the morphism of
Section~\ref{sec:starting-surface}, with $g=2^m-1$ and $m\ge4$.
If $\widetilde S\to S_0$ is a connected finite \'etale cover and
$\widetilde S\to S_{-1}$ is Galois, then
$\widetilde S\to S_0\to S_{-1}$ is an anchored diagram.
\end{proposition}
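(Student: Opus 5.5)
The plan is to check conditions (i)--(v) one at a time, reading each off the propositions of Section~\ref{sec:starting-surface}. Throughout, $g=2^m-1\ge15$, $N=2g+2$, and $\Sigma=\{p\le R=2g+1\}$ is the set from \eqref{eq:start-explicit-bad-primes}, which contains $2$.

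\textbf{Conditions (i) and (ii).}
\begin{enumerate}[label=(\alph*)]
\item Proposition~\ref{prop:start-models} shows that $S_{-1}$ is normal and projective.
\item Propositions~\ref{prop:start-curves} and~\ref{prop:start-genera} show that $C_{-1}$ is smooth and geometrically integral of genus $g$.
\item Proposition~\ref{prop:start-bost-charles-model} gives that $S_0$ is regular and projective, with the section $s_0$.
\item Proposition~\ref{prop:start-integral-galois} shows that $q$ is Galois with group $G_0\simeq\F^g$.
\item Proposition~\ref{prop:start-etale} shows that $q_\Q$ is étale.
\item The conditions on $\widetilde S$ and $h$ are exactly the hypotheses of the statement.
\end{enumerate}

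\textbf{Condition (iii).} Proposition~\ref{prop:start-good-fibres} gives smoothness with geometrically integral fibres over $U$, and that $q_U$ is a $G_0$-torsor. For \eqref{eq:real-explicit-rational-two-torsion}, I will use the standard description of $J[2]$ for $y^2=F(x)$ with $\deg F=2g+2$. The group $J(\overline\Q)[2]$ is spanned by the classes $[W_a-W_b]$, modulo the single relation $\sum_a W_a\sim(g+1)(P_++P_-)$. All $W_a$ are $\Q$-rational, so these classes are $\Q$-rational. Since $C_{-1}(\Q)\ne\varnothing$, $\Pic^0$ equals $J(\Q)$ by the descent argument of Section~\ref{sec:dictionary}, so every $2$-torsion point is rational.

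\textbf{Condition (v), at $p=2$.} I use Proposition~\ref{prop:starting-dyadic-fibres}. The reduced curve $Y_2=(S_{-1})_{\overline{\mathbb F}_2}$ has $2n$ rational components, where $n=2r-1$ and $r=2^{m-2}$. Its singular points are the $2(n-1)$ nodes and the $2r$ tacnode-type points, each with two branches.
\begin{itemize}
\item Connectedness of $Y_2$ follows from Zariski's connectedness theorem. Indeed, $S_{-1}$ is normal, proper and flat with geometrically connected generic fibre, so $(\mathcal O_{S_{-1}})$ pushes forward to $\mathcal O_{\Spec\Z}$.
\item Hence $c(\Gamma)=1$ and $b_1=2(n-1)+2r-2n+1=2r-1=(g-1)/2$.
\item Take $B_2$ to be the $2r=(g+1)/2$ points of $Y_2$ above the $\zeta_j$.
\item Away from $B_2$, the strict henselization computations in Propositions~\ref{prop:starting-regularity} and~\ref{prop:starting-dyadic-fibres} show the following. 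Above a point of $P_2$ outside the $\zeta_j$, $S_{-1}$ splits into $2$ copies and $S_0$ into $2^{g+1}$ copies of the same strictly henselian ring. So each geometric point of $S_{-1}$ there has exactly $2^g$ preimages, and $G_0$ acts freely on them.
\item Above each point of $B_2$ there are $2^{g-1}$ points of $S_0$, because the integral closure is a product of $2^{g-1}$ copies of the local ring $B$. By transitivity of $G_0$ on fibres, each stabilizer then has order $2^g/2^{g-1}=2$.
\end{itemize}

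\textbf{Condition (iv), at odd $p\in\Sigma$.}
\begin{itemize}
\item Regularity of $S_{-1}$ near $(S_{-1})_{\mathbb F_p}$ comes from Proposition~\ref{prop:start-models}.
\item The strict normal crossings property of $Y_p$ is proved at the end of Proposition~\ref{prop:starting-odd-fibres}.
\end{itemize}
The real work, and what I expect to be the main obstacle, is the numerical bounds \eqref{eq:real-explicit-fibre-bounds}.

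First bound the size of the tree $\Gamma_p$. Its vertices are $1+\sum_{k\ge1}|C_{p,k}|$, where $C_{p,k}$ counts residue classes mod $p^k$ containing at least two of $0,\dots,N-1$.
\begin{itemize}
\item For $p^k\le N/2$, use $|C_{p,k}|\le p^k$. Summing this geometric series gives at most about $N$.
\item For $p^k>N/2$, bound $|C_{p,k}|$ by the number of pairs $a<b<N$ with $p^k\mid b-a$. This is at most $N^2/p^k$, and summing over $k$ gives at most about $2N$.
\item So I aim for $|V_p|\le 2N$ or so, and hence $|\mathcal E_p|=|V_p|-1$.
\end{itemize}
Then transfer these counts to $S_{-1}$ using items (2) and (3) of Proposition~\ref{prop:starting-odd-fibres}.
\begin{itemize}
\item Each $E$ contributes at most $2$ components of $(X_{-1})_{k_p}$.
\item Each point of $E\cap E'$ or of $D_a\cap E$ contributes at most one exceptional line, and at most $2$ nodes before or after the blow-up. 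There are at most $|\mathcal E_p|+N$ such points.
\item This gives $c_p\le 2|V_p|+|\mathcal E_p|+N$ and $s_p\le 2(|\mathcal E_p|+N)+\dots$, and I will track constants to land below $7N$ and $8N$. If the crude pair count is too weak, I will instead bound $|C_{p,k}|\le N-p^k$, using that the indices are consecutive.
\end{itemize}
Finally, for $a_p$, each component over $E$ is at most a double cover of $\mathbb P^1$. It is branched only at points of $E$ lying on a neighbouring component or on a $D_a$ where $v(F)$ is odd, so its genus is at most half the number of neighbours. Summing over $E$ gives $a_p\le\frac12\bigl(2|\mathcal E_p|+N\bigr)\le 2N$.
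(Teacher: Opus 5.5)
Your proposal is correct and has the same architecture as the paper's proof. Conditions (i)--(iii) are read off from Section~\ref{sec:starting-surface}. The odd-prime bounds come from counting the vertices of $\Gamma_p$ through the classes $C_{p,k}$ and transferring the counts via the local rings of Proposition~\ref{prop:starting-odd-fibres}, and $a_p$ comes from Riemann--Hurwitz for the at most quadratic covers of the components of $(P_p)_{\overline{\mathbb F}_p}$.

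Three sub-steps are argued differently from the paper.
\begin{itemize}
\item For (iii) you quote the standard description of $J[2]$ for an even-degree hyperelliptic curve. The paper re-proves it by computing the relations among the $\delta_a=[W_a-W_{2g+1}]$. Your remark about descent is not needed: classes of $\Q$-rational divisors are automatically Galois-fixed points of $J$.
\item For connectedness of $Y_2$ you use Zariski's connectedness theorem, since $H^0(S_{-1},\mathcal O)$ is a finite $\Z$-subalgebra of $H^0(C_{-1},\mathcal O)=\Q$ and hence equals $\Z$. The paper reads connectedness off the graph: removing the $2r$ tacnode edges leaves two copies of the tree $\Gamma_2$, and each removed edge joins the two copies. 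Your route is shorter and does not need to know where the branches at the tacnode points lie.
\item For the stabilizers you combine transitivity of $A$ on fibres of $q$ with point counts: $2^{g+1}$ points of $S_0$ against $2$ of $S_{-1}$ away from the $\zeta_j$, and $2^{g-1}$ against $1$ above each $\zeta_j$. The paper instead computes $A_{\bar x}=\langle e_j+e_{j+2^{m-1}}\rangle$ explicitly. Both give exactly what (v) requires.
\end{itemize}

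One caution on the constants in (iv). Your estimate $a_p\le\frac12(2|\mathcal E_p|+N)=|V_p|-1+N/2$ gives $a_p\le2N$ only if $|V_p|\le1+\frac32N$. So the target ``$|V_p|\le2N$ or so'' is not enough. The crude pair count, which allows $|V_p|$ close to $3N$, would also break $c_p\le3|V_p|+N-1\le7N$.

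Your fallback $|C_{p,k}|\le N-p^k$ does work, giving $|V_p|\le1+\frac54N$, but it is not optional. The paper's version is simpler: $C_{p,k}=\varnothing$ unless $p^k\le N-1$, and $|C_{p,k}|\le p^k$, so $|V_p|\le1+\frac{p}{p-1}(N-1)\le1+\frac32N$. From there $c_p\le7N$, $a_p\le2N$ and $s_p\le2(N+|V_p|-1)\le8N$ follow as you outline. Your local claim of at most two nodes of $Y_p$ over each point of $\mathcal I_p$ is correct; the paper uses the weaker count of four per node.
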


\begin{proof}
Propositions~\ref{prop:start-models},
\ref{prop:start-integral-galois}, \ref{prop:starting-regularity} and
\ref{prop:start-integral-section} prove (i), construct the section
$s_0:\Spec\Z\to S_0$, and show that
$q:S_0\to S_{-1}$ is Galois with group $A=G_0\simeq\F^g$ and
$q_{\Q}$ is \'etale. The hypotheses on $\widetilde S\to S_0$ and
$\widetilde S\to S_{-1}$ give the remaining assertions in (ii).
Take $\Sigma=\{p:p\text{ prime},\ p\le2g+1\}$ and let $U$ be as in
\eqref{eq:real-explicit-good-base}.
Proposition~\ref{prop:start-good-fibres} shows that
$(S_{-1})_U\to U$ is smooth with geometrically connected fibres and
$q_U:(S_0)_U\to(S_{-1})_U$ is an \'etale $A$-torsor.

For $0\le a\le2g+1$, put $W_a=(a,0)\in C_{-1}(\Q)$.
The points $W_a$ are the ramification points of
$\rho_{-1}:C_{-1}\to\mathbb P^1_{\Q}$.
We will prove that the classes
$\delta_a=[W_a-W_{2g+1}]$, for $0\le a\le2g$, generate
$J(C_{-1})(\overline\Q)[2]$.
First, $2\delta_a=0$, because
\begin{equation}\label{eq:real-explicit-branch-divisor}
 \operatorname{div}\!\left(\frac{x-a}{x-(2g+1)}\right)
     =2W_a-2W_{2g+1}.
\end{equation}
To determine the relations among the $\delta_a$, let
$I\subseteq\{0,\ldots,2g\}$ and suppose
$\sum_{a\in I}\delta_a=0$.
Put
\begin{equation}\label{eq:real-explicit-subset-product}
 R_I(x)=\prod_{a\in I}\frac{x-a}{x-(2g+1)}.
\end{equation}
The assumed relation gives $h\in\overline\Q(C_{-1})^\times$ with
$\operatorname{div}(h)=\sum_{a\in I}(W_a-W_{2g+1})$.
Equation~\eqref{eq:real-explicit-branch-divisor} implies
$\operatorname{div}(R_I/h^2)=0$, so
$R_I/h^2\in\overline\Q^\times$ and
$R_I\in\overline\Q(C_{-1})^{\times2}$.
Recall that
$\overline\Q(C_{-1})=\overline\Q(x)(\sqrt F)$, where
$F=\prod_{a=0}^{2g+1}(x-a)$.
If $(u+v\sqrt F)^2\in\overline\Q(x)$, with
$u,v\in\overline\Q(x)$, the coefficient of $\sqrt F$ is $2uv$;
hence $u=0$ or $v=0$. It follows that
\begin{equation}\label{eq:real-explicit-square-intersection}
 \overline\Q(x)^\times\cap\overline\Q(C_{-1})^{\times2}
   =\overline\Q(x)^{\times2}\ \cup\ F\overline\Q(x)^{\times2}.
\end{equation}
For $0\le a\le2g$, the order of vanishing of $R_I$ at $x=a$ is
$1$ if $a\in I$ and $0$ if $a\notin I$.
If $R_I\in\overline\Q(x)^{\times2}$, then
$\operatorname{ord}_{x=a}(R_I)\equiv0\pmod2$ for every
$0\le a\le2g$, so $I=\varnothing$. If
$R_I\in F\overline\Q(x)^{\times2}$, then
$\operatorname{ord}_{x=a}(R_I)\equiv1\pmod2$ for every
$0\le a\le2g$, so $I=\{0,\ldots,2g\}$.
Finally,
\begin{equation}\label{eq:real-explicit-full-relation-divisor}
 \operatorname{div}\!\left(\frac{y}{(x-(2g+1))^{g+1}}\right)
   =\sum_{a=0}^{2g}(W_a-W_{2g+1})
\end{equation}
gives $\sum_{a=0}^{2g}\delta_a=0$.
Thus the $2g+1$ classes $\delta_a$ have precisely one nonzero
$\F$-linear relation, and
\begin{equation}\label{eq:real-explicit-two-torsion-span}
 \dim_{\F}\langle\delta_0,\ldots,\delta_{2g}\rangle=2g
    =\dim_{\F}J(C_{-1})(\overline\Q)[2].
\end{equation}
Since every $\delta_a$ is rational over $\Q$, equation
\eqref{eq:real-explicit-two-torsion-span} proves
\eqref{eq:real-explicit-rational-two-torsion} and completes (iii).

Fix an odd prime $p\in\Sigma$.
Proposition~\ref{prop:starting-odd-fibres} shows that
$Y_p=(S_{-1})_{\overline{\mathbb F}_p,\mathrm{red}}$ has strict normal
crossings. Let
$n_p=|V_p|$ be the number of irreducible components of
$(P_p)_{\overline{\mathbb F}_p}$, where
$\Gamma_p=(V_p,\mathcal E_p,\partial_p)$ is defined in
\eqref{eq:start-explicit-fibre-graph}.
In the construction of $P_m$, a blow-up at stage $k\ge1$ over $p$
has centre indexed by a residue class modulo $p^k$ containing at
least two integers from $\{0,\ldots,N-1\}$.
At stage $k$ there are at most $p^k$ centres, and a centre can occur
only if $p^k\le N-1$. Each blow-up adds one irreducible component to
the geometric fibre. Therefore, on putting
$M_p=\lfloor\log_p(N-1)\rfloor$, we obtain
\begin{equation}\label{eq:real-explicit-base-component-bound}
 n_p\le1+\sum_{k=1}^{M_p}p^k
       \le1+\frac{p}{p-1}(N-1)\le1+\frac32N.
\end{equation}
The divisors $D_a$, for $0\le a<N$, meet
$(P_p)_{\overline{\mathbb F}_p}$ at $N$ distinct smooth points.
Also, $\Gamma_p$ is a tree, so
$|\mathcal E_p|=n_p-1$; equivalently,
$(P_p)_{\overline{\mathbb F}_p}$ has $n_p-1$ nodes.
Define the finite set
\begin{equation}\label{eq:real-explicit-base-intersections}
 \mathcal I_p=
 \operatorname{Sing}((P_p)_{\overline{\mathbb F}_p})
 \cup\bigcup_{a=0}^{N-1}
       (D_a\cap(P_p)_{\overline{\mathbb F}_p}),
 \qquad |\mathcal I_p|=N+n_p-1.
\end{equation}
The branch divisor of $X_{-1}\to P_m$ is supported on the union of
the $D_a$ and the components of the special fibres of $P_m$.
The local equations of Proposition~\ref{prop:starting-odd-fibres}
show that every singular point of $X_{-1}$ over $p$ maps to a point
of $\mathcal I_p$, and that at most one singular point maps to each
element of $\mathcal I_p$.
The curve $(X_{-1})_{\overline{\mathbb F}_p,\mathrm{red}}$ has at
most $2n_p$ irreducible components.
In the construction of $S_{-1}\to X_{-1}$, the blow-up at each
singular point of $X_{-1}$ over $p$ introduces one rational component.
Proposition~\ref{prop:starting-odd-fibres} also proves that $Y_p$ has
at most four nodes above each node of
$(P_p)_{\overline{\mathbb F}_p}$, and at most two nodes above each
point of $D_a\cap(P_p)_{\overline{\mathbb F}_p}$.
Hence
\begin{equation}\label{eq:real-explicit-odd-component-node-bound}
 c_p\le2n_p+(N+n_p-1)=3n_p+N-1\le7N,\qquad
 s_p\le4(n_p-1)+2N\le8N.
\end{equation}

For each irreducible component $E$ of
$(P_p)_{\overline{\mathbb F}_p}$, let $\mathcal C_E$ be the set of
components of $(X_{-1})_{\overline{\mathbb F}_p,\mathrm{red}}$
whose image in $P_p$ is $E$.
For $Z\in\mathcal C_E$, let $\widetilde Z$ be the normalization
of $Z$ and let $\phi_Z:\widetilde Z\to E$ be the induced morphism.
Proposition~\ref{prop:starting-odd-fibres} gives
$\deg\phi_Z\in\{1,2\}$.
For a degree-two morphism $\phi_Z$, let $b_Z$ be the number of
branch points on $E$.
A point of $D_a\cap(P_p)_{\overline{\mathbb F}_p}$ contributes at
most one branch point, and a node of
$(P_p)_{\overline{\mathbb F}_p}$ contributes at most one branch
point on each of the two components containing that node. Therefore
\begin{equation}\label{eq:real-explicit-branch-count}
 \sum_{E\in V_p}\sum_{\substack{Z\in\mathcal C_E\\\deg\phi_Z=2}}
 b_Z\le N+2(n_p-1)\le4N.
\end{equation}
For $\deg\phi_Z=2$, Riemann--Hurwitz gives
$2g(\widetilde Z)-2=-4+b_Z$, because
$E\simeq\mathbb P^1_{\overline{\mathbb F}_p}$ and $p\ne2$.
In particular, $g(\widetilde Z)\le b_Z/2$.
For $\deg\phi_Z=1$, the curve $\widetilde Z$ is rational.
The exceptional components of $S_{-1}\to X_{-1}$ are also rational.
Summing the genera of the normalization components of $Y_p$ and
using \eqref{eq:real-explicit-branch-count}, we get $a_p\le2N$.
Together with \eqref{eq:real-explicit-odd-component-node-bound},
this proves (iv).

For the prime two, put $r=(g+1)/4$ and $n=2r-1$.
Proposition~\ref{prop:starting-dyadic-fibres} gives $2n$ rational
normalization components of $Y_2$, $2(n-1)$ nodes, and $2r$
points where two branches have intersection multiplicity two.
Write $\Gamma(Y_2)=(V_Y,\mathcal E_Y,\partial_Y)$, and let
$\mathcal T_2\subseteq\mathcal E_Y=\operatorname{Sing}(Y_2)$
be the set of the $2r$ points where the two branches have
intersection multiplicity two. Define
\begin{equation}\label{eq:real-explicit-dyadic-subgraph}
 \Gamma'=(V_Y,\mathcal E_Y\setminus\mathcal T_2,
                   \partial_Y|_{\mathcal E_Y\setminus\mathcal T_2}).
\end{equation}
The local descriptions in Proposition~\ref{prop:starting-dyadic-fibres}
identify $\Gamma'$ with the disjoint union of two copies of the tree
$\Gamma_2$ from \eqref{eq:start-explicit-fibre-graph}.
Denote the vertex sets of the two connected components of $\Gamma'$
by $V_Y^{(0)}$ and $V_Y^{(1)}$.
For each $E_{m-2,c}$, the union of the divisors $D_a$ meets
$E_{m-2,c}$ at two points. The points of $\mathcal T_2$ map to
$\bigcup_{a,c}(D_a\cap E_{m-2,c})$.
The two branches at every $y\in\mathcal T_2$ lie on the two
components mapping to the same $E_{m-2,c}$, so
\begin{equation}\label{eq:real-explicit-dyadic-connecting-edges}
 |\partial_Y(y)\cap V_Y^{(0)}|
   =|\partial_Y(y)\cap V_Y^{(1)}|=1
 \qquad(y\in\mathcal T_2).
\end{equation}
Since $|\mathcal T_2|=2r>0$, the graph $\Gamma(Y_2)$ is connected.
Thus $Y_2$ is connected. By \eqref{eq:bg-explicit-graph-betti},
\begin{equation}\label{eq:real-explicit-dyadic-betti}
 b_1(\Gamma(Y_2))=2(n-1)+2r-2n+1=\frac{g-1}{2}.
\end{equation}

It remains to verify \eqref{eq:real-explicit-stabilizer-hypotheses}.
Recall that
\begin{equation}\label{eq:real-explicit-sign-group}
 H=\Gal(L/\Q(x))=\F^{g+1},\qquad
 A=\ker\!\left(\lambda:H\to\F\right),\qquad
 \lambda((a_i)_{i=0}^g)=\sum_{i=0}^g a_i.
\end{equation}
Under the identification in \eqref{eq:real-explicit-sign-group}, the
standard basis vector $e_i\in\F^{g+1}$ represents the field
automorphism
\begin{equation}\label{eq:real-explicit-basis-automorphism}
 e_i(x)=x,\qquad e_i(z_i)=-z_i,\qquad
 e_i(z_h)=z_h\quad(0\le h\le g,\ h\ne i).
\end{equation}
The morphism $S_{-1}\to X_{-1}$ is an isomorphism after base change
to $\Spec\Z_{(2)}$. Therefore $(S_0)_{\Z_{(2)}}$ is the
normalization of $P_2$ in $L$; denote the normalization morphism by
$\psi:(S_0)_{\Z_{(2)}}\to P_2$.

Let $\bar q\in(P_2)_{\overline{\mathbb F}_2}$ satisfy
$\bar q\notin\bigcup_{a=0}^{N-1}D_a$, and put
$\mathcal A=\mathcal O^{\mathrm{sh}}_{P_2,\bar q}$.
For $0\le i\le g$, the rational function
$A_i=-2^{m-2}/(x-i)$ belongs to $\mathcal A$.
Define
\begin{equation}\label{eq:real-explicit-dyadic-split-equations}
 W_i=\frac{z_i/(x-i)-1}{2},\qquad
 W_i^2+W_i=A_i.
\end{equation}
By Proposition~\ref{prop:starting-regularity},
\begin{equation}\label{eq:real-explicit-dyadic-split-algebras}
 \mathcal A[w_i]/(w_i^2+w_i-A_i)\simeq\mathcal A\times\mathcal A,
 \qquad
 \bigotimes_{i=0}^g\mathcal A[w_i]/(w_i^2+w_i-A_i)
      \simeq\mathcal A^{\,2^{g+1}}.
\end{equation}
The tensor product in \eqref{eq:real-explicit-dyadic-split-algebras}
is the integral closure of $\mathcal A$ in
$L\otimes_{\Q(x)}\Frac(\mathcal A)$.
Hence the action of $H$ on the geometric points of
$\psi^{-1}(\bar q)$ is simply transitive. In particular,
\begin{equation}\label{eq:real-explicit-trivial-stabilizers}
 H_{\bar x}=\{0\},\qquad A_{\bar x}=\{0\}
 \quad\text{for every }\bar x\in\psi^{-1}(\bar q).
\end{equation}

Put $k=m-2$. For $0\le j<2^{m-1}$, write
$j=c+2^ke$, with $0\le c<2^k$ and $e\in\{0,1\}$, and let
$\bar q_j$ be the point of $E_{k,c}$ with
$X_{k,c}=\overline e$.
The divisors $D_a$ containing $\bar q_j$ are exactly those with
$a\equiv j\pmod{2^{m-1}}$.
Let $\mathcal A_j=\mathcal O^{\mathrm{sh}}_{P_2,\bar q_j}$ and put
\begin{equation}\label{eq:real-explicit-dyadic-scaled-roots}
 t=\frac{x-j}{2^{m-2}},\qquad
 U=\frac{z_j}{2^{m-2}},\qquad
 V=\frac{z_{j+2^{m-1}}}{2^{m-2}}.
\end{equation}
For each $i\notin\{j,j+2^{m-1}\}$, the integral closure of
$\mathcal A_j$ in
$\Frac(\mathcal A_j)[z_i]/(z_i^2-f_i)$ is
$\mathcal A_j\times\mathcal A_j$, by
Proposition~\ref{prop:starting-regularity}.
For the indices $j$ and $j+2^{m-1}$, the equations become
\begin{equation}\label{eq:real-explicit-dyadic-pair-equations}
 U^2=t(t-4),\qquad V^2=(t-2)(t-6).
\end{equation}
By Proposition~\ref{prop:starting-regularity}, the integral closure
of $\mathcal A_j$ in
$\Frac(\mathcal A_j)[U,V]/(U^2-t(t-4),V^2-(t-2)(t-6))$
is the algebra
\begin{equation}\label{eq:real-explicit-dyadic-normalization}
 \mathcal B_j=\mathcal A_j[a,b]/
       (a^2+ta+t,\ b^2+(t-2)b+t-2),
 \qquad U=t+2a,\quad V=t-2+2b.
\end{equation}
The algebra $\mathcal B_j$ is free of rank four over $\mathcal A_j$
and has one closed point. The remaining $g-1$ quadratic factors
split over $\mathcal A_j$. Thus the integral closure of
$\mathcal A_j$ in $L\otimes_{\Q(x)}\Frac(\mathcal A_j)$ is a product
of $2^{g-1}$ copies of $\mathcal B_j$.
For every $\bar x\in\psi^{-1}(\bar q_j)$, the stabilizers are
therefore
\begin{equation}\label{eq:real-explicit-nontrivial-stabilizers}
 H_{\bar x}=\langle e_j,e_{j+2^{m-1}}\rangle\simeq\F^2,
 \qquad
 A_{\bar x}=H_{\bar x}\cap\ker\lambda
           =\langle e_j+e_{j+2^{m-1}}\rangle\simeq\F.
\end{equation}
Proposition~\ref{prop:starting-dyadic-fibres} gives exactly one
point $y_j\in Y_2$ mapping to $\bar q_j$.
Put $B_2=\{y_j:0\le j<2^{m-1}\}$. The points $\bar q_j$ are
distinct, so
\begin{equation}\label{eq:real-explicit-exceptional-point-count}
 |B_2|=2^{m-1}=2r=\frac{g+1}{2}.
\end{equation}
Equations~\eqref{eq:real-explicit-trivial-stabilizers} and
\eqref{eq:real-explicit-nontrivial-stabilizers} prove
\eqref{eq:real-explicit-stabilizer-hypotheses} for this set $B_2$.
Together with \eqref{eq:real-explicit-dyadic-betti} and the description
of the normalization components and singular points of $Y_2$, this
proves (v).
\end{proof}
\subsection{Estimating the obstruction group}\label{sec:estimating-obstruction}

Let $r:\widetilde S\to S_0$ and $q:S_0\to S_{-1}$ form an
anchored diagram in the sense of \eqref{eq:anchored-diagram}. Write
$C=(S_{-1})_{\Q}$, $\widetilde C=\widetilde S_{\Q}$ and
\begin{equation}\label{eq:obs-explicit-bases}
 B_p=\Spec\Z_{(p)}^h,\qquad K_p=\Frac(\Z_{(p)}^h),\qquad
 X=(S_{-1})_U.
\end{equation}
For each $\theta\in H^2(G,\F)$, choose a representative extension
$1\to\F\xrightarrow{j_\theta}E_\theta
\xrightarrow{\rho_\theta}G\to1$ as in
\eqref{eq:realization-central-extension}.

For $V=\Spec\Z$, $\Spec\Q$, $U$, $B_p$ or $\Spec K_p$, a
\emph{central lift} of $\theta$ over $V$ is a finite \'etale
$\F$-torsor $P_V\to\widetilde S_V$ with an $E_\theta$-action over
$(S_{-1})_V$.  The morphism $\pi_V:P_V\to\widetilde S_V$
must satisfy $\pi_V(ez)=\rho_\theta(e)\pi_V(z)$ for $e\in E_\theta$;
$j_\theta(a)$ acts on $P_V$ by translation by $a\in\F$. Thus over $\Spec\Q$ the
torsor is $P_{\Q}\to\widetilde C$, and over $\Spec\Z$ it is a
torsor $P_{\Z}\to\widetilde S$.
The scheme $P_V$ is allowed to be disconnected.  For the split extension
$E_0=G\times\F$, the trivial torsor $\widetilde S_V\times\F$
is a central lift, with $(g,a)(z,b)=(gz,a+b)$.  For $\theta\ne0$, the scheme $P_V$ is connected: a connected
component of a disconnected $P_V$ would give a subgroup of
$E_\theta$ mapping isomorphically to $G$, as proved in
Subsection~\ref{sec:realizable-extensions}. Therefore
\begin{equation}\label{eq:obs-mechanical-integral-realizability}
 \mathscr E(\widetilde S,S_{-1})
 =\{\theta\in H^2(G,\F):\theta\text{ has a central lift over }\Spec\Z\}.
\end{equation} An isomorphism of central lifts over $V$ is an
$E_\theta$-equivariant isomorphism over the identity of
$\widetilde S_V$.

Let $P\to\widetilde C$ be a central lift of $\theta$, and let
$D\to C$ be a finite \'etale $\F$-torsor. Define
\begin{equation}\label{eq:obs-explicit-twist-quotient}
 P\star D:=\bigl(P\times_{\widetilde C}
       (\widetilde C\times_C D)\bigr)/\Delta\F.
\end{equation}
The scheme $P\star D$ is the contracted product of the
$\F$-torsors $P\to\widetilde C$ and
$\widetilde C\times_C D\to\widetilde C$. More precisely, if
$z\in P$ and $(x,d)\in\widetilde C\times_C D$ have the same
image in $\widetilde C$, the diagonal action is
\begin{equation}\label{eq:obs-mechanical-diagonal-twist-action}
 a\cdot(z,(x,d))=(z+a,(x,d+a)),\qquad a\in\F.
\end{equation}
After an \'etale base change trivializing the torsors, the addition
map $(a,b)\mapsto a+b$ identifies
$(\F\times\F)/\Delta\F$ with $\F$. Descent, as in
Proposition~\ref{prop:realizable-subspace}, therefore constructs
$P\star D\to\widetilde C$ as a finite \'etale $\F$-torsor.
The formula
\begin{equation}\label{eq:obs-mechanical-extension-twist-action}
 e\cdot(z,(x,d))=(ez,(\rho_\theta(e)x,d)),\qquad e\in E_\theta,
\end{equation}
defines an $E_\theta$-action commuting with
\eqref{eq:obs-mechanical-diagonal-twist-action}. Consequently
\eqref{eq:obs-mechanical-extension-twist-action} induces an action
on $P\star D$. For $a\in\F$, the element $j_\theta(a)$ acts
on $P\star D$ by the torsor translation by $a$. Hence
$P\star D\to\widetilde C$ is a central lift of $\theta$.  For $d\in\Q^\times$, put
$D_d=\Spec(\Q[t]/(t^2-d))$, a quadratic torsor over $\Spec\Q$;
in particular, $D_1=\Spec\Q\coprod\Spec\Q$.  Twisting by
$C\times_{\Q}D_d$ replaces a quadratic function-field equation
$z^2=a$ by $z^2=da$.

\begin{proposition}\label{prop:good-prime-twist}
Let $\widetilde S\to S_0\to S_{-1}$ be an anchored diagram.  Every central
lift $P_{\Q}\to\widetilde C$ has a twist by $C\times_{\Q}D_d$,
for a positive squarefree integer $d$ whose prime divisors lie outside
$\Sigma$ (with $d=1$ allowed), that extends to a central lift
$P_U\to\widetilde S_U$.  Consequently an extension class has a central lift
over $\Q$ if and only if it has one over $U$.
\end{proposition}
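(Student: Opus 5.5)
The plan is to use purity of the branch locus on the regular scheme $\widetilde S_U$ to reduce the extension problem to the codimension-one points of $\widetilde S_U$ lying over the primes $p\notin\Sigma$, and then to kill the ramification there by a quadratic twist from $\Q$. Over $U$ the morphism $\widetilde S_U\to X=(S_{-1})_U$ is finite \'etale, being the composite of the \'etale $h$ with the torsor $q_U$. Hence $\widetilde S_U$ is regular, and its fibres over $p\notin\Sigma$ are smooth curves. These fibres are the only codimension-one points of $\widetilde S_U$ not already in $\widetilde C$.

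First I extend. Let $P_U$ be the normalization of $\widetilde S_U$ in the $\F$-algebra of $P_{\Q}$. The $E_\theta$-action extends by functoriality of normalization. By purity (\st{0BMB}), $P_U\to\widetilde S_U$ is finite \'etale as soon as it is unramified at the generic point of every fibre $(\widetilde S_U)_{\mathbb F_p}$ with $p\notin\Sigma$. If that holds, the $\F$-action makes $P_U$ an $\F$-torsor: it is free on the generic fibre, the normalization is finite flat of degree two over a regular base, and the freeness propagates through the \'etale locus. So $P_U$ is a central lift. The problem is therefore local at each component of $(\widetilde S_U)_{\mathbb F_p}$.

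Second, I compute the ramification. Fix $p\notin\Sigma$, and write $P_{\Q}$ on each component as $z^2=a$ with $a\in\Q(\widetilde C)^\times$. The fibre $X_{\mathbb F_p}$ is geometrically integral by condition (iii). Each component $Y$ of $(\widetilde S_U)_{\mathbb F_p}$ lies over it with ramification index one, so $p$ is a uniformizer at $\eta_Y$. The parity $\varepsilon_Y=v_Y(a)\bmod2$ measures the ramification of $P_U$ at $Y$, because $p$ is odd. The key point is that $\varepsilon_Y$ is independent of $Y$. The group $E_\theta$ acts on $P_{\Q}$ compatibly, and $G$ acts transitively on the components $Y$ over the irreducible $X_{\mathbb F_p}$. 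For $e\in E_\theta$, the torsor $e^*P_{\Q}$ is isomorphic to $P_{\Q}$ as an $\F$-torsor, so $\rho_\theta(e)^*a\equiv a$ modulo squares. Hence $v_{gY}(a)\equiv v_Y(a)\pmod 2$ for all $g\in G$. This equivariance step is the one I expect to be the main obstacle. It must be checked carefully when $P_{\Q}$ is disconnected, which happens only for $\theta=0$, by arguing component by component. It also needs care because $\widetilde C$ may have several components over a given $Y$.

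Third, I twist. Put $d=\prod\{p\notin\Sigma:\varepsilon_p=1\}$; this is a finite product, since $a$ has only finitely many bad fibres. Twisting by $C\times_{\Q}D_d$ replaces $a$ by $da$. Because $v_p(d)=1$ exactly when $\varepsilon_p=1$, the new parity is zero at every $Y$. The new normalization is then unramified at all fibres over $U$, and purity gives the central lift $P_U$.

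For the final equivalence, one direction is restriction to the generic fibre, which is immediate. The other direction follows from the twisting result just proved, since the twisted $P_{\Q}\star(C\times_{\Q}D_d)$ is again a central lift of the same class $\theta$ over $\Q$.
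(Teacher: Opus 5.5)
Your proposal is correct and follows essentially the same route as the paper: normalize $\widetilde S_U$ in the algebra of $P_{\Q}$, use purity to reduce to the generic points of the fibre components over $p\notin\Sigma$, show that the parity of $v_Y(a)$ is constant on the $G$-orbit of these components, and twist by the product $d$ of the primes with odd parity. The only differences are cosmetic. You get the constancy from the $G$-invariance of the Kummer class $[a]$, since $E_\theta$ lifts the $G$-action and commutes with the central translations, whereas the paper conjugates inertia subgroups of $j_\theta(\F)$ on the normalization. Your worry about several components of $\widetilde C$ over a given $Y$ is unnecessary, because $\widetilde C$ is integral.
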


\begin{proof}
On the trivial torsor $\widetilde C\times\F$, each $e\in E_\theta$
acts by $(z,b)\mapsto(\rho_\theta(e)z,b+\lambda(e))$ for some
$\lambda(e)\in\F$.  The formula $(z,b)\mapsto(\rho_\theta(e)z,b+\lambda(e))$
defines the action of $e$ on $\widetilde S_U\times\F$ as well.  Suppose therefore that
$P_{\Q}$ is connected, with quadratic function field
$\Q(\widetilde C)(\sqrt a)$ for
$a\in\Q(\widetilde C)^\times$.  Let $n_a:N_a\to\widetilde S_U$ be the normalization of
$\widetilde S_U$ in $\Q(\widetilde C)(\sqrt a)$.
The morphism $n_a$ is finite because $\widetilde S_U$ is excellent.
Let $Z_a\subseteq N_a$ be the closed subset where $n_a$ is not \'etale.
The image of $Z_a$ under the proper morphism $N_a\to U$ is closed.
Since $(N_a)_{\Q}\to\widetilde C$ is the \'etale morphism
$P_{\Q}\to\widetilde C$, that image does not contain the generic
point of $U$. Thus
\begin{equation}\label{eq:obs-mechanical-ramification-prime-set}
 \Sigma_a:=\operatorname{im}(Z_a\to U)
 \quad\text{is a finite subset of }\{p:p\notin\Sigma\}.
\end{equation}

For $p\notin\Sigma$, the map $\widetilde S_U\to U$ is smooth, so every
irreducible component of $(\widetilde S_U)_p$ has multiplicity one.
For an irreducible component $D$ of $(\widetilde S_U)_p$, let
$\eta_D$ be the generic point of $D$ and let $v_D$ be the
normalized discrete valuation of the ring
$\mathcal O_{\widetilde S_U,\eta_D}$.  Since $p\ne2$, the extension
$\Q(\widetilde C)(\sqrt a)/\Q(\widetilde C)$ ramifies there if
and only if $v_D(a)$ is odd.  The group $G$ acts transitively on the irreducible components of
$(\widetilde S_U)_p$, because $X_p$ is geometrically integral and
$\widetilde S_U\to X$ is a $G$-torsor. The $E_\theta$-action on
$\Q(\widetilde C)(\sqrt a)$ extends to $N_a$. Choose a point $w\in N_a$ with $n_a(w)=\eta_D$, and define
\begin{equation}\label{eq:obs-mechanical-divisorial-inertia-definition}
 I_w=\{a\in j_\theta(\F):aw=w,\quad
          a|_{\kappa(w)}=\operatorname{id}_{\kappa(w)}\}.
\end{equation}
The group $j_\theta(\F)$ acts transitively on the points of
$n_a^{-1}(\eta_D)$. Since $j_\theta(\F)$ is abelian, the
subgroup $I_w$ is independent of the choice of $w$; denote that
subgroup by $I_D$. If $e\in E_\theta$ and $g=\rho_\theta(e)$,
then $n_a(ew)=g\eta_D$ and $I_{ew}=eI_we^{-1}$. Centrality
of $j_\theta(\F)$ now gives
\begin{equation}\label{eq:obs-mechanical-inertia-parity}
 I_{gD}=eI_De^{-1}=I_D,\qquad
 v_{gD}(a)\equiv v_D(a)\pmod2.
\end{equation}

For $p\notin\Sigma$, let $\epsilon_p\in\{0,1\}$ be the
common parity in \eqref{eq:obs-mechanical-inertia-parity}. By
\eqref{eq:obs-mechanical-ramification-prime-set}, only finitely many
$\epsilon_p$ are nonzero. Set
\begin{equation}\label{eq:obs-mechanical-constant-twist}
 d=\prod_{\substack{p\notin\Sigma\\\epsilon_p=1}}p.
\end{equation}
For every irreducible component $D$ of $(\widetilde S_U)_p$,
smoothness gives $v_D(p)=1$. Therefore
\begin{equation}\label{eq:obs-mechanical-twisted-valuations}
 v_D(da)=v_D(a)+\epsilon_p\equiv0\pmod2.
\end{equation}  At the generic point of every horizontal divisor on $\widetilde S_U$,
$d$ is a unit and the residue characteristic is zero, so adjoining
$\sqrt d$ is also unramified there.  Since
$P_{\Q}\to\widetilde C$ was \'etale, the normalization in the
quadratic algebra defined by $z^2=da$ is unramified at every
codimension-one point of the regular surface $\widetilde S_U$.  Purity of the branch locus now shows that the normalization morphism
to $\widetilde S_U$ is \'etale; see \st{0BMB}.

To identify the extension acting on the twist, let $G_{\Q(C)}$ be
the absolute Galois group of $\Q(C)$ and let
$\rho:G_{\Q(C)}\to E_\theta$ be the homomorphism obtained from
$P_{\Q}\to C$.  Its composite with $\rho_\theta$ is the quotient
corresponding to $\Q(\widetilde C)/\Q(C)$.  Let
$\chi_d:G_{\Q(C)}\to\F$ be the pullback of the character of
$G_\Q$ classified by $D_d\to\Spec\Q$.  Then
$\rho_d(\gamma)=j_\theta(\chi_d(\gamma))\rho(\gamma)$ is a
homomorphism and $\rho_\theta\rho_d=\rho_\theta\rho$, since
$j_\theta(\F)$ is central.  The $E_\theta$-action on
$P_{\Q}\star(C\times_{\Q}D_d)$ extends to the normalization of
$\widetilde S_U$ in the algebra of
$P_{\Q}\star(C\times_{\Q}D_d)$, by functoriality of normalization.  If the twist $P_{\Q}\star(C\times_{\Q}D_d)$ is
disconnected, apply the normalization and purity arguments to each
factor of the associated finite $\Q(\widetilde C)$-algebra. The
result is a central lift over $U$ in that case as well. By
\eqref{eq:obs-mechanical-integral-realizability} and the complement
argument preceding that equation, a disconnected twist can occur only
when $\theta=0$.
\end{proof}

Put $H=\Gal(\widetilde S/S_0)=\ker(G\to A)$.  The map
$r:\widetilde S\to S_0$ is an \'etale $H$-torsor.  Descent along
$r$ takes $G$-equivariant quadratic torsors on $\widetilde S$ to
$A$-equivariant quadratic torsors on $S_0$.  We use $A$-equivariant torsors on $S_0$ for twisting. The morphism
$q:S_0\to S_{-1}$ can ramify over primes in $\Sigma$, so an
$A$-equivariant \'etale torsor on $S_0$ need not descend to an
\'etale torsor on $S_{-1}$.

Let $T$ be a scheme on which a finite group $B$ acts. We denote
by $H_B^i(T,\F)$ the right derived functors of
$\mathcal F\mapsto\Gamma(T,\mathcal F)^B$ on $B$-equivariant \'etale
sheaves, evaluated at the constant sheaf.  A $B$-equivariant sheaf $\mathcal F$ has isomorphisms
$b^*\mathcal F\simeq\mathcal F$ compatible with multiplication in $B$.  In degree one, $H_B^1(T,\F)$ classifies finite \'etale
$\F$-torsors $D\to T$ with a $B$-action lifting the action on $T$
and commuting with torsor translations.  For $T=(S_0)_V$ and $B=A$,
if $D\to(S_0)_V$ represents a class in $H_A^1((S_0)_V,\F)$
and $P\to\widetilde S_V$ is a central lift of $\theta$, then
$(P\times_{\widetilde S_V}r_V^*D)/\Delta\F
\to\widetilde S_V$ is a central lift of $\theta$.  We write
$h_B^i(T)=\dim_{\F}H_B^i(T,\F)$ and
$h^i(T)=\dim_{\F}H^i_{\et}(T,\F)$.

Take an injective resolution $I^\bullet$ of the constant sheaf $\F$
in the category of $B$-equivariant \'etale sheaves on $T$, and form
the double complex of group cochains
\begin{equation}\label{eq:obs-explicit-equivariant-double-complex}
 C^a(B,\Gamma(T,I^b)).
\end{equation}
To embed a $B$-equivariant sheaf $\mathcal F$ in a
$B$-equivariant injective, first embed the underlying sheaf in an
injective sheaf $J$ on $T$, and then use the coinduced sheaf
$\prod_{b\in B}b^*J$.
The forgetful functor preserves injectives, since its left adjoint is an
exact finite direct sum of translates.  The global sections of an
equivariant injective form an injective $B$-module, by adjunction with
the exact constant-sheaf functor.  Filtering the double complex
\eqref{eq:obs-explicit-equivariant-double-complex} by group-cochain
degree and by resolution degree gives the spectral sequence
\begin{equation}\label{eq:realizable-equivariant-sequence}
 H^a(B,H^b_{\et}(T,\F))\ \Longrightarrow\ H_B^{a+b}(T,\F).
\end{equation}
This is the usual derived-functor construction; see \st{015N}.
For a finite \'etale $B$-torsor $T\to Z$, descent identifies
$H_B^i(T,\F)$ with $H^i_{\et}(Z,\F)$.

\begin{proposition}\label{prop:local-lift-criterion}
For an anchored diagram and
$V\in\{\Spec\Q,U,B_p,\Spec K_p\}$, there is a natural linear map
\begin{equation}\label{eq:obs-explicit-obstruction-map}
 \alpha_V:H^2(G,\F)\longrightarrow H_A^2((S_0)_V,\F).
\end{equation}
For $\theta\in H^2(G,\F)$, a central lift
$P_V\to\widetilde S_V$ exists if and only if $\alpha_V(\theta)=0$.  If $\alpha_V(\theta)=0$, the isomorphism classes of central lifts
of $\theta$ form a torsor under $H_A^1((S_0)_V,\F)$, acting by quadratic twisting.
The maps $\alpha_V$ and the twisting actions commute with pullback
under every base change $V'\to V$ between schemes in
$\{\operatorname{Spec}\mathbb Q,U,B_p,\operatorname{Spec}K_p\}$.  For
$V=U,\Spec\Q,\Spec K_p$, descent along the \'etale $A$-torsor
$q_V:(S_0)_V\to(S_{-1})_V$ identifies the target with
$H^2_{\et}((S_{-1})_V,\F)$.
\end{proposition}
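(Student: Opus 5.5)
We set this up as a gerbe/non-abelian-cohomology computation with $G$-equivariant torsors on $\widetilde S_V$, descended to $A$-equivariant objects on $(S_0)_V$ along the étale $H$-torsor $r_V$, where $H=\ker(G\to A)$.

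\textbf{The map $\alpha_V$.} The problem is equivalent to finding an $\F$-torsor $P_V\to\widetilde S_V$ with an action of $E_\theta$ lifting the $G$-action, such that $j_\theta(\F)$ acts by translations. Equivalently, we want a $G$-equivariant structure on an $\F$-torsor, twisted by the cocycle $c_\theta$. We consider the spectral sequence \eqref{eq:realizable-equivariant-sequence} for $B=G$ and $T=\widetilde S_V$; its abutment is $H^\bullet_G(\widetilde S_V,\F)\simeq H^\bullet_A((S_0)_V,\F)$ by descent along $r_V$. The edge map $H^2(G,H^0_{\et}(\widetilde S_V,\F))\to H^2_G(\widetilde S_V,\F)$, precomposed with $H^2(G,\F)\to H^2(G,H^0)$ (induced by constants), defines $\alpha_V$.

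Concretely, $\alpha_V(\theta)$ is the class of the ``$\theta$-twisted trivial torsor'' as an equivariant gerbe. Here we use that for a connected $T$, equivariant $\F$-gerbes with trivial underlying gerbe are classified in this way. Linearity holds because the edge map is a homomorphism. Naturality in $V'\to V$ follows from functoriality of the double complex \eqref{eq:obs-explicit-equivariant-double-complex} under pullback.

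\textbf{Existence and torsor structure.} We interpret $H^2_G(\widetilde S_V,\F)$ as classifying $G$-equivariant $\F$-gerbes on $\widetilde S_V$ (the equivariant analogue of Giraud). The stack of central lifts of $\theta$ is such a gerbe, whose class is $\alpha_V(\theta)$ up to sign. A gerbe has a global object iff its class vanishes, so a central lift exists iff $\alpha_V(\theta)=0$. When lifts exist, the standard gerbe formalism gives two facts: any two differ by a unique equivariant $\F$-torsor, namely the class in $H^1_G(\widetilde S_V,\F)=H^1_A((S_0)_V,\F)$ of $\mathrm{Isom}$; and the action is contracted product. This is exactly the twisting $(P\times r_V^*D)/\Delta\F$ already defined.

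A more hands-on route avoids gerbes. First show that $\F$-torsors with twisted $E_\theta$-action are torsors for the hypercohomology of the Hochschild--Serre-type double complex. Then compute the obstruction by choosing a local lift and forming the Čech/group $2$-cocycle. We would present this version, checking that the $2$-cocycle lands in total degree $2$ and is the image of $c_\theta$.

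\textbf{Identification with $H^2_{\et}((S_{-1})_V,\F)$.} For $V=U,\Spec\Q,\Spec K_p$, the map $q_V$ is an étale $A$-torsor. This holds over $U$ by anchored condition (iii), and over $\Q$ and $K_p$ because $q_{\Q}$ is étale and Galois with group $A$, hence an $A$-torsor on the generic fibre. In these cases the $A$-equivariant étale site of $(S_0)_V$ is equivalent to the étale site of $(S_{-1})_V$ (\st{03RV}-type descent), identifying $H^2_A$ with $H^2_{\et}$.

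\textbf{Main obstacle.} The delicate step is the precise comparison between the edge-map class and the gerbe of central lifts, that is, checking that the obstruction is exactly the image of $\theta$ and not some other component of $H^2_G$. We would first handle it with explicit cocycles in the double complex. We would also verify naturality for $B_p$, where $q$ is not étale, so only the $A$-equivariant formulation (and not descent) is available.
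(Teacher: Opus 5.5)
Your proposal is correct. Your definition of $\alpha_V$ matches the paper's: the edge map of \eqref{eq:realizable-equivariant-sequence} for $G$ acting on $\widetilde S_V$, followed by descent along the \'etale $H$-torsor $r_V$. So do your descent identification with $H^2_{\et}((S_{-1})_V,\F)$ and your naturality argument.

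The route to the core equivalence and to the torsor statement differs. The paper never uses gerbes. Since $\widetilde S_V$ is connected, the five-term sequence gives exactness of $H^1_{\et}(\widetilde S_V,\F)^G\xrightarrow{d_2}H^2(G,\F)\xrightarrow{\alpha_V}H_A^2((S_0)_V,\F)$, so $\ker\alpha_V=\operatorname{im}d_2$. It then only has to identify the transgression. A $G$-invariant quadratic torsor $P$ determines the central extension $E(P)$ of pairs $(g,\varphi)$; connectedness makes the $\F$-equivariant automorphisms over the identity mere translations. A \v{C}ech computation on a $G$-equivariant trivializing cover gives $d_2[P]=[E(P)]$. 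The $H_A^1$-torsor structure is then checked by hand with $D_{12}=(P_1\times_{\widetilde S_V}P_2)/\Delta\F$ and explicit contracted-product isomorphisms.

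You instead regard central lifts as neutralizations of the $\F$-banded gerbe of liftings of the $G$-torsor $\widetilde S_V\to[\widetilde S_V/G]$ along $E_\theta\to G$. This gerbe lives on the topos of $G$-equivariant sheaves, which for $V=B_p$ is a genuine quotient stack because the action has stabilizers there. Giraud's formalism then gives the existence criterion and the torsor structure under $H^1_G=H^1_A$ at once, with no connectedness needed.

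The price is exactly the comparison you flag, and it is routine. The lifting gerbe is the pullback of $BE_\theta\to BG$, whose class is $\theta$, along the classifying map $[\widetilde S_V/G]\to BG$. Pullback along this map on $H^2$ is the edge map. Your hands-on alternative, describing central lifts as total-degree-one cochains in the double complex whose differential is the image of $c_\theta$, is the cochain-level version of the same argument. The paper's use of exactness at $H^2(G,\F)$ lets it replace this degree-two comparison with the more concrete degree-one identification of $d_2$.
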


\begin{proof}
Recall that $H=\ker(G\to A)$. The map
$r_V:\widetilde S_V\to(S_0)_V$ is an \'etale $H$-torsor.
Pullback along $r_V$ gives an exact equivalence from
$A$-equivariant \'etale sheaves on $(S_0)_V$ to $G$-equivariant
\'etale sheaves on $\widetilde S_V$.  The inverse functor is
$\mathcal F\mapsto(r_{V*}\mathcal F)^H$. To check the assertion,
choose an \'etale morphism $W\to(S_0)_V$ trivializing $r_V$. Then
$\widetilde S_V\times_{(S_0)_V}W\simeq\coprod_{h\in H}W$.
The $H$-equivariance identifies the restrictions of $\mathcal F$ to
the copies of $W$. Descent therefore recovers a sheaf on $(S_0)_V$,
and the quotient action of $G/H=A$ gives the $A$-equivariance.
Exactness can be checked after the same \'etale base change.
For a $G$-equivariant sheaf $\mathcal F$ on $\widetilde S_V$, this
equivalence identifies
$\Gamma(\widetilde S_V,\mathcal F)^G$ with
$\Gamma((S_0)_V,(r_{V*}\mathcal F)^H)^A$.  Passing to derived
functors gives
\begin{equation}\label{eq:realizable-fixed-descent}
 H_G^i(\widetilde S_V,\F)\simeq H_A^i((S_0)_V,\F).
\end{equation}
The group $H_A^i((S_0)_V,\F)$ depends on $q:S_0\to S_{-1}$
and $V$, independently of the choice of $r:\widetilde S\to S_0$.
The scheme $\widetilde S_V$ is connected, because
$\widetilde S_{\Q}$ is geometrically integral by Proposition~\ref{prop:realizing-surface-properties}.
The five-term sequence of \eqref{eq:realizable-equivariant-sequence} gives
\begin{equation}\label{eq:obs-explicit-five-term-lift-sequence}
 H^1_{\et}(\widetilde S_V,\F)^G
 \xrightarrow{d_2}H^2(G,\F)
 \xrightarrow{\alpha_V}H_A^2((S_0)_V,\F).
\end{equation}
Here $\alpha_V$ is the edge map to
$H_G^2(\widetilde S_V,\F)$ followed by
\eqref{eq:realizable-fixed-descent}.  A class in
$H^1_{\et}(\widetilde S_V,\F)^G$ is represented by a quadratic
torsor $P\to\widetilde S_V$ such that each $g\in G$ lifts to an
automorphism of $P$.  Define
\begin{equation}\label{eq:obs-mechanical-torsor-automorphism-extension}
 E(P)=\{(g,\varphi):g\in G,\ \varphi:P\xrightarrow{\sim}P,
             \ \pi_V\varphi=g\pi_V,\
             \varphi\text{ commutes with the }\F\text{-action}\}.
\end{equation}
Give $E(P)$ the multiplication
\begin{equation}\label{eq:obs-mechanical-torsor-extension-multiplication}
 (g,\varphi)(h,\psi)=(gh,\varphi\circ\psi).
\end{equation}
The homomorphism $E(P)\to G$ given by $(g,\varphi)\mapsto g$
is a central extension of $G$ by $\F$: every $g$ has a lift, and an
$\F$-equivariant automorphism of $P$ over the connected scheme
$\widetilde S_V$ is translation by an element of $\F$.
Choose $(g,\widehat g)\in E(P)$ for each $g\in G$. Then
\begin{equation}\label{eq:obs-mechanical-transgression-cocycle}
 d_2[P]=[c_P],\qquad
 c_P(g,h)=\widehat g\widehat h\widehat{gh}^{-1}\in\F.
\end{equation}  To verify \eqref{eq:obs-mechanical-transgression-cocycle}, choose
an \'etale cover $W_0\to\widetilde S_V$ trivializing $P$, and set
\begin{equation}\label{eq:obs-mechanical-equivariant-trivializing-cover}
 W=\prod_{g\in G,\,\widetilde S_V}g^*W_0.
\end{equation}
Here the product is the fibre product over $\widetilde S_V$.
Permuting the factors by the action of $G$ makes
$W\to\widetilde S_V$ a $G$-equivariant \'etale cover, and the
factor indexed by $1\in G$ shows that $W$ trivializes $P$.  Represent
$P$ by a \v Cech cocycle $z$ on
$W\times_{\widetilde S_V}W$, and choose degree-zero \v Cech
cochains $b_g$ on $W$ with $gz-z=\delta b_g$, where $\delta$ is
the \v Cech differential.  The group-cochain differential gives
$d_2z(g,h)=b_g+gb_h-b_{gh}$, the same translation as
$\widehat g\widehat h\widehat{gh}^{-1}$.  Exactness of
\eqref{eq:obs-explicit-five-term-lift-sequence} therefore identifies
$\ker\alpha_V$ with the classes admitting central lifts.  Torsors are
classified by $H^1$ and represented by finite \'etale schemes; see
\st{03AJ}, \st{03RV}.

Let $P_1,P_2\to\widetilde S_V$ be central lifts of the extension
$E_\theta$. Define the quadratic torsor
\begin{equation}\label{eq:obs-explicit-difference-quotient}
 D_{12}:=(P_1\times_{\widetilde S_V}P_2)/\Delta\F.
\end{equation}
The action $e(p_1,p_2)=(ep_1,ep_2)$ induces an action of
$E_\theta$ on $D_{12}$. For $a\in\F$, the element $j_\theta(a)$
acts on $P_1\times_{\widetilde S_V}P_2$ by $(p_1,p_2)\mapsto
(p_1+a,p_2+a)$ and hence acts trivially on $D_{12}$. Thus the
$E_\theta$-action on $D_{12}$ factors through $G$, and descent
along $r_V$ gives an $A$-equivariant quadratic torsor on $(S_0)_V$.
Conversely, pulling back an $A$-equivariant quadratic torsor from
$(S_0)_V$ and twisting $P_1$ gives a central lift of $\theta$.
Write $D=D_{12}$ in the following formula.
There is an $E_\theta$-equivariant isomorphism
\begin{equation}\label{eq:obs-explicit-difference-inverse}
 \begin{aligned}
 (P_1\times_{\widetilde S_V}D)/\Delta\F&\longrightarrow P_2,\\
 [p,[p_1,p_2]]&\longmapsto p_2+a,\qquad p=p_1+a.
 \end{aligned}
\end{equation}
The equality $p=p_1+a$ determines $a\in\F$ uniquely.
Replacing $(p_1,p_2)$ by $(p_1+b,p_2+b)$ replaces $a$ by $a+b$,
so $p_2+a$ is unchanged. Replacing $(p,[p_1,p_2])$ by
$(p+b,[p_1+b,p_2])$ also leaves $p_2+a$ unchanged. Thus
\eqref{eq:obs-explicit-difference-inverse} is well defined.
After an \'etale base change trivializing $P_1$ and $P_2$, the map
in \eqref{eq:obs-explicit-difference-inverse} is the addition map
on the copies of $\F$, and is an isomorphism. The same formula,
with a $G$-equivariant torsor $D'\to\widetilde S_V$ in place of
$D_{12}$, gives
\begin{equation}\label{eq:obs-mechanical-twist-difference-identities}
 P_1\star D_{12}\simeq P_2,\qquad
 (P_1\times_{\widetilde S_V}(P_1\star D'))/\Delta\F\simeq D'.
\end{equation}
Here $P_1\star D'$ denotes the contracted product over
$\widetilde S_V$. Hence twisting acts freely and transitively on the isomorphism
classes of central lifts of $\theta$. The isomorphism
\eqref{eq:realizable-fixed-descent} identifies the group of twists
with $H_A^1((S_0)_V,\F)$, proving the assertion.  For $V=U,\Spec\Q,\Spec K_p$,
descent along $q_V$ identifies
$H_A^i((S_0)_V,\F)$ with $H^i_{\et}((S_{-1})_V,\F)$.
Pullback under a base change among the schemes $V$ commutes with
$r_V$-descent, the transgression and contracted products, proving the
restriction assertions.
\end{proof}

Let $f:X\to U$ be the structure map. The morphism $s=(q\circ s_0)|_U:U\to X$ satisfies
$f\circ s=\operatorname{id}_U$. Proper smooth base change
(\st{0GLI}) shows that $R^1f_*\F$ is locally constant of rank $2g$.
The generic geometric stalk is $H^1_{\et}(C_{\overline\Q},\F)$,
and $G_{\Q}$ acts trivially on $H^1_{\et}(C_{\overline\Q},\F)$
because $J(C)[2](\overline\Q)=J(C)[2](\Q)$.
The homomorphism $\pi_1(\Spec\Q)\to\pi_1(U)$ is surjective
by \st{0BQM}. Consequently
\begin{equation}\label{eq:obs-mechanical-constant-first-direct-image}
 R^1f_*\F\simeq{\F}_U^{\,2g}.
\end{equation}

The following proposition will detect whether
$\alpha_U(\theta)\in H^2_{\et}(X,\F)$ is zero from the restrictions
$\alpha_U(\theta)|_{C_{K_p}}$ in
$H^2_{\et}(C_{K_p},\F)$ for $p\in\Sigma$.

\begin{proposition}[Detection at the omitted primes]
\label{prop:good-detection}
Let $\Sigma$ be a finite set of rational primes containing two, and put
\begin{equation}\label{eq:obs-explicit-detection-bases}
 U=\operatorname{Spec}\mathbb Z[1/\prod_{p\in\Sigma}p],
 \qquad K_p=\operatorname{Frac}(\mathbb Z_{(p)}^h).
\end{equation}
Let $f:X\to U$ be a smooth proper curve with geometrically connected
fibres and a section $s:U\to X$.  Suppose that
$R^1f_*\mathbb F_2$ is constant.  Write $C=X_{\mathbb Q}$.
Then restriction is injective:
\begin{equation}\label{eq:obs-explicit-detection-restriction}
 H^2_{\mathrm{\acute et}}(X,\mathbb F_2)
 \longrightarrow
 \bigoplus_{p\in\Sigma}
 H^2_{\mathrm{\acute et}}(C_{K_p},\mathbb F_2).
\end{equation}
The same assertion holds with $K_p$ replaced by $\mathbb Q_p$.
\end{proposition}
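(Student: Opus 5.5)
Use the Leray spectral sequence for $f:X\to U$,
\[
 E_2^{a,b}=H^a_{\et}(U,R^bf_*\F)\Longrightarrow H^{a+b}_{\et}(X,\F),
\]
and compare it term by term with the corresponding spectral sequences for $C_{K_p}\to\Spec K_p$. The section $s$ makes the edge maps $H^a(U,\F)\to H^a(X,\F)$ split injective, so all differentials out of the bottom row vanish. Since $R^0f_*\F=\F$, $R^1f_*\F\simeq\F_U^{2g}$, and $R^2f_*\F\simeq\mu_2\simeq\F$ (because $2$ is invertible on $U$), the graded pieces of $H^2_{\et}(X,\F)$ are subquotients of $H^2(U,\F)$, $H^1(U,\F)^{2g}$ and $H^0(U,\F)$. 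The same description holds over $K_p$, and restriction is compatible with the filtrations. It therefore suffices to prove injectivity piece by piece, using $\bigoplus_{p\in\Sigma}$ of the local restriction maps. On the top piece, the degree map is nonzero on the restriction to any $C_{K_p}$, so that piece is injective once the filtration on $H^2(X,\F)$ is seen to split compatibly via the section.

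The middle piece is $H^1(U,\F)^{2g}\to\bigoplus_p H^1(K_p,\F)^{2g}$. Here $H^1(U,\F)$ classifies quadratic extensions of $\Q$ unramified outside $\Sigma\cup\{\infty\}$, namely $\Q(\sqrt d)$ with $d$ a product of $-1$ and primes in $\Sigma$. A class dying in every $H^1(K_p,\F)=H^1(\Q_p^{h},\F)$ (the henselization has the same Galois group as $\Q_p$) means that every $p\in\Sigma$ splits in $\Q(\sqrt d)$. Such a field is then unramified at all finite primes, hence trivial by Minkowski. So the restriction is injective.

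The bottom piece is $H^2(U,\F)\to\bigoplus_p H^2(K_p,\F)$. By Kummer theory, $H^2(U,\mu_2)$ sits in an exact sequence with $\Pic(U)/2=0$ and $\Br(U)[2]$. The group $\Br(U)$ is the subgroup of $\Br(\Q)$ unramified outside $\Sigma\cup\{\infty\}$. By Brauer--Hasse--Noether, a class with zero invariants at all $p\in\Sigma$ has nonzero invariant at most at $\infty$, and the sum formula then forces it to vanish. This is the step where the real place must be handled, and I expect it to be the main subtlety: one must use that $2\in\Sigma$ so that $\Sigma$ is nonempty and the reciprocity sum absorbs $\infty$. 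One also has to track the $\Br$ part against a possible nonzero class in $H^1(U,\F)\otimes$ coming from $\mathcal O(U)^\times/2$ via Kummer, which is handled like the middle piece.

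The main obstacle is the extension problem: an element of $H^2(X,\F)$ killed locally has zero image in each graded piece only after compatible splittings. I would handle this using the section $s$ together with the trace and degree map on $R^2$, which split the Leray filtration functorially in the base. The local maps then respect the direct sum decomposition, and injectivity on each summand suffices. The $\Q_p$ version follows identically, since $G_{K_p}\simeq G_{\Q_p}$ and the same local arguments apply.
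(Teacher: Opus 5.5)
Your proposal is correct and is essentially the paper's proof: the Leray filtration for $f$, compatible with base change to each $C_{K_p}$; the section forcing $F^2=f^*H^2_{\et}(U,\F)$; detection on $H^1_{\et}(U,\F)$ (via Minkowski, where the paper uses the explicit Kummer classes $(-1)^\epsilon\prod_{p\in\Sigma}p^{\epsilon_p}$ and $-1\notin\Q_2^{\times2}$); and Brauer--Hasse--Noether with reciprocity on $H^2_{\et}(U,\F)\simeq\Br(U)[2]$. The ``extension problem'' you single out does not arise: restriction respects the filtrations, and the graded pieces inject into $H^0(U,R^2f_*\F)$, $H^1(U,R^1f_*\F)$ and $H^2(U,\F)$, so a locally trivial class successively lies in $F^1$, then in $F^2$, and is then killed by pulling back along $s$ --- no splitting is needed, and $\Sigma\neq\varnothing$ is used in the first of these steps, not in the reciprocity step.
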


\begin{proof}
We first prove that
$H^i_{\et}(U,\mathbb F_2)\to
\bigoplus_{p\in\Sigma}H^i_{\et}(\Spec\mathbb Q_p,\mathbb F_2)$
is injective for $i=1,2$.  In this proof all scheme cohomology is
\'etale cohomology, and cohomology of a field means that of its spectrum.  Since $\operatorname{Pic}(U)=0$, Kummer theory
gives
\begin{equation}\label{eq:obs-explicit-base-kummer-classes}
 H^1(U,\mathbb F_2)
   =\mathbb Z[1/\prod_{p\in\Sigma}p]^\times/
     \bigl(\mathbb Z[1/\prod_{p\in\Sigma}p]^\times\bigr)^2.
\end{equation}
Let $\eta\in H^1(U,\mathbb F_2)$. By
\eqref{eq:obs-explicit-base-kummer-classes}, there are exponents
$\epsilon,\epsilon_p\in\{0,1\}$ such that
\begin{equation}\label{eq:obs-mechanical-kummer-representative}
 \eta=[a],\qquad a=(-1)^\epsilon\prod_{p\in\Sigma}p^{\epsilon_p}.
\end{equation}
Suppose that $a\in\mathbb Q_p^{\times2}$ for every $p\in\Sigma$.
For each $p\in\Sigma$, the normalized $p$-adic valuation gives
$\epsilon_p=v_p(a)\in2\mathbb Z$, so $\epsilon_p=0$.
It follows that $a=(-1)^\epsilon$. An odd $2$-adic square is
congruent to $1$ modulo $8$, whereas $-1\equiv7\pmod8$.
Thus $\epsilon=0$ and $\eta=0$. We have proved that
\begin{equation}\label{eq:base-degree-one-detection}
 H^1(U,\mathbb F_2)\longrightarrow
 \bigoplus_{p\in\Sigma}H^1(\mathbb Q_p,\mathbb F_2)
 \quad\text{is injective}.
\end{equation}
Taking a basis of a finite-dimensional $\mathbb F_2$-vector space
$M$ and applying \eqref{eq:base-degree-one-detection} to each basis
coordinate proves that
\begin{equation}\label{eq:obs-mechanical-constant-coefficient-detection}
 H^1(U,M)\longrightarrow\bigoplus_{p\in\Sigma}H^1(\mathbb Q_p,M)
 \quad\text{is injective}
\end{equation}
when $M$ denotes the constant sheaf.

Kummer theory and $\operatorname{Pic}(U)=0$ identify
$H^2(U,\mathbb F_2)$ with $H^2(U,\mathbb G_m)[2]$.
We next prove that the restriction homomorphism
$H^2(U,\mathbb G_m)\to H^2(\mathbb Q,\mathbb G_m)$ is injective.
Consider the valuation sequence
\begin{equation}\label{eq:obs-explicit-valuation-sequence}
 0\longrightarrow\mathbb G_{m,U}\longrightarrow
 j_*\mathbb G_{m,\mathbb Q}\longrightarrow
 \bigoplus_{q\notin\Sigma}i_{q*}\mathbb Z\longrightarrow0,
\end{equation}
where $j:\Spec\mathbb Q\to U$ is the generic inclusion and
$i_q:\Spec\mathbb F_q\to U$ is the closed immersion at $q$.
Here
$H^1(U,\bigoplus_{q\notin\Sigma}i_{q*}\mathbb Z)=0$:
$H^1(\mathbb F_q,\mathbb Z)=0$, finite direct image is exact, and
\'etale cohomology on the quasi-compact quasi-separated scheme $U$
commutes with filtered colimits of abelian sheaves.  The long exact cohomology sequence of
\eqref{eq:obs-explicit-valuation-sequence} therefore gives the first
injection below. The stalk formula and Hilbert 90 give
$R^1j_*\mathbb G_m=0$, so the Leray spectral sequence for $j$
gives the second injection:
\begin{equation}\label{eq:obs-mechanical-brauer-restriction-injections}
 H^2(U,\mathbb G_m)\lhook\joinrel\longrightarrow
 H^2(U,j_*\mathbb G_{m,\mathbb Q})
 \lhook\joinrel\longrightarrow
 H^2(\mathbb Q,\mathbb G_m)=\operatorname{Br}(\mathbb Q).
\end{equation}

Let $\xi\in H^2(U,\mathbb F_2)$.  For $q\notin\Sigma$, restriction factors as
\begin{equation}\label{eq:obs-mechanical-unramified-local-vanishing}
 H^2(U,\mathbb F_2)\longrightarrow H^2(\mathbb Z_q,\mathbb F_2)=0
 \longrightarrow H^2(\mathbb Q_q,\mathbb F_2).
\end{equation}
The equality in \eqref{eq:obs-mechanical-unramified-local-vanishing}
follows by descent from the strict henselization of $\mathbb Z_q$:
a strictly henselian local scheme has no higher \'etale cohomology,
and $\operatorname{Gal}(\overline{\mathbb F}_q/\mathbb F_q)$
has torsion cohomological dimension one.
Suppose also that $\xi|_{\mathbb Q_p}=0$ for every $p\in\Sigma$.
Let $\beta\in\operatorname{Br}(\mathbb Q)[2]$ be the image of
$\xi$ under \eqref{eq:obs-mechanical-brauer-restriction-injections}.
Equation~\eqref{eq:obs-mechanical-unramified-local-vanishing} gives
$\operatorname{inv}_q(\beta|_{\mathbb Q_q})=0$ for every finite
prime $q$. The reciprocity law for Brauer invariants gives
\begin{equation}\label{eq:obs-mechanical-real-invariant}
 \operatorname{inv}_{\infty}(\beta|_{\mathbb R})
 =-\sum_{q\text{ prime}}\operatorname{inv}_q(\beta|_{\mathbb Q_q})=0.
\end{equation}
Brauer--Hasse--Noether therefore gives
\begin{equation}\label{eq:base-degree-two-detection}
 H^2(U,\mathbb F_2)\longrightarrow
 \bigoplus_{p\in\Sigma}H^2(\mathbb Q_p,\mathbb F_2)
 \quad\text{is injective}.
\end{equation}
See \cite[Chapter~VIII, Theorem~4.2]{MilneCFT}.

Apply the Leray spectral sequence (\st{03QC})
\begin{equation}\label{eq:obs-explicit-leray-sequence}
 E_2^{a,b}=H^a(U,R^bf_*\mathbb F_2)
 \ \Longrightarrow\ H^{a+b}(X,\mathbb F_2),
\end{equation}
and denote the induced filtration on $H^2(X,\mathbb F_2)$ by
\begin{equation}\label{eq:obs-explicit-leray-filtration}
 H^2(X,\mathbb F_2)=F^0\supseteq F^1\supseteq F^2\supseteq0.
\end{equation}
Proper smooth base change identifies $R^0f_*\mathbb F_2$ and
$R^2f_*\mathbb F_2$ with the constant rank-one sheaf.  For $R^2f_*\mathbb F_2$, local constancy follows from
\st{0GLI}; a rank-one $\mathbb F_2$-local system is constant because
$\operatorname{GL}_1(\mathbb F_2)=\{1\}$.
The equality $s^*\circ f^*=\operatorname{id}_{H^2(U,\mathbb F_2)}$
shows that $f^*:H^2(U,\mathbb F_2)\to H^2(X,\mathbb F_2)$ is
injective.  Therefore
\begin{equation}\label{eq:obs-explicit-leray-base-piece}
 F^2=f^*H^2(U,\mathbb F_2)
       \simeq H^2(U,\mathbb F_2).
\end{equation}
Thus the differential
$d_2^{0,1}:H^0(U,R^1f_*\mathbb F_2)\to H^2(U,\mathbb F_2)$
is zero.
There are no incoming differentials to the positions $(0,2)$ and $(1,1)$
on pages $r\geq2$.  Consequently the quotients $F^0/F^1$ and $F^1/F^2$ have natural
injections
\begin{equation}\label{eq:obs-explicit-leray-graded-injections}
 F^0/F^1=E_\infty^{0,2}
     \lhook\joinrel\longrightarrow H^0(U,R^2f_*\mathbb F_2),
 \qquad
 F^1/F^2=E_\infty^{1,1}
     \lhook\joinrel\longrightarrow H^1(U,R^1f_*\mathbb F_2).
\end{equation}
For $f_p:C_{\mathbb Q_p}\to\Spec\mathbb Q_p$, the same
argument gives a filtration $F_p^\bullet$ of
$H^2(C_{\mathbb Q_p},\mathbb F_2)$ and injections of
$F_p^0/F_p^1$ and $F_p^1/F_p^2$ into
$H^0(\mathbb Q_p,R^2f_{p*}\mathbb F_2)$ and
$H^1(\mathbb Q_p,R^1f_{p*}\mathbb F_2)$, respectively.
Proper base change (\st{095T}) identifies $R^bf_{p*}\mathbb F_2$
with the pullback of $R^bf_*\mathbb F_2$, so the restriction map
$H^2(X,\mathbb F_2)\to H^2(C_{\mathbb Q_p},\mathbb F_2)$
takes $F^i$ into $F_p^i$. The induced maps on $F^0/F^1$ and
$F^1/F^2$ commute with the injections in
\eqref{eq:obs-explicit-leray-graded-injections} and the corresponding
injections for $f_p$.

Let $\alpha\in H^2(X,\mathbb F_2)$ restrict to zero in
$H^2(C_{\mathbb Q_p},\mathbb F_2)$ for every $p\in\Sigma$.
The image of $\alpha$ in $F^0/F^1$ is zero, because
\begin{equation}\label{eq:obs-explicit-geometric-degree-two-restriction}
 H^0(U,R^2f_*\mathbb F_2)\longrightarrow
 H^0(\mathbb Q_p,R^2f_{p*}\mathbb F_2)
\end{equation}
is the identity on $\mathbb F_2$.  Hence $\alpha\in F^1$.
The image of $\alpha$ in
$F^1/F^2\subseteq H^1(U,R^1f_*\mathbb F_2)$ restricts to zero in
$H^1(\mathbb Q_p,R^1f_{p*}\mathbb F_2)$ for every $p\in\Sigma$.
Apply \eqref{eq:obs-mechanical-constant-coefficient-detection} with
$M=H^1_{\et}(C_{\overline\Q},\mathbb F_2)$, using the assumed
constancy of $R^1f_*\mathbb F_2$. The image of $\alpha$ in $F^1/F^2$
is zero, so $\alpha\in F^2$.  By \eqref{eq:obs-explicit-leray-base-piece},
$\alpha=f^*\beta$ for a unique
$\beta\in H^2(U,\mathbb F_2)$.  For the base-changed section
$s_p:\Spec\mathbb Q_p\to C_{\mathbb Q_p}$,
\begin{equation}\label{eq:obs-explicit-section-detects-base-class}
 \beta|_{\mathbb Q_p}
 =s_p^*f_p^*(\beta|_{\mathbb Q_p})
 =s_p^*(\alpha|_{C_{\mathbb Q_p}})=0.
\end{equation}
Equations~\eqref{eq:obs-explicit-section-detects-base-class} and
\eqref{eq:base-degree-two-detection} give $\beta=0$ and hence $\alpha=f^*\beta=0$. This proves
injectivity of \eqref{eq:obs-explicit-detection-restriction} with
$\mathbb Q_p$ in place of $K_p$.

Finally, the map $\mathbb Z_{(p)}\to\mathbb Z_p$ extends to the
henselization, since $\mathbb Z_p$ is henselian. The extended local homomorphism
$\mathbb Z_{(p)}^h\to\mathbb Z_p$ is injective: a nonzero kernel
in the discrete valuation ring $\mathbb Z_{(p)}^h$ would contain a
power of $p$, whereas every power of $p$ has nonzero image in
$\mathbb Z_p$. We obtain embeddings $K_p\hookrightarrow\mathbb Q_p$
and factorizations
\begin{equation}\label{eq:obs-mechanical-henselian-completed-restriction}
 H^2(X,\mathbb F_2)\longrightarrow H^2(C_{K_p},\mathbb F_2)
 \longrightarrow H^2(C_{\mathbb Q_p},\mathbb F_2).
\end{equation}
Injectivity after restriction to all $C_{\mathbb Q_p}$ therefore
implies \eqref{eq:obs-explicit-detection-restriction}.
\end{proof}

For $p\in\Sigma$ and $i\in\{1,2\}$, let
\begin{equation}\label{eq:obs-mechanical-local-restriction-definition}
 \operatorname{res}_p^i:
 H_A^i((S_0)_{B_p},\F)\longrightarrow H_A^i((S_0)_{K_p},\F)
 \xrightarrow{\ \sim\ }H^i_{\et}(C_{K_p},\F).
\end{equation}
The first arrow is pullback along
$(S_0)_{K_p}\to(S_0)_{B_p}$, and the second is descent along
the \'etale $A$-torsor $q_{K_p}:(S_0)_{K_p}\to C_{K_p}$.
Write
\begin{equation}\label{eq:obs-explicit-local-groups-and-restriction}
 \mathcal B_p^i=H_A^i((S_0)_{B_p},\F),\qquad
 Q_p=H^1_{\et}(C_{K_p},\F),\qquad
 \rho_p:\mathcal B_p^2\longrightarrow H^2_{\et}(C_{K_p},\F).
\end{equation}
Thus $\rho_p=\operatorname{res}_p^2$, while
$\operatorname{res}_p^1:\mathcal B_p^1\to Q_p$ is the restriction
map in degree one. Put $\alpha_p=\alpha_{B_p}$.  The group $Q_p$ classifies quadratic
torsors on $C_{K_p}$, including the descents of differences between
central lifts on $\widetilde C_{K_p}$.  If $P_U\to\widetilde S_U$ and
$P_p\to\widetilde S_{B_p}$ are central lifts of $\theta$, the
difference between $(P_U)_{K_p}$ and $(P_p)_{K_p}$ descends to a
class in $Q_p$. Twisting $P_U$ by a class
$\eta\in H^1_{\et}(X,\F)$ adds $\eta|_{C_{K_p}}$ to that class.
Twisting $P_p$ by a class $\eta_p\in\mathcal B_p^1$ adds
$\operatorname{res}_p^1(\eta_p)$. Accordingly, define
\begin{equation}\label{eq:twist-defect-group}
 \mathcal D_\Sigma=
 \frac{\displaystyle\bigoplus_{p\in\Sigma}Q_p}
 {\displaystyle\operatorname{res}H^1_{\et}(X,\F)
       +\bigoplus_{p\in\Sigma}
                    \operatorname{im}(\mathcal B_p^1\to Q_p)}.
\end{equation}
Here
\begin{equation}\label{eq:obs-mechanical-global-twist-restriction}
 \operatorname{res}:H^1_{\et}(X,\F)\longrightarrow
 \bigoplus_{p\in\Sigma}Q_p,\qquad
 \eta\longmapsto(\eta|_{C_{K_p}})_{p\in\Sigma},
\end{equation}
and $\operatorname{im}(\mathcal B_p^1\to Q_p)$ means
$\operatorname{im}(\operatorname{res}_p^1)$.
For a representative $D_p\to(S_0)_{B_p}$ of
$\eta_p\in\mathcal B_p^1$, the contracted product
$(P_p\times_{\widetilde S_{B_p}}r_{B_p}^*D_p)/\Delta\F$
is finite \'etale over $\widetilde S_{B_p}$. No descent of $D_p$
to an \'etale cover of $(S_{-1})_{B_p}$ is needed;
$q_{B_p}$ may be ramified.

\begin{proposition}\label{prop:geometric-twist-obstruction}
For an anchored diagram, put
\begin{equation}\label{eq:obs-explicit-rational-integral-kernels}
 \mathscr R=\ker\bigl((\rho_p\alpha_p)_p\bigr),\qquad
 \mathscr L=\ker\bigl((\alpha_p)_p\bigr)
 \quad\text{inside }H^2(G,\F).
\end{equation}
Then $\mathscr R$ is the space of extension classes admitting a central
lift $P_{\Q}\to\widetilde C$, and $\mathscr L$ is the space
admitting a central lift $P_p\to\widetilde S_{B_p}$ for every
$p\in\Sigma$.
There is a canonical linear map
\begin{equation}\label{eq:obs-explicit-compatibility-map}
 \delta:\mathscr L\longrightarrow\mathcal D_\Sigma,
 \qquad \ker\delta=\mathscr E(\widetilde S,S_{-1}).
\end{equation}
In particular
\begin{equation}\label{eq:obs-explicit-obstruction-filtration}
 H^2(G,\F)\supseteq\mathscr R\supseteq\mathscr L
       \supseteq\mathscr E(\widetilde S,S_{-1}),
\end{equation}
and the successive quotients inject, respectively, into
\begin{equation}\label{eq:obs-explicit-quotient-targets}
 \bigoplus_{p\in\Sigma}\operatorname{im}\rho_p,
 \qquad\bigoplus_{p\in\Sigma}\ker\rho_p,
 \qquad\mathcal D_\Sigma.
\end{equation}
Consequently
\begin{equation}\label{eq:bad-only-obstruction-budget}
 \dim_{\F}\mathcal O(\widetilde S,S_{-1})
 \le\sum_{p\in\Sigma}\dim_{\F}\mathcal B_p^2
       +\dim_{\F}\mathcal D_\Sigma.
\end{equation}
The groups $\mathcal B_p^2$ and $\mathcal D_\Sigma$ depend only
on $q:S_0\to S_{-1}$ and $\Sigma$, independently of
$r:\widetilde S\to S_0$.
\end{proposition}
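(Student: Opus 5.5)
The argument strings together Proposition~\ref{prop:local-lift-criterion}, Proposition~\ref{prop:good-prime-twist} and Proposition~\ref{prop:good-detection}, followed by a local-to-global gluing step.

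\textbf{Step 1: describing $\mathscr R$.} Proposition~\ref{prop:local-lift-criterion} says that $\theta$ has a central lift over $\Q$ if and only if $\alpha_{\Q}(\theta)=0$. By Proposition~\ref{prop:good-prime-twist}, this happens if and only if $\theta$ has a central lift over $U$, that is, $\alpha_U(\theta)=0$ in $H^2_{\et}(X,\F)$.

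Proposition~\ref{prop:good-detection} applies to $X=(S_{-1})_U$. Its hypotheses hold: the section is $s$, and $R^1f_*\F$ is constant by \eqref{eq:obs-mechanical-constant-first-direct-image}. It gives $\alpha_U(\theta)=0$ if and only if every restriction $\alpha_U(\theta)|_{C_{K_p}}$ vanishes.

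By the base-change compatibility in Proposition~\ref{prop:local-lift-criterion}, the restriction $\alpha_U(\theta)|_{C_{K_p}}$ equals $\alpha_{K_p}(\theta)$, which in turn equals $\rho_p\alpha_p(\theta)$. This proves the description of $\mathscr R$. The description of $\mathscr L$ is the lift criterion applied directly over $B_p$.

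The first two quotients in \eqref{eq:obs-explicit-quotient-targets} are then formal. The map $(\rho_p\alpha_p)_p$ embeds $H^2(G,\F)/\mathscr R$ into $\bigoplus_p\operatorname{im}\rho_p$. On $\mathscr R$, the map $(\alpha_p)_p$ takes values in $\bigoplus_p\ker\rho_p$ and has kernel $\mathscr L$.

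\textbf{Step 2: the map $\delta$.} Take $\theta\in\mathscr L$. Choose a central lift $P_U$, which exists because $\mathscr L\subseteq\mathscr R$, and choose central lifts $P_p$ over $B_p$. Define $\delta(\theta)$ as the class of the tuple of differences
\[
\bigl[(P_U)_{K_p}-(P_p)_{K_p}\bigr]\in Q_p .
\]
This is well defined: by the torsor statement of Proposition~\ref{prop:local-lift-criterion}, all choices of lifts differ by twists. Twisting $P_U$ by $\eta\in H^1_{\et}(X,\F)$ and twisting $P_p$ by classes in $\mathcal B^1_p$ change the tuple exactly by the subgroup we quotient by in \eqref{eq:twist-defect-group}. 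Linearity follows from the Baer-sum construction in Proposition~\ref{prop:realizable-subspace}: fibre products of lifts, taken modulo $\Delta$, add both the classes and the differences.

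\textbf{Step 3: $\ker\delta=\mathscr E$ (the main obstacle).} The inclusion $\mathscr E\subseteq\ker\delta$ is easy: if $\theta\in\mathscr E$, restrict a global lift $P_{\Z}$ to $U$ and to each $B_p$; all differences are then zero.

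For the converse, suppose $\delta(\theta)=0$. After retwisting, we may assume $(P_U)_{K_p}\simeq(P_p)_{K_p}$ $E_\theta$-equivariantly for every $p\in\Sigma$. We then glue $P_U$ and the $P_p$ to a finite \'etale $\F$-torsor over $\widetilde S$ carrying an $E_\theta$-action. I plan two routes:
\begin{itemize}[label=--]
\item \emph{Beauville--Laszlo/henselian gluing.} Use the cartesian square formed by $\widetilde S_U$, $\coprod_p\widetilde S_{B_p}$ and $\coprod_p\widetilde S_{K_p}$ over $\widetilde S$. Finite \'etale covers glue along this square, since $\Spec\Z$ is covered by $U$ together with the henselian local schemes at the primes in $\Sigma$.
\item \emph{Normalization and purity.} Take the normalization of $\widetilde S$ in the function field of $P_U$. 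It is \'etale over $U$, and it is \'etale over $B_p$ because it agrees with $P_p$ there: normalization commutes with the henselian base change. Hence it is \'etale everywhere.
\end{itemize}
I will use the second route, since it matches the techniques the paper already uses.

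The $E_\theta$-action extends by functoriality of normalization, so we obtain a central lift over $\Spec\Z$. By \eqref{eq:obs-mechanical-integral-realizability}, $\theta\in\mathscr E$.

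The delicate point is equivariance: the isomorphisms over $K_p$ must be $E_\theta$-equivariant, not merely isomorphisms of torsors. This is exactly why the differences are taken in $Q_p$, that is, after descent along $q_{K_p}$, and not in $H^1(\widetilde C_{K_p},\F)$.

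\textbf{Step 4: the bound.} Finally, \eqref{eq:bad-only-obstruction-budget} follows by adding the dimensions of the three targets, using $\dim\operatorname{im}\rho_p+\dim\ker\rho_p=\dim\mathcal B^2_p$. Independence from $r$ holds because $\mathcal B^i_p$, $Q_p$ and $H^1_{\et}(X,\F)$ are all defined in terms of $q$ alone, via \eqref{eq:realizable-fixed-descent}.
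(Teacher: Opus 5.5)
Your proposal is correct and follows the paper's proof essentially step for step. It uses the same chain of lift criterion, good-prime twisting and detection to describe $\mathscr R$, the same definition of $\delta$ via descended difference torsors (well defined by twisting, additive by Baer sums), and the same normalization argument comparing with $P_U$ over $U$ and with $P_p$ over $B_p$ to get $\ker\delta\subseteq\mathscr E$. Two small points: that last step needs no purity, only that normalization commutes with henselization plus faithfully flat descent of \'etaleness from $B_p$; and you should treat $\theta=0$ separately, since only for $\theta\ne0$ is $P_U$ connected and so has a function field.
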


\begin{proof}
If $\theta$ admits a central lift $P_{\Q}\to\widetilde C$,
the morphism $(P_{\Q})_{K_p}\to\widetilde C_{K_p}$ is a central lift, so
$\rho_p\alpha_p(\theta)=\alpha_{\Spec K_p}(\theta)=0$.
Conversely, if $\rho_p\alpha_p(\theta)=0$ for every $p\in\Sigma$,
then $\alpha_U(\theta)\in H^2_{\et}(X,\F)$ restricts to zero in
each $H^2_{\et}(C_{K_p},\F)$.  Proposition~\ref{prop:good-detection}
implies that $\alpha_U(\theta)=0$.  By
Proposition~\ref{prop:local-lift-criterion}, there is then a central lift
$P_U\to\widetilde S_U$, whose generic fibre is a central lift
$P_{\Q}\to\widetilde C$.  This proves the assertion about
$\mathscr R$. Moreover, Proposition~\ref{prop:good-prime-twist}
shows that, for every central lift $P_{\Q}\to\widetilde C$, there
is a squarefree $d>0$ such that
$P_{\Q}\star(C\times_{\Q}D_d)$ extends to a central lift over $U$.
By the definition of $\mathscr R$, we obtain the injection
\begin{equation}\label{eq:obs-mechanical-rational-obstruction-injection}
 H^2(G,\F)/\mathscr R\lhook\joinrel\longrightarrow
 \bigoplus_{p\in\Sigma}\operatorname{im}\rho_p,\qquad
 [\theta]\longmapsto(\rho_p\alpha_p(\theta))_{p\in\Sigma}.
\end{equation}

For $\theta\in\mathscr R$ and $p\in\Sigma$, the definition
of $\mathscr R$ gives $\alpha_p(\theta)\in\ker\rho_p$.
Proposition~\ref{prop:local-lift-criterion} gives the equivalences
\begin{equation}\label{eq:obs-mechanical-integral-local-obstruction}
 \theta\in\mathscr L
 \ \Longleftrightarrow\ 
 \alpha_p(\theta)=0\text{ for every }p\in\Sigma
 \ \Longleftrightarrow\ 
 \text{a central lift over }B_p\text{ exists for every }p\in\Sigma.
\end{equation}
Consequently we obtain the injection
\begin{equation}\label{eq:obs-mechanical-local-integral-injection}
 \mathscr R/\mathscr L\lhook\joinrel\longrightarrow
 \bigoplus_{p\in\Sigma}\ker\rho_p,\qquad
 [\theta]\longmapsto(\alpha_p(\theta))_{p\in\Sigma}.
\end{equation}

Let $\theta\in\mathscr L$. Choose central lifts
$P_U\to\widetilde S_U$ and $P_p\to\widetilde S_{B_p}$ for each
$p\in\Sigma$. Define the following torsor on $\widetilde C_{K_p}$:
\begin{equation}\label{eq:obs-explicit-local-difference-quotient}
 D_p:=\bigl((P_U)_{K_p}\times_{\widetilde C_{K_p}}(P_p)_{K_p}\bigr)
       /\Delta\F.
\end{equation}
As in Proposition~\ref{prop:local-lift-criterion}, the diagonal
$E_\theta$-action on $D_p$ factors through $G$. Descent along
the $G$-torsor $\widetilde C_{K_p}\to C_{K_p}$ therefore gives
a quadratic torsor $\overline D_p\to C_{K_p}$. Put
\begin{equation}\label{eq:obs-mechanical-delta-definition}
 \chi_p=[\overline D_p]\in Q_p,\qquad
 \delta(\theta)=[(\chi_p)_{p\in\Sigma}]\in\mathcal D_\Sigma.
\end{equation}
Twisting $P_U$ by $\eta\in H^1_{\et}(X,\F)$ and $P_p$ by
$\eta_p\in\mathcal B_p^1$ replaces $\chi_p$ by
\begin{equation}\label{eq:obs-mechanical-choice-change}
 \chi_p+\eta|_{C_{K_p}}+\operatorname{res}_p^1(\eta_p).
\end{equation}
By \eqref{eq:twist-defect-group}, the tuples in
\eqref{eq:obs-mechanical-choice-change} and
\eqref{eq:obs-mechanical-delta-definition} have the same image in
$\mathcal D_\Sigma$. Thus $\delta(\theta)$ is independent of
the choices of $P_U$ and $P_p$.
An isomorphism $E_\theta\simeq E'_\theta$ inducing the identity
on $\F$ and $G$ transports the actions on $P_U$ and every $P_p$.
The induced $G$-actions on $D_p$ remain unchanged, so the classes
$\chi_p\in Q_p$ do not depend on the representative extension.
For $\theta_1,\theta_2\in\mathscr L$ and
$V\in\{U\}\cup\{B_p:p\in\Sigma\}$, choose central lifts
$P_{1,V},P_{2,V}\to\widetilde S_V$ and put
\begin{equation}\label{eq:obs-mechanical-baer-lifts}
 P_{12,V}=(P_{1,V}\times_{\widetilde S_V}P_{2,V})/\Delta\F,
 \qquad E_{12}=(E_{\theta_1}\times_G E_{\theta_2})/\Delta\F.
\end{equation}
Proposition~\ref{prop:realizable-subspace} gives an $E_{12}$-action
on $P_{12,V}$ and identifies the class of $E_{12}$ with
$\theta_1+\theta_2$. Let $\chi_{i,p}$ be the class defined by
\eqref{eq:obs-mechanical-delta-definition} using $P_{i,U},P_{i,B_p}$.
The difference between $(P_{12,U})_{K_p}$ and
$(P_{12,B_p})_{K_p}$ is the contracted product of the torsors
representing $\chi_{1,p}$ and $\chi_{2,p}$. Hence
\begin{equation}\label{eq:obs-mechanical-delta-additivity}
 \delta(\theta_1+\theta_2)
 =[(\chi_{1,p}+\chi_{2,p})_{p\in\Sigma}]
 =\delta(\theta_1)+\delta(\theta_2).
\end{equation}

If $P_{\Z}\to\widetilde S$ is a central lift of $\theta$, take
$P_U=(P_{\Z})_U$ and $P_p=(P_{\Z})_{B_p}$. Then
$(P_U)_{K_p}=(P_p)_{K_p}$, so $\chi_p=0$ for every $p\in\Sigma$.
Thus $\delta(\theta)=0$.  Conversely, suppose that
$\delta(\theta)=0$.  Then there are
$\eta\in H^1_{\et}(X,\F)$ and $\eta_p\in\mathcal B_p^1$
with
\begin{equation}\label{eq:obs-mechanical-compatible-twists}
 \chi_p=\eta|_{C_{K_p}}+\operatorname{res}_p^1(\eta_p)
 \quad\text{in }Q_p\quad(p\in\Sigma).
\end{equation}  Twist $P_U$ by the pullback to
$\widetilde S_U$ of a torsor representing $\eta$, and twist $P_p$
by the pullback along $r_{B_p}$ of an $A$-equivariant torsor
representing $\eta_p$.  Keep the notation $P_U,P_p$ for the twisted torsors. By
\eqref{eq:obs-mechanical-choice-change} and
\eqref{eq:obs-mechanical-compatible-twists}, the difference between
$(P_U)_{K_p}$ and $(P_p)_{K_p}$ now represents $0\in Q_p$.
Proposition~\ref{prop:local-lift-criterion} therefore gives
$E_\theta$-equivariant isomorphisms
\begin{equation}\label{eq:obs-explicit-generic-lift-isomorphisms}
 (P_U)_{K_p}\simeq(P_p)_{K_p}
       \quad\text{over }\widetilde C_{K_p}.
\end{equation}
For $\theta=0$, the identity cover of $\widetilde S$ realizes
$\theta$ by definition. Suppose that $\theta\ne0$; then $P_U$
is connected. Let $M=\Q(P_U)$, and let
$\mathcal N\to\widetilde S$ be the normalization of
$\widetilde S$ in $M/\Q(\widetilde C)$. Excellence of
$\widetilde S$ makes $\mathcal N\to\widetilde S$ finite.
The scheme $P_U$ is normal, finite over $\widetilde S_U$, and has
function field $M$. Uniqueness of normalization gives
\begin{equation}\label{eq:obs-mechanical-global-normalization-restriction}
 \mathcal N_U\simeq P_U\quad\text{over }\widetilde S_U.
\end{equation}

Put $L_p=K_p(\widetilde C_{K_p})$.  Normalization commutes with
\'etale localization and henselization (\st{03GE}, \st{0CBF}), so
$\mathcal N_{B_p}$ is the integral closure of $\widetilde S_{B_p}$ in
$M\otimes_{\Q(\widetilde C)}L_p$.  The isomorphism
\eqref{eq:obs-explicit-generic-lift-isomorphisms} identifies
$M\otimes_{\Q(\widetilde C)}L_p$ with the algebra of the generic
fibre of $P_p\to\widetilde S_{B_p}$. Since $P_p$ is normal and
finite over $\widetilde S_{B_p}$, uniqueness of integral closure
gives
\begin{equation}\label{eq:obs-mechanical-henselian-normalization-restriction}
 \mathcal N_{B_p}\simeq P_p\quad\text{over }\widetilde S_{B_p}.
\end{equation}
Consequently $\mathcal N_{B_p}\to\widetilde S_{B_p}$ is \'etale.  Faithfully flat descent along
$\Spec\Z_{(p)}^h\to\Spec\Z_{(p)}$ then shows that
$\mathcal N\to\widetilde S$ is \'etale at the points over $p$.  Equation~\eqref{eq:obs-mechanical-global-normalization-restriction}
also proves that $\mathcal N_U\to\widetilde S_U$ is \'etale.
Since every point of $\widetilde S$ lies over $U$ or over a prime
in $\Sigma$, the morphism $\mathcal N\to\widetilde S$ is
\'etale. For an affine open $V=\operatorname{Spec}A_V\subseteq S_{-1}$,
write $(q\circ r)^{-1}(V)=\operatorname{Spec}B_V$ and let $D_V$
be the integral closure of $B_V$ in $M$. The group $E_\theta$
fixes $A_V$ pointwise and preserves $\Q(\widetilde C)$.
Since $B_V$ is the integral closure of $A_V$ in
$\Q(\widetilde C)$, every $e\in E_\theta$ satisfies
\begin{equation}\label{eq:obs-mechanical-equivariant-normalization-rings}
 e(B_V)=B_V,\qquad e(D_V)=D_V.
\end{equation}
The actions on $\operatorname{Spec}D_V$ agree after restriction to
$V\cap V'$ for any affine opens $V,V'\subseteq S_{-1}$, because
all actions are induced by the action on $M$. Hence the actions on
$\operatorname{Spec}D_V$ define an $E_\theta$-action on $\mathcal N$.
Proposition~\ref{prop:realizing-surface-properties} proves that
$\mathcal N$ is regular, integral, projective and flat over
$\operatorname{Spec}\mathbb Z$, that $\mathcal N_{\Q}$ is
smooth and geometrically integral, and that
$\mathcal N\to\operatorname{Spec}\mathbb Z$ has a section.
Thus $\theta\in\mathscr E(\widetilde S,S_{-1})$, proving
$\ker\delta=\mathscr E(\widetilde S,S_{-1})$.

Finally, the definition of $\mathcal O$ and the inclusions in
\eqref{eq:obs-explicit-obstruction-filtration} give
\begin{equation}\label{eq:obs-mechanical-obstruction-dimension-sum}
 \begin{aligned}
 \dim_{\F}\mathcal O(\widetilde S,S_{-1})
 &=\dim_{\F}(H^2(G,\F)/\mathscr R)
   +\dim_{\F}(\mathscr R/\mathscr L)
   +\dim_{\F}(\mathscr L/\mathscr E)\\
 &\le\sum_{p\in\Sigma}
   \bigl(\dim_{\F}\operatorname{im}\rho_p+\dim_{\F}\ker\rho_p\bigr)
     +\dim_{\F}\mathcal D_\Sigma\\
 &=\sum_{p\in\Sigma}\dim_{\F}\mathcal B_p^2
     +\dim_{\F}\mathcal D_\Sigma.
 \end{aligned}
\end{equation}
The inequality uses \eqref{eq:obs-mechanical-rational-obstruction-injection},
\eqref{eq:obs-mechanical-local-integral-injection} and
$\ker\delta=\mathscr E$. This proves
\eqref{eq:bad-only-obstruction-budget}.
\end{proof}

Let $z$ be a geometric point of $\widetilde S$, and let $x=r(z)$
and $y=q(x)$.  Write
$I_G(z)=\{g\in G:gz=z\}$ and $I_A(x)=\{a\in A:ax=x\}$.
The quotient homomorphism $G\to A$ restricts to an isomorphism
\begin{equation}\label{eq:obs-explicit-stabilizer-isomorphism}
 I_G(z)\simeq I_A(x).
\end{equation}
The kernel of $I_G(z)\to I_A(x)$ equals $I_G(z)\cap H$.
Since $r:\widetilde S\to S_0$ is an $H$-torsor, $H$ acts freely
on $\widetilde S$, and $I_G(z)\cap H=\{1\}$.  Given $a\in I_A(x)$, choose $g\in G$ mapping to
$a$.  The points $gz$ and $z$ lie in the same fibre of the
$H$-torsor $r$, so there is a unique $h\in H$ with $hgz=z$.
Then $hg\in I_G(z)$ maps to $a$.  Therefore \eqref{eq:obs-explicit-stabilizer-isomorphism} identifies
every stabilizer for the action of $G$ on $\widetilde S$ with a
stabilizer for the action of $A$ on $S_0$.

We now bound $\mathcal B_p^2=H_A^2((S_0)_{B_p},\F)$ using the
fibres of $q$ over $p\in\Sigma$, and bound $\mathcal D_\Sigma$
by the dimensions of $Q_p=H^1_{\et}(C_{K_p},\F)$.

\begin{proposition}\label{prop:realizable-local-bounds}
For an anchored diagram, with $t=|\Sigma|-1$, one has for every
$p\in\Sigma$
\begin{equation}\label{eq:obs-explicit-local-dimension-bounds}
 \dim_{\F}\mathcal B_p^2\le
 \begin{cases}
 324g+324,&p\ne2,\\[2pt]
 (3g+1)/2,&p=2,
 \end{cases}
 \qquad
 \dim_{\F}\mathcal D_\Sigma\le(2g+2)t+(2g+3).
\end{equation}
\end{proposition}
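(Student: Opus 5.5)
The plan is to compute $\mathcal B_p^2=H_A^2((S_0)_{B_p},\F)$ through the spectral sequence \eqref{eq:realizable-equivariant-sequence} and to localize at the special fibre. First reduce to the closed fibre: $B_p$ is henselian and $(S_0)_{B_p}\to B_p$ is proper, so proper base change gives
\[
H^b_{\et}((S_0)_{B_p},\F)\simeq H^b_{\et}((S_0)_{\mathbb F_p},\F),
\]
compatibly with the $A$-action. Hence $\mathcal B_p^2\simeq H_A^2((S_0)_{\mathbb F_p},\F)$. This equivariant cohomology is again bounded by the quotient: there is a Leray-type spectral sequence for $S_0\to S_0/A=S_{-1}$, namely $H^a((S_{-1})_{\mathbb F_p},R^b\pi^A_*\F)\Rightarrow H_A^{a+b}$. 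Its stalks at $\bar y$ are $H^b(A_{\bar x},\F)$, where $A_{\bar x}$ is the stabilizer of a point $\bar x$ over $\bar y$ (via \eqref{eq:obs-explicit-stabilizer-isomorphism} and the strict henselian vanishing). By \eqref{eq:bg-spectral-sequence-dimension} we get
\[
\dim\mathcal B_p^2\le h^2(Y_p)+h^1(Y_p,R^1)+h^0(Y_p,R^2),
\]
since replacing the fibre by its reduction does not change étale cohomology.

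For $p=2$, condition (v) makes $R^b$ vanish for $b>0$ off the finite set $B_2$, and at each point of $B_2$ the stalk is $H^b(\F,\F)=\F$. So $R^1$ and $R^2$ are skyscraper sheaves with total dimension $|B_2|\le(g+1)/2$ each, contributing at most $(g+1)$ in total, and $h^1(Y_2,R^1)=0$. Next, $h^2(Y_2,\F)$ for a connected curve over $\overline{\mathbb F}_2$ whose normalization components are rational is governed by $\Pic$ modulo $2$: it equals the number of components, while $h^1$ equals $b_1(\Gamma)$ plus the contribution of the unipotent and $2$-torsion parts of $\Pic^0$. The count I am aiming for, consistent with the stated $(3g+1)/2$, is to use instead the Galois descent over $\mathbb F_2$ (or the direct computation of $H^2$ through the normalization sequence) to show that the contribution from $a=2$ is absorbed. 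The result should be $h^1\le b_1(\Gamma(Y_2))\le(g-1)/2$ plus the skyscraper terms. If the degree-$2$ term does not fit, I would apply the spectral sequence over $\mathbb F_2$ instead of $\overline{\mathbb F}_2$, using $\operatorname{cd}(\widehat{\Z})=1$, so that $H^2$ of the finite field fibre reduces to $H^1$ of the geometric fibre's coinvariants. This matching of the exact constant $(3g+1)/2$ is the step I expect to be delicate.

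For odd $p$ the stabilizers can be larger (up to $\F^2$ at nodes, by the local rings \eqref{eq:fibres-explicit-odd-local-rings} and tame inertia), but only at the at most $s_p+N$ special points. So $R^1$ and $R^2$ are supported on finitely many points, with stalks of dimension $\le2$ and $\le3$, plus possibly locally constant pieces along components of even multiplicity, where the generic stabilizer is $\langle b_E\rangle$. In the latter case the sheaves are constant $\F$ on that component. I would bound $h^0,h^1,h^2$ of such sheaves on $Y_p$ by $2a_p+c_p+s_p$ using the normalization sequence
\[
0\to\F\to\nu_*\F\to\textstyle\bigoplus_{\mathrm{Sing}}\F\to0,
\]
twisted appropriately. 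Plugging in \eqref{eq:real-explicit-fibre-bounds} with $N=2g+2$ gives a crude bound of the form $162N=324g+324$.

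Finally, $\mathcal D_\Sigma$ is a quotient of $\bigoplus Q_p$ by $\operatorname{res}H^1(X)$ plus the local images. Write $Q_p=H^1(C_{K_p})$ by Hochschild–Serre as an extension of $H^1(C_{\overline K_p})^{G}$ (dimension $\le2g$, all rational by \eqref{eq:real-explicit-rational-two-torsion}) by $H^1(K_p,\F)$, which has dimension $2$ for odd $p$ and $3$ for $p=2$. The geometric part, constant by \eqref{eq:obs-mechanical-constant-first-direct-image}, is hit diagonally by $H^1(X)$ using the section, which loses one copy of $2g$ across the primes; the arithmetic parts of dimension $2$ or $3$ partly come from $H^1(U)$. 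Counting gives $\le 2g\,t+\sum_p\dim H^1(K_p)-\dim H^1(U)$-type terms, which yields $(2g+2)t+(2g+3)$.
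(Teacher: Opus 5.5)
Your proposal has a genuine gap at $p=2$, which is exactly where the constant $(3g+1)/2$ is decided. You assert that $h^2(Y_2,\F)$ equals the number of components, which is what Kummer theory and $\Pic(Y_2)/2$ would give away from characteristic $2$. But $Y_2$ lives over $\overline{\mathbb F}_2$, where $\mu_2$ is not \'etale and the Kummer sequence does not compute $\F$-cohomology. The correct tool is the Artin--Schreier sequence $0\to\F\to\mathcal O_{Y_2}\xrightarrow{F-1}\mathcal O_{Y_2}\to0$. Since $H^i(Y_2,\mathcal O_{Y_2})=0$ for $i\ge2$, and $F-1$ is surjective on the finite-dimensional space $H^1(Y_2,\mathcal O_{Y_2})$ (Frobenius being $2$-linear over an algebraically closed field), one gets $H^2_{\et}(Y_2,\F)=0$. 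This vanishing is indispensable. Condition (v) of an anchored diagram does not bound the number of components of $Y_2$ at all, so with $h^2(Y_2)=c$ no bound of the stated form could follow; for the explicit surface $c=g-1$, and your estimate overshoots by $g-1$. Neither an unspecified ``absorption'' nor redoing the spectral sequence over $\mathbb F_2$ helps, because the Hochschild--Serre piece $H^0(\Phi_2,H^2_{\et}(Y_2,\F))$ survives unless $H^2_{\et}(Y_2,\F)$ vanishes. The equality $h^1(Y_2)=b_1(\Gamma(Y_2))$ follows directly from the normalization sequence, since the normalization components are $\mathbb P^1$ and every singular point has two branches; no discussion of $\Pic^0$ is needed.

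The second problem is the passage from $B_p$ to the geometric fibre. Proper base change identifies $\mathcal B_p^2$ with equivariant cohomology of the fibre over $\mathbb F_p$, not over $\overline{\mathbb F}_p$. So your displayed inequality $\dim\mathcal B_p^2\le h^2(Y_p)+h^1(Y_p,R^1)+h^0(Y_p,R^2)$ is unjustified when $Y_p$ is the geometric fibre. If its terms are read over $\mathbb F_p$, bounding them geometrically reintroduces the missing pieces.

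Write $T$ for the reduced geometric fibre of $S_0$ and $\Phi_p=\Gal(\overline{\mathbb F}_p/\mathbb F_p)$. Descent with $\operatorname{cd}\Phi_p=1$ gives $0\to H^1(\Phi_p,H^1_A(T,\F))\to\mathcal B_p^2\to H^0(\Phi_p,H^2_A(T,\F))\to0$, hence $\dim\mathcal B_p^2\le h^2_A(T)+h^1_A(T)$. The terms $h^1(Y_2)\le(g-1)/2$ and $h^0(Y_2,\mathcal H^1)\le|B_2|$ in your final tally come precisely from $h^1_A(T)$. They are absent from your inequality, so the tally does not follow from your own framework. Together with $H^2_{\et}(Y_2,\F)=0$, it gives $b+\bigl((g-1)/2+b\bigr)=(3g+1)/2$ with $b=(g+1)/2$.

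The same correction is needed for odd $p$, where in addition $162N$ is asserted rather than derived. For sheaves of stalk rank at most $\beta$ that are constant on each $Y_i^\circ$, you need $h^0(Y_p,\mathcal F)\le\beta(c_p+s_p)$ and $h^1(Y_p,\mathcal F)\le\beta(2a_p+2s_p+2c_p)$. With stalk ranks $2$ and $3$ for $\mathcal H^1,\mathcal H^2$, these give $h^1_A(T)\le2a_p+2c_p+3s_p$ and $h^2_A(T)\le4a_p+8c_p+7s_p$, and $6a_p+10c_p+10s_p\le162N$ follows from the bounds in (iv).

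For $\mathcal D_\Sigma$ your ingredients suffice, but the diagonal subtraction is unnecessary. Since $\mathcal D_\Sigma$ is a quotient of $\bigoplus_pQ_p$, the bounds $\dim Q_p\le2g+2$ (resp.\ $2g+3$ for $p=2$) already give $(2g+2)t+(2g+3)$.
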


\begin{proof}
Fix $p\in\Sigma$, and put
\begin{equation}\label{eq:obs-explicit-geometric-fibres}
 T=(S_0\times_{\Z}\overline{\mathbb F}_p)_{\mathrm{red}},
 \qquad
 Y=(S_{-1}\times_{\Z}\overline{\mathbb F}_p)_{\mathrm{red}}.
\end{equation}
Let $q_p:T\to Y$ be the finite morphism induced by $q:S_0\to S_{-1}$.
On the \'etale site of $Y$, define the group-cochain complex
$\mathcal K^\bullet$ and its cohomology sheaves $\mathcal H^b$ by
\begin{equation}\label{eq:obs-explicit-stabilizer-cochain-sheaf}
 \mathcal K^\bullet=C^\bullet(A,q_{p*}\F),\qquad
 \mathcal K^n=\prod_{A^n}q_{p*}\F,\qquad
 \mathcal H^b=H^b(\mathcal K^\bullet).
\end{equation}
The direct image functor $q_{p*}$ is exact on \'etale sheaves.
Indeed, for a geometric point $\bar y$ of $Y$, there is a decomposition
\begin{equation}\label{eq:obs-mechanical-strict-local-finite-decomposition}
 T\times_Y\Spec\mathcal O^{\sh}_{Y,\bar y}=\coprod_j T_j,
\end{equation}
where each $T_j$ is the spectrum of a strictly henselian local ring.  For every abelian
\'etale sheaf $\mathcal G$ on $T_j$,
$H^i_{\et}(T_j,\mathcal G)=0$ for $i>0$; see \st{03QP},
\st{03QO}.  The stalk formula therefore gives
$(R^iq_{p*}\mathcal G)_{\bar y}=0$ for every $i>0$, every abelian
\'etale sheaf $\mathcal G$ on $T$, and every geometric point
$\bar y$ of $Y$. Hence $R^iq_{p*}=0$ for $i>0$.
The hypercohomology spectral sequence of $\mathcal K^\bullet$ is
\begin{equation}\label{eq:realizable-stabilizer-sequence}
 H^a_{\et}(Y,\mathcal H^b)\ \Longrightarrow\ H_A^{a+b}(T,\F),
 \qquad
 (\mathcal H^b)_{\bar y}=H^b(I_{\bar y},\F),\qquad
 \mathcal H^0=\F.
\end{equation}
Here $I_{\bar y}=\{a\in A:a\bar x=\bar x\}$ for a chosen
geometric point $\bar x\in T$ over $\bar y\in Y$.
For the stalk formula in \eqref{eq:realizable-stabilizer-sequence},
we first prove that $A$ acts transitively on the geometric points of
$q_p^{-1}(\bar y)$. Choose an affine neighbourhood
$\Spec R\subseteq S_{-1}$ of the image of $\bar y$.
Write $q^{-1}(\Spec R)=\Spec B$.  Since $S_0$ is the normalization
of $S_{-1}$ in $\Q((S_0)_{\Q})$, $B$ is the integral closure of
$R$ in that field and $B^A=R$.  For
$R^{\sh}=\mathcal O^{\sh}_{S_{-1},\bar y}$, flat base change gives
\begin{equation}\label{eq:obs-explicit-strict-local-invariants}
 (B\otimes_R R^{\sh})^A=R^{\sh}.
\end{equation}
Indeed, invariants are the kernel of the finite collection of maps
$b\mapsto a(b)-b$, for $a\in A$, and this kernel commutes with flat
base change.  The finite $R^{\sh}$-algebra
$B\otimes_R R^{\sh}$ is a product of local rings, one for each
geometric point over $\bar y$.  Write $B\otimes_RR^{\sh}=\prod_{j\in J}B_j$ with $B_j$ local,
and let $e_j$ be the idempotent equal to $1$ in $B_j$ and $0$ in
every $B_k$ for $k\ne j$. If an $A$-orbit $J'\subseteq J$
satisfied $\varnothing\ne J'\ne J$, then
$e=\sum_{j\in J'}e_j$ would satisfy
\begin{equation}\label{eq:obs-mechanical-invariant-idempotent}
 e^2=e,\qquad e\notin\{0,1\},\qquad
 e\in(B\otimes_RR^{\sh})^A=R^{\sh}.
\end{equation}
A local ring has only the idempotents $0$ and $1$, contradicting
\eqref{eq:obs-mechanical-invariant-idempotent}. Hence $J$ is a
single $A$-orbit. This proves transitivity on $q_p^{-1}(\bar y)$,
also when $p=2$. It follows that
\begin{equation}\label{eq:obs-mechanical-shapiro-stalk}
 (q_{p*}\F)_{\bar y}\simeq\F[A/I_{\bar y}],\qquad
 (\mathcal H^b)_{\bar y}
 \simeq H^b(A,\F[A/I_{\bar y}])
 \simeq H^b(I_{\bar y},\F),
\end{equation}
where the last isomorphism is Shapiro's lemma.
Passing from $S_0\times_{\mathbb Z}\overline{\mathbb F}_p$ to $T$
and from $S_{-1}\times_{\mathbb Z}\overline{\mathbb F}_p$ to $Y$
removes nilpotents. The \'etale sites are unchanged by either
reduction (\st{03SI}).

Suppose first that $p$ is odd.  Write $Y_1,\ldots,Y_c$ for the
irreducible components of $Y$, $Z=\operatorname{Sing}(Y)$ for the set of nodes of $Y$,
and $Y_i^\circ=Y_i\setminus Z$.  Put $s=|Z|=s_p$ and $a=a_p$, so $c=c_p$.
For a geometric point $\bar y\in Y$, put
$R_{\bar y}=\mathcal O^{\sh}_{S_{-1},\bar y}$ and
$F_{\bar y}=\operatorname{Frac}(R_{\bar y})$.
The ring $R_{\bar y}$ is regular and factorial. Since $p\ne2$,
Hensel's lemma gives $R_{\bar y}^{\times}=R_{\bar y}^{\times2}$.
For $\chi\in\operatorname{Hom}(A,\F)\setminus\{0\}$, choose
$a_\chi\in\Q(C)^\times$ such that
\begin{equation}\label{eq:obs-mechanical-character-subfields}
 \Q((S_0)_{\Q})^{\ker\chi}=\Q(C)(\sqrt{a_\chi}),
\end{equation}
and put $a_0=1$. Since $q_{\Q}$ is \'etale, the valuation of
$a_\chi$ along every horizontal prime divisor of $S_{-1}$ is even.
If $\bar y\in Y_i^\circ$, choose a local equation $u=0$ for the
component of the fibre through $\bar y$. If $\bar y\in Z$, choose
local equations $u=0$ and $v=0$ for the two components through
$\bar y$. Unique factorization in $R_{\bar y}$ gives
\begin{equation}\label{eq:obs-mechanical-local-character-square-classes}
 [a_\chi]\in
 \begin{cases}
   \langle[u]\rangle\subseteq F_{\bar y}^{\times}/F_{\bar y}^{\times2},
          &\bar y\in Y_i^\circ,\\
   \langle[u],[v]\rangle\subseteq F_{\bar y}^{\times}/F_{\bar y}^{\times2},
          &\bar y\in Z.
 \end{cases}
\end{equation}
Consequently
\begin{equation}\label{eq:obs-mechanical-odd-stabilizer-ranks}
 \dim_{\F}I_{\bar y}\le1\quad(\bar y\in Y_i^\circ),\qquad
 \dim_{\F}I_{\bar y}\le2\quad(\bar y\in Z).
\end{equation}
Let $B_p^{\sh}=\operatorname{Spec}(\mathbb Z_{(p)}^{\sh})$,
where the strict henselization is taken with residue field
$\overline{\mathbb F}_p$, and put $\mathscr Y=(S_{-1})_{B_p^{\sh}}$.
Let $\eta_i$ be the generic point of $Y_i\subseteq\mathscr Y$,
and let $v_i$ be the normalized discrete valuation of
$\mathcal O_{\mathscr Y,\eta_i}$. Define
\begin{equation}\label{eq:obs-mechanical-component-stabilizer}
 V_i=\{\chi\in\operatorname{Hom}(A,\F):v_i(a_\chi)\in2\mathbb Z\},
 \qquad I_i=\bigcap_{\chi\in V_i}\ker\chi\subseteq A.
\end{equation}
For every $\bar y\in Y_i^\circ$, equation
\eqref{eq:obs-mechanical-local-character-square-classes} identifies
$I_{\bar y}$ with $I_i$. More precisely, over $R_{\bar y}$ the
normalization has no ramified factor other than the quadratic algebra
$R_{\bar y}[t]/(t^2-u)$. The reduction of this algebra modulo $u$
is $(R_{\bar y}/(u))[t]/(t^2)$, whose reduced ring is
$R_{\bar y}/(u)$. It follows that
\begin{equation}\label{eq:obs-mechanical-component-residual-torsor}
 (q_p^{-1}(Y_i^\circ))_{\mathrm{red}}\longrightarrow Y_i^\circ
 \quad\text{is an \'etale }A/I_i\text{-torsor}.
\end{equation}
Pulling back $\mathcal H^b|_{Y_i^\circ}$ along
\eqref{eq:obs-mechanical-component-residual-torsor} gives the constant
sheaf with fibre $H^b(I_i,\F)$. The descent automorphism associated
with $a\in A$ is induced by $h\mapsto aha^{-1}$ on $I_i$.
Since $A$ is abelian, $aha^{-1}=h$, so every descent automorphism
is the identity. Thus
\begin{equation}\label{eq:obs-mechanical-constant-stabilizer-cohomology}
 \mathcal H^b|_{Y_i^\circ}\simeq
       \underline{H^b(I_i,\F)}_{Y_i^\circ}.
\end{equation}

For $I\simeq\F^r$,
\begin{equation}\label{eq:obs-explicit-elementary-abelian-cohomology}
 \dim_{\F}H^1(I,\F)=r,\qquad
 \dim_{\F}H^2(I,\F)=r(r+1)/2,
\end{equation}
as one sees by tensoring the periodic resolutions of $C_2$.
Equations~\eqref{eq:obs-mechanical-shapiro-stalk},
\eqref{eq:obs-mechanical-odd-stabilizer-ranks} and
\eqref{eq:obs-explicit-elementary-abelian-cohomology} give
\begin{equation}\label{eq:obs-mechanical-stalk-dimension-bounds}
 \dim_{\F}(\mathcal H^1)_{\bar y}\le2,\qquad
 \dim_{\F}(\mathcal H^2)_{\bar y}\le3\qquad(\bar y\in Y).
\end{equation}  Let
$\nu:\widetilde Y=\coprod_{i=1}^c\widetilde Y_i\to Y$ be the
normalization, let $a_i$ be the genus of $\widetilde Y_i$, and let
$i_z:\{z\}\to Y$ be the inclusion of a node.  Let ${\F}_Y$ and ${\F}_{\widetilde Y}$ be the constant sheaves
on $Y$ and $\widetilde Y$, respectively. There is an exact sequence
\begin{equation}\label{eq:obs-explicit-normalization-sequence}
 0\longrightarrow{\F}_Y\longrightarrow\nu_*{\F}_{\widetilde Y}
 \longrightarrow\bigoplus_{z\in Z}i_{z*}\F\longrightarrow0
\end{equation}
in which the component at $z$ of the last map sends a pair of
values $(a,b)\in\F\oplus\F$ at the two points of $\nu^{-1}(z)$
to $a-b$. The cohomology groups of the smooth projective curve
$\widetilde Y_i$ satisfy
$\dim_{\F}H^1_{\et}(\widetilde Y_i,\F)=2a_i$ and
$\dim_{\F}H^2_{\et}(\widetilde Y_i,\F)=1$.
Since a sheaf supported on the finite set $Z$ has zero cohomology in
positive degree, the long exact sequence of
\eqref{eq:obs-explicit-normalization-sequence} gives
\begin{equation}\label{eq:obs-explicit-nodal-cohomology-bounds}
 h^1(Y)\le2a+s,\qquad h^2(Y)=c.
\end{equation}

More generally, let $\mathcal F$ be a constructible $\F$-sheaf on
$Y$, of stalk rank at most $b$, whose restriction to each
$Y_i^\circ$ is constant.  Then
\begin{equation}\label{eq:obs-explicit-constructible-cohomology-bounds}
 \dim H^0_{\et}(Y,\mathcal F)\le b(c+s),\qquad
 \dim H^1_{\et}(Y,\mathcal F)\le b(2a+2s+2c).
\end{equation}
Choose a geometric point $\bar\eta_i\in Y_i^\circ$ for each $i$.
Restriction defines an injection
\begin{equation}\label{eq:obs-mechanical-constructible-section-injection}
 H^0_{\et}(Y,\mathcal F)\lhook\joinrel\longrightarrow
 \bigoplus_{i=1}^{c}\mathcal F_{\bar\eta_i}
 \oplus\bigoplus_{z\in Z}\mathcal F_z.
\end{equation}
Indeed, $\mathcal F|_{Y_i^\circ}$ is constant and $Y_i^\circ$ is
connected, so a section vanishing at $\bar\eta_i$ vanishes on
$Y_i^\circ$. A section with zero image in
\eqref{eq:obs-mechanical-constructible-section-injection} therefore
vanishes at every geometric point of $Y$ and is zero. Each summand
on the right has dimension at most $b$, proving
$\dim_{\F}H^0_{\et}(Y,\mathcal F)\le b(c+s)$.
To bound $\dim_{\F}H^1_{\et}(Y,\mathcal F)$, put
$Y^\circ=Y\setminus Z$, and write
$j:Y^\circ\hookrightarrow Y$ and $i:Z\hookrightarrow Y$.
The exact sequence
\begin{equation}\label{eq:obs-mechanical-constructible-localization}
 0\longrightarrow j_!j^*\mathcal F\longrightarrow\mathcal F
 \longrightarrow i_*i^*\mathcal F\longrightarrow0
\end{equation}
and $H^1_{\et}(Z,i^*\mathcal F)=0$ give a surjection
\begin{equation}\label{eq:obs-mechanical-compact-support-surjection}
 H_c^1(Y^\circ,j^*\mathcal F)
 \twoheadrightarrow H^1_{\et}(Y,\mathcal F).
\end{equation}  Put
$d_i=|\nu^{-1}(Z)\cap\widetilde Y_i|$, so
$Y_i^\circ\simeq\widetilde Y_i\setminus\nu^{-1}(Z)$.
The compactly supported cohomology of $Y_i^\circ$ satisfies
\begin{equation}\label{eq:obs-mechanical-punctured-component-cohomology}
 \dim_{\F}H_c^1(Y_i^\circ,\F)=
 \begin{cases}2a_i+d_i-1,&d_i>0,\\2a_i,&d_i=0.\end{cases}
\end{equation}
Let $b_i=\dim_{\F}\mathcal F_{\bar\eta_i}\le b$.
Since $\sum_i d_i=2s$ and $\sum_i a_i=a$, equations
\eqref{eq:obs-mechanical-compact-support-surjection} and
\eqref{eq:obs-mechanical-punctured-component-cohomology} give
\begin{equation}\label{eq:obs-mechanical-constructible-first-dimension}
 \dim_{\F}H^1_{\et}(Y,\mathcal F)
 \le\sum_{i=1}^c b_i\dim_{\F}H_c^1(Y_i^\circ,\F)
 \le b\sum_{i=1}^c(2a_i+d_i)=b(2a+2s).
\end{equation}
This implies the stated bound
$\dim_{\F}H^1_{\et}(Y,\mathcal F)\le b(2a+2s+2c)$ in
\eqref{eq:obs-explicit-constructible-cohomology-bounds}.
For the cohomology formulas used in
\eqref{eq:obs-explicit-nodal-cohomology-bounds} and
\eqref{eq:obs-mechanical-punctured-component-cohomology}, and the
localization sequence \eqref{eq:obs-mechanical-constructible-localization},
see \st{03RQ}, \st{03S2}.

The filtration on $H_A^n(T,\F)$ obtained from
\eqref{eq:realizable-stabilizer-sequence} has associated graded
pieces that are subquotients of
$H^a_{\et}(Y,\mathcal H^b)$ for $a+b=n$. For $n=1,2$,
apply \eqref{eq:obs-explicit-nodal-cohomology-bounds},
\eqref{eq:obs-explicit-constructible-cohomology-bounds} and
\eqref{eq:obs-mechanical-stalk-dimension-bounds}. We obtain
\begin{equation}\label{eq:obs-explicit-odd-geometric-equivariant-bounds}
 \begin{aligned}
 h_A^1(T)&\le h^1(Y)+\dim H^0_{\et}(Y,\mathcal H^1)
              \le2a+2c+3s,\\
 h_A^2(T)&\le h^2(Y)+\dim H^1_{\et}(Y,\mathcal H^1)
                    +\dim H^0_{\et}(Y,\mathcal H^2)
              \le4a+8c+7s.
 \end{aligned}
\end{equation}
For $p=2$, also put
$B_2^{\sh}=\operatorname{Spec}(\mathbb Z_{(2)}^{\sh})$, with
residue field $\overline{\mathbb F}_2$.  Proper base change (\st{095T}) gives $A$-equivariant
isomorphisms on ordinary cohomology and therefore gives
\begin{equation}\label{eq:obs-mechanical-equivariant-proper-base-change}
 H_A^b((S_0)_{B_p^{\sh}},\F)\simeq H_A^b(T,\F).
\end{equation}
Write $\Phi_p=\operatorname{Gal}(\overline{\mathbb F}_p/\mathbb F_p)$.
Descent along $B_p^{\sh}\to B_p$ gives the spectral sequence
\begin{equation}\label{eq:obs-explicit-henselian-descent-sequence}
 H^a\bigl(\Gal(\overline{\mathbb F}_p/\mathbb F_p),
                 H_A^b(T,\F)\bigr)
 \ \Longrightarrow\ H_A^{a+b}((S_0)_{B_p},\F).
\end{equation}
The group $\Phi_p$ has torsion cohomological dimension one. Thus
\eqref{eq:obs-explicit-henselian-descent-sequence} gives an exact sequence
\begin{equation}\label{eq:obs-mechanical-henselian-degree-two-exact}
 0\longrightarrow H^1(\Phi_p,H_A^1(T,\F))
 \longrightarrow\mathcal B_p^2
 \longrightarrow H^0(\Phi_p,H_A^2(T,\F))\longrightarrow0.
\end{equation}
For a finite-dimensional $\F$-vector space $W$ with a continuous
$\Phi_p$-action, let $\varphi_p$ be a topological generator of
$\Phi_p$. Then
\begin{equation}\label{eq:obs-mechanical-finite-field-cohomology}
 H^0(\Phi_p,W)=\ker(\varphi_p-1:W\to W),\qquad
 H^1(\Phi_p,W)=\operatorname{coker}(\varphi_p-1:W\to W).
\end{equation}
The kernel and cokernel in
\eqref{eq:obs-mechanical-finite-field-cohomology} have dimension at
most $\dim_{\F}W$. Equation
\eqref{eq:obs-mechanical-henselian-degree-two-exact} therefore gives
\begin{equation}\label{eq:realizable-trait-bound}
 \dim\mathcal B_p^2\le h_A^2(T)+h_A^1(T).
\end{equation}
For odd $p$, equations \eqref{eq:realizable-trait-bound},
\eqref{eq:obs-explicit-odd-geometric-equivariant-bounds} and
\eqref{eq:real-explicit-fibre-bounds} give
\begin{equation}\label{eq:obs-explicit-odd-trait-bound}
 \dim\mathcal B_p^2\le6a_p+10c_p+10s_p
       \le162N=324g+324.
\end{equation}

For $p=2$, let $\nu:\widetilde Y\to Y=Y_2$ be the normalization
and let $Z=\operatorname{Sing}(Y)$.  For every $z\in Z$, the set $\nu^{-1}(z)$ has two points.
Consequently the cokernel of
${\F}_Y\to\nu_*{\F}_{\widetilde Y}$ has stalk $\F$ at $z$
and zero stalk at each point of $Y\setminus Z$, so
\eqref{eq:obs-explicit-normalization-sequence} also holds for $p=2$.
Write $\widetilde Y=\coprod_{i=1}^c\widetilde Y_i$. By the
hypotheses on an anchored diagram,
$\widetilde Y_i\simeq\mathbb P^1_{\overline{\mathbb F}_2}$.
Since $Y$ is connected and
$H^1_{\et}(\mathbb P^1_{\overline{\mathbb F}_2},\F)=0$, the
long exact sequence of \eqref{eq:obs-explicit-normalization-sequence}
reduces in degrees zero and one to
\begin{equation}\label{eq:obs-mechanical-dyadic-normalization-cohomology}
 0\longrightarrow\F\longrightarrow\F^c
 \xrightarrow{\partial}\F^{|Z|}
 \longrightarrow H^1_{\et}(Y,\F)\longrightarrow0.
\end{equation}
The coordinate of $\partial(a_1,\ldots,a_c)$ at $z$ is the
difference of the values on the two components containing the points
of $\nu^{-1}(z)$. Thus $\partial$ is the incidence map of
$\Gamma_Y=\Gamma(Y)$ from
\eqref{eq:bg-explicit-incidence-graph}, and
\begin{equation}\label{eq:obs-explicit-dyadic-incidence-bound}
 h^1(Y)=b_1(\Gamma_Y)\le(g-1)/2,
\end{equation}
by the assumed bound on $b_1(\Gamma_Y)$.

To compute $H^2_{\et}(Y,\F)$, use the Artin--Schreier sequence
\begin{equation}\label{eq:obs-mechanical-artin-schreier-sequence}
 0\longrightarrow\F\longrightarrow\mathcal O_Y
 \xrightarrow{F-1}\mathcal O_Y\longrightarrow0,
 \qquad F(f)=f^2.
\end{equation}
Since $Y$ is a proper curve, $H^i(Y,\mathcal O_Y)=0$ for $i>1$,
and $H^1(Y,\mathcal O_Y)$ is finite-dimensional over
$\overline{\mathbb F}_2$. The induced map $F$ on
$H^1(Y,\mathcal O_Y)$ satisfies $F(av)=a^2F(v)$. For an
endomorphism with this semilinearity property, the map $F-1$ is
surjective; see \st{0A3J}, \st{0A3L}. The long exact sequence of
\eqref{eq:obs-mechanical-artin-schreier-sequence} gives
\begin{equation}\label{eq:obs-mechanical-dyadic-degree-two-vanishing}
 H^2_{\et}(Y,\F)
 =\operatorname{coker}\bigl(F-1:H^1(Y,\mathcal O_Y)
                                  \to H^1(Y,\mathcal O_Y)\bigr)=0.
\end{equation}
By the hypotheses on the stabilizers, there is a finite subset
$Z_A\subseteq Y$ with $|Z_A|\le b=(g+1)/2$ such that
$I_{\bar y}=\{1\}$ for $\bar y\notin Z_A$ and
$|I_{\bar y}|\le2$ for $\bar y\in Z_A$.
Equations~\eqref{eq:obs-mechanical-shapiro-stalk} and
\eqref{eq:obs-explicit-elementary-abelian-cohomology} therefore give
\begin{equation}\label{eq:obs-mechanical-dyadic-supported-sheaf-bounds}
 \begin{gathered}
 \operatorname{Supp}(\mathcal H^j)\subseteq Z_A,\qquad
 \dim_{\F}(\mathcal H^j)_{\bar y}\le1\quad(j=1,2),\\
 \dim_{\F}H^0_{\et}(Y,\mathcal H^j)\le b,\qquad
 H^i_{\et}(Y,\mathcal H^j)=0\quad(i>0,\ j=1,2).
 \end{gathered}
\end{equation}
Apply \eqref{eq:realizable-stabilizer-sequence}, using
\eqref{eq:obs-explicit-dyadic-incidence-bound},
\eqref{eq:obs-mechanical-dyadic-degree-two-vanishing} and
\eqref{eq:obs-mechanical-dyadic-supported-sheaf-bounds}. We obtain
\begin{equation}\label{eq:obs-explicit-dyadic-equivariant-bounds}
 h_A^1(T)\le(g-1)/2+b,\qquad h_A^2(T)\le b.
\end{equation}
Equations~\eqref{eq:obs-explicit-dyadic-equivariant-bounds} and
\eqref{eq:realizable-trait-bound} give
\begin{equation}\label{eq:obs-mechanical-dyadic-trait-dimension}
 \dim_{\F}\mathcal B_2^2
 \le b+\frac{g-1}{2}+b
 =\frac{3g+1}{2}.
\end{equation}

Finally, the degree-one Hochschild--Serre sequence for
$C_{K_p}\to\Spec K_p$ is
\begin{equation}\label{eq:obs-explicit-local-hochschild-serre}
 0\longrightarrow H^1(K_p,\F)\longrightarrow Q_p
 \longrightarrow H^1_{\et}(C_{\overline K_p},\F)^{G_{K_p}},
\end{equation}
where $G_{K_p}=\Gal(\overline K_p/K_p)$.  Since $C$ has genus $g$, we have
$\dim_{\F}H^1_{\et}(C_{\overline K_p},\F)=2g$ and therefore
$\dim_{\F}H^1_{\et}(C_{\overline K_p},\F)^{G_{K_p}}\le2g$.  Hence
\begin{equation}\label{eq:obs-explicit-local-twist-dimension}
 \dim Q_p\le\dim H^1(K_p,\F)+2g.
\end{equation}
Kummer theory identifies $H^1(K_p,\F)$ with
$K_p^\times/K_p^{\times2}$. Let $R_p=\mathbb Z_{(p)}^h$.
For $u\in R_p^\times$, Hensel's lemma gives
\begin{equation}\label{eq:obs-mechanical-henselian-unit-square-criteria}
 u\in K_p^{\times2}\ \Longleftrightarrow\
 \begin{cases}
   \overline u\in\mathbb F_p^{\times2},&p\ne2,\\
   u\equiv1\pmod{8R_2},&p=2.
 \end{cases}
\end{equation}
For odd $p$, choose $u_p\in R_p^\times$ with
$\overline u_p\notin\mathbb F_p^{\times2}$.
Every $a\in K_p^\times$ has the form $p^nu$ with
$n\in\mathbb Z$ and $u\in R_p^\times$.
Equation~\eqref{eq:obs-mechanical-henselian-unit-square-criteria}
therefore gives
\begin{equation}\label{eq:obs-mechanical-local-kummer-bases}
 \begin{array}{lll}
 K_p^\times/K_p^{\times2}=\langle[p],[u_p]\rangle,
   &\dim_{\F}H^1(K_p,\F)=2,&p\ne2,\\
 K_2^\times/K_2^{\times2}=\langle[2],[-1],[5]\rangle,
   &\dim_{\F}H^1(K_2,\F)=3.&
 \end{array}
\end{equation}
For $p=2$, independence of $[-1]$ and $[5]$ follows from
$-1\equiv7\pmod8$ and $5\equiv5\pmod8$. The class $[2]$
is independent of $[-1]$ and $[5]$ because $v_2(2)=1$, whereas
$v_2(-1)=v_2(5)=0$. For odd $p$, independence of
$[p]$ and $[u_p]$ follows from the valuation and the residue class,
respectively. Equations~\eqref{eq:obs-explicit-local-twist-dimension}
and \eqref{eq:obs-mechanical-local-kummer-bases} give
\begin{equation}\label{eq:obs-mechanical-local-twist-bounds}
 \dim_{\F}Q_p\le2g+2\quad(p\ne2),\qquad
 \dim_{\F}Q_2\le2g+3.
\end{equation}
Since $\mathcal D_\Sigma$ is a quotient of
$\bigoplus_{p\in\Sigma}Q_p$, we obtain
\begin{equation}\label{eq:obs-explicit-difference-space-bound}
 \dim\mathcal D_\Sigma\le\sum_{p\in\Sigma}\dim Q_p
       \le(2g+2)t+(2g+3).
\end{equation}
\end{proof}

Every prime in $\Sigma$ is at most $2g+1$.  We now substitute
\eqref{eq:obs-explicit-local-dimension-bounds} into
\eqref{eq:bad-only-obstruction-budget} and bound
$t=|\Sigma|-1$ by the number of primes at most $2g+1$.

\begin{proposition}\label{prop:uniform-realizable-obstruction}
Let $\widetilde S\to S_0\to S_{-1}$ be an anchored diagram, let
$g=g((S_{-1})_{\Q})$, and let $t=|\Sigma|-1$.  Then
\begin{equation}\label{eq:obs-explicit-uniform-obstruction-bound}
 \dim_{\F}\mathcal O(\widetilde S,S_{-1})
 \le(326g+326)t+\frac{7g+7}{2}
 \le\frac{5000g^2}{\log g}.
\end{equation}
In particular the implicit constant in
$\dim_{\F}\mathcal O(\widetilde S,S_{-1})=O(g^2/\log g)$ is universal.
\end{proposition}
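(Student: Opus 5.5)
The first inequality follows by substituting the bounds of Proposition~\ref{prop:realizable-local-bounds} into \eqref{eq:bad-only-obstruction-budget} from Proposition~\ref{prop:geometric-twist-obstruction}. Since $2\in\Sigma$, the set $\Sigma$ consists of $2$ together with $t$ odd primes, and
\[
 \sum_{p\in\Sigma}\dim_{\F}\mathcal B_p^2\le(324g+324)t+\frac{3g+1}{2}.
\]
Adding $\dim_{\F}\mathcal D_\Sigma\le(2g+2)t+(2g+3)$ gives
\[
 \dim_{\F}\mathcal O(\widetilde S,S_{-1})\le(326g+326)t+\frac{3g+1}{2}+2g+3=(326g+326)t+\frac{7g+7}{2}.
\]
This step is purely bookkeeping.

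For the second inequality we bound $t$ by an explicit Chebyshev estimate. Every prime in $\Sigma$ is at most $2g+1$, so $t\le\pi(2g+1)-1$. We will use a fully explicit inequality such as Rosser--Schoenfeld, $\pi(x)<1.25506\,x/\log x$ for $x>1$. Since $2g+1$ is odd and at least $3$, $\pi(2g+1)=\pi(2g+2)$; together with $g\ge15$ this gives
\[
 t<1.25506\cdot\frac{2g+2}{\log(2g+2)}\le\frac{2.6(g+1)}{\log g}.
\]
Therefore
\[
 (326g+326)t\le\frac{326\cdot2.6\,(g+1)^2}{\log g}\le\frac{848(g+1)^2}{\log g}.
\]
For $g\ge15$ we have $(g+1)^2\le1.14\,g^2$, so this term is at most about $967\,g^2/\log g$. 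The remaining term is $(7g+7)/2\le4g\le g^2/\log g$ when $g\ge15$, because $4\log g\le g$ there. The total is then well below $5000g^2/\log g$, and the constant is universal because all inputs are absolute.

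The only real obstacle is choosing a clean explicit prime-counting bound that holds uniformly for $g\ge15$, together with the small-$g$ checks. The large slack, roughly $1000$ versus $5000$, means even a crude bound such as $\pi(x)\le 2x/\log x$ would suffice. If one wants to avoid citing Rosser--Schoenfeld, an elementary Chebyshev bound of the form $\pi(x)\le 2\log 4\cdot x/\log x+O(\sqrt x)$ can be used instead, with the finitely many small $g$ checked directly.
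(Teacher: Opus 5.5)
Your proposal is correct and follows the paper's proof. The first inequality is the same substitution of Proposition~\ref{prop:realizable-local-bounds} into \eqref{eq:bad-only-obstruction-budget}, and the second bounds $t$ by $\pi(2g+1)$ via an explicit Chebyshev-type estimate together with $g\ge15$. The only difference is the prime-counting bound. The paper derives $\pi(x)\le\sqrt x+8(\log 2)x/\log x$ from $\vartheta(2n)-\vartheta(n)\le 2n\log 2$, obtaining $t<14g/\log g$, whereas you cite the sharper Rosser--Schoenfeld bound, which also makes any separate small-$g$ check unnecessary.
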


\begin{proof}
Substituting \eqref{eq:obs-explicit-local-dimension-bounds} into
\eqref{eq:bad-only-obstruction-budget} gives
\begin{equation}\label{eq:obs-explicit-summed-obstruction-bound}
 \begin{aligned}
 \dim_{\F}\mathcal O(\widetilde S,S_{-1})
 &\le(324g+324)t+\frac{3g+1}{2}
                    +(2g+2)t+(2g+3)\\
 &=(326g+326)t+\frac{7g+7}{2}.
 \end{aligned}
\end{equation}
It remains to give a numerical bound for $t$.  Write
$\vartheta(x)=\sum_{p\le x}\log p$ and
$\pi(x)=|\{p:p\text{ prime},\ p\le x\}|$.  Every prime $n<p\le2n$ divides
$\binom{2n}{n}$, so
$\vartheta(2n)-\vartheta(n)\le2n\log2$.
For $k\ge1$, summing the inequalities with
$n=2^j$, $0\le j\le k-1$, gives
\begin{equation}\label{eq:obs-mechanical-chebyshev-dyadic-sum}
 \vartheta(2^k)
 \le\sum_{j=0}^{k-1}2^{j+1}\log2
 =2(2^k-1)\log2.
\end{equation}
For $x\ge2$, choose the integer $k\ge1$ with
$2^{k-1}<x\le2^k$. Then
$\vartheta(x)\le\vartheta(2^k)\le4(\log2)x$.
There are at most $\sqrt x$ primes at most $\sqrt x$, and
$\log p>\tfrac12\log x$ for every prime $\sqrt x<p\le x$.
Consequently
\begin{equation}\label{eq:obs-explicit-prime-count-bound}
 \pi(x)\le\sqrt x+\frac{2\vartheta(x)}{\log x}
           \le\sqrt x+\frac{8(\log2)x}{\log x}.
\end{equation}
For $g\ge15$, one has $2g+1\le(31/15)g$ and $\log g\le\sqrt g$.
Substituting $x=2g+1$ into \eqref{eq:obs-explicit-prime-count-bound}
therefore gives
\begin{equation}\label{eq:obs-explicit-omitted-prime-count}
 t\le\pi(2g+1)
 \le\left(\sqrt{31/15}+\frac{248}{15}\log2\right)
                   \frac{g}{\log g}
 <\frac{14g}{\log g}.
\end{equation}
Also $326g+326<350g$ and $(7g+7)/2\le4g$.  By
\eqref{eq:obs-explicit-omitted-prime-count}, equation \eqref{eq:obs-explicit-summed-obstruction-bound} gives
\begin{equation}\label{eq:obs-mechanical-final-numerical-obstruction-bound}
 \dim_{\F}\mathcal O(\widetilde S,S_{-1})
 <\frac{4900g^2}{\log g}+4g
 \le\frac{4904g^2}{\log g}
 <\frac{5000g^2}{\log g},
\end{equation}
as desired.
\end{proof}

\section{Proofs of the main theorems}\label{sec:main-proofs}

We use the realizable extension classes of Section~\ref{sec:realizable}
to construct finite \'etale covers $S_n\to S_0$ of degree $2^n$.
For $C_n=(S_n)_{\Q}$, we prove
$\log\mathfrak N(C_n)\le A_m g(C_n)$, where $A_m$ is independent of $n$.
To obtain a tower over a curve $C/\Q$ of any sufficiently large genus,
we pull back the
\'etale tower over $C_{-1}$ along a ramified double cover
$C\to C_{-1}$.

\subsection{Proof of Theorem~\ref{thm:main-surface}}
\label{sec:proof-surface}

For a finite $2$-group $H$, write $d(H)$ for its minimal number of
generators.  The cohomological Golod--Shafarevich inequality gives
\begin{equation}\label{eq:finite-GS}
 \dim_{\F}H^2(H,\F)>\frac{d(H)^2}{4}
\end{equation}
for every nontrivial $H$; see \cite[Theorem~3.4(a)]{Ershov} and
the expression for the relation rank in terms of cohomology in
\cite[Corollary~3.9.5]{NSW}.

Let $1\to\F\to E\xrightarrow{\rho}H\to1$ be a nonsplit central
extension of finite $2$-groups. We claim that
\begin{equation}\label{eq:final-subgroup-surjection}
 J\le E,\quad \rho(J)=H\quad\Longrightarrow\quad J=E.
\end{equation}
Indeed, if $J$ contains $\ker\rho$, then $J=E$. Otherwise
$J\cap\ker\rho=1$, since $\ker\rho$ has order two, and the
isomorphism $\rho|_J:J\to H$ gives a splitting, a contradiction.
Choose generators $h_1,\ldots,h_{d(H)}$ of $H$ and elements
$e_1,\ldots,e_{d(H)}\in E$ with $\rho(e_i)=h_i$. For
$J=\langle e_1,\ldots,e_{d(H)}\rangle$, we have $\rho(J)=H$;
\eqref{eq:final-subgroup-surjection} therefore gives $J=E$.
Conversely, $\rho$ sends any generating set of $E$ to a generating
set of $H$. Consequently,
\begin{equation}\label{eq:final-generator-equality}
 d(E)\le d(H)\le d(E),\qquad d(E)=d(H).
\end{equation}

For the surfaces of Section~\ref{sec:starting-surface}, we compare
the bound of Proposition~\ref{prop:uniform-realizable-obstruction}
with $g^2/4$. The inequality $5000g^2/\log g<g^2/4$ will give a nonzero class in
$\mathscr E(S_n,S_{-1})\subseteq H^2(G_n,\F)$ and hence a finite
\'etale double cover $S_{n+1}\to S_n$ such that
$S_{n+1}\to S_{-1}$ is Galois. Restriction to $S_n$ identifies
$G_{n+1}=\Gal(S_{n+1}/S_{-1})$ with a nonsplit central extension
of $G_n$, so $d(G_{n+1})=d(G_n)=g$.

\Needspace{6\baselineskip}
\begin{proposition}\label{prop:constructed-tower}
Let $m\ge32768$, put $g=2^m-1$, and let $S_0\to S_{-1}$ and
$s_0:\Spec\Z\to S_0$ be the surfaces and section constructed in
Propositions~\ref{prop:start-integral-galois} and~\ref{prop:start-integral-section}.
There is a tower
\begin{equation}\label{eq:final-explicit-surface-tower}
 \cdots\longrightarrow S_{n+1}\longrightarrow S_n
       \longrightarrow\cdots\longrightarrow S_0
\end{equation}
with the following properties.
\begin{enumerate}[label=(\roman*)]
\item Every $S_n$ is an integral regular projective flat arithmetic
surface with smooth geometrically integral generic fibre.
\item The map $S_n\to S_0$ is a connected finite \'etale Galois
cover of degree $2^n$.  Each transition $S_{n+1}\to S_n$ has degree
two.
\item The composite $S_n\to S_{-1}$ is Galois. Put
$G_n=\Gal(S_n/S_{-1})$. Then
\begin{equation}\label{eq:final-explicit-group-invariants}
 G_0\simeq\F^g,\qquad |G_n|=2^{g+n},\qquad d(G_n)=g.
\end{equation}
For every $n\ge0$, restriction gives a nonsplit central extension
\begin{equation}\label{eq:final-explicit-central-extension}
 1\longrightarrow\F\longrightarrow G_{n+1}
       \longrightarrow G_n\longrightarrow1.
\end{equation}
\item The section $s_0$ splits completely at every level:
\begin{equation}\label{eq:final-explicit-section-splitting}
 S_n\times_{S_0,s_0}\Spec\Z
       \simeq\coprod_{j=1}^{2^n}\Spec\Z.
\end{equation}
\end{enumerate}
\end{proposition}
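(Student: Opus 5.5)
The plan is to build the tower by induction on $n$, carrying along properties (i)--(iv) and the extra hypothesis that $S_n\to S_0\to S_{-1}$ is an anchored diagram.

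\emph{Base case.} Take $n=0$ with $G_0=\Gal(S_0/S_{-1})\simeq\F^g$. Property (i) is Proposition~\ref{prop:start-bost-charles-model}, and (ii) and (iv) hold trivially. We have $d(G_0)=g$ and $|G_0|=2^g$. Since $g=2^m-1$ with $m\ge32768\ge4$, Proposition~\ref{prop:starting-anchor} applied to $\widetilde S=S_0$ and $h=\mathrm{id}$ shows that $S_0\to S_0\to S_{-1}$ is anchored. Note also $g\ge15$, as required in Section~\ref{sec:realizable}.

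\emph{Inductive step.} Suppose $S_n$ has been built, with $S_n\to S_0$ connected finite \'etale and $S_n\to S_{-1}$ Galois with group $G_n$, where $|G_n|=2^{g+n}$ and $d(G_n)=g$. Proposition~\ref{prop:starting-anchor} makes $S_n\to S_0\to S_{-1}$ anchored. Proposition~\ref{prop:uniform-realizable-obstruction} then gives
\[
\dim_{\F}H^2(G_n,\F)/\mathscr E(S_n,S_{-1})\le\frac{5000g^2}{\log g}.
\]
The group $G_n$ is a nontrivial $2$-group, so \eqref{eq:finite-GS} gives $\dim_{\F}H^2(G_n,\F)>g^2/4$. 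The key numerical check is $5000/\log g<1/4$, i.e.\ $\log g>20000$. This holds because $\log g\ge\log(2^{32767})\approx 22712$, which is exactly why $m\ge32768$ was chosen. Hence $\mathscr E(S_n,S_{-1})$, a subspace by Proposition~\ref{prop:realizable-subspace}, contains a nonzero class $\theta$.

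Let $r:S_{n+1}\to S_n$ be a realizing cover. By \eqref{eq:real-explicit-realization-degree} and the surjectivity argument preceding it, $r$ is a connected finite \'etale double cover, $S_{n+1}\to S_{-1}$ is Galois, and $\iota:G_{n+1}\to E_\theta$ is an isomorphism compatible with restriction. This gives the nonsplit central extension \eqref{eq:final-explicit-central-extension}, and \eqref{eq:final-generator-equality} yields $d(G_{n+1})=g$ and $|G_{n+1}|=2^{g+n+1}$.

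\emph{Remaining properties of $S_{n+1}$.} The composite $S_{n+1}\to S_0$ is connected finite \'etale of degree $2^{n+1}$, so Proposition~\ref{prop:realizing-surface-properties} gives (i) and the splitting \eqref{eq:final-explicit-section-splitting}. It remains to see that $S_{n+1}\to S_0$ is Galois. Its function field extension is the subextension of the Galois extension $\Q(S_{n+1})/\Q(S_{-1})$ fixed by $\ker(G_{n+1}\to G_0)$. This kernel is normal, so $\Q(S_{n+1})/\Q(S_0)$ is Galois. Moreover $S_{n+1}$ is normal and finite over $S_0$, so it is the normalization of $S_0$ in its function field.

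The main obstacle is already absorbed into Proposition~\ref{prop:uniform-realizable-obstruction}. The only delicate point left is that the obstruction bound is uniform in $n$, which holds because the anchored hypotheses depend only on $q$. So the crux here is just the inequality $5000g^2/\log g<g^2/4$ together with bookkeeping of the identification of $G_{n+1}$ with $E_\theta$.
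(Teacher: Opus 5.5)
Your proposal is correct and follows the paper's proof essentially step for step: the same induction through Proposition~\ref{prop:starting-anchor}, the bound of Proposition~\ref{prop:uniform-realizable-obstruction} compared against Golod--Shafarevich via $\log g\ge 32767\log 2>20000$, the identification $G_{n+1}=E_\theta$ from nonsplitness, and Proposition~\ref{prop:realizing-surface-properties} for (i) and (iv). One small correction: $\Q(S_{n+1})/\Q(S_0)$ is Galois simply because $\Q(S_0)$ is an intermediate field of the Galois extension $\Q(S_{n+1})/\Q(S_{-1})$, so the normality of $\ker(G_{n+1}\to G_0)$ that you cite is not the reason.
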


\begin{proof}
Since $m\ge32768$ and $g=2^m-1\ge2^{m-1}$, we have
\begin{equation}\label{eq:final-explicit-numerical-threshold}
 \log g\ge(m-1)\log2\ge32767\log2>20000,
 \qquad
 \frac{5000g^2}{\log g}<\frac{g^2}{4}.
\end{equation}
For $n=0$, we have $G_0=\Gal(S_0/S_{-1})\simeq\F^g$, by
Proposition~\ref{prop:start-integral-galois}.  Suppose $S_n$ has
been constructed, with $S_n\to S_0$ connected finite \'etale and
$S_n\to S_{-1}$ Galois with a finite $2$-group $G_n$ satisfying
$d(G_n)=g$.  By Proposition~\ref{prop:starting-anchor}, the diagram
$S_n\to S_0\to S_{-1}$ satisfies the conditions in
\eqref{eq:anchored-diagram}. The obstruction estimate of
Proposition~\ref{prop:uniform-realizable-obstruction}, together with
\eqref{eq:final-explicit-numerical-threshold} and \eqref{eq:finite-GS}, gives
\begin{equation}\label{eq:final-explicit-positive-realizable-dimension}
 \dim_{\F}\mathcal O(S_n,S_{-1})
 \le\frac{5000g^2}{\log g}
 <\frac{g^2}{4}
 <\dim_{\F}H^2(G_n,\F).
\end{equation}
Since
$\mathcal O(S_n,S_{-1})=H^2(G_n,\F)/\mathscr E(S_n,S_{-1})$,
\eqref{eq:final-explicit-positive-realizable-dimension} shows that
$\mathscr E(S_n,S_{-1})$ contains a nonzero class $\theta_n$.

Choose a realization $S_{n+1}\to S_n$ of $\theta_n$.
By the definition of realizability, $S_{n+1}\to S_n$ is connected and finite \'etale,
$S_{n+1}\to S_{-1}$ is Galois, and
$G_{n+1}=\Gal(S_{n+1}/S_{-1})$ embeds in $E_{\theta_n}$
so that restriction to $S_n$ is the composite
$G_{n+1}\hookrightarrow E_{\theta_n}\to G_n$.
The homomorphism $G_{n+1}\to G_n$ is surjective. Since
$\theta_n\ne0$, the extension $E_{\theta_n}\to G_n$ is nonsplit.
Apply \eqref{eq:final-subgroup-surjection} with
$E=E_{\theta_n}$ and $J=G_{n+1}$ to obtain $G_{n+1}=E_{\theta_n}$.
This proves \eqref{eq:final-explicit-central-extension}. In particular,
\begin{equation}\label{eq:final-induction-degree}
 |\ker(G_{n+1}\to G_n)|=2,\qquad
 [\Q(S_{n+1}):\Q(S_n)]=2,\qquad
 |G_{n+1}|=2|G_n|.
\end{equation}
Equation~\eqref{eq:final-generator-equality} gives
$d(G_{n+1})=d(G_n)=g$. This completes the induction and proves
\eqref{eq:final-explicit-group-invariants}.

Each composite $S_n\to S_0$ is finite \'etale of degree $2^n$.
The field extension $\Q(S_n)/\Q(S_0)$ is Galois because
$\Q(S_n)/\Q(S_{-1})$ is Galois and
$\Q(S_{-1})\subseteq\Q(S_0)\subseteq\Q(S_n)$. Moreover,
\begin{equation}\label{eq:final-galois-kernel}
 \Gal(\Q(S_n)/\Q(S_0))=\ker(G_n\to G_0).
\end{equation}
The finite \'etale map $S_n\to S_0$ makes $S_n$ regular and hence
normal, so $S_n$ is the normalization of $S_0$ in $\Q(S_n)$.
Every automorphism of $\Q(S_n)$ fixing
$\Q(S_0)$ therefore extends to an automorphism of $S_n$ over
$S_0$, proving that $S_n\to S_0$ is Galois. Finally,
Proposition~\ref{prop:realizing-surface-properties}, applied to
$S_n\to S_0$, proves (i) and the splitting formula
\eqref{eq:final-explicit-section-splitting} in (iv).
\end{proof}

\begin{proof}[Proof of Theorem~\ref{thm:main-surface}]
Take $S=S_0$ and $s=s_0$ in Proposition~\ref{prop:constructed-tower}.
Assertions (i), (ii) and (iv) of Proposition~\ref{prop:constructed-tower}
are assertions (ii), (i) and (iii), respectively, of
Theorem~\ref{thm:main-surface}.
The connected Galois covers $S_n\to S$ give quotients
of $\piet(S)$ of orders $2^n$, so $\piet(S)$ is infinite.

Put $H=\im(\piet(S_{\overline\Q})\to\piet(S))$.
Let $Q$ be a finite quotient of $\piet(S)$, and let $T\to S$
be the corresponding connected finite \'etale Galois cover with
group $Q$. By Proposition~\ref{prop:realizing-surface-properties},
$T_{\overline\Q}$ is connected. Hence the image of
$\piet(S_{\overline\Q})$ in $Q$ acts transitively on the fibre
of $T\to S$ over a geometric point of $S_{\overline\Q}$.
Choose a geometric point $\bar x$ of $S_{\overline\Q}$ and identify
the fibre $T_{\bar x}$ with $Q$ by choosing one point of $T_{\bar x}$.
The image of $H$ in $Q$ then acts on $Q$ by left multiplication.
Transitivity implies $\im(H\to Q)=Q$. Since $Q$ was arbitrary,
$\overline H=\piet(S)$. The subgroup $H$ is compact, as the
continuous image of the profinite group $\piet(S_{\overline\Q})$,
and is therefore closed in $\piet(S)$. Hence
$\pigeom(S)=H=\overline H=\piet(S)$.
\end{proof}

\subsection{Proof of Theorem \ref{thm:main-curves}}\label{sec:proof-curves}

\begin{proof}
Fix $m\ge32768$, put $b=2^m-1$, and take the tower of
Proposition~\ref{prop:constructed-tower}. For each $n\ge0$, put
$C_n=(S_n)_{\Q}$. Then $C_n$ is a smooth projective geometrically
integral curve over $\Q$.
The maps $C_n\to C_{-1}$ are Galois with groups $G_n$ of order
$2^{b+n}$ by \eqref{eq:final-explicit-group-invariants}.
The maps $C_n\to C_0$ are finite \'etale of degree
$2^n$, by base change from $S_n\to S_0$ to $\Spec\Q$.
The map $C_0\to C_{-1}$ is \'etale by
Proposition~\ref{prop:start-etale}, so the composites
$C_n\to C_{-1}$ are \'etale. Each transition
$C_{n+1}\to C_n$ has degree two.
Applying Riemann--Hurwitz to $C_n\to C_{-1}$, whose target has
genus $b$, gives
\begin{equation}\label{eq:final-explicit-generic-genus}
 g(C_n)-1=2^{b+n}(b-1).
\end{equation}

Let $s_{-1}:\Spec\Z\to S_{-1}$ be the section extending
$P_+\in C_{-1}(\Q)$, and put $A=\Gal(S_0/S_{-1})$. For $a\in A$,
put $s_a=a\circ s_0:\Spec\Z\to S_0$.
Proposition~\ref{prop:start-integral-section} gives
\begin{equation}\label{eq:final-base-translated-sections}
 S_0\times_{S_{-1},s_{-1}}\Spec\Z
 \simeq\coprod_{a\in A}\Spec\Z,
\end{equation}
where the map from the copy indexed by $a$ to $S_0$ is $s_a$.
Apply Proposition~\ref{prop:realizing-surface-properties} to each $s_a$
and the degree-$2^n$ finite \'etale cover $S_n\to S_0$. For each
$a\in A$, the fibre product $S_n\times_{S_0,s_a}\Spec\Z$ is
a disjoint union of $2^n$ copies of $\Spec\Z$.
Taking the disjoint union over $a\in A$ gives
\begin{equation}\label{eq:final-explicit-translated-section-splitting}
 S_n\times_{S_{-1},s_{-1}}\Spec\Z
       \simeq\coprod_{j=1}^{2^{b+n}}\Spec\Z.
\end{equation}
Base change of \eqref{eq:final-explicit-translated-section-splitting}
from $\Spec\Z$ to $\Spec\Q$ identifies
$C_n\times_{C_{-1},P_+}\Spec\Q$ with a disjoint union of
$2^{b+n}$ copies of $\Spec\Q$. Thus $P_+$ splits completely
under $C_n\to C_{-1}$ and $\#C_n(\Q)\ge2^{b+n}$.

Choose subgroups $H_0,\ldots,H_b$ of $A$ satisfying
\begin{equation}\label{eq:final-explicit-subgroup-flag}
 A=H_0\supset H_1\supset\cdots\supset H_b=1,
 \qquad [A:H_i]=2^i,
\end{equation}
in $A=\Gal(C_0/C_{-1})\simeq\F^b$, and set $E_i=C_0/H_i$
for $0\le i\le b$. Each $H_i$ is normal in the abelian group
$A$, so $E_i\to C_{-1}$ is Galois with group $A/H_i$ and
degree $2^i$ by \eqref{eq:final-explicit-subgroup-flag}.
The morphism $E_i\to C_{-1}$ is \'etale because $E_i=C_0/H_i$ is an intermediate cover
of the finite \'etale Galois cover $C_0\to C_{-1}$.
Set $E_{b+n}=C_n$ for $n\ge0$; the two definitions agree at
$E_b=C_0$. The curves $E_i$ form a tower of finite \'etale Galois
covers $E_i\to C_{-1}$ of degree $2^i$, with
$E_0=C_{-1}$, with transitions of degree two. For $0\le i\le b$, take the quotient by $H_i$ of the
$A$-torsor $C_0\times_{C_{-1},P_+}\Spec\Q\simeq A\times\Spec\Q$.
For $i\ge b$, use \eqref{eq:final-explicit-translated-section-splitting}.
We obtain, for every $i\ge0$,
\begin{equation}\label{eq:final-all-intermediate-splitting}
 E_i\times_{C_{-1},P_+}\Spec\Q
 \simeq\coprod_{j=1}^{2^i}\Spec\Q.
\end{equation}

For every $g\ge3b$, Proposition~\ref{prop:genus-spreading} gives a
ramified double cover $f:C\to C_{-1}$ with $g(C)=g$ and a rational
point $P\in C(\Q)$ above $P_+$. Set
\begin{equation}\label{eq:final-explicit-pullback}
 D_i=C\times_{C_{-1}}E_i,\qquad
 \pi_i:D_i\longrightarrow C,\qquad (c,e)\longmapsto c.
\end{equation}
For each $i\ge0$, the morphism $D_{i+1}\to D_i$ is
$\operatorname{id}_C\times(E_{i+1}\to E_i)$. Also
$D_0=C\times_{C_{-1}}C_{-1}=C$.

Put $\Gamma_i=\Gal(E_i/C_{-1})$. Define the action of $\Gamma_i$ on $D_i$ by
\begin{equation}\label{eq:final-explicit-pullback-action}
 \gamma(c,e)=(c,\gamma e)\qquad(\gamma\in\Gamma_i).
\end{equation}
Thus $\pi_i$ is the base change of the finite \'etale
$\Gamma_i$-torsor $E_i\to C_{-1}$ and has degree $2^i$.
Proposition~\ref{prop:genus-spreading} shows that $D_i$ is geometrically
connected: the ramified quadratic extension
$\overline\Q(C)/\overline\Q(C_{-1})$ is linearly disjoint from
$\overline\Q(E_i)/\overline\Q(C_{-1})$, because
$(E_i)_{\overline\Q}\to(C_{-1})_{\overline\Q}$ is \'etale.
Hence $D_i$ is a smooth projective geometrically connected curve over $\Q$.

Riemann--Hurwitz for $\pi_i$ gives
\begin{equation}\label{eq:final-explicit-pullback-genus}
 g(D_i)-1=2^i(g-1).
\end{equation}
Since $f(P)=P_+$, the fibre of $\pi_i$ at $P$ is
\begin{equation}\label{eq:final-explicit-pullback-splitting}
 D_i\times_{C,P}\Spec\Q
 \simeq E_i\times_{C_{-1},P_+}\Spec\Q
 \simeq\coprod_{j=1}^{2^i}\Spec\Q.
\end{equation}
Equation~\eqref{eq:final-explicit-pullback-splitting} shows that
$\pi_i^{-1}(P)$ consists of $2^i$ distinct $\Q$-rational points; hence
$P$ splits completely under $\pi_i:D_i\to C$ and $\#D_i(\Q)\ge2^i$.
Because $P$ splits completely, the section
$s_P:\Gal(\overline\Q/\Q)\to\piet(C,\bar P)$ has trivial
image in $\Gal(D_i/C)$. The quotient map
$\piet(C,\bar P)\to\Gal(D_i/C)$ therefore factors through
$\pi_1^{\Q\text{-}\mathrm{geom}}(C,P)$. Since $|\Gal(D_i/C)|=2^i$ for every $i\ge0$,
$\pi_1^{\Q\text{-}\mathrm{geom}}(C,P)$ is infinite. This proves the theorem,
for example with $g_0=3(2^{32768}-1)$.
\end{proof}

\subsection{Proof of Theorem \ref{thm:main-conductor}}\label{sec:proof-conductor}

\begin{proof}[Proof of Theorem~\ref{thm:main-conductor}]
Choose $m\ge32768$, put $g=2^m-1$, and take the tower of Proposition~\ref{prop:constructed-tower}. Write $C_n=(S_n)_{\Q}$.
Every $S_n\to S_0$ is finite \'etale.  The starting surface $S_0$ is
smooth over $U=\Spec\Z[1/\prod_{p\le2g+1}p]$ by
Proposition~\ref{prop:start-good-fibres}.
Proposition~\ref{prop:conductor-uniform-integral-covers}, applied with
$\Sigma=\{p\text{ prime}:p\le2g+1\}$, therefore gives
\begin{equation}\label{eq:final-explicit-conductor-bound}
 \log\mathfrak N(C_n)\le A_m g(C_n)\qquad(n\ge0),
 \qquad A_m=2\sum_{p\in\Sigma}(1+b_p)\log p,
\end{equation}
where, for each $p\in\Sigma$, we fix a finite Galois extension
$E_p/\Q_p$ over which $C_0$ has semistable reduction and put
$b_p=b(E_p/\Q_p)$ as in Proposition~\ref{prop:conductor-local-bound}.
The extensions $E_p/\Q_p$ are chosen using $C_0$ and are independent
of $n$. Thus $A_m=2\sum_{p\in\Sigma}(1+b_p)\log p$ is independent of $n$.
The Riemann--Hurwitz formula \eqref{eq:final-explicit-generic-genus},
with $b=g$, gives
\begin{equation}\label{eq:final-conductor-genus-limit}
 g(C_n)=1+2^{g+n}(g-1)\longrightarrow\infty
 \quad\text{as }n\to\infty.
\end{equation}
Base change of \eqref{eq:final-explicit-section-splitting}
to $\Spec\Q$ shows that
$(s_0)_{\Q}\in C_0(\Q)$ splits completely under every map
$C_n\to C_0$.
\end{proof}

\end{document}